\let\emptyset \undefined
\definecolor{darkred}{rgb}{0.9,0.1,0.1}
\theoremstyle{plain}
\newtheorem{theorem}{Theorem}[section]
\newtheorem{lemma}[theorem]{Lemma}
\newtheorem{proposition}[theorem]{Proposition}
\newtheorem{definition}[theorem]{Definition}
\newtheorem{assumption}[theorem]{Assumption}
\newtheorem{notation}[theorem]{Notation}
\theoremstyle{remark}
\newtheorem{remark}[theorem]{Remark}
\numberwithin{equation}{section}
\numberwithin{figure}{section}
\def\bul{$\bullet\;$}
\def\begeq{\begin{equation}}
\def\endeq{\end{equation}}
\newcommand{\ls}{\lesssim}
\newcommand{\E}{\mathbb{E}}
\newcommand{\eps}{\varepsilon}
\newcommand{\phim}{\varphi_{-1}}
\newcommand{\phip}{\varphi_{+1}}
\newcommand{\R}{\mathbb{R}}
\newcommand{\Z}{\mathbb{Z}}
\newcommand{\F}{\mathcal{F}}
\newcommand{\Aa}{\mathcal{A}}
\newcommand{\Bb}{\mathcal{B}}
\newcommand{\A}{\mathcal{A}}
\newcommand{\N}{\mathbb{N}}
\newcommand{\Je}{J_{y,\eps}}
\newcommand{\DE}{\Delta E}
\newcommand{\Ixu}{I^{u_{\pm}}_{x_{\pm}}}
\newcommand{\Hp}{J_{z,+}^\eps}
\newcommand{\Hm}{J_{y,-}^\eps}
\newcommand{\klm}{k_{y,-}^\eps}
\newcommand{\Be}{\mathcal{J}_{y,\eps}}
\newcommand{\de}{d_\eps}
\newcommand{\Rx}{\mathsf{R}_{y, z}}
\newcommand{\kxepsm}{k_{y,-}^\eps}
\newcommand{\kxepsp}{k_{y,+}^\eps}
\newcommand{\kzepsm}{k_{z,-}^\eps}
\newcommand{\kzepsp}{k_{z,+}^\eps}
\newcommand{\I}{\mathcal{I}}
\newcommand{\Ayt}{\Aa_{Y,2}}
\newcommand{\Ayth}{\Aa_{Y,3}}
\newcommand{\JY}{\mathcal{J}_Y}
\newcommand{\Rrl}{R^{\chi_r}_{\chi_l} }
\DeclareMathOperator{\sign}{sign}
\DeclareMathOperator{\Id}{Id}
\newcommand{\ue}{u_\eps}
\newcommand{\Le}{L_\eps}
\newcommand{\hx}{h _{(x_-,x_+)}^{u_-,u_+}}
\begin{document}

\title[Invariant measures]
{Invariant measure of the stochastic Allen-Cahn equation: the regime of small noise \\and large system size}
\author{Felix Otto}
\address{Felix Otto, MPI for Mathematics in the Sciences}
\email{felix.otto@mis.mpg.de}

\author{Hendrik Weber}
\address{Hendrik Weber, University of Warwick
}
\email{hendrik.weber@warwick.ac.uk}

\author{Maria G. Westdickenberg}
\address{Maria G. Westdickenberg, RWTH Aachen University}
\email{ maria@math1.rwth-aachen.de}



 \begin{abstract}
We study the invariant measure of the one-dimensional stochastic Allen-Cahn equation for a small noise strength and a large but finite system.  We endow the system with inhomogeneous Dirichlet boundary conditions that enforce  at least one transition from $-1$ to $1$. (Our methods can be applied to other boundary conditions as well.) We are interested in the competition between the ``energy'' that should be minimized due to the small noise strength and the ``entropy'' that is induced by the large system size.

Our methods handle system sizes that are exponential with respect to the inverse noise strength, up to the ``critical'' exponential size predicted by the heuristics. We capture the competition between energy and entropy through  upper and lower bounds on the  probability of extra transitions between $\pm 1$. These bounds are sharp on the exponential scale and imply in particular that the probability of having one and only one transition from $-1$ to $+1$ is exponentially close to one. In addition, we show that the position of the transition layer is uniformly distributed over the system on scales larger than the logarithm of the inverse noise strength.

Our arguments rely on local large deviation bounds, the strong Markov property, the symmetry of the potential, and measure-preserving reflections. \end{abstract}


\date\today

\maketitle

\section{Introduction}
\label{s:Intro}
In this paper we study the unique invariant measure of the stochastically perturbed Allen-Cahn equation
\begin{equation}\label{e:ACE}
\partial_t \ue(t,x)   \, = \,  \partial_{x}^2\,  \ue(t,x) - V'(\ue(t,x) )    + \sqrt{2 \eps} \,  \eta(t,x),
\end{equation}
where $\ue$ is a one-dimensional order parameter defined for all non-negative times $t \in \R_+$ and  $x \in (-\Le,\Le )$.  Here $\eta$ is a formal expression denoting space-time white noise and $V$ is  a symmetric double-well potential.  The canonical choice for $V$ is
\begin{equation*}
V(u)\,=\,\frac{1}{4}(1-u^2)^2,
\end{equation*}
although more general choices are possible (see Assumption~\ref{ass:V} below).  We are interested in the properties of the invariant measure for large system sizes,
$$\Le\gg 1.$$

It is well-known that for $\eps\downarrow 0$ and fixed system size $L$, the invariant measure of the Allen-Cahn equation concentrates on minimizers of the energy
\begin{align*}
\int_{- L}^ {L}  \left(\frac{1}{2}(\partial_x u)^2+V(u)\right)\,dx.
\end{align*}
This follows from large deviation theory.  In fact, even for system sizes $\Le$ that grow with $\eps$, the same is true.  Indeed, in \cite{W10} the second author proved this fact for $\Le\sim\eps^{-\alpha}$ for any $\alpha < \frac{2}{3}$.

Our main goal in the current paper is to go up to interval sizes that are \emph{exponential} with respect to $\eps^{-1}$ and, specifically, to understand the \emph{competition between energy and entropy} that emerges in this regime.  Let us first consider the effect of \underline{\emph{energy}} on the measure.
The intuition is that the invariant measure can be viewed as a Gibbs measure with the given energy, i.e., that it is in some heuristic sense proportional to
$$\exp\left(-\frac{1}{\eps}\int_{- \Le}^ {\Le}  \left(\frac{1}{2}(\partial_x u)^2+V(u)\right)\,dx\right).$$
The heuristic picture then says that, because of the potential term in the energy, functions $u$ supported on this measure are most likely to be close to one or the other  minimum of $V$ on most of $[-\Le,\Le]$.  On the other hand, because of the gradient term in the energy, there is an energetic ``cost'' $c_0$ for each transition between these two preferred states, making such transitions unlikely.  When the system size is order-one, this is  the end of the story.

Now let us consider the competing effect of \underline{\emph{entropy}} on the measure when the system size is large.
Namely, the probability of finding a transition between the minima of $V$ is increased by the fact that it is possible for the transition to occur anyplace in the system.  Hence, the folklore is that the probability of finding $n$ transition layers scales like
\begin{equation}\label{e:expprefact}
(\Le)^n\exp\left(-\frac{nc_0}{\eps}\right).
\end{equation}
This competition between entropy and energy is captured (on the exponential level) in our first theorem, Theorem~\ref{t:layers} below. Our second  result, Theorem~\ref{t:uniform}, then shows  the uniform distribution of transitions within the domain.

As far as our methods, the central idea is that one can decompose the measure into conditional measures and the corresponding marginals in order to reduce to order-one intervals  on which one can apply large deviation theory.
Along the way, it is important for us to use measure-preserving \emph{reflection arguments} that allow us to transform the underlying Brownian paths. The detailed structure of the (deterministic) energy functional is also critical in our proofs.

We will state our results in  detail in Subsection~\ref{ss:MR} after first explaining our set-up and notation.

\subsection{Set-up and notation}

For the potential $V$ in~\eqref{e:ACE}, we need a symmetric double-well potential with at least  superlinear growth at infinity.  For simplicity, we assume that the two minima of $V$ are normalized to be at $\pm 1$ and that the minimum value of the potential is zero.  To be precise, our assumptions are:

\begin{assumption}\label{ass:V}
$V$ is a smooth, even potential such that, on $(0,\infty)$, $V$ satisfies
\begin{align}
&V(u) \geq 0 \quad \text{and} \quad V(u)=0 \quad \text{iff}\quad u= 1, \notag \\
&V'(u)=0 \quad\text{if and only if}\quad u=1 \notag,\\
&V''(1)>0,\notag\\
&V(u)    \geq  u^{1+\beta}/C \quad\text{for} \quad u\geq C \quad \text{for some $C<\infty$ and $\beta>0$}. \label{e:assumptions}
\end{align}
\end{assumption}

\begin{remark}
If we assume superquadratic growth on $V$ at infinity (recall that we have \emph{quartic} growth of the standard double well potential $V(u)=(1-u^2)^2/4$), some of our technical lemmas simplify slightly. In particular, one can remove the dependence of the minimal system size $\ell_*$ on $M$ in Lemmas~\ref{le:lazy} and \ref{l:cl}.
\end{remark}

Because of the normalization of our potential, the transitions that we are interested in are transitions between $\pm 1$.  We make the notion precise in the following definition.
\begin{definition}[Up/down transition layers]\label{def:layer}
We say that $u$ has an up transition layer on $(x_-,x_+)$ if
$$ u(x_{\pm})= \pm  1 \qquad\text{and}\qquad  |u(x)|< 1\quad\text{for all}\;x\in(x_-,x_+).$$
We say that $u$ has a down transition layer on $(x_-,x_+)$ if the same condition holds with signs reversed, and that $u$ has a transition layer if it has an up or down transition layer.
\end{definition}

For the boundary conditions on our PDE, we will work with the popular inhomogeneous Dirichlet boundary conditions
\begin{equation}
\ue \big(t,\pm \Le \big) = \pm \frac{1}{\eps} \int^{\ell_0}_{-\ell_0} V\big(\eps^{1/2} (\hat{u} -1) +1 \big)1.\label{bcj}
\end{equation}
Because of the boundary conditions, there is necessarily \emph{one} up transition layer, and the question is whether there are additional transition layers.  Moreover, if there are additional layers, they come as a pair of an up layer and a down layer.  Note that our methods can also handle other boundary conditions, for instance periodic boundary conditions or Dirichlet boundary conditions that do not force a transition layer to be present.

We will denote the invariant measure of~\eqref{e:ACE} subject to the boundary conditions~\eqref{bcj} by
$\mu^{-1,1}_{\eps,(-\Le,\Le)}$ and the corresponding expectation by $\mathbb{E}^{\mu_\eps,-1,1}_{(-\Le,\Le)}(\cdot)$.  We will often use the fact that
the  measure $  \mu^{-1,1}_{\eps,(-\Le,\Le)}$ can be written as a Gaussian measure with density \cite{Z88}.  Namely, one can express the expectation of any test function $\Phi$ as
\begin{align}
\mathbb{E}^{\mu_\eps,-1,1}_{(-\Le,\Le)}(\Phi)=\frac{\mathbb{E}^{\mathcal{W}_\eps,-1,1}_{( -\Le,\Le)}\left[\Phi(u)\exp\Big(-\frac{1}{\eps}\int_{-\Le}^{\Le} V(u)\,dx\Big)\right]}{\mathbb{E}^{\mathcal{W}_\eps,-1,1}_{( -\Le,\Le)}\left[\exp\Big(-\frac{1}{\eps}\int_{-\Le}^{\Le} V(u)\,dx\Big)\right]}. \label{m30.1}
\end{align}
Here $ \mathbb{E}_{(-\Le,\Le)}^{\mathcal{W}_\eps,-1,1} $ denotes the expectation with respect to the measure  $  \mathcal{W}^{-1,1}_{\eps, (-\Le,\Le)}$, which is the distribution of a Brownian bridge on $(-\Le, \Le)$ from $-1$ to $+1$ with variance proportional to $\eps$. Properties of  $\mathcal{W}^{-1,+1}_{\eps, (-\Le,\Le)}$ will be discussed in detail in Section   \ref{s:Prlm}.

The deterministic Allen-Cahn equation (set $\eta=0$ in~\eqref{e:ACE}) is the $L^2$-gradient flow of the energy
\begin{align}
E(u):=\int_{-\Le}^{\Le}\left(\frac{1}{2}(\partial_x u)^2+V(u)\right)\,dx.\label{energy}
\end{align}
When we need to refer to the energy on all of $\R$ or the localized energy on a subinterval, we will denote this with a subscript:
\begin{align}
E_{(-\infty,\infty)}(u)&=\int_{-\infty}^{\infty}\left(\frac{1}{2}(\partial_x u)^2+V(u)\right)\,dx,\notag\\
E_{(-\ell,\ell)}(u)&=\int_{-\ell}^{\ell}\left( \frac{1}{2}(\partial_x u)^2+V(u)\right)\,dx.
\label{enloc}
\end{align}
As mentioned above, the energy functional will be important for understanding the invariant measure of the stochastic equation.  In particular, the probability of finding transition layers will depend on the energetic ``cost'' of a transition layer on $\R$, that is:
\begin{equation}
c_0 \, := \, \inf \big\{  E_{(-\infty,\infty)}(u) \colon \, u(\pm \infty) = \pm 1 \big\}.\label{c00}
\end{equation}
It is well known \cite{MM} that this cost can be computed explicitly as
\begin{equation}
c_0\,=\,\int_{-1}^{1}\sqrt{2V(u)}\,du\,\overset{\bf{Ass.}~\ref{ass:V}}=\,2\int_{0}^{1}\sqrt{2V(u)}\,du;\label{c0}
\end{equation}
see the beginning of Section~\ref{s:detac} for an explanation.

We will often refer to scaling regimes in our results.  To this end, we define the following notation.

\begin{notation}\label{N:Notation}
The well-established theory of large deviations applies on intervals whose length is order-one with respect to $\eps$. A main point of this paper, however, is to obtain estimates on intervals that are exponentially large with respect to $\eps$ and for which, consequently, the established theory does not apply. We therefore use a subscript of $\eps$ in order to distinguish interval lengths that are large with respect to $\eps$ from quantities that are order-one with respect to $\eps$.

To specify bounds with respect to $\eps$, we sometimes make use of the shorthand notation $\ll$, $\lesssim$, and $\lessapprox$. To explain:
We write $$A_\eps\ll B_\eps$$ if  for every $C <\infty$, we have $A_\eps/B_\eps \leq 1/C$ for $\eps$ sufficiently small.

We write $$A_\eps\lesssim B_\eps $$ if there exists a universal constant $C<\infty$ such that $A_\eps\leq C\,B_\eps$, and similarly for $A_\eps\gtrsim B_\eps$. If both inequalities hold, then we write $A_\eps\sim B_\eps$.

We write $$A_\eps \lessapprox B_\eps$$ if for every $\alpha>0$ 
we have $A_\eps\leq B_\eps +\alpha$ for $\eps$ sufficiently small, and similarly for $A_\eps\gtrapprox B_\eps$.  If both inequalities hold, then we write $A_\eps\approx B_\eps$.

We use numbered constants $C_1$, $C_2$, et cetera, to denote specific constants that we refer to later in the paper. On the other hand, we use $C$ to denote a generic order-one constant whose value may change from place to place. Throughout the article, $C$ or a numbered constant $C_i$ is a constant that is universal except for a possible dependence on the potential $V$.
\end{notation}

We are now ready to state our results.

\subsection{Main results}
\label{ss:MR}

Recall that the boundary conditions imply that there must be at least one up layer and that any additional layers come in pairs. We will always consider the regime where the system size $\Le$ satisfies
\begin{align}
 1\ll \Le\lesssim \exp\left(\frac{c_0'}{\eps}\right)\quad\text{for some}\;\; c_0'<c_0.\label{lbd}
\end{align}
(Recall that $c_0$ is the energy cost defined in~\eqref{c00}.) This is the regime in which one expects the probability of extra transitions to go to zero and in particular to obey the energetic and entropic scaling expressed in~\eqref{e:expprefact}. Our first result captures this behavior on the exponential level.
\begin{theorem}\label{t:layers}
Suppose that $\Le$ satisfies~\eqref{lbd}.
Then for every $n \in \N$ and $\gamma>0$ sufficiently small, there exists $\eps_0>0$ such that for  $\eps \leq \eps_0$, one has the upper bound
\begin{align*}
\mu_{\eps,(-\Le,\Le)}^{-1,1} \big(&  \text{ $u$ has $(2n+1)$ transition layers  } \big) \\
&\leq (\Le)^{2n}\,\exp\left(-\frac{ 2n c_0 - \gamma}{\eps}\right),
\end{align*}
and the lower bound
\begin{align*}
\mu_{\eps,(-\Le,\Le)}^{-1,1} \big(&  \text{ $u$ has $(2n+1)$ transition layers  } \big)\\
 &\geq (\Le)^{2n}\,\exp\left(-\frac{ 2n c_0 + \gamma}{\eps}\right).
\end{align*}
\end{theorem}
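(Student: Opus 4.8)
The plan is to prove both bounds by the decomposition-into-conditional-measures strategy advertised in the introduction, reducing the estimate on the exponentially large interval $(-\Le,\Le)$ to large-deviation estimates on order-one subintervals, and to exploit the Gaussian representation~\eqref{m30.1} together with reflection symmetry of the Brownian bridge and evenness of $V$.

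\textbf{Upper bound.} First I would fix a small length scale $\ell$ (order one with respect to $\eps$) and partition $(-\Le,\Le)$ into $\sim \Le/\ell$ subintervals. If $u$ has $(2n+1)$ transition layers, then at least $2n+1$ of these subintervals (in fact I would be slightly more careful: some bounded number determined by $\ell$ and the crossing geometry) must contain a ``crossing'' of the strip between the two wells; by a union bound over the $\binom{\sim\Le/\ell}{2n}$ ways to choose the $2n$ subintervals carrying the \emph{extra} layers (the one forced layer is ``free''), the probability is at most $(\Le/\ell)^{2n}$ times the probability that $2n$ fixed disjoint order-one subintervals each carry a crossing. On each such fixed subinterval I would condition on the boundary values of $u$ at its endpoints (using the Markov property of the bridge, which holds because of the Gaussian structure~\eqref{m30.1}) and apply the standard large-deviation lower bound on the rate functional: a crossing on an order-one interval costs energy at least $c_0-o(1)$ as $\ell\to\infty$, so by independence across the $2n$ disjoint subintervals one picks up $\exp(-(2n c_0 - \gamma/2)/\eps)$ once $\ell$ is large enough and $\eps$ small enough. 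Absorbing the harmless $\ell^{-2n}$ and the combinatorial constants into the remaining $\gamma/2$ gives the claimed bound. The technical lemmas on localized energy and on the cost of a crossing on a finite interval (referenced as Lemmas~\ref{le:lazy}, \ref{l:cl} and the material of Section~\ref{s:detac}) are exactly what makes the ``$c_0-o(1)$'' step rigorous, and the normalization by the partition function in~\eqref{m30.1} must be controlled from below — a Brownian bridge with a single monotone transition has energy $\approx c_0$, so the denominator is bounded below by $\exp(-(c_0+\gamma/4)/\eps)$ times the probability of a tube around an optimal profile, which is only polynomially small; this offsets the one forced layer.

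\textbf{Lower bound.} Here I would construct an explicit event of the right probability. Place the one mandatory up-layer near, say, the left boundary, and then insert $n$ additional down--up pairs at $2n$ prescribed, well-separated locations $x_1<\dots<x_{2n}$ in the bulk; integrating the positions of these $2n$ layers over the whole interval produces the entropic factor $(\Le)^{2n}$ (after dividing by an order-one spacing constant). On the complement of the layer neighborhoods we ask $u$ to stay in a tube around $\pm1$. Using~\eqref{m30.1}, the probability of this event is bounded below by the Wiener-bridge probability of a tube around a profile that makes $2n+1$ transitions, times $\exp(-\frac1\eps\int V(u)\,dx)$ evaluated on that tube, divided by the partition function. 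Choosing the comparison profile to be a concatenation of near-optimal single transitions, its energy is $\approx (2n+1)c_0$, so the numerator is $\gtrsim \exp(-(2n+1)c_0 + \gamma/2)/\eps)$ times a polynomially small Gaussian tube probability; dividing by the partition function (which as above is $\approx\exp(-c_0/\eps)$ up to polynomial factors) cancels one copy of $c_0$ and yields $(\Le)^{2n}\exp(-(2nc_0+\gamma)/\eps)$. Here the measure-preserving reflection arguments are used to turn the Brownian \emph{bridge} (pinned at $\pm1$) into something to which the large-deviation \emph{lower} bound on each order-one block applies cleanly, and to glue the blocks across the prescribed transition points.

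\textbf{Main obstacle.} The delicate point is the matching of the two ``$o(1)$ in the exponent'' — i.e.\ showing that the finite-interval crossing cost converges to $c_0$ \emph{from below} (for the upper bound) and that an explicit finite-energy tube achieves cost $\le c_0+o(1)$ \emph{per transition} (for the lower bound), uniformly while the number of blocks $\sim \Le/\ell$ is exponentially large. The union bound over exponentially many subintervals is only affordable because $\Le\lesssim\exp(c_0'/\eps)$ with $c_0'<c_0$; making the bookkeeping of the error terms (combinatorial factors, the $\ell$-dependence of the cost defect, the polynomial Gaussian tube probabilities, and the partition-function normalization) fit inside an arbitrary prescribed $\gamma$ is the real work, and it is where Notation~\ref{N:Notation} ($\lesssim$, $\lessapprox$) and the deterministic energy lemmas earn their keep.
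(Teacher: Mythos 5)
There is a genuine gap at the very heart of your upper bound. You condition on the boundary values of each order-one subinterval and then assert that ``a crossing on an order-one interval costs energy at least $c_0-o(1)$'' as a conditional large-deviation rate. That is false uniformly in the boundary data: the rate functional for the conditional measure $\mu^{u_-,u_+}_{\eps,(-2\ell,2\ell)}$ is the energy \emph{difference} $\DE$ of \eqref{endiff}, and if the sampled boundary values happen to be $u_-\approx-1$, $u_+\approx+1$ (which occurs with probability of order one near the forced layer, and with exactly the probability you are trying to estimate elsewhere), the constrained minimizer already makes a transition and $\DE(\text{transition})\approx 0$. In that regime the factor $\exp(-c_0/\eps)$ you want is carried by the marginal law $\nu$ of the boundary values, which your argument never estimates; integrating your ``uniform'' bound against $\nu$ therefore does not produce $\exp(-2nc_0/\eps)$. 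This is precisely the obstruction the paper gets around by a different device: using the strong Markov property at hitting points of $0$ together with the evenness of $V$, one reflects the path measure-preservingly so that each extra transition becomes a ``wasted excursion'' (Definition~\ref{def:wasted}), whose conditional cost is $\geq c_0-C\delta$ \emph{uniformly} in $u_\pm\in[-M,M]$ (Lemma~\ref{l:cl}), with $|u(x_k)|>M$ handled separately by Lemma~\ref{le:onept}. Without this reflection step, or some substitute control of $\nu$, your union bound does not close.

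A second problem is your treatment of the normalization in \eqref{m30.1} and of Gaussian tube probabilities on the full interval. On an interval of length $\sim\exp(c_0'/\eps)$ the partition function is not $\exp(-c_0/\eps)$ up to polynomial factors: it carries extensive (in $\Le$) bulk contributions, and likewise the probability that the Brownian bridge with variance $\eps$ stays in a fixed tube around a $(2n+1)$-transition profile over length $2\Le$ is exponentially small in $\Le$, not polynomially small. These enormous factors must cancel between numerator and denominator, and your proposal does not arrange the cancellation; the paper avoids the issue entirely by never invoking a global LDP or tube estimate, working only with conditional measures on order-one blocks (where numerator and denominator share the same boundary data, Propositions~\ref{pr:LD1}--\ref{pr:LD2}). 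For the lower bound the same remark applies, and in addition you need (i) disjointness of the events indexed by the layer positions before summing them (the paper enforces ``no $\delta^+$ layer in the other blocks'' in the definition of $\Aa_3^{\bar k}$), and (ii) a non-exponentially-small lower bound on the probability of the favorable boundary data on the blocks (e.g.\ $u\leq 0$ to the left and $|u(x_k)|\leq M$, which the paper obtains with probability $\geq 1/3$ in Lemma~\ref{l:negneg} by a symmetry argument). Your sketch replaces both by the flawed global partition-function heuristic, so as written neither bound is established.
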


\begin{remark}
One should note that because of the error term $\gamma$, our result sees only information on the exponential level. In particular, if one has an exponential system size such that
\begin{equation*}
\eps \,  \log \, \Le \approx   c_0' < c_0,
\end{equation*}
then what our result says is that for any $n \in \N$ we have
\begin{equation*}
\eps \log \mu_{\eps,(-\Le,\Le)}^{-1,1} \big(  \text{ $u$ has $(2n+1)$ transition layers  } \big) \approx  - 2n ( c_0 - c_0' ).
\end{equation*}
\end{remark}

\begin{remark}
Throughout the paper, when we say ``$u$ has $2n+1$ layers,'' we mean that $u$ has \underline{at least} $2n+1$ layers.
\end{remark}

\begin{remark}
As mentioned above, our techniques can also handle different boundary conditions, e.g.,  periodic boundary conditions or Dirichlet boundary conditions that do not enforce a transition layer.  For instance, for periodic boundary conditions or Dirichlet conditions $u(\pm \Le)=1$, the probability of $2n$ transition layers is bounded above and below by
\begin{align*}
(\Le)^{2n}\,\exp\left(-\frac{ 2n c_0\mp\gamma}{\eps}\right),
\end{align*}
respectively, while for homogeneous Dirichlet boundary conditions, the probability of $n$ transition layers is bounded above and below by
\begin{align*}
(\Le)^{n}\,\exp\left(-\frac{ n c_0\mp\gamma}{\eps}\right),
\end{align*}
respectively.
\end{remark}

Our second main result states that, on scales larger than logarithmic in $1/\eps$, the layer location is uniformly distributed in the following sense.

\begin{theorem}\label{t:uniform}
Consider $\mu^{-1,1}_{\eps,(-\Le,\Le)}$  in the regime
\begin{align}
 |\log \eps| \ll  \Le\lesssim \exp\left(\frac{c_0'}{\eps}\right)\quad\text{for some}\;\; c_0'<c_0.
\end{align}
Let $d_\eps>0$ be such that
\begin{equation*}
|\log \eps| \ll d_\eps \leq \Le.
\end{equation*}
Then uniformly for any $x$ such that  $[x- d_\eps,x + d_\eps] \subseteq [-\Le,\Le]$, we have
\begin{align}
 \frac{\Le}{\de} \; \mu^{-1,1}_{\eps,(-\Le,\Le)}\big(&\text{there is an up layer contained in }[x-d_\eps,x+d_\eps] \big)
\approx 1.\label{unif}
\end{align}
\end{theorem}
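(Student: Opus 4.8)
The plan is to reduce, via Theorem~\ref{t:layers}, to the event that there is exactly one transition layer, and then to show that the layer's position is asymptotically uniform by combining the strong Markov property, measure‑preserving reflections, and the evenness of $V$. For the reduction: apply Theorem~\ref{t:layers} with $n=1$. Since $\Le\lesssim\exp(c_0'/\eps)$ with $c_0'<c_0$, for $\gamma<2(c_0-c_0')$ the bound $(\Le)^2\exp(-(2c_0-\gamma)/\eps)$ tends to $0$; as the boundary conditions force exactly one up layer and extra layers come in pairs,
\[
\mu^{-1,1}_{\eps,(-\Le,\Le)}(\Ecal^c)\longrightarrow 0,\qquad \Ecal:=\{u\text{ has exactly one transition layer}\}.
\]
On $\Ecal$ the unique up layer $(x_-,x_+)$ is well defined, and a local estimate (from the Brownian‑bridge fluctuations near the wells together with the large‑deviation bounds underlying Theorem~\ref{t:layers}) shows $x_+-x_-\ll\de$ with probability $1-o(\de/\Le)$, uniformly in $x$; this is one place the hypothesis $|\log\eps|\ll\de$ enters. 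Hence, writing $Y:=x_-$, the event ``there is an up layer contained in $[x-\de,x+\de]$'' agrees with $\Ecal\cap\{Y\in[x-\de,x+\de]\}$ up to probability $o(\de/\Le)$ uniformly in $x$, and it suffices to prove that the conditional law of $Y$ given $\Ecal$ is asymptotically uniform on $(-\Le,\Le)$ at scale $\de$.

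For the uniformity, I use the Gibbs representation~\eqref{m30.1} and the Markov property of the Brownian bridge: the weight $\exp(-\tfrac1\eps\int V)$ factorizes across any point, so one may condition on $u$ outside a sub‑interval $[a,b]\subseteq(-\Le,\Le)$. On $\Ecal\cap\{Y\in(a,b)\}$ the exterior configuration sits near $-1$ to the left of $a$ and near $+1$ to the right of $b$, so the conditional law of $u|_{(a,b)}$ is a Brownian bridge from $v:=u(a)$ to $w:=u(b)$ reweighted by $\exp(-\tfrac1\eps\int_a^bV)$, with $v+1$ and $w-1$ small. The affine reflection $\Psi u(t):=v+w-u(a+b-t)$ fixes the endpoint values, preserves the Brownian bridge, and --- since $V$ is even --- changes the weight only by the factor $\exp\!\big(-\tfrac1\eps\int_a^b[V(u-(v+w))-V(u)]\big)$; expanding $V$ around the well value where $u$ sits ($V(\mp1+s)=\tfrac{V''(1)}{2}s^2+O(s^3)$ by evenness), the $(v+w)$‑quadratic part of the exponent cancels the Gaussian fluctuation integral, while the remaining cross terms are $o(1)$ thanks to the smallness of $|v+1|+|w-1|$ and of the off‑layer fluctuations. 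Since $\Psi$ carries an up layer at distance $t$ from $a$ to one at distance $t$ from $b$, the conditional density of $Y$ is symmetric about $(a+b)/2$ up to $1+o(1)$.

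As $[a,b]$ --- hence the reflection centre --- ranges over the bulk, this forces the density of $Y$ given $\Ecal$ to be flat up to $1+o(1)$ away from $\pm\Le$; equivalently, the ``one‑layer'' weight factorizes (up to the $O(x_+-x_-)$ layer width) as $Z(\Le+Y)\cdot(\text{layer weight})\cdot Z(\Le-Y)$ with the \emph{same} bulk weight $Z$ on both sides (evenness) and $Z$ asymptotically multiplicative (a renewal‑type estimate from the Markov property and local large deviations), so the product depends on $Y$ only through lower‑order terms. A direct entropic bound of the type in Theorem~\ref{t:layers} rules out the layer lying within $o(\Le)$ of $\pm\Le$. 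Therefore $Y$ given $\Ecal$ is asymptotically uniform on $(-\Le,\Le)$: partitioning the interval into windows of common length much smaller than $\de$ but much larger than $|\log\eps|$, each carries (by the flatness) probability proportional to its length, and $[x-\de,x+\de]$ is sandwiched between unions of such windows with the sandwich tight up to $1+o(1)$. Combined with $\mu^{-1,1}_{\eps,(-\Le,\Le)}(\Ecal)\to1$, this yields~\eqref{unif}.

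The main obstacle is making the error in the two previous paragraphs a genuine $1+o(1)$ \emph{uniformly in the length $b-a$ of the reflected sub‑interval and in $x$}: anharmonic corrections to $V$ near the wells, integrated over a bulk region of length comparable to $\Le$, threaten to accumulate, and the argument must exploit the evenness of $V$ (so the two bulk sides contribute identically), the exact cancellation of the leading $(v+w)$‑dependence in the reflected weight, and the confinement of $u$ near $\pm1$ off the layer at the precision that $|\log\eps|\ll\de$ affords. The companion inputs --- that the layer is $o(\de)$‑thin and stays $o(\Le)$ away from $\pm\Le$ with the required uniform probability --- are exactly where the hypotheses $|\log\eps|\ll\de\le\Le$ are used.
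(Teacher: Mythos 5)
Your central step has a genuine gap, and it is the one you yourself flag as ``the main obstacle'' without resolving it. The point reflection $\Psi u(t)=v+w-u(a+b-t)$ with $v=u(a)\approx-1$, $w=u(b)\approx+1$ does preserve the Gaussian bridge, but it is \emph{not} a symmetry of $\mu$: the weight changes by $\exp\big(-\tfrac1\eps\int_a^b[V(u-(v+w))-V(u)]\,dx\big)$, and since $v+w$ is only $O(\eps^{1/2})$ (not zero), the linear term $\tfrac{v+w}{\eps}\int_a^b V'(u)\,dx$ does not cancel by any symmetry. The interval $[a,b]$ must contain the layer and, in your scheme, its length must range up to order $\Le$, which is exponentially large in $1/\eps$; even with $V'(u)=O(\eps^{1/2})$ pointwise off the layer, the accumulated exponent can be of order $b-a$ (or at best $\sqrt{b-a}$ by cancellation heuristics), which is nowhere near $o(1)$. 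The asserted ``exact cancellation of the $(v+w)$-quadratic part against the Gaussian fluctuation integral'' is not an argument, and no amount of evenness of $V$ alone repairs the linear term. Two further steps are asserted without proof and are themselves delicate: the claim that the layer width is $\ll\de$ with probability $1-o(\de/\Le)$, and more generally the fact that all error events must be controlled \emph{relative} to the target probability, which is itself only of size $\de/\Le$ (possibly exponentially small); absolute $o(1)$ bounds do not suffice.

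This is exactly the difficulty the paper's construction is designed to avoid. There one reflects between \emph{stopping points where $u$ hits exactly $-1$ and exactly $+1$}: with $u(\chi_-)=-1$, $u(\chi_+)=+1$, the point reflection $u\mapsto -u(\chi_-+\chi_+-\cdot)$ matches the boundary values exactly, and by the strong Markov property (Lemma~\ref{p:Markov}) together with the evenness of $V$ it preserves $\mu^{-1,1}_{\eps,(-\Le,\Le)}$ \emph{exactly} --- no error accumulates over the bulk at all, no matter how long the reflected interval is. The entire technical burden is then shifted to showing that such hitting points of $\pm1$ exist, with high probability relative to $\mu(\Be)$, inside auxiliary windows of length $\sim|\log\eps|$ adjacent to the target intervals; this is the content of Lemmas~\ref{l:smallu}, \ref{l:hittingzero} and \ref{l:hittingz} (iterated rescaling plus uniform large deviations), supplemented by Lemma~\ref{l:reflection} to exclude extra layers and by the two-partition covering argument of Step~4 to convert ``all windows have comparable probability'' into the precise $\de/\Le$ statement. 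If you want to salvage your conditional-law-of-$Y$ formulation, you would have to replace your approximate reflection by reflections anchored at exact hitting points of $\pm1$ and prove the corresponding hitting estimates; as written, the flatness of the law of $Y$ is not established.
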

The theorem says that the probability of finding an up transition layer in a subinterval of length $2d_\eps$ given a system size $2\Le$ is approximately $d_\eps/\Le$ in the sense expressed in~\eqref{unif}, independent of the location of the subinterval. (The existence of an up transition layer somewhere in the system is forced by the boundary conditions.) In this sense, the layer locations are approximately uniformly distributed. The theorem is strongest when considering $d_\eps$ at the lower range of validity: It shows that the uniform distribution holds not only on macroscopic intervals but also down to the logarithmic scale.

We remark that the uniform distribution of the layer location in our regime is very different from the characterization of the layer distribution in the case $\Le=|\log \eps|/4$ studied in~\cite{BBB08}; see Subsection~\ref{ss:back} below for more discussion.
\subsection{Methods:  Markovianity, compact sets, and reflections}\label{ss:methods}

Our approach for Theorem ~\ref{t:layers} relies on a simple idea. Namely, while we cannot use large deviation theory directly on $(-\Le,\Le)$, we can use the Markovianity of the underlying reference measure to reduce to order-one subintervals on which we can. In particular, by taking large (but order-one) subintervals and conditioning on the boundary values of a larger, surrounding subinterval, we can take advantage of \emph{large deviation bounds} with a cost that is \emph{to leading order independent of the subinterval size}. This method is similar in spirit to Freidlin and Wentzell's approach of calculating the expected exit time from a metastable domain for a diffusion process with small noise (\cite{FW}, see Subsection \ref{ss:back} for a more detailed account of the related literature).

To illustrate the idea, suppose that we want to estimate the probability that there is a transition layer contained within $[-\ell,\ell]$ for some $\ell$ large. (Transition layers are introduced in Definition~\ref{def:layer} above; roughly, they are layers connecting $\pm 1$.) The Markov property (Lemma~\ref{le:Markov1b}) implies that this probability can be written as
\begin{align}
\lefteqn{\mu_{\eps,(-\Le,\Le)}^{-1,1}  \big(\text{transition in} (-\ell,\ell)\big)} \notag\\
=&\int_{-\infty}^\infty \int_{-\infty}^\infty   \,   \nu(du_-, du_+) \, \mu_{\eps,(-2\ell,2\ell)}^{u_-,u_+}  \big( \text{transition in} (-\ell,\ell)    \big) \label{e:Outline1}.
\end{align}
Here $\nu$ denotes the marginal distribution of the pair $(u(-2\ell),u(2\ell))$, and $ \mu^{u_-,u_+}_{\eps,(-2\ell,2\ell)}$ denotes the distribution of paths on $(-2\ell,2\ell)$ with boundary conditions $u_{\pm}$ (see Section \ref{s:Prlm} for a precise definition of this measure).

In Subsection \ref{ss:LD} we establish large deviation estimates for the measures $ \mu_{\eps,(-2\ell,2\ell)}^{u_-,u_+}$ that hold locally uniformly in the boundary values $u_\pm$. Hence for $u_{\pm}$ in some large compact set, we can integrate over these bounds in~\eqref{e:Outline1}. On the other hand, the probability that the boundary values $u_\pm$ fall outside of the compact set $[-M,M]$ for $M\gg 1$ decays exponentially with $M$ (see Lemma~\ref{le:onept} below).

For  boundary values within the compact set $[-M,M]$,  large deviation theory gives the uniform estimate
\begin{align*}
 \mu_{\eps,(-2\ell,2\ell)}^{u_-,u_+}  \big( &\text{ transition in} (-\ell,\ell)    \big) \\
  &=  \exp  \Big(-\frac{1}{\eps}\big(  \DE(\text{transition}) + o(1) \big)  \Big).
\end{align*}
Here $ \DE(\text{transition})$ denotes the difference between the minimal energy of  paths that perform a transition in $(-\ell,\ell)$ and the minimal energy of any profile $u$ that satisfies the boundary conditions $u(\pm 2\ell) = u_{\pm}$. (See Subsection \ref{ss:LD} for a more complete discussion.)

Now we arrive at the second problem, which is more subtle. The issue is that the energy difference $\DE(\text{transition})$ depends strongly on the boundary conditions. The cost that we are expecting to recover is $c_0$, defined in~\eqref{c00}. However, if $u_-\approx -1$ and $u_+\approx 1$, for instance, then the energy \emph{difference} is approximately zero! In this case, the information about the probability of a transition is encoded in the distribution $\nu$.

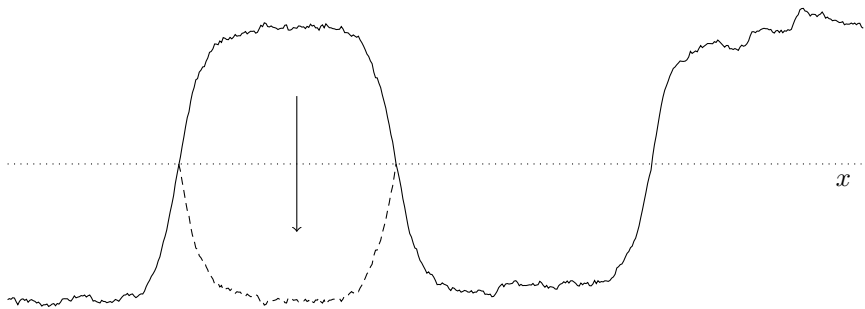
\begin{figure}
\centering
\begin{tikzpicture}[xscale=0.0225, yscale=1.8]

\draw[black, style=dotted]  (1,0) -- (500,0) node[anchor=north east]{\small $x$};

\draw[black] plot file{./Graphs/Plot1.txt};
\draw[black, style = densely dashed] plot file{./Graphs/Plot2.txt};

\draw[->] (170,.5) --(170,-.5)   ;

\end{tikzpicture}
\caption{A vertical reflection turns a  transition layer into a ``wasted excursion'' in which (roughly speaking) the path goes from $-1$ to $0$ and then back to $-1$. The probability of a wasted excursion on $(-\ell,\ell)$ is approximately \emph{independent} of the boundary conditions at $\pm 2\ell$. }
\label{fig:1}
\end{figure}

Our idea to handle the problem of dependence on the boundary conditions relies on Markovianity and the global symmetries of $\mu_{\eps, (-\Le,\Le)}^{-1,1}$. What we want to do is to transform a transition event into an event that does not feel the influence of the boundary conditions. Roughly, the new event will be that there are points $x<y<z\in(-\ell,\ell)$ such that $u(x)\approx u(z)\approx -1$ while $u(y)= 0$. (See Figure \ref{fig:1} for an illustration and Definitions~\ref{def:wasted} and~\ref{def:wastedplus} for formal definitions of these ``wasted excursions.'') The expected cost for such an event is also $c_0$, and a little thought reveals that this should be the energy difference \emph{regardless of the boundary conditions at $\pm 2\ell$}. (For a result in this direction, see Lemma~\ref{l:cl}.)

In order to transform transitions into wasted excursions, we use the strong Markov property (see Lemma~\ref{p:Markov}) and the symmetry of $V$. Specifically, we reflect paths vertically between certain hitting points of zero in such a way that leaves $\mu_{\eps, (-\Le,\Le)}^{-1,1}$ invariant. For details, see for instance~\eqref{r} and the subsequent calculations in the proof of Theorem~\ref{t:layers}.

A different reflection operator turns out to be useful when we come to the proof of the uniform distribution of the layer location in Theorem~\ref{t:uniform}. Again the Markovianity and the symmetry of $\mu_{\eps, (-\Le,\Le)}^{-1,1}$ are crucial. Here the rough idea is to show that the probability of finding the transition layer in any interval $[y-\de,y+\de]$ is approximately the same as that of finding the layer in any other interval $[z-\de,z+\de]$. In Section~\ref{s:Uniform}, we construct a measure-preserving reflection operator that transforms paths with a transition in $[y-\de,y+\de]$ into paths with a transition in (or near) $[z-\de,z+\de]$. We build this reflection operator using certain hitting points of $-1$ and $+1$ to the left and right of the transition layer. (This is illustrated in Figure \ref{fig:2}.) Hence a key point is to prove that, on the set of paths with a transition in $[y-\de,y+\de]$, such hitting points exist with high probability. This fact is developed in Lemmas~\ref{l:smallu}
and~\ref{l:hittingzero} using an iterated rescaling argument and large deviation bounds.

\begin{figure}
\centering
\begin{tikzpicture}[xscale=0.025, yscale=2, >=stealth]

\draw[black, style=dotted] (1,0) -- (500,0) node[anchor=north east]{\small $x$};

\draw[black, ultra thick] (60,0) -- (120,0);
\draw[black, ultra thick] (60,-.05) -- (60,0.05);
\draw[black, ultra thick] (120,-.05) -- (120,0.05);
\draw (90,0.05) node[above] {$J_{1}$};

\draw[black, ultra thick] (330,0) -- (390,0);
\draw[black, ultra thick] (330,-.05) -- (330,0.05);
\draw[black, ultra thick] (390,-.05) -- (390,0.05);
\draw (360,0.05) node[above] {$J_{2}$};

\draw[black] plot file{./Graphs/P2G1.txt};
\draw[black, style = densely dashed] plot file {./Graphs/P2G2.txt};


 \draw[ ->] (250,.25) arc [start angle= 30,   end angle=290,
x radius=1120pt, y radius=14 pt ] ;

\end{tikzpicture}
\caption{A point reflection between a hitting point of $-1$ and a hitting point of $+1$ moves the transition from the interval $J_1$ into the interval $J_2$. As the point reflection preserves the measure, both events have the same probability.}
\label{fig:2}
\end{figure}
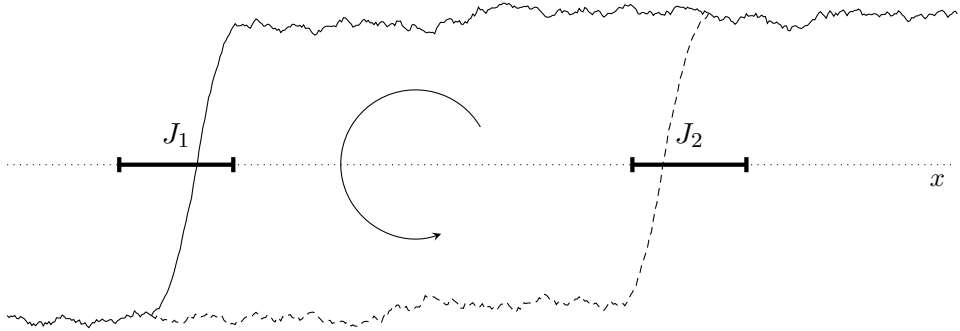

\subsection{Background literature and related results}\label{ss:back}

The study of the effect of a small noise on a physical system has a rich history in the chemistry, physics, and mathematics literature. With roots in the fluctuation theory of Einstein~\cite{einstein} (1910), the path integral formulations of Wiener~\cite{wiener} (1930) and Feynman~\cite{feynman} (1948) lie at the heart of the large deviation theory for diffusion processes and the characterization of the corresponding invariant measure. One of the aspects to receive the most applied interest and significant mathematical attention is the question of the first exit time from a metastable basin. The exponential dependence of the mean exit time on the energy barrier goes back to Van't Hoff and Arrhenius~\cite{VH,arrhenius} (1889). Refining this picture, the so-called Kramers formula determines the prefactor in terms of the curvature of the potential at the critical points and was made famous in the 1940 paper by Kramers~\cite{kramers}, although the result (for the overdamped dynamics) had been derived as early
as 1927 by Farkas~\cite{farkas}. See the review paper by H\"anggi, Talkner, and Borkovec~\cite{HTB} for a thorough historical survey.
The higher dimensional case was analyzed by Landauer \& Swanson~\cite{LS} in 1961 and further pursued by Langer (see for instance~\cite{L}, 1969).

In the mathematics literature, metastability for diffusion processes that depend only on time (i.e., constant in space) was explored early on in the paper by Pontryagin, Andronov, and Vitt~\cite{PAV} (1933). The mathematical theory of large deviations was subsequently developed in the 1970s in papers by Wentzell and Freidlin (see for instance~\cite{wf_early}) and Kifer~\cite{kifer}, and a landmark text is the book of Freidlin-Wentzell~\cite{FW} (published in Russian in 1979 and first published in English in 1984). On the level of the mean exit time, the Freidlin-Wentzell theory confirmed the exponential factor in the Kramers formula. The prefactor in Kramers' law for $d>1$ was established via formal asymptotic expansions in the famous paper by Matkowsky and Schuss~\cite{MS} in 1977. A rigorous derivation was given  by Sugiura in \cite{S95} and independently and with a different method by  Bovier, Eckhoff, Gayrard, and Klein~\cite{BEGK04,BGK05}.

The small noise problem for stochastic \emph{partial} differential equations appears more recently in the mathematics community. A seminal paper in extending the Freidlin-Wentzell theory to spatially varying diffusions is the paper of Faris and Jona-Lasinio~\cite{FJ} from 1982, which specifically established and studied the action functional of the stochastic Allen-Cahn differential equation on a bounded system $[0,L]$. The invariant measure of stochastically perturbed reaction diffusion systems (including the Allen-Cahn equation) on a bounded domain is studied by Freidlin in~\cite{F} in 1988. Recently, Barret, Bovier, and M\'el\'eard \cite{BBM, B12} and  Berglund and Gentz~\cite{BG} have established the mean exit time estimate including the prefactor for a class of equations including the Allen-Cahn equation.

As we have emphasized in the beginning of the introduction, in this paper we are concerned with the interplay between \emph{small noise} and \emph{large domain size}. Specifically, we are interested in system sizes that are exponential with respect to the inverse noise strength. Before turning to the invariant measure for unbounded systems, we remark that there is already an \emph{entropic, system-size dependent component} of the mean switching time when there is a ``flat'' or ``degenerate'' saddle point, e.g., for the Allen-Cahn equation in the periodic case. Specifically, the prefactor picks up a factor that is proportional to the volume of the degenerate set. This fact was observed already by Glasstone, Laidler, and Eyring~\cite{GLE} (1941) in the context of transition state theory, and the estimates in the setting of overdamped diffusions were developed by Langer~\cite{L} (1969) and Matkowsky \& Schuss~\cite{MS} (1977). See also~\cite{VW} for an independent, also formal, derivation. %

The dynamics of the stochastic Allen-Cahn equation~\eqref{e:ACE} have been considered by several authors. In particular, in the groundbreaking works of Funaki \cite{Fu95}  and Brassesco, De Masi, and Presutti~\cite{P95}, the dynamics of very similar  equations were studied.  In \cite{Fu95},  the equation~\eqref{e:ACE} is considered on the whole line with boundary conditions that enforce one transition. The noise term $\sqrt{2 \eps} \eta$ is multiplied by a function with compact support. In terms of our notation, the noise acts on an interval $\Le$ of length polynomial in $\eps^{-1}$. In \cite{P95}, the equation~\eqref{e:ACE} is considered for $\Le = \eps^{-1}$ with Neumann boundary conditions. In both articles, the initial condition is chosen close to the optimal profile of a single transition, and it is shown that the solution stays close to an optimal profile on timescales that are polynomial in $\eps^{-1}$. The evolution of the midpoint of the transition layer is also characterized: In \cite{Fu95}, the
interface dynamic is given by a stochastic differential equation that reflects the spatially dependent noise strength.
In \cite{P95}, it is shown that the midpoint performs a Brownian motion. The dynamic behavior observed in both of these articles is consistent with our results on the invariant measure. In particular, the Brownian motion of interfaces is consistent with the uniform distribution of layer location that we observe in Theorem \ref{t:uniform}.

Now let us consider the interplay between small noise and large domain size.  The idea of understanding large deviation events on large spatial systems via a decomposition into subintervals (intermediate in size between the logarithmic and exponential scale) is used in the paper~\cite{VW} to heuristically derive the nucleation and propagation dynamics in the setting of an unequal-well potential. In rigorous work on the invariant measure for the equal-well case,
the second author derived a concentration result for the measures $\mu^{-1,1}_{\eps,(-\Le,\Le)}$ in~\cite{W10} for system sizes that are large but algebraically bounded: specifically, $\Le \leq \eps^{-\alpha}$ for $\alpha < 2/3$. The technique used there is completely different from the one employed in the present article, however. In~\cite{W10}, the measure is discretized to make rigorous the heuristic intuition that $  \mu^{-1,1}_{\eps,(-\Le,\Le)}$ is a Gibbs measure. Explicit bounds on the energy landscape and Gaussian concentration inequalities are then used to derive bounds on this discretized measure. This technique does not appear to be applicable for longer intervals because the discretization errors become too large.

In the articles~\cite{BBB} and~\cite{BBB08}, the special case of intervals growing like $\Le =  \frac{1}{4} |\log \eps|$ is studied. (The prefactor $1/4$ depends on a specific choice of double-well potential.) The article uses the fact
pointed out in~\cite{RVE05} that the measure  $\mu^{-1,1}_{\eps,(-\Le,\Le)}$ can be realized as the distribution of a diffusion process
\begin{equation}\label{e:SDE}
du(x) \, = \, a_\eps\big(u(x) \big) \, dx + \eps^{1/2} dw(x) \qquad u(-\Le)= -1,
\end{equation}
\emph{conditioned on} the event $u(\Le) = 1$. The drift term $a_\eps$ is the logarithmic derivative of the ground state of the Schr\"odinger operator $-\eps^2 \Delta +V$. (In most cases, the drift $a_\eps$ cannot be given explicitly.) This is the extension to bounded intervals of the well-known equivalence for the measure on the real line, cf. \cite{simon}.

Building on the connection between the invariant measure of the PDE and the process in~\eqref{e:SDE},~\cite{BBB08} derives a concentration result around the one-parameter family of energy minimizers. Furthermore, the authors characterize the asymptotic distribution of the position of the interfacial layer. It is nonuniform due to the energetic repulsion from the boundary of the interval. To see this nonuniformity, the moderate scaling  $\Le \approx | \log \eps|$ is necessary. Incidentally, this shows that our lower bound $d_\eps \gg |\log \eps|$ in Theorem~\ref{t:uniform} is optimal: Below the scale of $|\log \eps|$, nonuniformity occurs. Loosely speaking, the results in \cite{BBB08} and ours are complimentary. They obtain finer results on logarithmically large system sizes, we obtain coarser results on exponentially large system sizes.

Results  similar to (but different from) ours  were obtained in \cite{COP93} for a one-dimensional Ising model with ferromagnetic Kac potential. This is a spin model whose spins interact not only with their nearest neighbors, but with all spins in a given range. The authors study the limit in which this range diverges. This corresponds to the limit $\eps \downarrow 0$ that we investigate. Their main argument relies on a large deviation statement for the whole system in a \emph{local topology}. This large deviation result implies, for example, that the the local spin averages concentrate around $\pm 1$ and that probability to see a transition from $-1$ to $+1$ in any given compact interval is exponentially small. The exponential rate is given by the energetic cost of a transition (similar to the constant $c_0$ in this work). The significant difference between 
their large deviation bounds and ours is the dependence on the boundary condition. Their bounds state that the exponential decay of the probability of observing a certain behavior on an order-one interval is governed by the energy. We only get bounds for the measures conditioned on the boundary values on that interval. The difference is easy to appreciate on the level of the results. As mentioned, the probability of seeing a transition on a given order-one subinterval in their setting is exponentially small, while---because of our boundary conditions---a similar statement cannot possibly hold in our case:
Indeed, if it were to hold, we could sum over order-one subintervals and deduce that the probability to see a transition in the full system goes to zero with the noise, while in fact it is identically equal to one.

Finally, let us touch on the appearance of measures similar to   $\mu^{-1,1}_{\eps,(-\Le,\Le)}$  in the study of  Schr\"odinger operators. The Feynman-Kac formula gives a way to solve the imaginary time Schr\"odinger equation  (i.e., the heat equation with a potential) in terms of measures that are absolutely continuous with respect to Wiener measure. In this context, our model is often referred to as the $\phi^4_1$ model and the limit $\eps \downarrow 0$ corresponds to the \emph{semiclassical limit} in which the Planck constant $\hbar$ is sent to zero. Lemma \ref{le:onept}, for instance, is closely related (but not equivalent to) a statement about the decay of the ground state for the Schr\"odinger operator $\eps^2 \Delta + V$ as $\eps \downarrow 0$.

\subsection{Organization}
We begin with preliminaries:  In Section~2 we collect some properties of the energy functional, and in Section~3 we collect some probabilistic properties of  $\mu_{\eps,(-\Le,\Le)}^{-1,1}$ and of the underlying Gaussian measures.  With these preliminaries in hand, we turn in Section~4 to the proof of our first result, Theorem~\ref{t:layers}. In Section~5 we prove Theorem~\ref{t:uniform}, the uniform distribution of the layer location.  Finally, in Section~6 we prove the various technical lemmas that have been used in support of the main theorems.

\section{Deterministic preliminaries}
\label{s:detac}

In this section we discuss some more details about the energy functional $E$ (cf.~\eqref{energy}).  Our goal is to familiarize the reader with the common intuition about this energy, as well as to present some facts that will guide our method and appear later in proofs.

As described above, the potential term in the energy favors the states $\pm 1$ and the gradient term in the energy leads to an energetic cost for transitions between these states.  Given our large system and the boundary conditions~\eqref{bcj}, it is natural to consider the problem
\begin{align*}
\inf\{E_{(-\infty,\infty)}(u)\;:\; u(\pm\infty)=\pm 1\}.
\end{align*}
As we mentioned, the minimum cost $c_0$ can be calculated explicitly (cf.~\eqref{c0}). The calculations underlying this fact appear repeatedly in the proofs of our energy lemmas, so we begin by recalling them. The so-called Modica-Mortola trick (cf. \cite{MM}) uses the elementary inequality $a^2+b^2\geq 2ab$ to observe:
\begin{align*}
&\inf\{E_{(-\infty,\infty)}(u)\colon u(\pm \infty)=\pm 1\}\\
&=\inf\left\{\int_{-\infty}^{\infty}\left(\frac{1}{2}(\partial_x u)^2+V(u)\right)\,dx\colon u(\pm\infty)=\pm 1\right\}\\
&\geq \inf\left\{\int_{-\infty}^{\infty} \sqrt{2V(u)}(\partial_x u)\,dx\colon u(\pm\infty)=\pm 1\right\}\\
&=\int_{-1}^{1}\sqrt{2V(u)}\,dx,
\end{align*}
which gives a lower bound on the energetic cost. For the matching upper bound, one observes that the equality $a^2+b^2=2ab$ holds if and only if $a=b$, so that the minimum energetic cost is achieved precisely when
\begin{align}
|\partial _x u|=\sqrt{2V(u)}.\label{absode}
\end{align}
For our boundary conditions, it is easy to see that the minimum is achieved for the strictly increasing function that satisfies
\begin{align}
 \partial_x u=\sqrt{2V(u)}. \label{ode}
\end{align}
We denote by $m$ the minimizer that is normalized so that $m(0)=0$. This function $m$ is then the unique, centered, stationary solution of the Allen-Cahn equation on $\R$ subject to the given boundary conditions, i.e., the solution of
\begin{equation*}
\partial_x^2 m - V'(m) = 0 \qquad m(0)=0 \qquad \text{and} \quad m(\pm \infty) = \pm 1.
\end{equation*}
In the case of the standard double-well potential $V(u) = (1 - u^2)^2/4 $, one has $m(x) = \tanh(x/ \sqrt{2})$.

For general potentials satisfying Assumption~\ref{ass:V}, the energy minimizer has similar qualitative properties to the hyperbolic tangent.  In particular, what will be important for us is that the minimizer converges exponentially to $\pm 1$ as $x\to\pm\infty$.
\begin{lemma}[Exponential decay of minimizer]\label{l:expdecmin}
Under Assumption~\ref{ass:V} on the potential $V$, there exists $C<\infty$ such that the global energy minimizer $m$ satisfies
\begin{equation*}
| m(x) - \sign(x) |  \leq  C\,\exp\left(-\sqrt{\frac{V''(1)}{2}}\;x\right).
\end{equation*}
\end{lemma}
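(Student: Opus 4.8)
The plan is to argue directly from the first–order characterization $\partial_x m = \sqrt{2V(m)}$ of the minimizer (cf.~\eqref{ode}) rather than from the second–order equation, via an elementary ODE comparison near the equilibrium $u=1$. First I would dispose of the region $x<0$: there the right–hand side of the claimed inequality is at least $1$, while $|m(x)-\sign(x)|<2$ always (because $m$ takes values strictly between its limits $\pm 1$), so the bound holds trivially as soon as $C\ge 2$. It remains to treat $x\ge 0$, where $m$ is strictly increasing, $m(x)\in[0,1)$, and $m(x)\to 1$ as $x\to\infty$.

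Set $w(x):=1-m(x)\in(0,1]$ for $x\ge 0$, so that $\partial_x w=-\sqrt{2V(1-w)}$ and $w$ is strictly decreasing with $w(x)\to 0$. Since $V(1)=V'(1)=0$ and $V''(1)>0$, Taylor expansion gives $2V(1-w)=V''(1)\,w^2+O(w^3)$ as $w\downarrow 0$; hence there is $\delta\in(0,1)$ with $2V(1-w)\ge \tfrac12 V''(1)\,w^2$, equivalently $\sqrt{2V(1-w)}\ge\lambda\,w$ with $\lambda:=\sqrt{V''(1)/2}$, for all $w\in(0,\delta]$. Consequently, on any interval on which $w\le\delta$ one has $\partial_x w\le -\lambda w$, i.e. $\partial_x\big(e^{\lambda x}w(x)\big)\le 0$.

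Because $m(x)\to 1$, the continuous decreasing function $w$ satisfies $w(x_0)\le\delta$ for some finite $x_0\ge 0$, and then $w(x)\le\delta$ for all $x\ge x_0$. Integrating the differential inequality on $[x_0,x]$ yields $w(x)\le w(x_0)\,e^{-\lambda(x-x_0)}\le \delta\,e^{-\lambda(x-x_0)}$ for $x\ge x_0$, while on the compact range $0\le x\le x_0$ the crude estimate $w(x)\le 1\le e^{\lambda x_0}e^{-\lambda x}$ suffices. Combining the two regimes and enlarging the constant to also absorb the case $x<0$ gives $|m(x)-\sign(x)|=w(x)\le C\,e^{-\lambda x}$ with $C:=\max\{2,\,e^{\lambda x_0}\}$, which is the assertion.

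There is no essential obstacle here; the argument is a routine Grönwall/integrating–factor comparison. The only points needing (minor) care are the Taylor estimate near $u=1$, which fixes the rate: note that $\lambda=\sqrt{V''(1)/2}$ is \emph{not} the sharp decay rate $\sqrt{V''(1)}$ (as the example $m(x)=\tanh(x/\sqrt2)$ shows), but only a one–sided bound at this slower rate is claimed, and using it keeps the choice of $\delta$ entirely painless; and the remark that $m$ enters the neighborhood $\{w\le\delta\}$ at some \emph{finite} $x_0$, which is what lets the exponential decay be seeded and then propagated.
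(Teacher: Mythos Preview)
Your proof is correct and follows exactly the approach the paper indicates: the paper simply states that ``the exponential convergence to $\pm 1$ follows directly from~\eqref{ode} and the quadratic behavior of $V$ near the minima,'' and your argument is precisely a clean filling-in of that sentence via the first-order ODE $\partial_x m=\sqrt{2V(m)}$, a Taylor bound $\sqrt{2V(1-w)}\ge \lambda w$ near $w=0$, and the resulting Gr\"onwall/integrating-factor inequality. Your remark that the stated rate $\sqrt{V''(1)/2}$ is not sharp (the true rate being $\sqrt{V''(1)}$) is also correct and a nice observation.
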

The exponential convergence to $\pm 1$ follows directly from~\eqref{ode} and  the quadratic behavior of $V$ near the minima (cf., Assumption~\ref{ass:V}).

In addition to the exponential convergence to $\pm 1$, we see from~\eqref{ode} and Assumption~\ref{ass:V} that outside of a neighborhood of $\pm 1$, the slope of $m$ is bounded away from zero. Consequently, there is a characteristic length-scale associated to a transition layer.  We will use this length-scale in an essential way.  That is, since we cannot apply large deviation theory on the full system scale $\Le$, we will decompose into subsystems of bounded size, typically called $2\ell$ or $4 \ell$.  We will choose the subsystem size so that (with very large probability) a typical transition layer fits inside, which requires $\ell$ to be large.  In order to make these ideas precise, we begin by introducing the idea of a $\delta^-$~transition layer. Simply put, instead of connecting $\pm 1$, it connects $-1+\delta$ with $1-\delta$.
\begin{definition}[$\delta^-$~transition layer]\label{def:layerA}
Fix $\delta \in (0,1/2)$ and suppose $x_-<x_+$.  We say that $u$ has a $\delta^-$ up transition layer between $x_-$ and $x_+$ if
$$ u(x_{\pm})= \pm  (1-\delta) \qquad\text{and}\qquad  |u(x)|< 1-\delta\;\;\text{for all}\;x\in(x_-,x_+).$$

We say that $u$ has a $\delta^-$ down transition layer on $(x_-,x_+)$ if the same condition holds true with signs reversed, and that $u$ has a $\delta^-$ transition layer if it has a $\delta^-$ up or a $\delta^-$ down transition layer.
\end{definition}

\medskip

Since it is of course true that
\begin{align*}
&\mu_{\eps,(-\Le,\Le)}^{-1,1} \big(  \text{ $u$ has $(2n+1)$ transition layers  } \big)\\ &\;\;\leq \mu_{\eps,(-\Le,\Le)}^{-1,1} \big(  \text{ $u$ has $(2n+1)$ $\delta^-$~transition layers  } \big),
\end{align*}
the proof of the upper bound in Theorem~\ref{t:layers} will be established if we can show that for any $\gamma>0$ and for sufficiently small $\delta>0$, there is an $\eps_0>0$ such that, for all $\eps\leq\eps_0$, we have
\begin{align}
\lefteqn{\mu_{\eps,(-\Le,\Le)}^{-1,1} \big(  \text{ $u$ has $(2n+1)$ $\delta^-$~transition layers  } \big)}\notag\\
 &\lesssim (\Le)^{2n}\,\exp\left(-\frac{ 2n c_0-\gamma}{\eps}\right).\label{fras}
\end{align}
The main ingredient for establishing~\eqref{fras} is the uniform large deviation estimate from Proposition~\ref{pr:LD1}, below, which essentially reduces the problem to one of \emph{energy estimates}. We will control the energy of suitable classes of functions up to a small $\delta$-dependence and ultimately absorb this error term into the large deviation error $\gamma$ from the proposition.

One of the first steps will be to understand the length-scale associated to $\delta^-$ transition layers.  For any $\delta\in (0,1/2)$, the optimal transition layer captured by the energy minimizer $m$ goes from $-1+\delta$ to $1-\delta$ over a finite length-scale, and ``typical layers'' perform the transition on a similar length-scale.
A question that we will have to address is how likely it is for a transition to  take unusually long to complete a $\delta^-$~transition. In the following lemma, we show that the difference of energies expressed in Proposition~\ref{pr:LD1} is large for functions that perform unusually long transitions (uniformly with respect to the boundary values).
\begin{lemma}[Long transitions]\label{le:lazy}
There exists a $C_1 < \infty$ (depending only on $V$) such that, for any $M<\infty$ and any $\delta \in (0,1/2)$, there exists an $\ell_* <\infty$ with the following property. For any $\ell \geq \ell_*$ and $u_\pm\in [-M,M]$, set
\begin{align*}
\mathcal{A}^{\rm bc}&:=\{u\in C([-2\ell, 2\ell]) \colon u(-2\ell)=u_{-}  \, \text{and } u(2 \ell) = u_+   \},\\
\mathcal{A}_0^{\rm bc}&:=\{u\in \mathcal{A}^{\rm bc}  \colon  \text{for all $x \in [-\ell, \ell ]$, }u(x) \in [-1+ \delta, 1-\delta ]     \}.
\end{align*}
Then we have
\begin{equation}\label{e:lazy}
\inf_{u \in \mathcal{A}_0^{\rm bc} } E_{(-2\ell, 2 \ell)}(u) - \inf_{u \in \mathcal{A}^{\rm bc} }  E_{(-2\ell, 2 \ell)}(u) \geq\frac{2\delta^2\,\ell}{C_1}.
\end{equation}
\end{lemma}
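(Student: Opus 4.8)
The plan is to reduce the estimate \eqref{e:lazy} to a one-dimensional (ODE) lower bound for the energy cost of a function that is confined to the strip $[-1+\delta,1-\delta]$ over an interval of length $2\ell$, and then handle the boundary pieces by comparison with an explicit competitor. Throughout I will use the Modica--Mortola trick recalled just above in \eqref{absode}--\eqref{ode}.

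\textbf{Step 1: A cheap upper bound for the unconstrained problem.} First I would bound $\inf_{u\in\mathcal{A}^{\rm bc}}E_{(-2\ell,2\ell)}(u)$ from above by exhibiting a competitor. A natural choice is to glue together three affine (or smooth monotone) pieces: on $[-2\ell,-2\ell+1]$ go from $u_-$ to $\pm1$ (whichever minimum is closest, but in fact we can simply aim for a value in $[-1+\delta,1-\delta]$), then interpolate between the relevant values, then on $[2\ell-1,2\ell]$ go from $\pm1$ to $u_+$. Since $u_\pm\in[-M,M]$ and $V$ is continuous, the energy of this competitor is bounded by a constant $C(M,\delta,V)$ independent of $\ell$ (the long middle stretch contributes nothing because the competitor can sit at a minimum of $V$ with zero slope there, or at worst traverse the strip $[-1+\delta,1-\delta]$ once at bounded cost). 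Hence $\inf_{u\in\mathcal{A}^{\rm bc}}E_{(-2\ell,2\ell)}(u)\le C(M,\delta,V)=:C_M$.

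\textbf{Step 2: A lower bound for the constrained problem.} For $u\in\mathcal{A}_0^{\rm bc}$ we have, by positivity of the gradient term,
\begin{align*}
E_{(-2\ell,2\ell)}(u)\ \ge\ \int_{-\ell}^{\ell}\Big(\tfrac12(\partial_x u)^2+V(u)\Big)\,dx\ \ge\ \int_{-\ell}^{\ell}V(u)\,dx\ \ge\ 2\ell\cdot\inf_{|s|\le 1-\delta}V(s).
\end{align*}
Here I would use Assumption~\ref{ass:V}: $V$ is smooth, even, vanishes only at $\pm1$, with $V''(1)>0$, so near $s=1$ one has $V(s)\ge \tfrac{V''(1)}{4}(1-s)^2$ for $s$ close enough to $1$; and on the remaining compact set $[-1+\delta_0,1-\delta]$ (away from $\pm1$) $V$ is bounded below by a positive constant. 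This gives $\inf_{|s|\le1-\delta}V(s)\ge \kappa\,\delta^2$ for a constant $\kappa=\kappa(V)>0$ once $\delta$ is small; hence $E_{(-2\ell,2\ell)}(u)\ge 2\kappa\delta^2\ell$. Combining with Step 1,
\begin{align*}
\inf_{u\in\mathcal{A}_0^{\rm bc}}E_{(-2\ell,2\ell)}(u)-\inf_{u\in\mathcal{A}^{\rm bc}}E_{(-2\ell,2\ell)}(u)\ \ge\ 2\kappa\delta^2\ell - C_M,
\end{align*}
and choosing $\ell_*$ large enough (depending on $M$, $\delta$, $V$) that $C_M\le \kappa\delta^2\ell_*$ finishes the proof with $C_1=1/\kappa$, since then $2\kappa\delta^2\ell-C_M\ge \kappa\delta^2\ell=\tfrac{\delta^2\ell}{C_1}$; one absorbs the missing factor $2$ by taking $\ell_*$ slightly larger so that $C_M\le \tfrac{\kappa}{2}\delta^2\ell$ for $\ell\ge\ell_*$.

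\textbf{Main obstacle.} The routine parts are the competitor construction and the crude $\int V$ bound; the only place requiring care is the dependence of $\ell_*$ on $M$, i.e. making sure the unconstrained infimum is bounded uniformly in the boundary data on $[-M,M]$ \emph{without} the bound blowing up. With only superlinear growth of $V$ (as in \eqref{e:assumptions}), the boundary ``spike'' that connects $u_\pm\in[-M,M]$ to the strip over a unit-length interval costs an energy that grows with $M$ (this is why the remark after Assumption~\ref{ass:V} notes that superquadratic growth removes the $M$-dependence of $\ell_*$); one simply tracks this $M$-dependence and feeds it into the choice of $\ell_*$. A secondary point is that one must quantify the lower bound $\inf_{|s|\le 1-\delta}V(s)\gtrsim\delta^2$ uniformly, which follows from $V''(1)>0$ and compactness as indicated; this is where the precise constant $\delta^2$ (rather than, say, $\delta$) in \eqref{e:lazy} comes from.
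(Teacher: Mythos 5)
Your argument is correct, and it is a genuinely simpler route than the one in the paper, made possible by the fact that the statement allows $\ell_*$ to depend on $M$. The paper also proceeds by ``upper bound by construction, lower bound from the potential term on $[-\ell,\ell]$,'' but it treats the outer intervals $(\pm\ell,\pm2\ell)$ carefully: using the Modica--Mortola inequality there it recovers the boundary costs $\min\{\varphi_{-1}(u_\pm),\varphi_{+1}(u_\pm)\}$ in the lower bound over $\mathcal{A}_0^{\rm bc}$, so that these exactly cancel the corresponding terms in the constructed competitor for $\mathcal{A}^{\rm bc}$, and only $c_0$ plus an $o(1)_{\ell\uparrow\infty}$ error remains to be absorbed into $\delta^2\ell$. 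You instead discard the outer intervals entirely in the lower bound and bound the unconstrained infimum crudely by an $\ell$-independent constant $C(M,V)$, then absorb $C(M,V)$ into the choice of $\ell_*(M,\delta)$. For this lemma the two are equally good: both give $C_1=C_1(V)$ and $\ell_*=\ell_*(M,\delta)$, and your Step 2 observation that $\inf_{|s|\leq 1-\delta}V(s)\geq\kappa(V)\,\delta^2$ uniformly for $\delta\in(0,1/2)$ (quadratic nondegeneracy near $\pm1$ plus positivity on the compact core) is exactly the mechanism the paper uses on the middle interval. The reason the paper's more careful cancellation is worth knowing is that it is the version of the argument that survives in the companion energy lemmas (Lemmas~\ref{l:cl}, \ref{l:cll}, \ref{l:last}), where the energy gap is an $\ell$-independent constant $c_0-C\delta$ and there is no growing term $\delta^2\ell$ into which a crude $C(M)$ could be absorbed, so the boundary contributions must cancel exactly.

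One small bookkeeping slip: with your notation the difference is bounded below by $2\kappa\delta^2\ell-C_M$, so taking $C_1=1/\kappa$ does not quite reach the stated $\tfrac{2\delta^2\ell}{C_1}=2\kappa\delta^2\ell$ no matter how large $\ell_*$ is; you should instead take, say, $C_1=2/\kappa$ and $\ell_*$ so large that $C_M\leq\kappa\delta^2\ell$ for $\ell\geq\ell_*$. Since $\kappa$ depends only on $V$, this is harmless and the lemma's quantifier structure is respected.
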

The proof of Lemma~\ref{le:lazy} is given in Subsection~\ref{ss:enlem}. This lemma together with the large deviation bound from Proposition~\ref{pr:LD1} will imply that for $\gamma$ small with respect to $\delta^2\ell$, the probability of finding such a layer is bounded above by
$$\exp\left(-\frac{2\delta^2\ell/C_1-\gamma}{\eps}\right)\leq \exp\left(-\frac{\delta^2\ell}{C_1\eps}\right),$$
which we can make negligible by choosing $\ell$ sufficiently large.

Now we would like to show that the exponential factor in the probability of finding a $\delta^-$ layer is close to $c_0$, defined in~\eqref{c00}.  Specifically, we expect it to be approximately
$$\int_{-1+\delta}^{1-\delta}\sqrt{2V(s)}\,ds.$$
The problem, which we already alluded to at the end of Subsection~\ref{ss:methods}, is that the boundary values (for instance $u(-2\ell)\approx -1$, $u(2\ell)\approx 1$) may make it likely to find a layer.  Hence, we will employ reflection operators to transform $\delta^-$~transition layers into events that are
unlikely \emph{regardless of the boundary conditions}.  We will call such events wasted $\delta^-$ excursions:
\begin{definition}[Wasted $\delta^-$ excursion]\label{def:wasted}
For any $\delta\in (0,1/2)$, we will say that $u$ has a wasted $\delta^-$ excursion on $(-\ell,\ell)$ if there exist points
$$-\ell\leq x_-<x_0<x_+\leq \ell$$
such that
$$|u(x_0)|\leq \delta $$
and
$$\text{either}\quad |u(x_\pm)-1|\leq \delta\qquad\text{or}\qquad |u(x_\pm)+1|\leq\delta.$$
\end{definition}
As described above for long transitions, we will estimate the probability of such events using the large deviation estimate from Proposition~\ref{pr:LD1}.  We note that the proposition requires minimizing energy over a ball (in the space of continuous functions) around the set of interest. Because of the way we have defined wasted excursions, a ball of radius $\delta$ around the set of functions with a $\delta^-$ excursion in a given interval is equal to the set of functions with a $(2\delta)^-$ excursion in that interval. Hence, our large deviation estimate together with an energetic estimate will bound the probability that we are after. The following lemma contains the necessary energetic estimate: namely, that the difference of energies described in our large deviation estimate is bounded below by $c_0$ plus a small term.
\begin{lemma}\label{l:cl}
There exists a constant $C< \infty$ such that for every $M<\infty$ and $\delta\in(0,1/2)$, there exists a constant $\ell_*<\infty$ with the following property. For any  $\ell\geq \ell_*$ and any boundary conditions $u_{\pm}\in[-M,M]$, set
\begin{align*}
\mathcal{A}^{\rm bc}&:=\{u\in C([-2\ell,2\ell])\colon u(\pm 2\ell)=u_{\pm}\},\\
\mathcal{A}_0^{\rm bc}&:=\{u\in \mathcal{A}^{\rm bc}\colon u\,\text{has a wasted $\delta^-$ excursion in } (-\ell,\ell)\}.
\end{align*}
Define the optimal cost
\begin{align}
c_\ell:=\inf_{\mathcal{A}_0^{\rm bc}}E_{(-2\ell,2\ell)}(u)
-\inf_{\mathcal{A}^{\rm bc}}E_{(-2\ell,2\ell)}(u).\label{mmm.1}
\end{align}
Then we have
\begin{align}
c_\ell-c_0\geq -C\,\delta.\label{fdy}
\end{align}
\end{lemma}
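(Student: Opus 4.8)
\textbf{Proof proposal for Lemma~\ref{l:cl}.}

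The plan is to establish the lower bound $c_\ell \geq c_0 - C\delta$ by showing that any function $u \in \mathcal{A}_0^{\rm bc}$ with a wasted $\delta^-$ excursion on $(-\ell,\ell)$ must ``pay'' at least $c_0 - C\delta$ more energy than the optimal boundary-condition-respecting competitor. The key structural point, already emphasized in Subsection~\ref{ss:methods}, is that a wasted excursion --- going from near $+1$ (or $-1$) down to near $0$ and back to near $\pm 1$ --- forces the path to traverse the ``expensive'' middle band of the potential twice, and that this cost is \emph{insensitive} to the boundary values, unlike the cost of a genuine transition. First I would fix $u \in \mathcal{A}_0^{\rm bc}$ and the associated points $x_- < x_0 < x_+$ in $[-\ell,\ell]$ with $|u(x_0)| \leq \delta$ and $|u(x_\pm) \mp 1| \leq \delta$ (say, near $+1$; the case near $-1$ is symmetric by the evenness of $V$). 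On each of the two intervals $(x_-,x_0)$ and $(x_0,x_+)$, the Modica--Mortola inequality $\tfrac12(\partial_x u)^2 + V(u) \geq \sqrt{2V(u)}\,|\partial_x u|$ together with the substitution rule gives
\begin{align*}
\int_{x_-}^{x_0}\left(\tfrac12(\partial_x u)^2 + V(u)\right)dx &\geq \left|\int_{u(x_-)}^{u(x_0)}\sqrt{2V(s)}\,ds\right| \geq \int_{\delta}^{1-\delta}\sqrt{2V(s)}\,ds,\\
\int_{x_0}^{x_+}\left(\tfrac12(\partial_x u)^2 + V(u)\right)dx &\geq \int_{\delta}^{1-\delta}\sqrt{2V(s)}\,ds,
\end{align*}
so that the energy of $u$ localized to $(x_-,x_+) \subseteq (-\ell,\ell)$ is at least $2\int_\delta^{1-\delta}\sqrt{2V(s)}\,ds$, which by~\eqref{c0} and the continuity of $s \mapsto \sqrt{2V(s)}$ on $[0,1]$ is at least $c_0 - C\delta$ for a constant $C$ depending only on $V$.

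It remains to bound $\inf_{\mathcal{A}^{\rm bc}} E_{(-2\ell,2\ell)}(u)$ from above by (roughly) the energy $u$ spends \emph{outside} $(x_-,x_+)$, which would then let the two quantities in~\eqref{mmm.1} cancel up to $C\delta$. The cleanest way is: given \emph{any} competitor $u \in \mathcal{A}_0^{\rm bc}$, construct a competitor $\tilde u \in \mathcal{A}^{\rm bc}$ by replacing $u$ on $(x_-,x_+)$ with a nearly-constant profile that interpolates between $u(x_-) \approx +1$ and $u(x_+) \approx +1$ (both within $\delta$ of $+1$), at energetic cost $O(\delta)$ --- here one uses the quadratic behavior of $V$ near its minima (Assumption~\ref{ass:V}) and the fact that a short, shallow interpolation on an interval of, say, unit length costs $O(\delta^2)$ in potential energy and $O(\delta^2)$ in gradient energy; alternatively, if $x_+ - x_-$ is large one simply sets $\tilde u \equiv 1-\delta$ there at even smaller cost, but some care is needed because a truly constant segment is not literally admissible at the endpoints --- a boundary layer of bounded length fixes this. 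Then $\inf_{\mathcal{A}^{\rm bc}} E_{(-2\ell,2\ell)} \leq E_{(-2\ell,2\ell)}(\tilde u) = E_{(-2\ell,2\ell) \setminus (x_-,x_+)}(u) + O(\delta)$, while $\inf_{\mathcal{A}_0^{\rm bc}} E_{(-2\ell,2\ell)} $ is attained (or approached) by some $u$ with $E_{(-2\ell,2\ell)}(u) = E_{(-2\ell,2\ell)\setminus(x_-,x_+)}(u) + E_{(x_-,x_+)}(u) \geq E_{(-2\ell,2\ell)\setminus(x_-,x_+)}(u) + c_0 - C\delta$. Subtracting gives $c_\ell \geq c_0 - C\delta$ after adjusting the constant. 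The role of $\ell \geq \ell_*(M,\delta)$ is only to guarantee there is enough room: the interpolation/boundary-layer construction for $\tilde u$ near $x = \pm 2\ell$ needs the boundary values $u_\pm \in [-M,M]$ to be brought back to the range where the above estimates apply, and this uses a transition of bounded length (depending on $M$ and $\delta$) well inside $(-2\ell, -\ell)$ and $(\ell, 2\ell)$, which is exactly why the constraint in the definition of $\mathcal{A}_0^{\rm bc}$ (and the wasted excursion) is imposed on the smaller interval $(-\ell,\ell)$.

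The main obstacle I anticipate is the second half: making the comparison between the constrained and unconstrained infima genuinely uniform in the boundary data $u_\pm \in [-M,M]$ and in $\ell \geq \ell_*$, while keeping all error terms of order $\delta$ (not $\delta$ times something $\ell$- or $M$-dependent). The subtlety is that one is comparing two infima over different classes rather than two fixed functions, so one must be careful to run the construction \emph{from} a near-minimizer of the constrained problem (to get the lower bound on the first infimum) and simultaneously produce an admissible competitor for the unconstrained problem whose energy differs by only $O(\delta)$ on the ``overlap'' region --- in particular the splitting of the energy into the part on $(x_-,x_+)$ and its complement must be done for the \emph{same} function. Handling the case where $x_+ - x_-$ is large (so that one cannot afford a gradient-heavy interpolation but can afford a flat one) versus small requires splitting into cases, but in each case the cost is $O(\delta)$ uniformly. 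The evenness of $V$ is what lets the ``near $-1$'' case be reduced to the ``near $+1$'' case without new work.
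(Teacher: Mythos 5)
Your first half is exactly the paper's: the Modica--Mortola estimate on $(x_-,x_0)\cup(x_0,x_+)$ showing the excursion itself costs at least $2\int_\delta^{1-\delta}\sqrt{2V(s)}\,ds\geq c_0-C\delta$ is estimate \eqref{M.e12}. Your second half takes a genuinely different route. The paper does no surgery on the competitor: it bounds the energy of $u\in\mathcal{A}_0^{\rm bc}$ \emph{outside} $(x_-,x_+)$ from below by $\varphi_{+1}(u_-)+\varphi_{+1}(u_+)-C\delta^2$ (Modica--Mortola again, using $u(x_\pm)\approx 1$), and separately bounds $\inf_{\mathcal{A}^{\rm bc}}E_{(-2\ell,2\ell)}$ from above by the \emph{same} quantity plus $o(1)_{\ell\uparrow\infty}$ via an independent construction through $u(0)=1$; subtracting these matching bounds is what kills the dependence on $u_\pm$. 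Your surgery (keep a $\gamma$-near-minimizer outside the excursion, re-bridge the two endpoints near $+1$ cheaply) achieves the same cancellation more directly, and it has a real advantage: the modified path inherits the boundary data from $u$, so no construction near $\pm 2\ell$ is needed at all --- your closing concern about requiring $\ell\geq\ell_*(M,\delta)$ ``for room'' near the boundary is actually moot for your argument, whereas in the paper it is precisely the upper-bound construction for $\inf_{\mathcal{A}^{\rm bc}}E$ that makes $\ell_*$ depend on $M$.

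Two of your sub-claims need repair, though both are fixable. First, for a long excursion you cannot set $\tilde u\equiv 1-\delta$: since $V(1-\delta)\sim\delta^2>0$ and $x_+-x_-$ may be comparable to $\ell$, which is allowed to be arbitrarily large relative to $1/\delta$, that segment costs $\sim\delta^2\,(x_+-x_-)$, which is not $O(\delta)$. The fix is to interpolate in boundary layers of unit length down to the exact minimum $1$ and sit at $1$ in between; then the cost is $O(\delta^2)$ uniformly in the length of the excursion. Second, your claim that ``in each case the cost is $O(\delta)$ uniformly'' fails when $x_+-x_-=:h$ is very small: any bridge joining $u(x_-)$ to $u(x_+)$ has gradient energy at least $(u(x_+)-u(x_-))^2/(2h)$, which can be of order $\delta^2/h\gg\delta$ once $h\ll\delta$. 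In that regime you should not compare the bridge cost with $\delta$ but with the excursion you removed: by Cauchy--Schwarz the original path spends at least $(1-2\delta)^2/(2h)$ on $(x_-,x_+)$, which dominates both the bridge cost and $c_0$, so the inequality $E_{(x_-,x_+)}(u)-(\text{bridge cost})\geq c_0-C\delta$ holds trivially there. With these two adjustments, and running the argument from a $\gamma$-near-minimizer of the constrained problem as you already indicate, your route does give \eqref{fdy}.
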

The proof of Lemma~\ref{l:cl} is given in Subsection~\ref{ss:enlem}.  It gives us the exponential factor in the desired estimate~\eqref{fras}, above.

\medskip

For the lower bound in Theorem~\ref{t:layers}, we will work  with so-called $\delta^+$ transition layers between $-1-\delta$ and $1+\delta$.
\begin{definition}[$\delta^+$~transition layer]\label{def:layerB}
Fix $\delta \in (0,1/2)$.  We say that $u$ has a $\delta^+$ up transition layer within the interval $(-\ell,\ell)$ if there exist points
$$-\ell\leq x_-< x_+\leq\ell$$
such that
\begin{align*}
&u(x_\pm)=\pm (1+\delta).
\end{align*}
We say that $u$ has a $\delta^+$ down transition layer on $(-\ell,\ell)$ if the same condition holds true with signs reversed, and that $u$ has a $\delta^+$ transition layer if it has a $\delta^+$ up or a $\delta^+$ down transition layer.
\end{definition}

In analogy with the $\delta^-$~transition layers that we use for the upper bound, $\delta^+$~transition layers will be convenient for the lower bound. Since the probability of having $(2n+1)$~transition layers is greater than the probability of having $(2n+1)$~$\delta^+$~transition layers, it will  suffice to show that
\begin{align*}
\mu_{\eps,(-\Le,\Le)}^{-1,1}& \big(  \text{ $u$ has $(2n+1)$ $\delta^+$ transition layers  } \big)\\
&\;\;\gtrsim (\Le)^{2n}\,\exp\left(-\frac{ 2n c_0 -\gamma}{\eps}\right).
\end{align*}
We will establish this bound by reflecting in order to transform the $\delta^+$ transition layers into some kind of ``wasted excursions'' whose probability we can bound, independently of the boundary conditions.
\begin{definition}[Wasted $\delta^+$ excursion]\label{def:wastedplus}
For any $\delta\in (0,1/2)$, we will say that $u$ has a wasted $\delta^+$ excursion on $(-\ell,\ell)$ if there exist points
$$-\ell\leq x_-<x_0<x_+\leq \ell$$
such that
\begin{align*}
&u(x_\pm)\leq -1-\delta,\quad u(x_0)=0.
\end{align*}
\end{definition}
(We will use only the wasted $\delta^+$ excursions that come from below, but of course it would be straightforward to define the analogue with $u(x_\pm)\geq 1+\delta$, and they would obey the same energetic and probabilistic bounds.)

As in the case of the upper bound, we need an energetic lemma that will control the contribution to the large deviation estimate for wasted $\delta^+$ excursions. Because of the form of the large deviation estimate that we will develop in Section~\ref{s:Prlm} (see Proposition~\ref{pr:LD2} below), it will be convenient for us to introduce the energy bound on the following set of functions:
\begin{align}
\mathcal{A}_{\delta,pre}^{\rm bc}:=\Big\{&u\in\mathcal{A}^{\rm bc}\colon \text{ there exist points }-\ell\leq x_-< x_0<x_+\leq \ell\notag\\
&\text{with }u(x_-)\leq -1-2\delta,\, u(x_+)\leq -1-2\delta,\,u(x_0)\geq\delta\Big\}.\label{preset}
\end{align}
It is easy to see that  a $\delta$ ball (with respect to the $\sup$ norm) around $\mathcal{A}_{\delta,pre}^{\rm bc}$ is equal to the set of functions with wasted $\delta^+$ excursions on $(-\ell,\ell)$. This fact is what will later be useful for the lower bound. For now, we record the following energetic fact, which plays the role for the lower bound that Lemma~\ref{l:cl} played for the upper bound.
\begin{lemma}\label{l:cll}
There exists a constant $C< \infty$ such that for every $M<\infty$ and $\delta \in (0,1/2)$, there exists a constant $\ell_*<\infty$ with the following property. For any $\ell\geq \ell_*$  and $u_\pm\in[-M,0]$, set
\begin{align*}
&\mathcal{A}^{\rm bc}:=\{u\in C([-2\ell,2\ell])\colon u(\pm 2\ell)=u_{\pm}\}\\
\text{and}\quad &\mathcal{A}_{\delta,pre}^{\rm bc}\text{ as above in}~\eqref{preset}.
\end{align*}
Define the optimal cost
\begin{align*}
c_\ell:=\inf_{\mathcal{A}_{\delta,pre}^{\rm bc}}E_{(-2\ell,2\ell)}(u)
-\inf_{\mathcal{A}^{\rm bc}}E_{(-2\ell,2\ell)}(u).
\end{align*}
Then we have
\begin{align*}
c_\ell-c_0\leq C\, \delta.
\end{align*}
\end{lemma}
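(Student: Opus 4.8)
The plan is to construct a near-optimal competitor for the constrained problem by gluing the global transition profile $m$ with a reflected version of it, and to bound the constrained infimum from above by $c_0 + C\delta$, while simultaneously showing the unconstrained infimum is nonnegative (in fact, close to $0$, using that $u_\pm \in [-M,0]$). First I would observe that the unconstrained infimum $\inf_{\mathcal{A}^{\rm bc}} E_{(-2\ell,2\ell)}(u)$ is bounded above by a universal constant: take the piecewise-linear (or piecewise-$m$-type) interpolant from $u_-$ to $0$ to $u_+$ staying in $[-M,0]$; since $V$ is bounded on $[-M,0]$ and the gradient contribution can be made $O(M^2/\ell)$ by spreading the interpolation over the full interval, this infimum is $O(1)$ — and in fact, since the boundary data are negative and near a well only if $u_\pm\approx-1$, a more careful competitor gives that the unconstrained infimum is $\lessapprox$ a small multiple of $\delta$ when $u_\pm$ are near $-1$, and $O(1)$ otherwise; in all cases it is bounded below by $0$ trivially and above by $C$. (Actually for the inequality $c_\ell - c_0 \le C\delta$ we only need a \emph{lower} bound on the unconstrained infimum; since $E \ge 0$ always, $\inf_{\mathcal{A}^{\rm bc}} E_{(-2\ell,2\ell)} \ge 0$ suffices, so this reduces the lemma to showing $\inf_{\mathcal{A}_{\delta,pre}^{\rm bc}} E_{(-2\ell,2\ell)}(u) \le c_0 + C\delta$.)

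The core step is thus the construction of an admissible $u \in \mathcal{A}_{\delta,pre}^{\rm bc}$ with $E_{(-2\ell,2\ell)}(u) \le c_0 + C\delta$. Here I would use the minimizer $m$ from~\eqref{ode}: since $m(x)\to\pm1$ exponentially (Lemma~\ref{l:expdecmin}), pick $T = T(\delta)$ with $m(T) \ge 1-\delta$, hence $-m(-T)\ge 1-\delta$ and $-m(T)\le -1+\delta < -1-2\delta$ is \emph{false} — so instead I reflect: consider the profile that goes from below $-1-2\delta$ up through $0$ and back down below $-1-2\delta$, i.e., I want a ``bump'' reaching up to at least $\delta$. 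The natural competitor is: start at $u_-$ near the left endpoint, run quickly (over an $O(1)$-length, using the $m$-profile or its monotone variant) down to some level $\le -1-2\delta$ at a point $x_-$ — this costs at most $\int$ along the profile, which I can bound by $c_0/2 + C\delta$ plus a correction for starting at $u_-$ rather than at $-1$; then from $x_-$ run up to $0$ using a reflected-and-shifted copy of $m$ (cost $\le \tfrac12 c_0 + C\delta$ coming from $\int_{-1}^0\sqrt{2V}$ plus the overshoot to $-1-2\delta$, whose cost is $O(\delta)$ since $V(-1-2\delta) = O(\delta^2)$ and the length is $O(\delta)$); reach $x_0$ with $u(x_0)\ge\delta$ (take $=0$, but the set requires $\ge\delta$, so go slightly past — costs $O(\delta)$ more); then mirror the construction to come back down below $-1-2\delta$ at $x_+$ and finally back to $u_+$. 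Summing: the two ``half-transitions'' $-1\leftrightarrow 0$ contribute $2\cdot\tfrac12 c_0 = c_0$, all overshoots and boundary corrections contribute $O(\delta)$ (using $u_\pm\in[-M,0]$ and Assumption~\ref{ass:V}: near $-1$ the correction is genuinely $O(\delta)$; far from $-1$, i.e. $u_\pm$ not near a well, one absorbs the cost by noting the constrained infimum is being compared to $c_0$ and the extra energy to reach $-1-2\delta$ from a generic negative $u_\pm$ is bounded, but this would only give $O(1)$, not $O(\delta)$ — so I would need $\ell$ large enough to spread that part out at gradient cost $O(1/\ell)\le\delta$, exploiting that $\ell \ge \ell_*(\delta, M)$).

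The main obstacle I anticipate is exactly this last point: controlling the dependence on the boundary values $u_\pm$ \emph{uniformly over $[-M,0]$} while keeping the error at $O(\delta)$ rather than $O(1)$. The resolution should be the same mechanism that makes $\ell_*$ depend on $M$ and $\delta$ in Lemmas~\ref{le:lazy} and~\ref{l:cl}: the transition from a generic $u_\pm\in[-M,0]$ to the well at $-1$ can be done at arbitrarily small \emph{gradient} cost by using a long, slowly-varying segment near the endpoints $\pm2\ell$ (length up to $\ell$), on which the potential term $\int V(u)\,dx$ must also be controlled — and here the superlinear growth~\eqref{e:assumptions} of $V$ is what keeps $u$ from being forced to large values, so that $\int V$ along that slow segment is $O(1)$, and then dividing by... no — rather, one routes through values where $V$ is $O(1)$ and the segment has length $O(1)$, contributing $O(1)$, which then forces the comparison to only give $c_\ell - c_0 \le C$ unless the $u_\pm$-correction is genuinely small. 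Re-examining: the cleanest fix is to note we need $u_\pm \in [-M,0]$ only to ensure the endpoints are \emph{already on the correct side}, so the competitor never needs to ``cross'' $0$ near the boundary — it only needs a short $O(1)$-length, $O(1)$-energy segment from $u_\pm$ down to $-1-2\delta$, and that $O(1)$ energy is \emph{not} small. So the honest statement must be that the $O(\delta)$ in the lemma is really achieved because the \emph{unconstrained} infimum also picks up that same $O(1)$ boundary-layer energy, and the \emph{difference} $c_\ell - c_0$ is what is $O(\delta)$. Therefore I would compute both infima with the \emph{same} boundary-layer segments near $\pm2\ell$ and show they cancel in the difference, leaving only the genuinely-$O(\delta)$ overshoot terms from the wasted-excursion constraint — and verifying this cancellation cleanly, with the energy comparisons done via the Modica–Mortola identity along each monotone segment, is the technical heart of the argument. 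The rest (choosing $x_-<x_0<x_+$ inside $(-\ell,\ell)$, which is fine once $\ell \ge \ell_* \gg T(\delta)$, and checking measurability/continuity) is routine.
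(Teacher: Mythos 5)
Your closing paragraph arrives at the correct mechanism---the constrained infimum carries the boundary-layer energies $\phim(u_-)+\phim(u_+)$, and the lemma holds because these cancel against the unconstrained infimum---but the proposal never supplies the half of the proof that makes this cancellation legitimate: an ansatz-free \emph{lower} bound $\inf_{\mathcal{A}^{\rm bc}}E_{(-2\ell,2\ell)}\geq \phim(u_-)+\phim(u_+)-C\delta$. ``Computing both infima with the same boundary-layer segments'' only yields upper bounds, and an upper bound on $\inf_{\mathcal{A}^{\rm bc}}E$ enters the difference $c_\ell$ with the wrong sign. Your two attempted shortcuts are genuinely false: the trivial bound $\inf_{\mathcal{A}^{\rm bc}}E\geq 0$ does not suffice, because $\inf_{\mathcal{A}_{\delta,pre}^{\rm bc}}E$ is \emph{not} $c_0+O(\delta)$ uniformly in $u_\pm\in[-M,0]$ (for $u_\pm=0$ the Modica--Mortola inequality forces it to be close to $2c_0$, and for $u_\pm$ near $-M$ it is larger still); and spreading the connection from $u_\pm$ down to $-1-2\delta$ over a long stretch ``at gradient cost $O(1/\ell)$'' fails because the potential term $\int V(u)\,dx$ grows with the length of that stretch, as you noticed yourself.

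What is missing is exactly the paper's second step. There the lower bound is obtained by a dichotomy: either $u$ enters a $\delta$-neighborhood of $\{\pm1\}$ somewhere in $(-2\ell,0)$ and somewhere in $(0,2\ell)$, in which case Modica--Mortola applied to the two outer pieces gives $\min\{\phim(u_\pm),\phip(u_\pm)\}-C\delta$ at each end, and the hypothesis $u_\pm\in[-M,0]$ together with the evenness of $V$ upgrades each minimum to $\phim(u_\pm)$, matching exactly the boundary terms of the constructive upper bound $\phim(u_-)+\phim(u_+)+2\phip(0)+C\delta+o(1)_{\ell\uparrow\infty}$ (with $2\phip(0)=c_0$); or else $u$ stays $\delta$-away from both wells on a whole half-interval, the energy is $\gtrsim\delta^2\ell$, and one chooses $\ell_*=\ell_*(M,\delta)$ so large that this dominates the upper bound---which is also where the $M$-dependence of $\ell_*$ really comes from, not from the construction. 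Your constructive upper bound is essentially the paper's once the boundary-layer costs $\phim(u_\pm)$ appear in it explicitly (the overshoot to $-1-2\delta$ and the requirement $u(x_0)\geq\delta$ do cost only $O(\delta)$), so the gap is confined to---but is essential in---the lower bound on the unconstrained problem, and in particular to the point where the sign condition $u_\pm\leq 0$ is actually used; without it the claimed cancellation is unjustified.
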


We will need to consider some additional properties of the energy as we prove the main theorems, but we defer their discussion to a later time when their motivation and hypotheses will be clearer.  With the central facts about the energy in hand, we now turn to the  probabilistic background for our paper.

\section{Probabilistic preliminaries}
\label{s:Prlm}

In this section, we collect some probabilistic facts about the Gaussian measures $ \mathcal{W}^{u_{-}, u_+}_{\eps, (x_{-},x_+)}$ and the measures $  \mu^{u_-,u_+}_{\eps,(x_-,x_+)}$. After stating a precise definition and some elementary symmetry properties, we will discuss Markov properties satisfied by these measures in Subsection \ref{ss:GM} and large deviation bounds in Subsection \ref{ss:LD}.

For every $x_- < x_+$, we denote by $  \mathcal{W}^{0,0}_{\eps, (x_{-},x_+)}$ the distribution of a Brownian bridge with homogeneous boundary conditions on $[x_-,x_+]$ whose variance is proportional to $\eps$. To be more precise, $  \mathcal{W}^{0,0}_{\eps, (x_{-},x_+)}$ is the unique centered Gaussian measure on the space of continuous functions $C([x_-, x_+])$ such that, for all $x_1, x_2 \in [ x_- , x_+]$, one has
\begin{align}
 \mathbb{E}_{(x_{-},x_{+})}^{\mathcal{W}_\eps,0,0}& \Big( u(x_1) \, u(x_2)  \Big) \notag\\
  &= \, \frac{\eps}{x_+ - x_-}  \Big(  (x_1 - x_-)(x_+ - x_2)  \wedge (x_2  - x_-)(x_+ -x_1) \Big) .\label{e:Cov}
\end{align}

Equivalently, one can say that $  \mathcal{W}^{0,0}_{\eps, (x_{-},x_+)}$ is the centered Gaussian measure whose Cameron-Martin space is given by the Sobolev space $H^{1}_0 ([x_{-},x_+])$ with vanishing boundary conditions equipped with the homogeneous scalar product
\begin{equation*}
\frac{1}{\eps} \int_{x_{-}}^{x_+}  \partial_x u \, \partial_x  v  \, d x .
\end{equation*}

Indeed, the right-hand side of \eqref{e:Cov} is the Green's function for $\frac{1}{\eps}\partial_x^2$ with Dirichlet boundary conditions.

In the sequel, we often use the notation
\begin{equation}\label{e:I}
I_{x_-,x_+}(u)  : =  \frac{1}{2} \int_{x_{-}}^{x_+} \big(  \partial_x u   \big)^2 \,  d x
\end{equation}
to denote the Gaussian part of the energy of a function $u$ on the interval $(x_-,x_+)$.

It is common to think of $  \mathcal{W}^{0,0}_{\eps, (x_{-},x_+)}$ as a Gibbs measure
\begin{equation}\label{e:Fey}
 \mathcal{W}^{0,0}_{\eps, (x_{-},x_+)} \propto \exp \Big(  -\frac{1}{\eps} I_{x_-,x_+} (u) \Big) du
\end{equation}
with energy  $I_{x_-,x_+}$ and noise strength $\propto \eps$. Of course,~\eqref{e:Fey} does not make rigorous sense because there is no ``flat measure" $du$ on path space, and $I_{x_-,x_+} (u)$ is almost surely infinite under $  \mathcal{W}^{0,0}_{\eps, (x_{-},x_+)}$. The heuristic formula~\eqref{e:Fey} is motivated by finite dimensional approximations and it gives the right intuition for the large deviation bounds.

For more general boundary conditions $u_-,u_+ \in \R$, we can define  $\mathcal{W}^{u_{-},u_+}_{\eps, (x_{-},x_+)}$ as the image measure of $  \mathcal{W}^{0,0}_{\eps, (x_{-},x_+)}$ under the shift map
\begin{equation*}
u(x)  \mapsto u(x) + \hx(x),
\end{equation*}
where $h$ is the affine function interpolating the boundary conditions:
\begin{equation}\label{e:Defh}
 \hx(x) \,: = \,  \frac{x -x_-}{x_+ - x_-}u_+ +  \frac{x_+ -x}{x_+ - x_-}u_- .
\end{equation}
Similarly to~\eqref{m30.1}, for any choice of boundary condition $u_\pm$ and on any interval $(x_-,x_+)$, we denote by $  \mu^{u_-,u_+}_{\eps,(x_-,x_+)}$ the probability measure whose density with respect to $  \mathcal{W}^{u_{-},u_+}_{\eps, (x_{-},x_+)}$ can be expressed as
\begin{equation}\label{eq:Densmu}
\frac{d  \mu^{u_-,u_+}_{\eps,(x_-,x_+)}}{ d  \mathcal{W}^{u_{-},u_+}_{\eps, (x_{-},x_+)}}(u) =  \frac{1}{  \mathcal{Z}^{u_-,u_+}_{\eps, (x_-,x_+)}} \exp \bigg(-\frac{1}{\eps}  \int_{x_-}^{x_+}  V( u ) \, dx \bigg).
\end{equation}
Here we have introduced the notation
\begin{equation*}
 \mathcal{Z}^{u_-,u_+}_{\eps, (x_-,x_+)} :=   \mathbb{E}_{(x_{-},x_{+})}^{\mathcal{W}_\eps,u_{-},u_{+}} \Big( \exp\Big( - \frac{1}{\eps} \int_{x_-}^{x_+} V( u ) \, dx \Big) \Big)
\end{equation*}
for the normalization constant that ensures that $  \mu^{u_-,u_+}_{\eps,(x_-,x_+)}$ is indeed a probability measure.

As we have indicated in the introduction, there are symmetry properties of the measures $ \mathcal{W}^{u_{-},u_+}_{\eps, (x_{-},x_+)}$ and $  \mu^{u_-,u_+}_{\eps,(x_-,x_+)}$ that will play an important role in our argument. Observe for example that both $ \mathcal{W}^{0,0}_{\eps, (x_{-},x_+)}$ and $  \mu^{0,0}_{\eps,(x_-,x_+)}$ are invariant under the \emph{vertical reflection} $u \mapsto Ru$ and the \emph{horizontal reflection} $u \mapsto Su$ where
\begin{equation*}
R  u(x) \,: = \, - u(x) \quad \text{ and } \quad  S u(x) \,:=\,  u (x_+ + x_- -x).
\end{equation*}
Furthermore, the measures $  \mathcal{W}^{-1,1}_{\eps, (x_{-},x_+)}$ and $  \mu^{-1,1}_{\eps,(x_-,x_+)}$ are invariant under the \emph{point reflection} $ u \mapsto RSu$.

\subsection{Markov properties}
\label{ss:GM}

We first  present a two-sided version of the  Markov property for the measures $  \mathcal{W}^{u_{-},u_+}_{\eps, (x_{-},x_+)}$ and $  \mu^{u_-,u_+}_{\eps,(x_-,x_+)}$, which states that for any fixed points $x_- \leq \hat{x}_- < \hat{x}_+ \leq x_+$  and for $u$ distributed according to  to $  \mathcal{W}^{u_{-},u_+}_{\eps, (x_{-},x_+)}$ (or $  \mu^{u_-,u_+}_{\eps,(x_-,x_+)}$), the conditional distribution of $(u(x), x \in [\hat{x}_-,\hat{x}_+])$, given all the information about $u(x)$ for $x \in [x_-,x_+] \setminus (\hat{x}_-,\hat{x}_+)$, is $  \mathcal{W}^{u(\hat{x}_-)  ,u(\hat{x}_+)}_{\eps, (\hat x_{-},\hat x_+)}$ (or  $\mu^{u(\hat{x}_-)  ,u(\hat{x}_+)}_{\eps,(\hat x_-,\hat x_+)} $).  Then in Lemma~\ref{p:Markov}, we give the \emph{strong} Markov property, which states that the same statement holds true when the deterministic points $\hat{x}_{\pm}$  are replaced by left and right stopping points $\chi_\pm$. The proofs of these statements are quite standard. For completeness, we have included them in
Subsection \ref{ss:62}.

In the case of the measures $  \mathcal{W}^{u_{-},u_+}_{\eps, (x_{-},x_+)}$, the Markov property can be stated in the following way. For $\hat{x}_- < \hat{x}_+$, we define the piecewise linearization $u_{\hat{x}_-}^{\hat{x}_+}$ of $u$ between $\hat{x}_-$ and $\hat{x}_+$ as

\begin{equation}
u_{\hat{x}_-}^{ \hat{x}_+}(x) \, = \,
\begin{cases}
 h^{u(\hat{x}_-), u(\hat{x}_+)}_{(\hat{x}_-, \hat{x}_+)}(x) \qquad &\text{if } x \in (\hat x_-, \hat x_+) \\
u(x) \qquad &\text{else. }\label{linear}
\end{cases}
\end{equation}
Recall the definition~\eqref{e:Defh} of $\hx$. Then the following holds.

 \begin{lemma}\label{le:Markov1}
Suppose $x_- \leq \hat{x}_- < \hat{x}_+ \leq x_+$ are fixed, non-random points. Then under $  \mathcal{W}^{u_{-},u_+}_{\eps, (x_{-},x_+)}$ the random functions $u - u_{\hat{x}_-}^{\hat{x}_+}$ and $ u_{\hat{x}_-}^{\hat{x}_+}$ are independent. Furthermore, $u - u_{\hat{x}_-}^{\hat{x}_+}$ is zero outside of $(\hat{x}_-, \hat{x}_+)$ and is distributed according to $  \mathcal{W}^{0,0}_{\eps, (\hat{x}_{-},\hat{x}_+)}$ between the two points.
\end{lemma}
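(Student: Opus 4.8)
The plan is to reduce the statement to the defining covariance structure \eqref{e:Cov} of the Brownian bridge, exploiting that $\mathcal{W}^{u_-,u_+}_{\eps,(x_-,x_+)}$ is the image of $\mathcal{W}^{0,0}_{\eps,(x_-,x_+)}$ under the deterministic shift by $h$. Since the shift by $h$ is affine and deterministic, it suffices to prove the lemma for $u_-=u_+=0$: indeed, if $u=v+h^{u_-,u_+}_{(x_-,x_+)}$ with $v\sim\mathcal{W}^{0,0}_{\eps,(x_-,x_+)}$, then because $h^{u_-,u_+}_{(x_-,x_+)}$ is affine, its piecewise linearization between $\hat{x}_-$ and $\hat{x}_+$ is itself, so $u - u_{\hat{x}_-}^{\hat{x}_+} = v - v_{\hat{x}_-}^{\hat{x}_+}$, and $u_{\hat{x}_-}^{\hat{x}_+} = v_{\hat{x}_-}^{\hat{x}_+} + h^{u_-,u_+}_{(x_-,x_+)}$ differs from $v_{\hat{x}_-}^{\hat{x}_+}$ by a deterministic function. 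Hence independence and the law of the increment are unchanged.

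For the centered case, the key observation is that, since everything is jointly Gaussian and centered, it is enough to (i) compute the law of $u - u_{\hat{x}_-}^{\hat{x}_+}$ and (ii) show the covariance between $u - u_{\hat{x}_-}^{\hat{x}_+}$ and $u_{\hat{x}_-}^{\hat{x}_+}$ vanishes. For (i), note $u - u_{\hat{x}_-}^{\hat{x}_+}$ vanishes identically outside $(\hat{x}_-,\hat{x}_+)$ and, for $x\in(\hat{x}_-,\hat{x}_+)$, equals $u(x) - h^{u(\hat{x}_-),u(\hat{x}_+)}_{(\hat{x}_-,\hat{x}_+)}(x)$. A direct computation from \eqref{e:Cov}, using bilinearity to expand $\E\big[(u(x_1)-h^{u(\hat x_-),u(\hat x_+)}(x_1))(u(x_2)-h^{u(\hat x_-),u(\hat x_+)}(x_2))\big]$ for $x_1,x_2\in(\hat{x}_-,\hat{x}_+)$, shows that the boundary-dependent terms telescope and the result is exactly $\tfrac{\eps}{\hat{x}_+-\hat{x}_-}\big((x_1-\hat{x}_-)(\hat{x}_+-x_2)\wedge(x_2-\hat{x}_-)(\hat{x}_+-x_1)\big)$, i.e.\ the covariance of $\mathcal{W}^{0,0}_{\eps,(\hat{x}_-,\hat{x}_+)}$. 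This is the familiar fact that a Brownian bridge restricted between two interior points, after subtracting the chord, is again a Brownian bridge; the algebra is routine but slightly tedious because one must track the four cross terms coming from the two endpoints.

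For (ii), the cleanest route is to observe that $u_{\hat{x}_-}^{\hat{x}_+}$ is a measurable function of the restriction of $u$ to $[x_-,x_+]\setminus(\hat{x}_-,\hat{x}_+)$ together with $u(\hat{x}_\pm)$, i.e.\ of $\sigma(u(x): x\notin(\hat{x}_-,\hat{x}_+))$, whereas $u-u_{\hat{x}_-}^{\hat{x}_+}$ is supported on $(\hat{x}_-,\hat{x}_+)$; so it suffices to check $\E\big[(u(x_1) - h^{u(\hat x_-),u(\hat x_+)}(x_1))\,u(y)\big]=0$ for all $x_1\in(\hat{x}_-,\hat{x}_+)$ and all $y\notin(\hat{x}_-,\hat{x}_+)$. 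Again this is a short computation from \eqref{e:Cov}: for $y\leq\hat{x}_-$ one has $u(x_1)\wedge$-terms that reduce the first factor's covariance with $u(y)$ to a linear function of $x_1$ agreeing with what the chord contributes, and symmetrically for $y\geq\hat{x}_+$. In Gaussian space, vanishing covariance upgrades to independence, and since $u_{\hat{x}_-}^{\hat{x}_+}$ generates the same $\sigma$-algebra information as is needed, we get full independence of the two processes. Finally, conditioning on $\{u(x):x\notin(\hat{x}_-,\hat{x}_+)\}$ — equivalently on $u_{\hat{x}_-}^{\hat{x}_+}$ — and using independence of the increment together with (i) gives that the conditional law of $u$ on $[\hat{x}_-,\hat{x}_+]$ is $\mathcal{W}^{u(\hat{x}_-),u(\hat{x}_+)}_{\eps,(\hat{x}_-,\hat{x}_+)}$.

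\textbf{Main obstacle.} None of the steps is deep; the only mildly delicate point is bookkeeping in the covariance computations of (i) and (ii) — making sure the $\min$ in \eqref{e:Cov} is resolved correctly in all the cases $x_1,x_2\in(\hat{x}_-,\hat{x}_+)$ versus $y$ on either side — and being careful that "measurable function of $u$ outside $(\hat{x}_-,\hat{x}_+)$" really does capture all the randomness in $u_{\hat{x}_-}^{\hat{x}_+}$ (it does, since $\hat{x}_\pm$ are deterministic, so $u(\hat{x}_\pm)$ are boundary values of the exterior restriction). I expect the verification of the covariance identities to be the most calculation-heavy part but entirely mechanical.
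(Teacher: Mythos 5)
Your proposal is correct and follows essentially the same route as the paper: reduce to homogeneous boundary conditions via the deterministic affine shift, then use joint Gaussianity and compute the covariances from \eqref{e:Cov}, identifying the increment's covariance as that of $\mathcal{W}^{0,0}_{\eps,(\hat{x}_-,\hat{x}_+)}$ and showing the cross-covariance vanishes. Your checking the cross-covariance against the exterior evaluations $u(y)$ rather than directly against $u_{\hat{x}_-}^{\hat{x}_+}(x_2)$ is an equivalent bookkeeping choice, since $u_{\hat{x}_-}^{\hat{x}_+}$ is a deterministic (linear) function of those exterior values.
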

Due to the lack of spatial homogeneity, the corresponding property for the measures $  \mu^{u_-,u_+}_{\eps,(x_-,x_+)}$ has to be stated in a different way. For $I \subseteq [x_-,  x_+]$, we denote by $\F_{I}$ the sigma-algebra  generated by $u(x)$ for  $x \in I$, completed with respect to $  \mathcal{W}^{u_{-},u_+}_{\eps, (x_{-},x_+)}$.

We  also introduce the following notation that extends the measures  to paths on a larger domain by prescribing the values outside of an interval. Suppose that $[\hat{x}_-,\hat{x}_+] \subseteq [x_-,x_+]$ and that  ${\bf u} \in C([x_-,x_+])$ is a fixed path. We say that $u$ is distributed according to $  \mathcal{W}^{{\bf u}}_{\eps, (\hat x_{-},\hat x_+)}$, resp.  $  \mu^{{\bf u}}_{\eps,(\hat x_-,\hat x_+)}$,  if it  almost surely coincides with ${\bf u}$ outside of the interval $[\hat{x}_-,\hat{x}_+]$ and is distributed according to $  \mathcal{W}^{{\bf u}(\hat{x}_-) ,{\bf u}(\hat{x}_+) }_{\eps, (\hat x_{-},\hat x_+)}$, resp. $\mu^{{\bf u}(\hat{x}_-) ,{\bf u}(\hat{x}_+) }_{\eps, (\hat x_{-},\hat x_+)} $, on  $[\hat{x}_-,\hat{x}_+]$.

 Then the Markov property takes the following form.

\begin{lemma}\label{le:Markov1b}
Suppose $x_- \leq \hat{x}_- < \hat{x}_+ \leq x_+$ are fixed, non-random points. Then for any bounded measurable test function $\Phi \colon C([x_-,x_+] ) \to \R$, we get the following identity:
\begin{equation}\label{e:Markovmu}
 \mathbb{E}_{(x_{-},x_{+})}^{\mu_\eps,u_{-},u_{+}} \big( \Phi  \big|  \F_{[x_-,\hat{x}_-]}  \vee  \F_{[\hat{x}_+,x_+]  }  \big) \, = \,    \,  \mathbb{E}_{(\hat{x}_{-},\hat{x}_{+})}^{\mu_\eps,{\bf u}} \big( \Phi  \big) .
\end{equation}
\end{lemma}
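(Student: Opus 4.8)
\textbf{Proof plan for Lemma~\ref{le:Markov1b}.}

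The plan is to deduce the Markov property for $\mu_\eps^{u_-,u_+}$ from the already-available (or easily provable) Markov property for the Gaussian reference measure, Lemma~\ref{le:Markov1}, by exploiting the explicit density~\eqref{eq:Densmu} together with the fact that the potential term $\int V(u)\,dx$ decomposes additively over the subintervals. First I would reduce to testing against functions of a convenient form: since $\F_{[x_-,\hat x_-]}\vee\F_{[\hat x_+,x_+]}$ is generated by the values of $u$ outside $(\hat x_-,\hat x_+)$, it suffices to verify~\eqref{e:Markovmu} when paired against an arbitrary bounded measurable $\Psi$ that is $\big(\F_{[x_-,\hat x_-]}\vee\F_{[\hat x_+,x_+]}\big)$-measurable, i.e.\ to show
\begin{equation*}
\mathbb{E}_{(x_-,x_+)}^{\mu_\eps,u_-,u_+}\big(\Phi\,\Psi\big)
=\mathbb{E}_{(x_-,x_+)}^{\mu_\eps,u_-,u_+}\Big(\Psi\,\mathbb{E}_{(\hat x_-,\hat x_+)}^{\mu_\eps,\mathbf u}(\Phi)\Big).
\end{equation*}
Here on the right-hand side $\mathbf u$ is to be read as the random path $u$ restricted (and frozen) outside $(\hat x_-,\hat x_+)$, so that $\mathbb{E}_{(\hat x_-,\hat x_+)}^{\mu_\eps,\mathbf u}(\Phi)$ is itself a $\big(\F_{[x_-,\hat x_-]}\vee\F_{[\hat x_+,x_+]}\big)$-measurable random variable.

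The key computation is to unwind both sides using~\eqref{eq:Densmu}. Writing $\mathcal V(u):=\frac1\eps\int_{x_-}^{x_+}V(u)\,dx$ and splitting the integral as $\mathcal V(u)=\mathcal V_{\rm out}(u)+\frac1\eps\int_{\hat x_-}^{\hat x_+}V(u)\,dx$, where $\mathcal V_{\rm out}$ depends only on the values of $u$ outside $(\hat x_-,\hat x_+)$, the left side becomes
\begin{equation*}
\frac{1}{\mathcal Z^{u_-,u_+}_{\eps,(x_-,x_+)}}\,
\mathbb{E}_{(x_-,x_+)}^{\mathcal W_\eps,u_-,u_+}\Big(\Phi\,\Psi\,e^{-\mathcal V_{\rm out}(u)}\,e^{-\frac1\eps\int_{\hat x_-}^{\hat x_+}V(u)}\Big).
\end{equation*}
Now apply the Gaussian Markov property (Lemma~\ref{le:Markov1}): conditionally on the data outside $(\hat x_-,\hat x_+)$ — equivalently, conditionally on $u(\hat x_-)$, $u(\hat x_+)$ and the values outside, which fix $u^{\hat x_+}_{\hat x_-}$ — the path $u$ inside $(\hat x_-,\hat x_+)$ is distributed as $\mathcal W^{u(\hat x_-),u(\hat x_+)}_{\eps,(\hat x_-,\hat x_+)}$, independently of the exterior. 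Taking the conditional expectation of the inner factor $\Phi\,e^{-\frac1\eps\int_{\hat x_-}^{\hat x_+}V(u)}$ produces exactly $\mathcal Z^{u(\hat x_-),u(\hat x_+)}_{\eps,(\hat x_-,\hat x_+)}\cdot\mathbb{E}_{(\hat x_-,\hat x_+)}^{\mu_\eps,\mathbf u}(\Phi)$ by the definition~\eqref{eq:Densmu} of $\mu^{\mathbf u}_{\eps,(\hat x_-,\hat x_+)}$. Substituting back, the normalization $\mathcal Z^{u(\hat x_-),u(\hat x_+)}_{\eps,(\hat x_-,\hat x_+)}$ recombines with $e^{-\frac1\eps\int_{\hat x_-}^{\hat x_+}V(u)}$ (averaged over the inside) to rebuild the full density on $(x_-,x_+)$, and one reads off precisely the right-hand side of the displayed identity. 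Taking $\Phi=\Psi=1$ along the way confirms that $\mathcal Z^{u_-,u_+}_{\eps,(x_-,x_+)}=\mathbb{E}^{\mathcal W_\eps,u_-,u_+}_{(x_-,x_+)}\big(e^{-\mathcal V_{\rm out}(u)}\mathcal Z^{u(\hat x_-),u(\hat x_+)}_{\eps,(\hat x_-,\hat x_+)}\big)$, which is what makes the bookkeeping consistent.

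The one genuinely delicate point — the main obstacle — is measurability and the careful handling of the frozen exterior path $\mathbf u$: one must check that $(u(\hat x_-),u(\hat x_+))\mapsto \mathcal Z^{u(\hat x_-),u(\hat x_+)}_{\eps,(\hat x_-,\hat x_+)}$ and $\mathbf u\mapsto \mathbb{E}^{\mu_\eps,\mathbf u}_{(\hat x_-,\hat x_+)}(\Phi)$ are jointly measurable, so that the conditional-expectation manipulations and the final re-integration over the exterior are legitimate. This is routine — $\mathcal Z$ depends continuously on the boundary data by dominated convergence (using that $V$ is continuous and the Gaussian bridge depends continuously, even smoothly, on its endpoints via the shift map~\eqref{e:Defh}), and $\Phi$ being bounded measurable suffices for the inner expectation — but it needs to be said. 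Everything else is the additivity of the energy over subintervals plus Lemma~\ref{le:Markov1}, so I would present it compactly; indeed, as the excerpt notes, the proof is deferred to Subsection~\ref{ss:62}.
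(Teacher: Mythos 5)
Your proposal is correct and follows essentially the same route as the paper: split the Radon--Nikodym density $\exp(-\frac1\eps\int V)$ into the exterior and interior factors, apply the Gaussian Markov property of Lemma~\ref{le:Markov1} (the paper's~\eqref{e:pM1}) to the interior factor times $\Phi$, and recognize the conditional normalization constant $\mathcal{Z}^{u(\hat x_-),u(\hat x_+)}_{\eps,(\hat x_-,\hat x_+)}$ from the definition~\eqref{eq:Densmu} so that the full density reassembles. The only cosmetic difference is that the paper tests against products $\Xi_-\Xi_+$ of exterior-measurable functions rather than a general $\Psi$, which is an equivalent way of characterizing the conditional expectation; your extra remarks on measurability of the frozen-boundary quantities are fine but not part of the paper's argument.
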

Here $\F_{[x_-,\hat{x}_-]}  \vee  \F_{[\hat{x}_+,x_+]  }$ denotes the smallest sigma-algebra that contains all sets in  $\F_{[x_-,\hat{x}_-]}$ and $\F_{[\hat{x}_+,x_+]}$.

We will typically use~\eqref{e:Markovmu} in the following way: For given points $x_- \leq x_1 \leq x_2 \leq \ldots  \leq x_{2n} \leq x_+$ and given events $\mathcal{A}_i \in \F_{[x_{2i-1}, x_{2i}]}$, we can write
\begin{align}
 \mathbb{E}_{(x_{-},x_{+})}^{\mu_\eps,u_{-},u_{+}} & \Big( \mathbf{1}_{\Aa_1} \ldots \mathbf{1}_{\Aa_n} \Big) \,\label{e:Markomub}\\
  = \int_{\R^{2n}}&  \nu_{x_1, \ldots, x_{2n}}  (du_1, \ldots, du_{2n})  \, \mathbb{E}_{(x_{1},x_{2})}^{\mu_\eps,u_{1},u_{2}}  \Big( \mathbf{1}_{\Aa_1} \Big) \ldots \mathbb{E}_{(x_{2n-1},x_{2n})}^{\mu_\eps,u_{2n-1},u_{2n}}  \Big( \mathbf{1}_{\Aa_n} \Big) \notag.
\end{align}
Here $\nu_{x_1, \ldots, x_{2n}} $ denotes the distribution of the random vector $(u(x_1), \ldots, u(x_{2n}))$ under $  \mu^{-1,1}_{\eps,(x_-,x_+)}$. Formula~\eqref{e:Markomub} follows directly by applying~\eqref{e:Markovmu} $n$ times.

To state the strong Markov property, we additionally need the notion of left and right stopping points. These are defined analogously to stopping times for Markov processes. A random variable $\chi_-$ taking values in $[x_-, x_+]$ will be called a \emph{left stopping point} if for all $x \in [x_-,x_+]$ the event $\{ \chi_- \leq x \}$ is contained in $\F_{[x_-,x]}$. In the same way a random variable $\chi_+$ is called a \emph{right stopping point} if for all $x$ the event   $\{ \chi_+ \geq x \}$ is contained in  $\F_{[x,x_+]}$.  In all of our applications the stopping points $\chi_{\pm}$ are going to be left or rightmost hitting points of a closed set. It is easy to check that these random points are indeed left and right stopping points as defined above.

For given left and right stopping points $\chi_{\pm}$, we define the sigma-algebra $\F_{[x_+, \chi_-]}$ of events that occur left of $\chi_-$ and the sigma-algebra $\F_{[ \chi_+, x_+]}$ of events that happen to the right of $\chi_+$ by
\begin{align*}
\F_{[x_-,\chi_-]} \, :=\, & \big\{ \Aa \in \F_{[x_-,x_+]} \colon \forall x \quad \Aa \cap \{ \chi_- \leq x  \} \in \F_{[x_-,x]}  \big\},\\
\F_{[\chi_+, x_+]} \, :=\, & \big\{ \Aa \in \F_{[x_-,x_+]} \colon \forall x \quad  \Aa \cap \{ \chi_+ \geq x  \} \in \F_{[x,x_+]}  \big\}.
\end{align*}
The strong Markov property can be stated in an analogous way to~\eqref{e:Markovmu}.

\begin{lemma}\label{p:Markov}
Suppose $\chi_-$ and $\chi_+$ are left and right stopping points with $\chi_- < \chi_+$ almost surely. Suppose that $\Phi \colon C([x_-,x_+] ) \to \R$ is measurable and bounded. Then for any $u_\pm \in \R$,  we get the following identities
\begin{equation}\label{e:strMarkovW}
 \mathbb{E}_{(x_{-},x_{+})}^{\mathcal{W}_\eps,u_{-},u_{+}} \big( \Phi \big|   \F_{[x_-,\chi_-]} \vee \F_{[\chi_+, x_+]}   \big) \, = \,  \mathbb{E}_{(\chi_{-},\chi_{+})}^{\mathcal{W}_\eps,{\bf u}} \big( \Phi  \big)
\end{equation}
and
\begin{equation}\label{e:strMarkovmu}
 \mathbb{E}_{(x_{-},x_{+})}^{\mu_\eps,u_{-},u_{+}} \big( \Phi \big|   \F_{[x_-,\chi_-]} \vee \F_{[\chi_+, x_+]}   \big) \, = \,  \mathbb{E}_{(\chi_{-},\chi_{+})}^{\mu_\eps,{\bf u}} \big( \Phi  \big).
\end{equation}
\end{lemma}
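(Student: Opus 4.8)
The plan is to reduce the strong Markov property (Lemma~\ref{p:Markov}) to the fixed-point Markov property (Lemma~\ref{le:Markov1b} and its Gaussian analogue Lemma~\ref{le:Markov1}) by the usual discretization-of-stopping-times argument. I would prove the statement for $\mathcal{W}^{u_-,u_+}_{\eps,(x_-,x_+)}$ first; the statement for $\mu^{u_-,u_+}_{\eps,(x_-,x_+)}$ then follows since $\mu$ is absolutely continuous with respect to $\mathcal{W}$ with a density (see~\eqref{eq:Densmu}) that is itself $\F_{[x_-,x_+]}$-measurable and, more importantly, factors appropriately: the density depends on $\int_{x_-}^{x_+}V(u)\,dx = \int_{x_-}^{\chi_-}V(u)\,dx + \int_{\chi_-}^{\chi_+}V(u)\,dx + \int_{\chi_+}^{x_+}V(u)\,dx$, where the outer two integrals are measurable with respect to $\F_{[x_-,\chi_-]}\vee\F_{[\chi_+,x_+]}$ and the middle one is exactly the weight defining $\mu^{{\bf u}}_{\eps,(\chi_-,\chi_+)}$ on the inner interval. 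So after establishing the Gaussian case, the $\mu$ case is a conditional-expectation computation using the tower property and the fact that for a measure with density $\rho$, $\mathbb{E}^{\mu}(\Phi\mid\mathcal G) = \mathbb{E}^{\mathcal W}(\Phi\rho\mid\mathcal G)/\mathbb{E}^{\mathcal W}(\rho\mid\mathcal G)$.

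For the Gaussian case the key steps are as follows. First I would fix $n$ and approximate the left stopping point $\chi_-$ from above by its dyadic ceiling $\chi_-^{(n)} := x_- + \lceil (\chi_- - x_-)2^n \rceil 2^{-n}$ (capped at $x_+$), which takes finitely many values $\{s_j\}$ and is still a left stopping point because $\{\chi_-^{(n)} \le s_j\} = \{\chi_- \le s_j\} \in \F_{[x_-,s_j]}$; similarly approximate $\chi_+$ from below by $\chi_+^{(n)}$, a right stopping point taking finitely many values $\{t_k\}$. On the event $\{\chi_-^{(n)} = s_j,\ \chi_+^{(n)} = t_k\}$ (which, for $n$ large, has $s_j < t_k$), I invoke the fixed-point Markov property Lemma~\ref{le:Markov1} on the deterministic pair $(s_j, t_k)$: the increment $u - u_{s_j}^{t_k}$ is independent of everything $\F_{[x_-,s_j]}\vee\F_{[t_k,x_+]}$-measurable and is distributed as $\mathcal{W}^{0,0}_{\eps,(s_j,t_k)}$. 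Summing over the finitely many pairs $(s_j,t_k)$ — using that $\{\chi_-^{(n)}=s_j\}\in\F_{[x_-,s_j]}$ and $\{\chi_+^{(n)}=t_k\}\in\F_{[t_k,x_+]}$ so each such event lies in the relevant conditioning sigma-algebra — yields $\mathbb{E}^{\mathcal{W}_\eps,u_-,u_+}_{(x_-,x_+)}(\Phi\mid\F_{[x_-,\chi_-^{(n)}]}\vee\F_{[\chi_+^{(n)},x_+]}) = \mathbb{E}^{\mathcal{W}_\eps,{\bf u}}_{(\chi_-^{(n)},\chi_+^{(n)})}(\Phi)$.

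Finally I would pass to the limit $n\to\infty$. As $n\to\infty$ we have $\chi_-^{(n)}\downarrow\chi_-$ and $\chi_+^{(n)}\uparrow\chi_+$ almost surely, the sigma-algebras $\F_{[x_-,\chi_-^{(n)}]}\vee\F_{[\chi_+^{(n)},x_+]}$ decrease to $\F_{[x_-,\chi_-]}\vee\F_{[\chi_+,x_+]}$, and by continuity of the Brownian-bridge sample paths, $u_{\chi_-^{(n)}}^{\chi_+^{(n)}}\to u_{\chi_-}^{\chi_+}$ uniformly almost surely, so (for $\Phi$ continuous, then general bounded measurable by a monotone-class argument) the right-hand sides converge. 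On the left, the reverse martingale convergence theorem gives convergence of the conditional expectations. Matching limits gives~\eqref{e:strMarkovW}, and then the density computation sketched above gives~\eqref{e:strMarkovmu}.

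The main obstacle I expect is purely bookkeeping rather than conceptual: one must be careful that the dyadic approximations $\chi_\pm^{(n)}$ genuinely remain \emph{left}/\emph{right} stopping points and that on each atom $\{\chi_-^{(n)}=s_j,\chi_+^{(n)}=t_k\}$ one has $s_j<t_k$ (true for $n$ large since $\chi_-<\chi_+$ almost surely, but one should handle the null-measure exceptional set and the boundary cases $s_j=x_+$ or $t_k=x_-$ cleanly). A secondary technical point is justifying the uniform convergence $u_{\chi_-^{(n)}}^{\chi_+^{(n)}}\to u_{\chi_-}^{\chi_+}$ and hence the continuity in the stopping points needed to pass $\mathbb{E}^{\mathcal{W}_\eps,{\bf u}}_{(\chi_-^{(n)},\chi_+^{(n)})}(\Phi)\to\mathbb{E}^{\mathcal{W}_\eps,{\bf u}}_{(\chi_-,\chi_+)}(\Phi)$; this uses almost-sure continuity of the sample paths together with dominated convergence for bounded $\Phi$.
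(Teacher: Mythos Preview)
Your proposal is correct and follows essentially the same approach as the paper: discretize the stopping points to finitely-valued ones, apply the fixed-point Markov property on each atom, then pass to the limit using continuity of sample paths (for the right-hand side) and a conditional-expectation argument (for the left), with a monotone-class extension to general $\Phi$; the $\mu$ case is deduced from the Gaussian case via the density factorization exactly as in the proof of Lemma~\ref{le:Markov1b}. The only cosmetic difference is that you invoke reverse martingale convergence for the left-hand side, whereas the paper directly verifies that the limit satisfies the defining property of the conditional expectation.
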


The strong Markov property is a crucial ingredient in the proofs of  both Theorem \ref{t:layers} and Theorem \ref{t:uniform}. Let us illustrate how it is used in the proof of  Theorem \ref{t:layers}. Let $\chi_-$ be the leftmost hitting of zero to the right of a  given point $x_-$ and $\chi_+$ the  rightmost hitting of zero to the left of a given point $x_+$. The values $u(\chi_\pm)$ in the formulas~\eqref{e:strMarkovW} and~\eqref{e:strMarkovmu} are almost surely $0$. Then we can use the invariance of $  \mathcal{W}^{0,0}_{\eps, (\chi_{-},\chi_+)}$ and $  \mu^{0,0}_{\eps,(\chi_-,\chi_+)}$ under vertical reflection $R$ to conclude that the whole right-hand side of~\eqref{e:strMarkovW} and~\eqref{e:strMarkovmu} is invariant under   vertical reflection on $[\chi_-, \chi_+]$. In Section \ref{s:layers}, we will use this observation to reduce the problem of calculating the probability of transition layers to computing the probability of wasted excursions (see Definition  \ref{def:wasted}).

\subsection{Large deviations}
\label{ss:LD}

Large deviation estimates for the  measures $  \mu^{u_{-},u_+}_{\eps, (x_{-},x_+)}$ constitute an important ingredient for our argument. Large deviation bounds for Gaussian measures with a small variance, e.g., for $  \mathcal{W}^{u_{-},u_+}_{\eps, (x_{-},x_+)}$, are well-known (see e.g. \cite[Sec. 4.9]{Bog}). They can be extended to the measures $   \mu^{u_-,u_+}_{\eps,(x_-,x_+)}$ with an ``exponential tilting" argument (see e.g. \cite{Var84}, or~\cite[p.34]{dH00} ) in a standard way. Let $\mathcal{A}^{{\rm bc}}$ represent the set of continuous paths $u$ on $[x_-,x_+]$ that satisfy $u(x_\pm)=u_\pm$. The estimates then state that for every \emph{closed} set $\mathcal{A}\subseteq\mathcal{A}^{\rm bc}$ and every $\gamma>0$, there exists $\eps_0>0$ such that, for $\eps \leq \eps_0$, we have
\begin{equation}\label{e:LDclassical1}
 \mu^{u_-,u_+}_{\eps,(x_-,x_+)}(\Aa) \leq \exp  \Big(-\frac{1}{\eps} \big(  \DE(\Aa) - \gamma  \big)  \Big).
\end{equation}
Similarly, for every \emph{open} set $\Aa\subseteq\mathcal{A}^{\rm bc}$ and $\gamma>0$ there exists $\eps_0>0$ such that, for $\eps \leq \eps_0$, we have
\begin{equation}\label{e:LDclassical2}
 \mu^{u_-,u_+}_{\eps,(x_-,x_+)}(\Aa) \geq \exp  \Big(-\frac{1}{\eps} \big(  \DE(\Aa) + \gamma  \big)  \Big) .
\end{equation}
Here the energy difference $\DE(\Aa)$ is defined as
\begin{equation}
\DE (\Aa) := \inf_{u \in \Aa} E(u)  - \inf_{u \in \mathcal{A}^{\rm bc}} E(u).\label{endiff}
\end{equation}
Here and in the sequel, all topological notions like open and closed refer to the uniform topology, i.e., the topology generated by
\begin{equation}
\| u  \|_\infty := \sup_{x \in [x_-,x_+]} |u(x)|.
\end{equation}
Although we will not make use of it here, we remark that the bounds~\eqref{e:LDclassical1} and~\eqref{e:LDclassical2} are also true for different choices of topology. The Gaussian large deviation bounds hold for any separable Banach space that supports the Gaussian measure, and the  ``exponential tilting"  works as soon as the exponential density is continuous.

A priori, the choice of $\eps_0$ depends not only on $\gamma$ but also on the interval length $\ell:=  x_+ - x_-$, the boundary data $u_\pm$, and even the set $\Aa$ itself. As pointed out in Subsection \ref{ss:methods}, however, our argument requires integrating probabilities for different boundary conditions. Therefore, we need to know that we can choose the same $\eps_0$ for these different boundary conditions simultaneously. Moreover, in Lemma~\ref{l:smallu} we will need uniform estimates for measures with different potentials. Hence, we require uniform large deviation estimates, which is the content of the following two propositions.
They deliver local uniformity with respect to $\ell, u_\pm, \Aa$, and even with respect to the potential function $V$. To state the result, it is convenient to introduce the notation
\begin{equation}\label{e:Iu}
\Ixu := I(\hx) =    \frac{1}{2}\frac{(u_+ - u_-)^2}{x_+ - x_-}
\end{equation}
for the minimal Gaussian energy with the given boundary conditions. We will also  write
\begin{equation*}
B( \Aa, \delta)  = \big\{ u \colon \exists v \in \Aa, \, \| v -u  \|_\infty \leq \delta  \big\}
\end{equation*}
for the $\delta$ neighborhood of a set $\Aa$.

\begin{proposition}[Large deviation upper bound]\label{pr:LD1}
Fix constants $1 < M,R<\infty$ and $0 < \ell_-  < \ell_+ < \infty$.
For any $x_\pm\in\R$ with $x_+-x_-\in [\ell_-, \ell_+]$ and any $u_{\pm} \in [-M,M]$, let $\Aa$ be a measurable subset of $C([x_-,x_+])$ consisting of paths $u$ that satisfy the boundary conditions $u(x_{\pm}) = u_{\pm}$. Additionally, assume that
\begin{equation}\label{e:3.13}
\inf_{u \in \Aa} E(u) -  \Ixu \leq R.
\end{equation}

Then for any $\delta,\gamma >0 $  there exists an $\eps_0>0$ such that for all $\eps \leq \eps_0$ we have
\begin{equation}\label{e:LDUB1}
 \mu^{u_-,u_+}_{\eps,(x_-,x_+)}(\Aa) \leq \exp  \Big(-\frac{1}{\eps} \big(  \DE\big(B( \Aa,\delta)  \big) - \gamma  \big)  \Big),
\end{equation}
where $\DE$ is defined in~\eqref{endiff}. This $\eps_0$ depends on $M,R, \ell_\pm, \delta,$ and $\gamma$ but not on the particular choice of $x_\pm, u_\pm$. It only depends on the set $\Aa$ through the choice of $R$ in condition~\eqref{e:3.13}. Furthermore, $\eps_0$ depends on $V$ only through the local Lipschitz norm
\begin{equation*}
\sup_{|v| \leq M +\sqrt{2^{-1} (\ell_+ R +1)}+ 1} |V'(v) | .
\end{equation*}
\end{proposition}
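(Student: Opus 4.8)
The plan is to reduce the claim to the classical (fixed-interval, fixed-boundary-data, fixed-set) Gaussian large deviation upper bound~\eqref{e:LDclassical1} via an exponential tilting argument, and then to upgrade that to a \emph{locally uniform} statement by a compactness-plus-continuity argument in the parameters $(x_\pm,u_\pm)$. First I would recall that by the very definition~\eqref{eq:Densmu} of $\mu^{u_-,u_+}_{\eps,(x_-,x_+)}$, we have
\begin{equation*}
\mu^{u_-,u_+}_{\eps,(x_-,x_+)}(\Aa) \;=\; \frac{1}{\mathcal{Z}^{u_-,u_+}_{\eps,(x_-,x_+)}}\;\mathbb{E}^{\mathcal{W}_\eps,u_-,u_+}_{(x_-,x_+)}\!\Big(\mathbf{1}_\Aa(u)\exp\big(-\tfrac1\eps\!\int_{x_-}^{x_+}V(u)\,dx\big)\Big).
\end{equation*}
Using the shift map $u\mapsto u+\hx$ to center the Brownian bridge, the exponent becomes $-\tfrac1\eps I_{x_-,x_+}(u) - \tfrac1\eps\int V(u+\hx)\,dx$ plus the cross term $-\tfrac1\eps\int \partial_x u\,\partial_x \hx\,dx$, which vanishes after integration by parts against the linear profile $\hx$ (boundary terms cancel because the centered bridge vanishes at the endpoints), leaving $-\tfrac1\eps\Ixu$ as a deterministic constant that cancels between numerator and denominator. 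Thus, up to this bookkeeping, both the numerator and $\mathcal{Z}$ are Gaussian expectations of $\exp(-\tfrac1\eps \mathcal{V})$ for the continuous functional $\mathcal{V}(u)=\int_{x_-}^{x_+}V(u+\hx)\,dx$, and Varadhan-type asymptotics give
\begin{align*}
\eps\log\big(\text{numerator}\big)&\to -\inf_{u\in \bar\Aa}\Big(I_{x_-,x_+}(u)+\mathcal{V}(u)\Big)+\Ixu \;=\; -\inf_{u\in\bar\Aa}E(u)+\Ixu,\\
\eps\log\mathcal{Z}&\to -\inf_{u\in\mathcal{A}^{\rm bc}}E(u)+\Ixu,
\end{align*}
whose difference is exactly $-\DE(\Aa)$. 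Replacing $\Aa$ by the closed neighborhood $\overline{B(\Aa,\delta)}\supseteq \Aa$ only decreases the infimum, which is why the statement is phrased with $\DE(B(\Aa,\delta))$; the $\delta$ is genuinely needed because the classical upper bound requires a \emph{closed} set and we want to apply it to an arbitrary measurable $\Aa$.

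The heart of the matter — and the step I expect to be the main obstacle — is making $\eps_0$ uniform over $(x_\pm,u_\pm)$ in the stated ranges and over all sets $\Aa$ satisfying the energy budget~\eqref{e:3.13}, with the asserted dependence on $V$ only through a local Lipschitz norm. I would handle this as follows. (i) \emph{Uniformity in $\Aa$:} one reduces to bounding $\mathcal{W}^{0,0}_{\eps,(x_-,x_+)}\big(\|u\|_\infty> K\big)$ for large $K$, since a path in $B(\Aa,\delta)$ with $I_{x_-,x_+}\le$ (budget) has a sup-norm bound via the Gaussian isoperimetric/Fernique estimate, and the contribution of $\{\|u\|_\infty>K\}$ to the numerator is super-exponentially small — independently of $\Aa$. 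On the complementary event all relevant quantities are controlled by the potential restricted to a bounded interval, which is where the local Lipschitz norm $\sup_{|v|\le M+\sqrt{2^{-1}(\ell_+R+1)}+1}|V'(v)|$ enters; the radius $M+\sqrt{2^{-1}(\ell_+R+1)}+1$ is precisely $\|\hx\|_\infty$ ($\le M$) plus the $H^1$-to-$L^\infty$ bound $\sqrt{\ell_+\cdot 2\,(\text{Gaussian energy budget})/2}$ coming from~\eqref{e:3.13}, plus a $\delta$-slack. (ii) \emph{Uniformity in $(x_\pm,u_\pm)$:} since $x_+-x_-\in[\ell_-,\ell_+]$ and $u_\pm\in[-M,M]$ range over a compact set, and since the relevant functionals and the Gaussian bridge covariance~\eqref{e:Cov} depend continuously (indeed the bridge is obtained from a rescaled standard bridge by an affine change of variables plus the shift $\hx$), a finite-net argument lets one choose a single $\eps_0$: on each net cell the classical bound holds, and the error incurred by moving within a cell is absorbed into $\gamma$ because both $E(\cdot)$ and $\mathcal{V}(\cdot)$ vary by $o(1)$ in these parameters, uniformly on the sup-norm ball fixed in step (i). This is the only delicate point; the rest is standard tilting bookkeeping.

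One remark on the lower-bound companion, which I will \emph{not} prove here but which motivates the $\delta$-enlargement: for the analogous lower bound~\eqref{e:LDclassical2} one instead shrinks $\Aa$ to an open core and uses that the infimum over an open set is attained in the limit; here, for the upper bound, the enlargement to $B(\Aa,\delta)$ is harmless because the probability of $\Aa$ is only smaller than that of its neighborhood, and it buys us the ability to invoke the closed-set bound and, simultaneously, to quote this proposition later (as the excerpt does for $\delta^-$ and $\delta^+$ excursions) with the clean energy quantity $\DE(B(\Aa,\delta))$. The conclusion~\eqref{e:LDUB1} then follows by combining the two Varadhan asymptotics above with the uniformity from step (ii), choosing $\eps_0$ as the minimum over the finite net of the individual thresholds, each shrunk to accommodate the net-discretization error and the passage from $\Aa$ to $\overline{B(\Aa,\delta)}$.
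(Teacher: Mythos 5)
Your overall architecture (write $\mu^{u_-,u_+}_{\eps,(x_-,x_+)}(\Aa)$ as a ratio of Gaussian expectations, bound the numerator from above and the normalization constant from below, reduce to $[0,1]$ with homogeneous boundary data, and identify the Lipschitz radius $M+\sqrt{2^{-1}(\ell_+R+1)}+1$ from the Gaussian energy budget) matches the paper's strategy. But there is a genuine gap exactly at the point the proposition exists to address: the uniformity of $\eps_0$ in the set $\Aa$. Your step (i) claims this follows from a sup-norm cutoff at level $K$ plus the classical closed-set bound \eqref{e:LDclassical1} ``on the complementary event.'' That does not work: the classical bound's $\eps_0$ depends on the closed set itself, precisely because its proof covers (a compact sublevel set intersected with) the set by finitely many balls whose number and centers depend on the set; and a sup-norm ball is not compact in $C([0,1])$, so your cutoff provides no finite covering to replace it. Your finite-net argument in (ii) runs over the finite-dimensional parameters $(x_\pm,u_\pm,\ell)$ and cannot absorb the infinite-dimensional dependence on $\Aa$. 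Relatedly, you misstate why the $\delta$-enlargement is needed: it is not (only) to pass from measurable to closed sets, but to decouple the covering from $\Aa$.

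The paper's proof (Lemma~\ref{le:LDUpBou}) fixes this by covering the compact Gaussian-energy sublevel set $\mathcal{K}_{\ell_+ R}=\{I_{0,1}\le \ell_+R\}$ by finitely many balls $B(u_k,\tilde\delta)$ chosen \emph{independently of} $\Aa$, with $\tilde\delta$ determined by $\gamma$, $\delta$, and the local Lipschitz norm of $V$; the part of $\Aa$ outside the union is killed by the Gaussian bound of Proposition~\ref{pr:GLD} together with hypothesis \eqref{e:3.13}, and on each ball meeting $\Aa$ one freezes the potential (Lipschitz estimate) and applies the Gaussian bound to the closed ball. Since any such ball lies inside $B(\Aa,\delta)$, the ball-infima are comparable to $\inf_{B(\Aa,\delta)}E$ --- this is where the $\delta$-neighborhood is actually consumed. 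The number of balls depends only on $M,R,\ell_\pm,\delta,\gamma$ and the local Lipschitz norm, which is what makes $\eps_0$ uniform. If you want to complete your proposal, you must supply this (or an equivalent) $\Aa$-independent covering step; the Fernique cutoff alone cannot. A secondary omission: your lower bound for the normalization constant is asserted via ``Varadhan-type asymptotics,'' whereas a uniform statement requires an explicit argument such as the paper's Lemma~\ref{le:LDLoBou} (Cameron--Martin shift to a truncated minimizer $u_\ast\in[-M,M]$ plus the $\cosh$ symmetrization), again with constants depending only on $M,\ell_+$ and the local Lipschitz norm.
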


In particular, the same bounds hold for the  same $\eps_0$ if $V$ varies over a set of potentials with  uniformly bounded local $C^1$-norm. This uniformity of~\eqref{e:LDUB1} with respect to $V$ will be used in Subsection \ref{ss:unilem}. There, it will be applied to the family  $\big\{ 4^k V( 2^{-k}(u-1) +1   ) \colon k \in \N \big\}$ of rescaled versions of $V$.

We also get the corresponding lower bounds without a condition on the minimal energy of $E(u)$ for $u \in \Aa$.

\begin{proposition}[Large deviation lower bound]\label{pr:LD2}
Fix constants $M$ and $0 < \ell_-  < \ell_+ < \infty$. Suppose that $\ell= x_+-x_- \in [\ell_-, \ell_+]$ and  $u_{\pm} \in [-M,M]$. Assume that there exists an energy minimizer
\begin{equation*}
u_\ast = \underset{u \in \Aa}{\rm argmin} \, E(u)
\end{equation*}
satisfying $u_\ast \in [-M,M]$. Then, for any $\gamma>0$ and $\delta>0$ small enough, there exists $\eps_0 >0$ such that for all $\eps \leq \eps_0$ there holds
\begin{equation}\label{e:LDLB1}
 \mu^{u_-,u_+}_{\eps,(x_-,x_+)} \big(B(\Aa,\delta)\big) \geq \exp  \Big(-\frac{1}{\eps} \big(  \DE \big( \Aa  \big) + \gamma  \big)  \Big),
\end{equation}
where $\DE$ is defined in~\eqref{endiff}. As above, $\eps_0$ depends on $M, \ell_\pm, \delta,$ and $\gamma$, but not on the particular choice of $x_\pm, u_\pm$ or the set $\Aa$.
\end{proposition}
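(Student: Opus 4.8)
The goal is to prove Proposition~\ref{pr:LD2}, the uniform large deviation lower bound, with $\eps_0$ depending only on $M,\ell_\pm,\delta,\gamma$ but not on $x_\pm$, $u_\pm$, or $\Aa$. The strategy is to reduce everything to the classical Gaussian lower bound for $\mathcal{W}^{u_-,u_+}_{\eps,(x_-,x_+)}$ around a single path $u_\ast$, and then track the dependence of the error on the data explicitly. First I would note that since $u_\ast \in \Aa$, the event $B(u_\ast,\delta)\subseteq B(\Aa,\delta)$; hence it suffices to lower-bound $\mu^{u_-,u_+}_{\eps,(x_-,x_+)}\big(B(u_\ast,\delta)\big)$. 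Writing $\mu$ in terms of $\mathcal{W}$ via~\eqref{eq:Densmu}, one has
\begin{align*}
\mu^{u_-,u_+}_{\eps,(x_-,x_+)}\big(B(u_\ast,\delta)\big)
= \frac{1}{\mathcal{Z}^{u_-,u_+}_{\eps,(x_-,x_+)}}\;\mathbb{E}^{\mathcal{W}_\eps,u_-,u_+}_{(x_-,x_+)}\!\left[\mathbf{1}_{B(u_\ast,\delta)}\exp\Big(-\tfrac1\eps\!\int_{x_-}^{x_+}\! V(u)\,dx\Big)\right].
\end{align*}
On $B(u_\ast,\delta)$ we have $\|u-u_\ast\|_\infty\le\delta$, so $\int V(u)\,dx \le \int V(u_\ast)\,dx + (x_+-x_-)\,\omega(\delta)$, where $\omega(\delta)$ is a modulus controlled by the local Lipschitz norm of $V$ on $[-M-\delta,M+\delta]$ (this uses $u_\ast\in[-M,M]$). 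The numerator is then bounded below by $\exp\big(-\tfrac1\eps(\int V(u_\ast)+\ell_+\omega(\delta))\big)\,\mathcal{W}^{u_-,u_+}_{\eps,(x_-,x_+)}\big(B(u_\ast,\delta)\big)$.

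**Second step.** The Gaussian probability $\mathcal{W}^{u_-,u_+}_{\eps,(x_-,x_+)}\big(B(u_\ast,\delta)\big)$ must be bounded below uniformly. Shifting by the affine interpolant $h^{u_-,u_+}_{(x_-,x_+)}$ reduces this to $\mathcal{W}^{0,0}_{\eps,(x_-,x_+)}\big(B(u_\ast-h,\delta)\big)$, and by the Cameron–Martin theorem (the function $u_\ast-h$ lies in $H^1_0$ since $u_\ast$ satisfies the boundary conditions), this is at least $\exp\big(-\tfrac1\eps I_{x_-,x_+}(u_\ast-h)\big)$ times the centered-ball probability $\mathcal{W}^{0,0}_{\eps,(x_-,x_+)}\big(B(0,\delta)\big)$. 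Now $I_{x_-,x_+}(u_\ast - h) = I_{x_-,x_+}(u_\ast) - I^{u_+}_{u_-}(\cdot) \le I_{x_-,x_+}(u_\ast)$, so $\tfrac12\int(\partial_x u_\ast)^2 + \int V(u_\ast) = E(u_\ast) = \inf_\Aa E$, and combining, the numerator is $\ge \exp\big(-\tfrac1\eps(\inf_\Aa E + \ell_+\omega(\delta))\big)\,\mathcal{W}^{0,0}_{\eps,(x_-,x_+)}\big(B(0,\delta)\big)$. The centered ball probability satisfies $\eps\log\mathcal{W}^{0,0}_{\eps,(x_-,x_+)}\big(B(0,\delta)\big)\to 0$ as $\eps\to 0$, and this convergence is uniform for $x_+-x_-\in[\ell_-,\ell_+]$ by a standard small-ball estimate (e.g.\ using the Ornstein–Uhlenbeck/Brownian bridge small-deviation bounds, scaling in $\eps$), so it contributes only to the $\gamma$ error.

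**Third step: the normalization.** It remains to show $\mathcal{Z}^{u_-,u_+}_{\eps,(x_-,x_+)} \le \exp\big(\tfrac1\eps(\inf_{\Aa^{\rm bc}}E + \gamma/2)\big)$ uniformly, or rather that $-\eps\log\mathcal{Z}^{u_-,u_+}_{\eps,(x_-,x_+)} \gtrapprox -\inf_{\Aa^{\rm bc}}E$, so that it cancels the $\inf_{\Aa^{\rm bc}}E$ in the definition of $\DE$. For this I would note $\mathcal{Z} \ge \mathbb{E}^{\mathcal{W}_\eps,u_-,u_+}\big[\mathbf{1}_{B(m_\ast,\delta)}\exp(-\tfrac1\eps\int V)\big]$ where $m_\ast$ is a near-minimizer of $E$ over $\Aa^{\rm bc}$ with $m_\ast\in[-M',M']$ for some $M'=M'(M)$ — such a bounded near-minimizer exists because the boundary data are in $[-M,M]$ and one can truncate at height $M$ without increasing the energy (the potential is even with minima at $\pm1$ and $V$ is increasing on $(1,\infty)$). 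Then the same Cameron–Martin argument gives $\mathcal{Z}\ge \exp\big(-\tfrac1\eps(\inf_{\Aa^{\rm bc}}E + \ell_+\omega'(\delta))\big)\mathcal{W}^{0,0}_{\eps}\big(B(0,\delta)\big)$, whence $-\eps\log\mathcal{Z} \le \inf_{\Aa^{\rm bc}}E + \ell_+\omega'(\delta) - \eps\log\mathcal{W}^{0,0}_\eps(B(0,\delta))$. Assembling the numerator and denominator bounds and choosing $\delta$ small (so $\ell_+(\omega+\omega')(\delta)<\gamma/2$) and then $\eps_0$ small (so the small-ball terms are $<\gamma/2$), one obtains~\eqref{e:LDLB1}.

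**Main obstacle.** The routine parts (Cameron–Martin shift, Lipschitz control of the potential term) are straightforward; the delicate point is the \emph{uniformity in $\ell$ and $u_\pm$} of the small-ball probability $\mathcal{W}^{0,0}_{\eps,(x_-,x_+)}\big(B(0,\delta)\big)$ and, relatedly, the uniform control of the normalization $\mathcal{Z}$ — in particular exhibiting a \emph{bounded} near-minimizer of $E$ over $\Aa^{\rm bc}$ whose bound depends only on $M$. The small-ball uniformity is handled by rescaling the Brownian bridge to unit variance and interval $[0,1]$ (the ball radius becomes $\delta/\sqrt\eps\,\cdot$(length factor), which only helps) together with the explicit $\ell_-,\ell_+$ bounds; the bounded-near-minimizer point is an elementary truncation/comparison argument for the energy $E$ using Assumption~\ref{ass:V}. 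I would present these two facts as short lemmas (or cite the relevant Gaussian small-deviation literature, e.g.~\cite{Bog}) and then the proof of the proposition is the bookkeeping above.
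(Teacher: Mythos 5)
Your Steps 1--2 (restricting to $B(u_\ast,\delta)\subseteq B(\Aa,\delta)$, Lipschitz control of $\int V$ on the ball, the Cameron--Martin shift, and a uniform small-ball estimate for the centered bridge) reproduce in substance the paper's Lemma~\ref{le:LDLoBou} and are sound. The genuine gap is Step 3, the normalization constant. Since $\mu^{u_-,u_+}_{\eps,(x_-,x_+)}\big(B(\Aa,\delta)\big)$ is the quotient of your numerator by $\mathcal{Z}^{u_-,u_+}_{\eps,(x_-,x_+)}$, a \emph{lower} bound on $\mu$ requires an \emph{upper} bound on $\mathcal{Z}$, and the bound that actually produces the cancellation hidden in $\DE(\Aa)$ is the Laplace-type estimate
\begin{equation*}
\mathcal{Z}^{u_-,u_+}_{\eps,(x_-,x_+)}\;\leq\;\exp\Big(-\frac{1}{\eps}\big(\inf_{u\in\Aa^{\rm bc}}E(u)-\Ixu-\gamma\big)\Big),
\end{equation*}
uniformly in $x_\pm,u_\pm$. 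What you construct instead (Cameron--Martin around a bounded near-minimizer $m_\ast$ of $E$ over $\Aa^{\rm bc}$) is a \emph{lower} bound on $\mathcal{Z}$; that is the ingredient needed for the upper-bound Proposition~\ref{pr:LD1}, and it only yields an upper bound on the quotient. The inequality you state as the goal, $\mathcal{Z}\le\exp\big(\tfrac1\eps(\inf_{\Aa^{\rm bc}}E+\gamma/2)\big)$, has the wrong sign in the exponent: it is trivially true (since $V\ge0$ gives $\mathcal{Z}\le1$) and useless, because dividing by it does not cancel $\inf_{\Aa^{\rm bc}}E$. With only $\mathcal{Z}\le 1$ you obtain at best $\mu\ge\exp\big(-\tfrac1\eps(\inf_\Aa E-\Ixu+\gamma)\big)$, strictly weaker than \eqref{e:LDLB1} whenever $\inf_{\Aa^{\rm bc}}E>\Ixu$, which is the generic case (e.g.\ $u_\pm$ away from $\pm1$, where the constrained minimizer must pay potential energy). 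The missing piece is precisely the uniform upper bound of Lemma~\ref{le:LDUpBou}, whose proof is not bookkeeping: it requires covering the compact sublevel sets of the Gaussian rate functional by finitely many balls chosen independently of $\Aa$, $x_\pm$, $u_\pm$ (which is where the $\delta$-fattening in the statement comes from). The paper's proof of Proposition~\ref{pr:LD2} is exactly Lemma~\ref{le:LDLoBou} for the numerator combined with Lemma~\ref{le:LDUpBou}, applied to the full set of paths with the given boundary data, for the denominator, using that $\inf_{\Aa^{\rm bc}}E\le R(M)$ uniformly for $u_\pm\in[-M,M]$.

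A second, related slip: in Step 2 you discard the boundary term via $I_{x_-,x_+}(u_\ast-h)\le I_{x_-,x_+}(u_\ast)$. You must keep the exact identity $I_{x_-,x_+}(u_\ast-h)=I_{x_-,x_+}(u_\ast)-\Ixu$, so that the numerator bound reads $\exp\big(-\tfrac1\eps(\inf_\Aa E-\Ixu+o(1))\big)$; only then does $\Ixu$ cancel against the corresponding term in the correct upper bound for $\mathcal{Z}$, leaving $\DE(\Aa)$. With the weakened numerator bound, even the correct denominator estimate would leave an uncancelled $\Ixu/\eps$ in the exponent.
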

 The same remark about the uniform dependence on $V$ holds for the lower bounds.
\begin{remark}\label{r:exminapprox}
The existence of energy minimizers $u_\ast$ in $\Aa$ satisfying $u_\ast \in [-M,M]$  is not necessary and it can be replaced by an approximation. Actually, we will show the Proposition under the slightly weaker assumption that for every $\gamma>0$ there exists a profile $u_\gamma \in \Aa$ with $uu_\gamma(x) \in [-M,M]$ for all $x \in [x_-,x_+]$ and such that
\begin{equation}\label{e:coneta}
E(u_\gamma) \leq \inf_{u \in \Aa} E(u) + \gamma.
\end{equation}
\end{remark}

The proofs of these Propositions are essentially a careful copy of the classical proofs and can be found in Subsection~\ref{ss:63}. Let us remark here that we do not expect the bounds~\eqref{e:LDclassical1} and~\eqref{e:LDclassical2} to hold uniformly for all open or closed sets. In fact, the argument for the classical statements makes use of qualitative properties such as existence of coverings by finitely many open sets. One sums over this finite number and uses the fact that, for $\eps$ small enough, only the largest summand matters. For different open or closed sets, this finite number will in general be different, and  the choice of $\eps_0$ would also be different . We can resolve this issue by taking the $\delta$ neighborhood of $\Aa$ in the bounds ~\eqref{e:LDUB1} and~\eqref{e:LDLB1} as a uniform version of the topological assumptions on $\Aa$.
%

\section{Proofs of Theorem~\ref{t:layers}: Domination by single transition layer of minimal energy}
\label{s:layers}

In this section we prove Theorem~\ref{t:layers}. 
This theorem estimates the exponentially small probability of having more than one layer (with the correct entropic effect and exponential factor).  Hence, the most likely functions are those with only one transition layer.

As outlined in Subsection~\ref{ss:methods}, at the heart of the method is the idea of  decomposing the invariant measure into conditional measures and the corresponding marginals, so that we can reduce to estimating the probability of transition layers on order-one subintervals.  When the boundary data of the subinterval falls within a compact set $[-M,M]$, large deviation theory will allow us to estimate probabilities in the spirit of
\begin{align*}
\int_{-M}^M \int_{-M}^M   \,   \nu(du_-, du_+) \;\mu_{\eps,(-2\ell,2\ell)}^{u_-,u_+}  \big(\text{there is a transition layer in }(-\ell,\ell)\big).
\end{align*}
On the other hand, the probability that $|u(\pm 2\ell)|\geq M$ is uniformly small.  Before turning to the proofs of the main theorems, we introduce this fact about the decay of the one-point distribution.
\begin{lemma}\label{le:onept}
There exist $M_1<\infty$, $C_2<\infty$ (depending only on $V$) such that the following holds.  For any $M\geq M_1$, there exists $\eps_0>0$ such that for all $\eps\leq \eps_0$ and any $x_0$ in $(-\Le,\Le)$ there holds
\begin{align}
\mu^{-1,1}_{\eps,(-\Le,\Le)}\Big(|u(x_0)|\geq M\Big)\,\leq \,\exp\left(-\frac{M}{\eps\,C_2}\right).\label{onept}
\end{align}
\end{lemma}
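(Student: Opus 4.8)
The plan is to reduce the estimate, via the Markov property, to a local large‑deviation bound whose rate is controlled by a deterministic (Modica–Mortola) energy estimate, and to deal with the large values of $u$ through the superlinear growth of $V$. Throughout, write $\Phi(t):=\int_0^t\sqrt{2V(s)}\,ds$, so that $\Phi$ is odd, increasing, and $\Phi(M)-\Phi(M/2)=\int_{M/2}^M\sqrt{2V(s)}\,ds\ge\tfrac{M}{2}\sqrt{2V(M/2)}\ge c\,M^{(3+\beta)/2}\gg M$ for $M$ large, by Assumption~\ref{ass:V}.

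\textbf{Step 1 (localization).} Fix a large length $\ell_0=\ell_0(M)$ and let $J:=[x_0-\ell_0,x_0+\ell_0]\cap[-\Le,\Le]$; for $\eps$ small, $\Le\gg 1$ so $J$ is a genuine subinterval, and those endpoints of $J$ that coincide with $\pm\Le$ carry the known bounded value $\mp 1$. By Lemma~\ref{le:Markov1b},
\[
\mu^{-1,1}_{\eps,(-\Le,\Le)}\big(|u(x_0)|\ge M\big)=\int \nu(du_-,du_+)\;\mu^{u_-,u_+}_{\eps,J}\big(|u(x_0)|\ge M\big),
\]
where $\nu$ is the law of the values of $u$ at $\partial J$. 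Split the integral over $\{|u_-|\le M/2,\ |u_+|\le M/2\}$ and its complement.

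\textbf{Step 2 (bounded boundary values).} On $\{|u_\pm|\le M/2\}$ the interval $J$ has length in $[\ell_0,2\ell_0]$ and boundary data in a fixed compact set, so Proposition~\ref{pr:LD1} applies uniformly and gives $\mu^{u_-,u_+}_{\eps,J}(|u(x_0)|\ge M)\le\exp\big(-\tfrac1\eps(\DE-\gamma)\big)$, where $\DE$ is the excess energy of paths reaching level $\pm M$ at $x_0$ over the unconstrained minimum with boundary data $u_\pm$. The deterministic input is the estimate $\DE\ge 2\big(\Phi(M)-\Phi(M/2)\big)$, uniformly in $|u_\pm|\le M/2$ and $\ell_0\ge\ell_*(M)$: if $u^\ast$ is the constrained minimizer, say with $u^\ast(x_0)\ge M$, then the Modica–Mortola inequality on $[x_0-\ell_0,x_0]$ and on $[x_0,x_0+\ell_0]$ gives $E_J(u^\ast)\ge |\Phi(u^\ast(x_0))-\Phi(u_-)|+|\Phi(u^\ast(x_0))-\Phi(u_+)|\ge 2(\Phi(M)-\Phi(M/2))$, while the unconstrained minimum is nonnegative; this is the same mechanism as in Lemma~\ref{le:lazy}. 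Since $2(\Phi(M)-\Phi(M/2))\gtrsim M^{(3+\beta)/2}\gg M$, for $M\ge M_1$ and $\eps$ small this term is $\le\exp(-M/(\eps C_2))$ with room to spare.

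\textbf{Step 3 (large boundary values — the main obstacle).} The complement contributes at most $\mu^{-1,1}\big(|u(\text{left endpt of }J)|>M/2\big)+\mu^{-1,1}\big(|u(\text{right endpt})|>M/2\big)$, which is again of the form $2\,p(M/2)$ with $p(\cdot):=\sup_x\mu^{-1,1}(|u(x)|\ge\cdot)$; a naive iteration $p(M)\le 2p(M/2)+(\text{tiny})$ does \emph{not} close for a rate linear in $M$, so this is where the argument needs care. The way I would handle it is to compute the one–point marginal directly from the Gibbs representation~\eqref{m30.1}: conditioning the Gaussian reference measure on $u(x_0)=v$ and using its Markov property,
\[
\mu^{-1,1}_{\eps,(-\Le,\Le)}\big(|u(x_0)|\ge M\big)=\frac{\displaystyle\int_{|v|\ge M}\rho(v)\,G(v)\,dv}{\displaystyle\int_{\R}\rho(v)\,G(v)\,dv},\qquad
G(v):=\mathcal Z^{-1,v}_{\eps,(-\Le,x_0)}\,\mathcal Z^{v,1}_{\eps,(x_0,\Le)},
\]
where $\rho$ is the Gaussian density of $u(x_0)$ under $\mathcal W^{-1,1}_{\eps,(-\Le,\Le)}$ (mean $x_0/\Le$, variance $\eps\,(\Le+x_0)(\Le-x_0)/(2\Le)$). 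Peeling off (via the Markov property) a unit subinterval of $(-\Le,x_0)$ or $(x_0,\Le)$ adjacent to $x_0$ and running a nested–tube estimate — on any tube of radius $k$ the path spends a fixed length at height $\gtrsim|v|/2$, costing $\int V\gtrsim|v|^{1+\beta}$, while escaping costs a Gaussian penalty $\gtrsim|v|^2$ — yields $G(v)\le\exp\!\big(-c\,|v|^{\,1+\min(1,\beta)}/\eps\big)$ for $|v|$ large, uniformly in $x_0,\Le$; this kills the numerator. For the denominator one restricts $v$ to a fixed neighborhood of the minimum of $\pm1$ nearest to $x_0/\Le$; there $G(v)\ge e^{-C/\eps}G(1)$-type bounds and $\int\rho\gtrsim 1/\max(1,\sqrt{\mathrm{var}})$ hold, and the decisive point is that the partition–function factors $G$ (and the ``bulk'' normalization that is exponentially small in $\Le$) essentially cancel between numerator and denominator, leaving the ratio controlled by $\exp(-c\,M^{1+\min(1,\beta)}/\eps)$ up to polynomial factors in $1/\eps$.

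\textbf{Conclusion.} Combining Steps 2 and 3, for $M\ge M_1$ (chosen large, depending only on $V$) and $\eps\le\eps_0$ one gets $\mu^{-1,1}_{\eps,(-\Le,\Le)}(|u(x_0)|\ge M)\le\exp(-M/(\eps C_2))$, uniformly in $x_0\in(-\Le,\Le)$ and in $\Le$ in the admissible range. The hardest point is precisely Step 3: one must show that the probabilistic cost of a large value of $u$ at $x_0$ is \emph{local} (governed by the Modica–Mortola energy $\sim\Phi(M)$) despite the large system size and the a priori unbounded boundary data, which is why the cancellation of the $\Le$–dependent partition functions in the Gibbs ratio — rather than a recursion on the threshold — is the right mechanism.
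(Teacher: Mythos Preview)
Your Step~2 contains a bookkeeping slip: to lower bound $\DE$ you need an \emph{upper} bound on the unconstrained minimum, not the trivial lower bound ``nonnegative.'' With boundary data $|u_\pm|\le M/2$ the unconstrained minimum can be as large as $2\Phi(M/2)+c_0+o(1)$, so the correct inequality is $\DE\ge 2\Phi(M)-4\Phi(M/2)-c_0-o(1)$, which is still $\gtrsim M^{(3+\beta)/2}$ and so this step is salvageable.

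The genuine gap is Step~3. You correctly identify that the recursion $p(M)\le 2p(M/2)+\text{(tiny)}$ does not close, but your proposed cure --- the direct Gibbs computation with the partition-function ratio $G(v)$ --- leaves the hard part undone. The ``nested-tube'' bound you sketch would give an \emph{absolute} bound on $G(v)$, but $G(1)$ is itself exponentially small in $\Le$ (the partition functions are), so an absolute bound on the numerator is useless; what you actually need is a bound on the \emph{ratio} $G(v)/\sup_w G(w)$, uniformly in $\Le$ and $x_0$. Your sentence ``the partition-function factors $G$ \ldots\ essentially cancel between numerator and denominator'' asserts exactly this but does not prove it, and proving it (a statement about ratios of partition functions with different boundary data, uniformly over exponentially large systems) is the whole content of the lemma. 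This is essentially the Schr\"odinger ground-state approach of \cite{RVE05,BBB08}, which is a legitimate alternative route but requires substantial additional machinery that you have not supplied.

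The paper's argument avoids this circularity by a completely different mechanism. It splits according to whether the path stays above level $3M$ over an interval of length $\ge 1$ or not. In the first case, it uses a \emph{vertical reflection over the line $u=2M$} between appropriate hitting points: this map preserves the Gaussian reference measure $\mathcal{W}$ (via the strong Markov property and symmetry), and under it $\int V$ decreases pointwise by at least $2M/C_4$ thanks to the superlinear growth of $V$ --- yielding the bound directly from the Gibbs density, with no large-deviation input at all. In the second case, it conditions via the \emph{strong} Markov property at the hitting points $\chi_\pm$ of level $3M$ inside $[x_0-1,x_0+1]$: now the boundary values are \emph{exactly} $3M$, not unknown, so Proposition~\ref{pr:LD1} applies on a bounded interval with bounded boundary data, and a short variational argument gives $\DE\ge (M-1)^2$. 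The key idea you are missing is to condition at \emph{hitting times of a fixed level} rather than at fixed spatial points, which makes the boundary values deterministic and kills the recursion problem outright.
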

The proof of the lemma is given in Subsection~\ref{ss:onept}. With this preliminary estimate in hand, we turn now to the proof of Theorem~\ref{t:layers}. We consider separately the upper and lower bounds.

\bigskip

\begin{proof}[Proof of Theorem~\ref{t:layers}]
Fix $\gamma>0$. Fix a corresponding $\delta>0$ sufficiently small. Let $\ell$ and $M$ be large constants to be specified later.  To begin with, let $\ell$ be large enough so that~\eqref{e:lazy} and~\eqref{fdy} hold for the given $\delta$.  We will divide the system into  $2N_\eps$ intervals with
\begin{align}
N_\eps:=\left\lfloor\frac{\Le}{\ell}\right\rfloor,\label{ne}
\end{align}
labelling the endpoints:
\begin{align}
x_{\pm k}:=\begin{cases}\label{xk}
\pm k\,\ell,& k=0,\ldots, (N_\eps -1),\\
\pm \Le, & k=\pm N_\eps.
\end{cases}
\end{align}
We will work with this grid for the rest of this paper.

Then we consider the (overlapping) intervals
\begin{align}
I_k := [x_{k-1}, x_{k+1}],\qquad\text{for}\quad k=-(N_\eps-1),-(N_\eps-2),\ldots,(N_\eps-1).\label{ik}
\end{align}

Notice that $x_{\pm N_\eps}$ is separated from $x_{\pm(N_\eps-1)}$ by
up to length $2\ell$, while the rest of the points are separated by length $\ell$.  Since our energetic estimates will all hold uniformly for subsystems that are sufficiently large, and our large deviation bounds will all hold uniformly for subsystems whose length vary within a compact set, it will not matter that the boundary points may be up to $2\ell$ away from the neighboring points and we will ignore this issue for the rest of the proof.

\begin{center}
{\bf Upper bound.}
\end{center}
Here we will prove the upper bound, i.e., that
\begin{align*}
\mu_{\eps,(-\Le,\Le)}^{-1,1} \big( & \text{ $u$ has $(2n+1)$ transition layers  } \big) \\
&\lesssim (\Le)^{2n}\,\exp\left(-\frac{ 2n c_0-\gamma}{\eps}\right).
\end{align*}
As explained in Section~\ref{s:detac}, for the upper bound we will work with $\delta^-$~transition layers, and it will be sufficient to show that for any sufficiently small $\gamma>0$ and some sufficiently small $\delta>0$,  there is an $\eps_0>0$ such that for all $\eps\leq \eps_0$ we have
\begin{align}
\mu_{\eps,(-\Le,\Le)}^{-1,1} \big(&  \text{ $u$ has $(2n+1)$ $\delta^-$~transition layers  } \big)\notag\\
 &\lesssim (\Le)^{2n}\,\exp\left(-\frac{ 2n c_0-\gamma}{\eps}\right).\label{jn20.3}
\end{align}
Since the probability of transition layers is less than the probability of $\delta^-$ transition layers, the proof of the upper bound follows immediately.

The subtle part of the proof will be estimating the probability of a transition layer on a subsystem.  Recall from Subsection~\ref{ss:methods} that we cannot get the expected cost $c_0$ by estimating the probability
\begin{align*}
\mu_{\eps,(-2\ell,2\ell)}^{u_-,u_+}  \big(\text{ $u$ has a $\delta^-$~transition layer in }(-\ell,\ell)\big)
\end{align*}
because of the nontrivial dependence of this probability on the boundary conditions $u_\pm$.  To avoid this problem, we will use reflection operators to transform $\delta^-$~transition layers into wasted $\delta^-$ excursions (see Definition~\ref{def:wasted} and the accompanying discussion).

With this scheme in mind, let us now begin our estimates.

\medskip

\underline{Step 1}. Fix $\gamma>0$. Let $\delta>0$ be a small constant and $M<\infty$ be a large constant to be chosen below. Our first step will be to
 decompose the set of functions in which we are interested.  Namely, we notice that the set of continuous paths $u: [-\Le, \Le] \to \R$ satisfying the boundary conditions $u(\pm \Le) = \pm 1$ and exhibiting at least $(2n+1)$ $\delta^-$~transition layers is contained in the union of the following three sets:

\bul The set of paths that exhibit an atypically large value at one of the $x_k$:
\begin{align}
\mathcal{A}_1 := \Big\{ u  \colon \, \exists k \in \big\{ -(N_\eps -1) , \ldots , (N_\eps-1)  \big\} \colon |u(x_k)| \geq M \     \Big\}. \label{e:defA1}
\end{align}

\bul The complementary set intersected with the set of paths that are bounded away from $\pm 1$ on all of  $[x_k,x_{k+1}]$ for some $k$:
\begin{align}
\mathcal{A}_2 := \complement\mathcal{A}_1\cap \Big\{ u  \colon \,   \exists k \in \big\{ - & (N_\eps-1), \ldots , (N_\eps-2) \big\}: \label{e:defA2}\\
&  u\in [-1+\delta, 1 - \delta] \;\text{ on all of }  [x_k, x_{k+1}]    \Big\}. \notag
\end{align}

\bul The complement of $\mathcal{A}_1$ intersected with the set of paths performing $(2n+1)$ $\delta^-$ transitions, each of which is completely contained in (at least) one of the overlapping intervals  $I_k$.  We denote this set
\begin{align*}
\mathcal{A}_3 := \complement\mathcal{A}_1\cap \Big\{&   u \colon \text{there exist $2n$ integers} \\
&-N_\eps+1\leq k_1 \leq k_2 \leq \cdots \leq k_{2n}\leq N_\eps-1\text{ such that}\\
& \text{in each interval $I_{k_i}$   there is a $\delta^-$  layer}\Big\}.\\
\end{align*}
Note that there might be more than one layer in a single interval; the $2n$-tuple $(k_1, \ldots, k_{2n})$ allows for a possible higher multiplicity. There may also be \emph{more than} $2n$ layers; the statement is that there are \emph{at least} $2n$ layers.

Above, we have made use of the boundary conditions.  Indeed, for $\Aa_1$, we have omitted the points $x_{\pm N_\eps}$ since $u(\pm \Le)=\pm 1$.  For $\Aa_2$ we have omitted the boxes at the boundary since the boundary conditions make it impossible that $u(x) \in [-1+\delta, 1 - \delta] $ for all $x $ in the box.  For $\Aa_3$ we have recalled that the boundary conditions force there to be at least one transition. Even though $u$ has $2n+1$ layers, we can  expect an additional cost only for the $2n$ ``extra'' layers and hence only keep track of $2n$ layers.


Because the set of interest is contained within the above-mentioned sets, it suffices to bound
\begin{align}\label{e:decomposition}
 \mu_{\eps,(-\Le,\Le)}^{-1,1} \big(\mathcal{A}_1 \big)   +   \mu_{\eps,(-\Le,\Le)}^{-1,1} \big(\mathcal{A}_2  \big)  +  \mu_{\eps,(-\Le,\Le)}^{-1,1} \big(\mathcal{A}_3   \big).
\end{align}

 \medskip

 \underline{Step 2}.
 We first give a bound on the probability of $\mathcal{A}_1$. This bound follows directly from Lemma \ref{le:onept}. In fact, we get
 \begin{eqnarray}
 \mu_{\eps,(-\Le,\Le)}^{-1,1} \big(\mathcal{A}_1 \big) \,  & \leq&  \sum_{ k= -(N_\eps-1) }^{ N_\eps-1 }  \mu_{\eps,(-\Le,\Le)}^{-1,1} \big( |u(x_k)| \geq M  \big)\notag \\
 & \overset{\eqref{onept} }{\leq}&   \bigg( 2 \frac{\Le}{\ell} -1 \bigg)     \exp \bigg( - \frac{M}{\eps C_2} \bigg)\notag\\
 &\leq& \Le\exp \bigg( - \frac{M}{\eps C_2} \bigg).\label{sq}
 \end{eqnarray}
In particular, we can choose $M$ large enough so that $M/C_2\geq 2nc_0$ and
\begin{eqnarray*}
 \mu_{\eps,(-\Le,\Le)}^{-1,1} \big(\mathcal{A}_1 \big) \,
 &\leq& \Le\exp \bigg( - \frac{2nc_0}{\eps} \bigg).
 \end{eqnarray*}
Hence, the probability of $\Aa_1$ is of higher order with respect to the right-hand side of~\eqref{jn20.3}.

We remark that it is here where $M$ (and therefore also $\eps_0$) acquires a dependence on $n$.

\medskip

\underline{Step 3}.
To bound the second probability in~\eqref{e:decomposition}, we write
\begin{align}
\lefteqn{ \mu_{\eps,(-\Le,\Le)}^{-1,1}  \big(\mathcal{A}_2  \big)}  \notag\\
 \leq  & \sum_{ k= -(N_\eps-1) }^{ N_\eps -2}  \mu_{\eps,(-\Le,\Le)}^{-1,1} \Big(   u  \in
 [-1+\delta, 1 - \delta ] \text{ on all of }  [ x_k, x_{k+1} ] \notag\\
 & \qquad \qquad \qquad \qquad \qquad \text{ and } u(x_{k-1}), u(x_{k+2}) \in [-M,M]  \Big).\label{jn20.1}
\end{align}
Using the Markov property~\eqref{e:Markomub}, we can write for any $k$
\begin{align}
\lefteqn{\mu_{\eps,(-\Le,\Le)}^{-1,1} \Big( u \in [-1+\delta, 1 - \delta ]\text{ on all of }[ x_k, x_{k+1} ]  }\notag\\
 & \qquad \qquad \qquad \qquad \qquad \text{ and } u(x_{k-1}), u(x_{k+2}) \in [-M,M]  \Big) \notag\\
&=\int_{-M}^M \int_{-M}^M   \, \nu_{k-1, k+2}(du_-, du_+) \notag\\
& \qquad\qquad\times  \mu^{u_-,u_+}_{\eps,(x_{k-1},x_{k+2})} \Big(  u \in [-1+\delta, 1 - \delta ]\text{ on all of } [ x_k, x_{k+1} ]  \Big),\label{jn20}
\end{align}
where $\nu_{k-1, k+2}$ denotes the marginal distribution of the pair $(u(x_{k-1}),u(x_{k+2}))$.
We now want to invoke the large deviation bound~\eqref{e:LDUB1} and the energy bound from Lemma \ref{le:lazy} for the measures
$  \mu^{u_-,u_+}_{\eps,(x_{k-1},x_{k+2})}$. To this end, we observe that a $\delta/2$ ball around functions contained in $[-1+\delta,1-\delta]$ consists of functions contained in $[-1+\delta/2,1-\delta/2]$. Redefining $C_1$ by up to a factor of $8$ to account for the parameter $\delta/2$ and interval length (here $\ell$ rather than $2\ell$), we have that,  for any $\gamma >0$ and $\delta>0$, there exists an $\eps_0>0$ such that, for all $\eps \leq \eps_0$ and all $u_-, u_+ \in [-M,M]$, there holds
\begin{align}
\mu^{u_-,u_+}_{\eps,(x_{k-1},x_{k+2})} &\Big(  u(x)  \in [-1+\delta, 1 - \delta ] \text{ on all of } [x_k,x_{k+1}]\Big)  \notag\\
&\leq \exp \Big( -   \frac{1}{\eps} \Big(  \frac{2 \delta^2\ell }{C_1} - \gamma \Big)     \Big) .\label{jn20.2}
\end{align}
(Here we have used the fact that $\Le\gg 1$, so that we can choose $\ell>\ell_*$ to satisfy Lemma~\ref{le:lazy}.)
Letting $\gamma=1$ and choosing $\ell$ so that $\delta^2\ell\geq C_1$, the combination of~\eqref{jn20.1},~\eqref{jn20}, and~\eqref{jn20.2} gives
\begin{equation}
\mu_{\eps,(-\Le,\Le)}^{-1,1} \big(\mathcal{A}_2  \big) \, \leq \, \Le \exp\Big( - \frac{ \delta^2\ell }{C_1\,\eps}  \Big) ,  \label{e:A2bou}
\end{equation}
where we have trivially bounded the integral of $\nu$ by $1$.
In particular, for $\ell$ large but order-one (and depending on $n$, $\delta$), we have that the probability of $\mathcal{A}_2 $ is also of higher order with respect to the right-hand side of~\eqref{jn20.3}.

\medskip

\underline{Step 4}. Finally, we arrive at the subtler part, in which we will need the reflection operators. To begin with, let $\bar{k}=(k_1,\ldots,k_{2n})$ and write
\begin{align}
\mu_{\eps,(-\Le,\Le)}^{-1,1}  \big(\mathcal{A}_3 \big)
 \leq     \sum_{  \bar{k}\in\mathcal{I} }
\mu_{\eps,(-\Le,\Le)}^{-1,1} \big( \mathcal{A}_3^{\bar{k}}  \big), \label{e:transitions1}
\end{align}
where $\mathcal{I}$ is the set of nondecreasing $2n$-tuples, i.e.,
\begin{align*}
\mathcal{I} \, := \, \Big\{  & \bar{k} =  (k_1, k_2, \ldots, k_{2n})   \in \big\{ \!  -(N_\eps-1) , \ldots, (N_\eps-1)  \big\}^{2n} \\
&\text{with}\, k_{i-1}\leq k_i    \Big\},
\end{align*}
and
\begin{align}
\mathcal{A}_3^{\bar{k}} \,: = \,& \complement{\mathcal{A}_1} \cap\Big\{ \text{in each  $I_{k_i}$  there is a $\delta^-$~layer} \Big\}.\label{e:defa3}
\end{align}
The right-hand side of \eqref{e:defa3} is slightly ambiguous if several indices coincide or in the case of overlapping intervals, i.e. if $k_{i+1} = k_i+1$ for some $i$. If $j$ subsequent indices coincide, the right-hand side of  \eqref{e:defa3} has to be interpreted as saying that there are at least  $j$ $\delta^-$~transitions in the corresponding interval. In the case of overlapping intervals, for instance if $k_{i+1}=k_i+1$, the right-hand side of \eqref{e:defa3} should be interpreted to mean that there are at least two transitions in the interval $[(k_{i}-1)\ell, (k_{i}+2)\ell]$ and, moreover, one is fully contained in $[(k_{i}-1)\ell, (k_{i}+1)\ell]$ and one is fully contained in $[k_{i}\ell, (k_{i}+2)\ell]$.

The index set satisfies
\begin{align}
\big| \mathcal{I}\big| \lesssim N_\eps^{2n}\lesssim (\Le)^{2n}.\label{9pm}
\end{align}
(Recall our convention for the use of the symbol $\ls$ introduced in Notation~\ref{N:Notation}.) Hence, to complete the proof of~\eqref{jn20.3}, it suffices to show that for fixed  $\bar{k}\in\mathcal{I}$, we have
\begin{align}
\mu_{\eps,(-\Le,\Le)}^{-1, 1}\Big( \mathcal{A}_3^{\bar{k}}\Big)\lesssim\exp\left(-\frac{2nc_0-\gamma}{\eps}\right).\label{502pm}
\end{align}

As explained above, the main step is to reduce the problem of estimating the probability of $\delta^-$ layers to estimating the probability of wasted $\delta^-$ excursions. This will be achieved through suitable \emph{reflections}.

Let us at first assume that the $I_{k}$ are well-separated in the sense that
\begin{align*}
k_i\geq k_{i-1}+4\;\;\text{for all } i.
\end{align*}
Let us also assume that we are away from the boundary, i.e., that
\begin{align*}
k_1\geq -N_\eps+2,\quad k_{2n}\leq N_\eps-2.
\end{align*}
We will consider the possibilities of (a) intervals that overlap or are nearby, (b) intervals that are the same ($k_i=k_{i+1}$), and (c) boundary intervals at the end of Step 5.

We start by defining $n$ left stopping points $\chi_1, \ldots, \chi_n$ in the following manner. For $i =1, \ldots, n$ we set
\begin{align}
\chi_{i}:= \inf \big\{ &y \geq x_{k_{i}-1}  \colon u(y) =0\;\text{and }  |u(x)|=1-\delta \notag \\
&\text{ for some } x\in (x_{k_{2i-1}-1 },y)   \big\}.\label{e:defstp}
\end{align}
Here we set $\chi_{i} = \Le$ if the corresponding set is empty. It is easy to see that these random points are all left stopping points.  In a similar fashion, for $i =n+1, \ldots, 2n$ we set
\begin{align}
\chi_{i}:= \sup \big\{ &y \leq x_{k_{i}+1} \colon u(y) =0\;\text{and }  |u(x)|=1-\delta \notag \\
&\text{ for some } x\in (y,x_{k_{i}+1} )   \big\}.\label{e:defstp1}
\end{align}
Here we set $\chi_i  =-\Le$ if the corresponding set is empty.  Then $\chi_i$ is a right stopping point for all $i=n+1, \ldots 2n$. For any $u$ in $\A_3^{\bar{k}}$, all the left and right stopping points $\chi_i$  are contained in the corresponding intervals $I_{k_i}$ and, furthermore, we have
\begin{equation}
\chi_1 < \chi_2 < \ldots < \chi_n < \chi_{n+1} < \ldots < \chi_{2n}.\label{dubbub}
\end{equation}
Finally, note that as soon as $\chi_i \neq \pm \Le$, we have that $u(\chi_{i}) =0$.

For any left stopping point $\chi_l  \in \{ \chi_1, \ldots \chi_n\}$ and any right stopping point $\chi_r \in \{ \chi_{n+1}, \ldots , \chi_{2n}\}$, we now define the reflection operator  $R_{\chi_{l}}^{\chi_{r}}$. If $\chi_{l} < \chi_{r}$ (which is the case for any $u \in \Aa_{3}^{\bar{k}}$ as remarked above), we set
\begin{equation*}
\Rrl u(x) :=
\begin{cases}
- u(x)  \qquad &\text{for } x \in [\chi_l , \chi_r],\\
 u(x)  \qquad &\text{for } x \notin [\chi_{l} , \chi_r].
\end{cases}
\end{equation*}
 If $\chi_{l} \geq \chi_r$ we set $\Rrl u := u$. We clearly have $\Rrl \Rrl = \Id$; hence, $\Rrl$ is injective and onto. In order to show that $\Rrl$ preserves $\mu_{\eps,(-\Le,\Le)}^{-1, 1}$, we observe that for any measurable and bounded test function $\Phi\colon C([-\Le,\Le]) \to \R$, we have
 \begin{eqnarray*}
  &&\mathbb{E}_{(-\Le,\Le)}^{\mu_\eps,-1,1} \big( \Phi \circ \Rrl \big) \\
  &=& \mathbb{E}_{(-\Le,\Le)}^{\mu_\eps,-1,1} \big( \mathbf{1}_{ \{\chi_l < \chi_r \} }\Phi \circ \Rrl  \big) +  \mathbb{E}_{(-\Le,\Le)}^{\mu_\eps,-1,1} \big(\mathbf{1}_{ \{\chi_l \geq \chi_r \} }  \Phi \circ \Rrl \big) \\
  &=&  \mathbb{E}_{(-\Le,\Le)}^{\mu_\eps,-1,1}\Big(\, \mathbf{1}_{ \{\chi_l < \chi_r \} } \,  \mathbb{E}_{(-\Le,\Le)}^{\mu_\eps,-1,1}\big( \Phi \circ \Rrl  \big| \F_{[-\Le,\chi_l]} \vee \F_{[\chi_r, \Le]}   \big)\Big)  \\
  &&\qquad +  \mathbb{E}_{(-\Le,\Le)}^{\mu_\eps,-1,1} \big( \mathbf{1}_{ \{\chi_l \geq \chi_r \} }\,\Phi \big)\\
  &\overset{\eqref{e:strMarkovmu}}{=}&  \mathbb{E}_{(-\Le,\Le)}^{\mu_\eps,-1,1}\Big( \mathbf{1}_{ \{\chi_l < \chi_r \}} \, \mathbb{E}_{(\chi_l,\chi_r)}^{\mu_\eps,{ \bf u}}\big( \Phi \circ \Rrl    \big) \,  \Big)  \\
  &&\qquad +  \mathbb{E}_{(-\Le,\Le)}^{\mu_\eps,-1,1} \big(\mathbf{1}_{ \{\chi_l \geq \chi_r \} }\,\Phi \big).
 \end{eqnarray*}
Now we can use the fact that on the set $\{\chi_l < \chi_r \}$ we have almost surely that $u(\chi_l)= u(\chi_r)=0$ and  the invariance of the measure $\mu_{\eps,(\chi_l, \chi_r)}^{0,0}$ under the reflection $R\colon u \mapsto -u$. Note that the latter property relies on the symmetry of the double-well potential $V$. We get
\begin{eqnarray}
  && \hspace{-30pt}\mathbb{E}_{(-\Le,\Le)}^{\mu_\eps,-1,1}\Big(\, \mathbf{1}_{ \{\chi_l < \chi_r \} } \,  \mathbb{E}_{(\chi_l,\chi_r)}^{\mu_\eps,{\bf u}}\big( \Phi \circ \Rrl  \big) \, \Big)   +  \mathbb{E}_{(-\Le,\Le)}^{\mu_\eps,-1,1} \big(  \mathbf{1}_{ \{\chi_l \geq \chi_r \} }\,\Phi \big)\notag\\
 &=&  \mathbb{E}_{(-\Le,\Le)}^{\mu_\eps,-1,1}\Big(\mathbf{1}_{ \{\chi_l < \chi_r \} } \, \mathbb{E}_{(\chi_l,\chi_r)}^{\mu_\eps,{\bf u}}\big( \Phi     \big) \Big) +   \mathbb{E}_{(-\Le,\Le)}^{\mu_\eps,-1,1} \big( \mathbf{1}_{ \{\chi_l \geq \chi_r \} }\,\Phi \big)\notag\\
  &\overset{\eqref{e:strMarkovmu}}{=}& \mathbb{E}_{(-\Le,\Le)}^{\mu_\eps,-1,1}\big( \Phi \big)  \label{e:rinv}.
 \end{eqnarray}

Now we are finally ready to define the reflection operator as the composition
\begin{align}
\mathsf{R}:=R_{\chi_{1}}^{\chi_{2n}}\circ \cdots
 \circ  R_{\chi_{n-1}}^{\chi_{n+2}} \circ  R_{\chi_{n}}^{\chi_{n+1}}.\label{r}
\end{align}
We have again that $\mathsf{R}^2= \Id$. For any profile $u \in \Aa_{3}^{\bar{k}}$, the operator $\mathsf{R}$ acts in the following way:  In intervals of the form $(\chi_{i}, \chi_{i+1})$ for $i$ odd, $u$ is replaced by $-u$, and on the rest of the system, $u$ is left invariant. The action of the operator $\mathsf{R}$ on a typical  path in $\Aa_3$ is illustrated in Figure \ref{fig:4}.

\begin{figure}
\centering
\begin{tikzpicture}[xscale=0.025, yscale=1.2, >=stealth]

\draw[black, style=dotted] (1,0) -- (500,0) node[anchor=north east]{\small $x$};
\foreach \x in {20,38,56,74,...,488}
\draw[black, thin, style = densely dotted] (\x, -.05) -- (\x,0.05);

\draw[black] plot file{./Graphs/G3P2.txt};
\draw[black, style = densely dashed] plot file{./Graphs/G3P1.txt};
\draw[black, style = densely dashed] plot file{./Graphs/G3P3.txt};

\draw[->] (88,.6) --  node[fill=white]{$\mathsf{R}$} (88,-.6) ;
\draw[->] (307.5,.6) --  node[fill=white]{$\mathsf{R}$} (307.5,-.6) ;

\draw (51,0) node[anchor= south east]{$\chi_1$};
\draw (125,0) node[anchor= south west]{$\chi_2$};
\draw (265,0) node[anchor= south east]{$\chi_3$};
\draw (350,0) node[anchor= south west]{$\chi_4$};
\end{tikzpicture}

\caption{A typical path in $\Aa_3$. The reflection operator $\mathsf{R}$ turns the up and down transitions in the intervals $I_{k_i}$ into wasted excursions in the same intervals.}
\label{fig:4}
\end{figure}
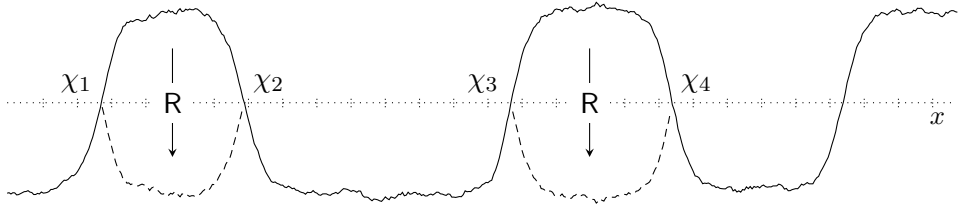


Finally, define the reflection of a set $\mathcal{A}$ as
\begin{align*}
\mathsf{R} \mathcal{A}=\{v: v= \mathsf{R}\, u\;\text{for some}\;u\in\mathcal{A} \}.
\end{align*}
As a composition of measure-preserving transformations, the operator  $\mathsf{R}$ preserves $\mu^{-1,1}_{\eps,(-\Le,\Le)}$ as well. Hence, we have in particular that
\begin{align}
\mu^{-1,1}_{\eps,(-\Le,\Le)} \big( \mathcal{A}_3^{\bar{k}}  \big)= \mu^{-1,1}_{\eps,(-\Le,\Le)} \Big(\mathsf{R}  \mathcal{A}_3^{\bar{k}}  \Big).\label{501pm}
\end{align}
This is useful because for $u \in \Aa^{\bar{k}}_3$ the profile $\mathsf{R}u$ has a wasted $\delta^-$ excursion on each interval $I_{k_i}$ (as is easy to check).
In other words, we note that
$\mathsf{R}\mathcal{A}_3^{\bar{k}}$ is a (proper) subset of the functions with wasted $\delta^-$ excursions in the given intervals.

\underline{Step 5.} It remains to bound the probability of the sets $\mathsf{R} \Aa_{3}^{\bar{k}}$. Again, let us at first assume that the $I_{k}$ are well-separated and away from the boundary in the sense described above. We consider the more general case at the end of this step.

Using the Markov property again, we have
\begin{align}
&\!\!\mu^{-1,1}_{\eps,(-\Le,\Le)} \Big(\mathsf{R} \mathcal{A}_3^{\bar{k}}  \Big)\notag\\
&\leq\mu^{-1,1}_{\eps,(-\Le,\Le)} \big(u\in \complement \mathcal{A}_1  \text{ and in each  $I_{k_i}$ there is a wasted $\delta^-$ excursion}\big) \notag\\
&= \,  \int_{-M}^M  \cdots  \int_{-M}^M  \,   \nu_{k_1-2, k_1 +2,   k_2-2,  \ldots , k_{2n}+2 } \big( du_{k_1 -2} , du_{k_1+2}, du_{k_2-2}, \ldots , du_{k_{2n} +2} \big)   \notag\\
& \qquad\prod_{\substack{i=1  }}^{2n}   \,  \mu_{\eps,  (x_{k_i -2}, x_{k_i +2 })}^{u_{k_i -2}, u_{k_i +2} } \Big(  \text{there is a wasted $\delta^-$ excursion in $I_{k_i}$}  \Big), \label{e:MarkovExc}
\end{align}
where $\nu_{k_1-2, k_1 +2,   k_2-2,\ldots, k_{2n}+2}$ denotes the distribution of the $4n$-dimensional marginal $u\big(x_{k_1-2} \big) , u\big(x_{k_1 +2} \big), u\big(x_{k_2 -2} \big) , \ldots, u\big(x_{k_{2n} +2}\big)$.

Now we would like to apply the large deviation bound~\eqref{e:LDUB1} and the energy bound from Lemma \ref{l:cl}. We observe that a $\delta$ ball around paths with a wasted $\delta^-$ excursion is equal to the set of paths with a wasted $(2\delta)^-$ excursion. As a result,
we get that for any $\gamma >0$ and $\delta>0$ there exists an $\eps_0>0$ such that for all $\eps\leq \eps_0$ and for all boundary data contained in $[-M,M]$, the probability of a wasted $\delta^-$ excursion is bounded by
\begin{align}
\mu_{\eps,  (x_{k_i -2}, x_{k_i +2 })}^{u_{k_i -2}, u_{k_i +2} } &\Big(  \text{there is a wasted $\delta^-$ excursion in $I_{k_i}$}  \Big)
\notag\\
&\leq\exp\Big( - \frac{1}{\eps}  \big( c_0 - 2C \delta- \gamma  \big) \Big).\label{5pm}
\end{align}
Choosing $\delta$ sufficiently small with respect to $\gamma$ and estimating the integral of $\nu$ by $1$ as usual, we have from the combination of~\eqref{501pm}, \eqref{e:MarkovExc}, and~\eqref{5pm} that~\eqref{502pm} holds (up to a redefinition of $\gamma$). Thus, finally,~\eqref{e:transitions1},~\eqref{9pm}, and~\eqref{502pm} imply
\begin{align*}
\mu_{\eps,(-\Le,\Le)}^{-1,1}  \big(\mathcal{A}_3 \big)\lesssim (\Le)^{2n}\exp\left(-\frac{2nc_0-\gamma}{\eps}\right),
\end{align*}
which concludes the proof of the upper bound in the well-separated case.

It remains to consider the three special cases:
(a) intervals that overlap or are nearby, (b)  intervals that are the same ($k_i=k_{i+1}$), (c) intervals that are boundary intervals.

\medskip

{\bf Case (a)} If two or more intervals  overlap (i.e., if $k_i=k_{i-1}+1$) or are nearby (i.e., if $k_{i-1}+2\leq k_i\leq k_{i-1}+3$), then we lump them together into a single, larger interval and proceed as in (b), below. The size of the largest possible interval formed in this way is $(4+3(2n-1))\ell$.  Our energy estimates require only that the interval length be sufficiently large and our large deviation estimates are uniform as long as the interval length falls within a compact set. (Here we rely on the fact that $n$ is order-one with respect to $\eps$.)

\medskip

{\bf Case (b)} If a multi-index $\bar{k}$ has repeated indices so that there is more than one  $\delta^-$~transition layer in a single interval, then we will use large deviation estimates for the event of having \emph{more than one wasted $\delta^-$ excursion} in a single interval.

Assume that we have $k_j=k_{j+1} = \ldots k_{j+m}$ for some $j <2n$ and some $1\leq m \leq 2n$. Furthermore, assume that the set of $m+1$ indices is maximal in the sense that either $j=1$ or $k_{j-1} \leq k_j-4$ and similarly that either $j+m=2n$ or $k_{j+m+1} \geq k_{j+m}+4$. In this case, we define the $m+1$ stopping points $\chi_j, \ldots \chi_{j+m}$ in the following way.

Consider any index $i\in\{j,\ldots,j+m\}$ that satisfies $i\leq n$. For $i=j$, we define $\chi_j$ as in~\eqref{e:defstp}. On the other hand, for $i>j$, we define
\begin{align*}
\chi_{i}:= \inf \big\{ y \geq x_{k_{j}-1}  \colon \text{$u(y)=0$ and there are $(i-j)$  $\delta^-$ layers in }  (x_{k_{j}-1 },y)   \big\}.
\end{align*}
As usual, we define $\chi_{i} = \Le$ if the set above is empty.

Now consider any index $i\in\{j,\ldots,j+m\}$ that satisfies $i> n$.
For $i=j+m$, we define $\chi_{j+m}$  as in \eqref{e:defstp1}. On the other hand, for $i<j+m$, we define
\begin{align*}
\chi_{i}:= \sup \big\{ &y \leq x_{k_{j}+1}  \colon u(y)=0 \notag \\
&\text{ and there are $(m-(i-j))$  $\delta^-$ layers in }  (y, x_{k_{j}+1 })   \big\}.
\end{align*}
Again, we take the usual definition $\chi_{i}= -\Le$ if the set above is empty.

As above these random points $\chi_i$ are left stopping points for $i\leq n$ and right stopping points for $i \geq n+1$. Furthermore, we still have that~\eqref{dubbub} holds for all $u \in \Aa^{\bar{k}}_3$.
The measure preserving reflection operator $\mathsf{R}$ can be defined as above in \eqref{r}, and $\mathsf{R}$ maps each $u \in \Aa_3^{\bar{k}}$ to a path that has $m+1$ wasted $\delta^-$ excursions in $I_{k_j}$. (Specifically, we mean $m+1$ wasted $\delta^-$ excursions on intervals $(x_-^i,x_+^i)\subset I_{k_j}$ for $i\in\{j,\ldots,j+m\}$ that are mutually disjoint except for possibly the endpoints.)

We leave it to the reader to verify that a generalization of Lemma~\ref{l:cl} is:

\begin{lemma}\label{l:gen}
There exists $C<\infty$ with the following property. Fix  $\delta >0$.  For any system sizes $\ell_1, \,\ell_2 <\infty$ sufficiently large and  boundary conditions $u_{\pm}\in\R$, set
\begin{align*}
\mathcal{A}^{\rm bc}&:=\{u\in C([-\ell_1-\ell_2,\ell_1+\ell_2])\colon u(-\ell_1-\ell_2)=u_{-}, u(\ell_1+\ell_2)=u_+\},\\
\mathcal{A}_0^{\rm bc}&:=\{u\in \mathcal{A}^{\rm bc}\colon u\,\text{has }m\text{ disjoint wasted $\delta^-$ excursions in } (-\ell_1,\ell_1)\}.
\end{align*}
Define the optimal cost
\begin{align*}
c_\ell:=\frac{1}{m}\left(\inf_{\mathcal{A}_0^{\rm bc}}E_{(-\ell_1-\ell_2,\ell_1+\ell_2)}(u)
-\inf_{\mathcal{A}^{\rm bc}}E_{(-\ell_1-\ell_2,\ell_1+\ell_2)}(u)\right).
\end{align*}

Then uniformly with respect to the boundary values $u_\pm$, one has
\begin{align*}
c_\ell-\,c_0\leq\,C\,\delta.
\end{align*}
\end{lemma}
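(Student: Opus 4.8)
The plan is to establish the equivalent statement that, for boundary data $u_\pm$ in a fixed compact set $[-M,M]$ (as in Lemma~\ref{l:cl} it is cleanest to let $\ell_*$ depend on $M$), there is a competitor $v\in\mathcal{A}_0^{\rm bc}$ with
\[
E_{(-\ell_1-\ell_2,\,\ell_1+\ell_2)}(v)\le \inf_{\mathcal{A}^{\rm bc}} E_{(-\ell_1-\ell_2,\,\ell_1+\ell_2)}+m\,(c_0+C\delta);
\]
dividing the resulting bound on $c_\ell$ by $m$ then gives the claim. I would build $v$ by grafting $m$ disjoint wasted $\delta^-$ excursions (Definition~\ref{def:wasted}) into a region where an energy minimizer $u_\ast$ of $E:=E_{(-\ell_1-\ell_2,\,\ell_1+\ell_2)}$ over $\mathcal{A}^{\rm bc}$ is already within $\delta$ of one of the wells. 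Such a minimizer exists by the direct method (minimizing sequences are bounded in $H^1$ since their gradients are bounded in $L^2$ and one endpoint value is fixed, and $E$ is weakly lower semicontinuous), and it is smooth and solves the Euler--Lagrange equation $u''=V'(u)$.

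The first step is an a priori description of $u_\ast$ on the middle interval $(-\ell_1,\ell_1)$. Comparing $u_\ast$ with the explicit test function that routes $u_\pm$ monotonically to the nearer well over the two boundary collars of width $\ell_2$ and performs at most one transition on $(-\ell_1,\ell_1)$, one obtains an a priori bound $E(u_\ast)\le C$ with $C=C(V,M)$ (this is where boundedness of $u_\pm$ is used). Since $\tfrac12(u_\ast')^2-V(u_\ast)$ is constant along $u_\ast$, an elementary phase-plane analysis using this first integral together with $E(u_\ast)\le C$ shows that, because $V(0)>0$, every oscillation of $u_\ast$ between consecutive zeros carries at least a fixed positive amount of energy; hence $u_\ast$ has at most $Z=Z(V,M)$ zeros on $(-\ell_1,\ell_1)$. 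Together with $\int_{-\ell_1}^{\ell_1}V(u_\ast)\le C$ and $v_\delta:=\min\{V(w):\dist(w,\{-1,1\})\ge\delta\}>0$, this forces $\dist(u_\ast,\{-1,1\})<\delta$ on all of $(-\ell_1,\ell_1)$ except a set of measure $\le C/v_\delta$. Removing the (at most $Z$) zeros of $u_\ast$ therefore splits $(-\ell_1,\ell_1)$ into at most $Z+1$ intervals on each of which $u_\ast$ has a constant sign; let $J$ be the longest, so $|J|\ge 2\ell_1/(Z+1)$, let $s\in\{-1,1\}$ be the well of that sign, and note that $\{x\in J:|u_\ast(x)-s|<\delta\}$ has measure at least $|J|-C/v_\delta$.

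The grafting step is the standard Modica--Mortola construction. Set $L_0:=\int_0^{1-\delta/2}dw/\sqrt{2V(w)}<\infty$ and let $\phi$ on $[-L_0,L_0]$ be the monotone solution of $|\phi'|=\sqrt{2V(\phi)}$ running from $\phi(-L_0)=s(1-\tfrac{\delta}{2})$ to $\phi(0)=0$, extended by even reflection to $[0,L_0]$; from $\tfrac12(\phi')^2+V(\phi)=2V(\phi)$ one computes $E_{(-L_0,L_0)}(\phi)=2\int_0^{1-\delta/2}\sqrt{2V(w)}\,dw<c_0$ (recall $c_0=2\int_0^1\sqrt{2V}$), and $\phi$ exhibits a wasted $\delta^-$ excursion (take $x_\pm=\pm L_0$, $x_0=0$). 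Provided $\ell_1$ is large enough (depending on $V,M,m,\delta$), inside $J$ one may choose $m$ intervals $[a_i,b_i]$ with pairwise disjoint interiors, of length between $2L_0+2$ and $2L_0+3+C/v_\delta$, with $u_\ast(a_i)$ and $u_\ast(b_i)$ within $\delta$ of $s$ --- possible since $J$ is long while the complement in $J$ of $\{|u_\ast-s|<\delta\}$ has only the bounded measure $C/v_\delta$. Then I define $v$ to equal $u_\ast$ off $\bigcup_i(a_i,b_i)$ and, on each $[a_i,b_i]$, to interpolate affinely from $u_\ast(a_i)$ to $s(1-\tfrac{\delta}{2})$, then equal an appropriate translate of $\phi$, then interpolate affinely from $s(1-\tfrac{\delta}{2})$ back to $u_\ast(b_i)$. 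Each affine collar stays within $\delta$ of $s$ and has length $O(1)$, so by the quadratic behaviour of $V$ at $s$ it contributes at most $C\delta^2$ to the energy; hence, using $E_{[a_i,b_i]}(u_\ast)\ge 0$,
\[
E(v)=E(u_\ast)-\sum_{i=1}^m E_{[a_i,b_i]}(u_\ast)+\sum_{i=1}^m E_{[a_i,b_i]}(v)\le E(u_\ast)+m\,(c_0+C\delta),
\]
and $v\in\mathcal{A}_0^{\rm bc}$ by construction, which is the displayed bound.

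I expect the only genuine obstacle to be the structural step for $u_\ast$: ruling out that the boundary data force the minimizer to stay far from the wells, or to oscillate, on a long portion of $(-\ell_1,\ell_1)$. This is precisely what the a priori bound $E(u_\ast)\le C$ and the first integral $\tfrac12(u_\ast')^2-V(u_\ast)=\mathrm{const}$ control; everything after that is the grafting already carried out for Lemma~\ref{l:cl}. (The complementary inequality $c_\ell-c_0\ge -C\delta$, which is the direct analogue of Lemma~\ref{l:cl}, follows more easily by bounding the energy of each of the $m$ disjoint excursions from below by $c_0-C\delta$ via the Modica--Mortola inequality.)
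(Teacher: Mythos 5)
There is no written proof of this statement in the paper: it is left to the reader as ``a generalization of Lemma~\ref{l:cl},'' so the benchmark is the intended adaptation of that proof, and the first thing to note is a mismatch of direction. Where the lemma is used (Case (b) of Step 5 in the upper bound of Theorem~\ref{t:layers}) it feeds the large-deviation \emph{upper} bound of Proposition~\ref{pr:LD1}, which requires a \emph{lower} bound on the cost of $m$ disjoint wasted excursions, namely $\inf_{\mathcal{A}_0^{\rm bc}}E-\inf_{\mathcal{A}^{\rm bc}}E\geq m(c_0-C\delta)$, i.e.\ $c_\ell-c_0\geq -C\delta$, exactly the direction of \eqref{fdy}; the printed ``$c_\ell-c_0\leq C\delta$'' is a slip in the same form as Lemma~\ref{l:cll}, which belongs to the lower-bound machinery of Proposition~\ref{pr:LD2}. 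Your detailed argument (minimizer $u_\ast$, a priori energy bound, first integral, zero count, grafting) proves the printed inequality, an \emph{upper} bound on the cost, which is true but not what the application uses; the direction that carries the weight appears only in your closing parenthetical, and the sketch there (``each excursion costs at least $c_0-C\delta$ by Modica--Mortola'') omits precisely the part that is the content of Lemma~\ref{l:cl}: the comparison with $\inf_{\mathcal{A}^{\rm bc}}E$. One must check that the rest of the path still pays the baseline cost, including the forced transition of order $c_0$ when $u_-$ and $u_+$ are routed to different wells, using that both feet $x_\pm$ of a wasted excursion lie near the \emph{same} well, so an excursion cannot absorb that transition. Concretely, if $s_1,\dots,s_m\in\{\pm1\}$ denote the wells on which the excursions are based (in spatial order), the total-variation argument gives $E(u)\geq \varphi_{s_1}(u_-)+\varphi_{s_m}(u_+)+c_0\mathbf{1}_{s_1\neq s_m}+m(c_0-C\delta)-C\delta$, while the construction of Remark~\ref{r:constr} gives $\inf_{\mathcal{A}^{\rm bc}}E\leq \varphi_{s_1}(u_-)+\varphi_{s_m}(u_+)+c_0\mathbf{1}_{s_1\neq s_m}+o(1)_{\ell\uparrow\infty}$; subtracting yields the needed bound. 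This short bookkeeping, not the per-excursion estimate alone, is the generalization of Lemma~\ref{l:cl}.

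Even for the direction you do prove in detail there is a quantitative slip: you allow windows $[a_i,b_i]$ of length up to $2L_0+3+C/v_\delta$, and since $v_\delta\sim\delta^2$ the slack that the affine collars must bridge can be of order $\delta^{-2}$; with $V\leq C\delta^2$ on the collars this contributes $O(1)$, not $O(\delta)$, so as written you only get $c_\ell\leq c_0+C$. The fix is routine: choose both endpoints $a_i,b_i$ in the good set $\{|u_\ast-s|<\delta\}$ with $b_i-a_i\in[2L_0+2,2L_0+3]$, which is possible because the bad set has measure at most $C/v_\delta$ while $|J|\gtrsim \ell_1$, so for $\ell_1$ large one finds $m$ such disjoint windows. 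Two further remarks: the statement must anyway be read, as you did, with $u_\pm\in[-M,M]$ and $\ell_\ast=\ell_\ast(M,\delta)$ (consistent with Lemmas~\ref{le:lazy} and~\ref{l:cl} and with how the lemma is applied); and the structural analysis of $u_\ast$ via the first integral and zero count can be bypassed entirely by grafting the excursions into the explicit near-optimal backbone of Remark~\ref{r:constr}, which is $\delta$-close to a well on most of $(-\ell_1,\ell_1)$ by construction, giving the upper-bound direction with much less effort.
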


\medskip

{\bf Case (c)}  Suppose for instance that there is a transition layer in $(x_{-N_\eps},x_{-N_\eps+1})$.  Then we know the boundary value $u(x_{-N_\eps})=u(-\Le)=-1$, while the boundary value $u(x_{N_\eps-2})$ at the other end of the subinterval is unknown.  This is easily handled by a suitable ``one-sided'' generalization of Lemma~\ref{l:cl}, which is easy to prove.

\medskip

Using the facts from above, the proof of the upper bound is completed by decomposing $ \mathcal{A}_3^{\bar{k}}$ into the various cases and recovering the correct (and identical) bounds in each case.

\medskip

\begin{center}
{\bf Lower bound.}
\end{center}
We turn now to the matching lower bound, i.e., that
\begin{align*}
\mu_{\eps,(-\Le,\Le)}^{-1,1} \big( & \text{ $u$ has $(2n+1)$ transition layers  } \big) \\
&\gtrsim (\Le)^{2n}\,\exp\left(-\frac{ 2n c_0+\gamma}{\eps}\right).
\end{align*}

As explained in Subsection~\ref{s:detac}, for the lower bound we will work with $\delta^+$ transition layers (cf. Definition~\ref{def:layerB}). Because of the boundary conditions and the definition of $\delta^+$ layers, it
will be sufficient to show that, for some $\delta\in(0,1/2)$, we have
\begin{align}
\lefteqn{\mu_{\eps,(-\Le,\Le)}^{-1,1} \big(  \text{ $u$ has $(2n)$ $\delta^+$ transition layers  } \big)}\notag\\
 &\gtrsim (\Le)^{2n}\,\exp\left(-\frac{ 2n c_0+\gamma}{\eps}\right).\label{jn23.3}
\end{align}
Indeed, in analogy with the upper bound, the probability of $\delta^+$ layers is bounded above by the probability of transition layers, and because of the boundary conditions  there must be an odd number of transitions.

\medskip

\underline{Step 1}.
Once again, we will use the gridpoints $x_k$ defined in~\eqref{xk}. Our first step is to get some control on the values of $u$ at the gridpoints. The following lemma, used below, is established via techniques similar to those used for the upper bound.
\begin{lemma}\label{l:negneg}
For any $M<\infty$ sufficiently large, there exists $\ell_*<\infty$ and $\eps_0>0$ such that, for $\ell\geq\ell_*$ and $\eps\leq \eps_0$, we have for any $\Le$ satisfying~\eqref{lbd} that
\begin{align}
\mu_{\eps,(-\Le,\Le)}^{-1,1} \Big( u\in\complement \mathcal{A}_1\colon  u(x)\leq 0\text{ for all }x\in[-\Le,-2\ell] \Big)\geq \frac{1}{3}.\label{bigenuf}
\end{align}
Recall the definition of $\mathcal{A}_1$ in~\eqref{e:defA1}.
\end{lemma}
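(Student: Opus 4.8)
The plan is to exploit the point-reflection symmetry of $\mu:=\mu^{-1,1}_{\eps,(-\Le,\Le)}$ to reduce the lemma to an exponential-smallness statement, and then to prove that statement with the same ``decompose, reflect, and apply large deviations'' scheme used for the upper bound of Theorem~\ref{t:layers}. Write $G$ for the event in~\eqref{bigenuf}, i.e.\ $G:=\{u\colon u\le 0\text{ on }[-\Le,-2\ell]\}$, and put $G':=\{u\colon u\ge 0\text{ on }[2\ell,\Le]\}$. Fix $M$ large (large enough for Lemma~\ref{le:onept}, for the propositions of Subsection~\ref{ss:LD}, and for Lemmas~\ref{le:lazy},~\ref{l:cl},~\ref{l:gen}, and with $M/C_2>c_0'$). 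Exactly as in~\eqref{sq} --- Lemma~\ref{le:onept} together with a union bound over the $\lesssim\Le$ gridpoints --- one gets $\mu(\mathcal A_1)\le\Le\exp(-M/(C_2\eps))\to0$, using~\eqref{lbd}. The point reflection $u\mapsto RSu$ with $RSu(x)=-u(-x)$ preserves $\mu$, carries $G$ onto $G'$, and carries $\mathcal A_1$ (see~\eqref{e:defA1}) onto itself; hence $\mu(G\cap\complement\mathcal A_1)=\mu(G'\cap\complement\mathcal A_1)$ and therefore
\[
2\,\mu(G\cap\complement\mathcal A_1)\ \ge\ \mu\big((G\cup G')\cap\complement\mathcal A_1\big)\ \ge\ 1-\mu(\mathcal A_1)-\mu\big(\complement G\cap\complement G'\cap\complement\mathcal A_1\big).
\]
Since $\mu(\mathcal A_1)\to0$, it suffices to prove $\mu(\complement G\cap\complement G'\cap\complement\mathcal A_1)\to0$: this then yields $\mu(G\cap\complement\mathcal A_1)\ge\tfrac12-o(1)\ge\tfrac13$ for $\eps$ small.

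For the exponential estimate, fix a small $\delta\in(0,\tfrac12)$ (it does not appear in the statement, so it is ours to choose). On $\complement G\cap\complement G'$ the path takes a value $\ge0$ somewhere in $[-\Le,-2\ell]$ and a value $\le0$ somewhere in $[2\ell,\Le]$; if $\chi$ and $\tilde\chi$ denote the leftmost and rightmost zeros of $u$ (which exist because $u(\mp\Le)=\mp1$), then $-\Le\le\chi<-2\ell<2\ell<\tilde\chi\le\Le$, $u(\chi)=u(\tilde\chi)=0$, $u<0$ on $[-\Le,\chi)$, $u>0$ on $(\tilde\chi,\Le]$, and in particular $\tilde\chi-\chi>4\ell$. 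We split into three cases according to the behavior of $u$ on $(\chi,\tilde\chi)$. \textbf{(A)} $u$ stays in $[-1+\delta,1-\delta]$ on some interval of length $2\ell$: this is handled exactly as the event $\mathcal A_2$ in Step~3 (a long sojourn of $u$ in the strip $[-1+\delta,1-\delta]$, decomposed over $\lesssim\Le$ positions and controlled position-by-position by Lemma~\ref{le:lazy} and~\eqref{e:LDUB1}, cf.~\eqref{e:A2bou}), giving probability $\lesssim\Le\exp(-\delta^2\ell/(C_1\eps))$, which is $o(1)$ once $\delta^2\ell_*/C_1>c_0'$. \textbf{(B$_1$)} Not (A), and on $(\chi,\tilde\chi)$ the path leaves $[-1+\delta,1-\delta]$ both above and below: reading off first and last crossings of $\pm(1-\delta)$ and using $u(\mp\Le)=\mp1$, $u(\chi)=u(\tilde\chi)=0$, the path then exhibits two disjoint ``costly features'' --- three disjoint $\delta^-$ transition layers (Def.~\ref{def:layerA}), or one such layer together with a disjoint wasted $\delta^-$ excursion (Def.~\ref{def:wasted}). \textbf{(B$_2$)} Not (A), and on $(\chi,\tilde\chi)$ the path leaves $[-1+\delta,1-\delta]$ on exactly one side; say it exceeds $1-\delta$ somewhere but stays above $-(1-\delta)$ throughout (the other sign is symmetric). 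Letting $x_-$ (resp.\ $x_+$) be the last (resp.\ first) crossing of $1-\delta$ before (resp.\ after) $\tilde\chi$ --- both exist since $u(\tilde\chi)=0$ and $u(\Le)=1$ --- the triple $x_-<\tilde\chi<x_+$ witnesses a wasted $\delta^-$ excursion straddling $\tilde\chi$.

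The key point for (B$_1$) and (B$_2$) is that, being \emph{outside} case (A), the path cannot stay in $[-1+\delta,1-\delta]$ on any interval of length $2\ell$; since each transition layer extracted above, as well as the two legs $(x_-,\tilde\chi)$ and $(\tilde\chi,x_+)$, is an interval on which $u$ \emph{does} stay in $[-1+\delta,1-\delta]$, each of these has length $<2\ell$ and hence is confined to a bounded number of the grid boxes. Working inside $\complement\mathcal A_1$ (so that the relevant gridpoint boundary data lie in $[-M,M]$), one then runs the Markov--reflection--large-deviation argument of the upper bound (Steps~4--5: the Markov property~\eqref{e:Markomub}, measure-preserving vertical reflections about interior zeros, as in~\eqref{e:rinv}, which convert each transition layer into a wasted excursion, the uniform bound~\eqref{e:LDUB1}, and the energy bounds of Lemmas~\ref{l:cl},~\ref{l:gen}) and sums over the admissible grid-box positions: this gives probability $\lesssim\Le\exp(-(c_0-C\delta-\gamma)/\eps)$ in case (B$_2$) and $\lesssim(\Le)^2\exp(-(2c_0-C\delta-\gamma)/\eps)$ in case (B$_1$), both $o(1)$ for $\delta,\gamma$ small since $c_0'<c_0$. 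As (A), (B$_1$), (B$_2$) exhaust $\complement G\cap\complement G'$, this proves $\mu(\complement G\cap\complement G'\cap\complement\mathcal A_1)\to0$ and completes the argument.

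The step I expect to be the main obstacle is the bookkeeping in cases (B$_1$) and (B$_2$): one must check carefully that the configurations there really do force two (respectively one) disjoint costly features, that these features are \emph{short} by virtue of being outside case (A) --- which is exactly what lets~\eqref{e:LDUB1} be applied with the full cost $c_0-C\delta$ per feature and the entropic factors $\Le,(\Le)^2$ be absorbed, using $c_0>c_0'$ --- and that a transition layer can be turned into a wasted excursion by a reflection about one of its interior zeros without disturbing $\mu$. The needed ingredients are all in place (the reflection identity~\eqref{e:rinv}, the large-deviation bounds of Subsection~\ref{ss:LD}, and Lemmas~\ref{le:onept},~\ref{le:lazy},~\ref{l:cl},~\ref{l:gen}), so cases (A) and (B$_1$) are essentially verbatim reuse of Steps~3 and~4--5 of the proof of Theorem~\ref{t:layers}; case (B$_2$) is the single place where a slightly new observation --- reading the one-sided overshoot between the two extreme zeros as a short wasted excursion straddling $\tilde\chi$ --- is required. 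Finally the parameters are ordered: $M$ first (as above), then $\delta$ small, then $\ell_*$ taken as the largest of the finitely many lower bounds on $\ell$ demanded (those of Lemmas~\ref{le:lazy},~\ref{l:cl},~\ref{l:gen} and the condition $\delta^2\ell_*/C_1>c_0'$), then $\gamma$ small enough that $2c_0'<2c_0-C\delta-\gamma$, and finally $\eps_0$ small enough that all invoked large-deviation bounds are in force and $\mu(\mathcal A_1)+\mu(\complement G\cap\complement G'\cap\complement\mathcal A_1)\le\tfrac13$.
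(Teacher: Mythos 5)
Your proposal is correct in substance and follows a route parallel to, but not identical with, the paper's own proof. The paper likewise disposes first of $\mathcal{A}_1$, of long $\delta^-$ layers (the $\mathcal{A}_2$-type bound) and of multiple $\delta^-$ layers (via~\eqref{jn20.3}), and then invokes the point-reflection symmetry; but it applies the symmetry to the \emph{location of the unique short layer} (it lies in $[-\Le,0]$ or $[0,\Le]$ with equal probability, hence each with probability at most $1/2$), and the residual event ``the single layer sits in $[-2\ell,\Le]$ yet $u$ reaches $0$ somewhere in $[-\Le,-2\ell]$'' is handled by a purpose-built energy estimate, Lemma~\ref{l:last} (being $\leq 1-\delta$ on a window while reaching $\geq-\delta/2$ costs $\approx c_0$). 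You instead apply the symmetry to the pair $G$, $G'$ and must then show $\mu(\complement G\cap\complement G'\cap\complement\mathcal{A}_1)\to 0$; your case (B$_2$) replaces Lemma~\ref{l:last} by extracting a wasted $\delta^-$ excursion straddling the extreme zero $\tilde\chi$ (or $\chi$), whose legs are short precisely because case (A) is excluded, so that the already available Lemma~\ref{l:cl} (or Lemma~\ref{l:gen}) applies with cost $c_0-C\delta$ uniformly in the window's boundary data. This is a genuine economy: no new energy lemma is needed, whereas the paper's route isolates a clean statement (Lemma~\ref{l:last}) that it reuses later (it is ``virtually identical'' to Lemma~\ref{l:unpv}, used in the proof of Lemma~\ref{l:rightshape}).

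Two corrections. First, your rate for case (B$_1$) is overstated: in the sub-case where you extract ``one $\delta^-$ layer together with a disjoint wasted excursion,'' the layer may well be the transition \emph{forced by the boundary conditions}, which contributes no cost beyond $\inf_{\mathcal{A}^{\rm bc}}E$; the guaranteed extra cost there is only $c_0-C\delta$, not $2c_0-C\delta$, so the correct bound for that sub-case is $\lesssim\Le\exp\big(-(c_0-C\delta-\gamma)/\eps\big)$ --- exactly your (B$_2$) bound --- which is still $o(1)$ under~\eqref{lbd}, so the conclusion survives, but the claimed $(\Le)^2\exp\big(-(2c_0-C\delta-\gamma)/\eps\big)$ is not what the machinery yields there; only the three-layer sub-case gives $2c_0$, and for that you can simply quote~\eqref{jn20.3} with $n=1$. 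Second, in the bookkeeping for (B$_1$) you must allow for further dips of the path below $-(1-\delta)$ between your $x_-$ and $\tilde\chi$: if such a dip occurs, the leg of the would-be excursion is not confined to the strip, but then an additional down layer and up layer appear and you are back in the three-layer sub-case, so the dichotomy closes once stated that way. Finally, excursions or layers adjacent to $\pm\Le$ require the one-sided window variant, as in case (c) of Step 5 of the proof of Theorem~\ref{t:layers}; this is routine but should be mentioned.
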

The proof is similar to the proof of the upper bound, and is deferred to Subsection~\ref{ss:negneg}. The main idea is that while the boundary conditions force there to be a transition layer, with high probability, there is \emph{only one transition layer}.  Moreover, by symmetry, this layer is as likely to appear on $[0,\Le]$ as it is on $[-\Le,0]$ (hence neither probability can be more than $1/2$).  On the other hand, for $u$ to hit zero away from the transition layer is energetically unlikely, by arguments similar to those used for the upper bound.

\medskip

\underline{Step 2}.
With Lemma~\ref{l:negneg} in hand, we turn to the basic set-up for the lower bound.
In this case, we will not want to use overlapping subintervals. We will also not work with the full system, but only with intervals on the left-hand side. Specifically, we will work with
$$I_k=[x_{k-1},x_{k+1}]\quad\text{for}\;\;k\in\{-(N_\eps-4),-(N_\eps-8),\ldots,
-4\}=:E.$$
We have assumed without loss of generality that $4$ divides $N_\eps$.  (If not, then $N_\eps=4j+r$ for some $j\in\N$ and $r\in\{1,2,3\}$. Replace $N_\eps$  by $N_\eps-r$ throughout.)

We remark that, as usual, for an event falling in the interval $I_k$, we will condition on the boundary values on a larger interval. Specifically, we will use a Markov decomposition in which we condition on the boundary values of the enlarged interval
$$\tilde{I}_{k}:=[x_{k-2},x_{k+2}].$$
Notice that for all $k\in E$, the enlarged intervals $\tilde{I}_k$ are  nonintersecting. For future reference, let us denote the set of boundary indices
\begin{align*}
 E_b:=\{-(N_\eps-2),-(N_\eps-6),\ldots,-2\}.
\end{align*}

The rough idea is to consider sets of functions having $2n$ layers with a layer in one of the intervals $I_k$ for $2n$ distinct values of $k\in E$. Unfortunately, because we work with functions $u$ that have \emph{at least} $2n+1$ transitions rather than \emph{exactly} $2n+1$ transitions, a given function $u$ may have more than $2n+1$ layers and belong to more than one of the sets we have just described. Hence we cannot translate the probability of the union into the sum of the probabilities. In order to work around this, we will work with more restrictive sets.

Analogous to the set $\Aa_1$ defined in~\eqref{e:defA1} above, we define the following set.  Rather than keeping track of all the boundary values, it will be convenient to track only the boundary values for the extended intervals described above. That is, we consider
\begin{align*}
\tilde{\mathcal{A}}_1 := \Big\{ u  \colon \,  |u(x_k)| \geq M \text{ for some }k\in E_b\     \Big\}.
\end{align*}

We now introduce a set that is analogous to the set $\Aa_3^{\bar k}$ above (but more restrictive, for the reason we have explained). For ease of notation, we do not introduce a new label. Let $\bar{k}=(k_1,\ldots,k_{2n})$ and consider the set
\begin{align*}
\mathcal{A}_3^{\bar{k}} : =  &\complement{\tilde{\mathcal{A}}_1}\cap \Big\{ \text{in each  $I_{k_i}$ with $i$ odd there exists a $\delta^+$ up layer} \\
&   \text{and in each $I_{k_i}$ with $i$ even there exists a $\delta^+$ down layer and}\\
& \text{for $k\in E\setminus\{k_i,\;1\leq i\leq 2n\}$ $u$ does not have a $\delta^+$ layer in $I_k$ }\Big\}.
\end{align*}
Clearly, we have the following inclusion of sets of paths:
\begin{align}
\big\{ \text{ $u$ has $(2n)$  $\delta^+$ transition layers } \big\} \supseteq \bigcup_{\bar{k} \in \mathcal{I} } \mathcal{A}_3^{\bar{k}}  ,\label{incly2}
\end{align}
where  $\mathcal{I}$ is the following set of \emph{well-separated indices on the negative $x$-axis}:
\begin{align*}
\mathcal{I} \, := \, \Big\{  \bar{k} =  (k_1, k_2, \ldots, k_{2n})   \in E^{2n} \colon \text{for all } i, \;  k_{i-1} < k_{i}      \Big\}.
\end{align*}
Moreover, the sets $\mathcal{A}_3^{\bar{k}}$ for $\bar{k}\in\mathcal{I}$ are disjoint, so  that~\eqref{incly2} implies
\begin{align}
\mu^{-1,1}_{\eps,(-\Le,\Le)}   \big( \text{ $u$ has $(2n)$  $\delta^+$~transition layers } \big) \geq \sum_{\bar{k} \in \mathcal{I} } \mu^{-1,1}_{\eps,(-\Le,\Le)} \big( \mathcal{A}_{3}^{\bar{k}} \big).\label{parak}
\end{align}

The set on the right-hand side of~\eqref{incly2} is certainly smaller than the set on the left-hand side, but the bound will be good enough on the level of scaling since
\begin{align}
|\mathcal{I}|\gtrsim N_\eps^{2n}\gtrsim (\Le)^{2n}.\label{ibig}
\end{align}

\medskip

\underline{Step 3}.
Given~\eqref{parak} and~\eqref{ibig}, we will be done if we can establish that for any $\gamma>0$ and for $\eps>0$ sufficiently small, we have
\begin{align}
\mu^{-1,1}_{\eps,(-\Le,\Le)} \big( \mathcal{A}_3^{\bar{k}} \big)  \gtrsim\exp\Big( - \frac{2nc_0+\gamma}{\eps} \Big).\label{parak2}
\end{align}
%
%

To this end, fix any multi-index $\bar{k} \in \mathcal{I}$. We will now bound the probability of $\mathcal{A}^{\bar{k}}_3$ using reflections, as we did for the upper bound.  Indeed, let
\begin{align*}
\chi_{2i-1}:=\inf&\Big\{y\in I_{k_{2i-1}}\colon u(y)=0,\;\;u(x)=-1-\delta\\
&\qquad\qquad\text{ for some }x\in(x_{k_{(2i-1)}-1},y)\Big\},\\
\chi_{2i}:=\sup&\Big\{y\in I_{k_{2i}}\colon u(y)=0,\;\;u(x)=-1-\delta\\
&\qquad\qquad\text{ for some }x\in (y,x_{k_{2i}+1})\Big\}.
\end{align*}
Then we define the reflection operator $\mathsf{R}$ as
\begin{equation*}
\mathsf{R}= R^{\chi_{2n}}_{\chi_{2n-1}} \circ \cdots  R^{\chi_{2}}_{\chi_{1}} .
\end{equation*}
By the same argument as above in~\eqref{e:rinv} it can be seen that this operator preserves the measure $\mu^{-1,1}_{\eps,(-\Le,\Le)}$. Notice that $\mathsf{R}$ creates $\delta^+$ wasted excursions in the intervals $I_{k_i}$ and cannot create layers in any interval $I_k$  for $k\in E\setminus\{k_i,\;1\leq i\leq 2n\}$. We recover
\begin{align}
&\mu^{-1,1}_{\eps,(-\Le,\Le)} \Big(  \mathcal{A}_3^{\bar{k}} \Big)\notag\\
&=\mu^{-1,1}_{\eps,(-\Le,\Le)} \Big(\mathsf{R} \mathcal{A}_3^{\bar{k}} \Big)\notag\\
&= \,  \int_{-M}^M  \cdots  \int_{-M}^M  \,   \nu_{-(N_\eps-2), -(N_\eps-6),\ldots , -2} \big( du_{-(N_\eps-2)} , du_{-(N_\eps-6)}, \ldots , du_{-2} \big)   \notag\\
& \qquad\prod_{\substack{i=1  }}^{2n}   \,  \mu_{\eps,  (x_{k_i -2}, x_{k_i +2 })}^{u_{k_i -2}, u_{k_i +2} } \Big(  \text{ there is a wasted $\delta^+$ excursion in $I_{k_i}$}  \Big)\notag\\
&\qquad
\prod_{\substack{k\in E\setminus \{k_i, 1\leq i\leq 2n\}}}   \,  \mu_{\eps,  (x_{k -2}, x_{k +2 })}^{u_{k -2}, u_{k +2} } \Big(  \text{ there is no  $\delta^+$ layer in $I_{k}$}  \Big)\notag\\
 &\geq \,  \int_{-M}^0 \cdots  \int_{-M}^0  \,   \nu_{-(N_\eps-2), -(N_\eps-6),\ldots , -2} \big( du_{-(N_\eps-2)} , du_{-(N_\eps-6)}, \ldots , du_{-2} \big)   \notag\\
& \qquad\prod_{\substack{i=1  }}^{2n}   \,  \mu_{\eps,  (x_{k_i -2}, x_{k_i +2 })}^{u_{k_i -2}, u_{k_i +2} } \Big(  \text{ there is a wasted $\delta^+$ excursion in $I_{k_i}$}  \Big)\notag\\
 &\qquad
\prod_{\substack{k\in E\setminus \{k_i, 1\leq i\leq 2n\}}}   \,  \mu_{\eps,  (x_{k -2}, x_{k +2 })}^{u_{k -2}, u_{k +2} } \Big(  \text{ there is no  $\delta^+$ layer in $I_{k}$}  \Big).\label{newtag}
\end{align}
As usual, $\nu$ denotes the distribution of boundary values, here at the boundary points of each extended interval $\tilde{I}_k$ for $k\in E$. Note that the second equality follows from the definition of wasted $\delta^+$ excursions. The definition of wasted $\delta^-$ excursions is different and led to an inequality in the analogous estimate, cf.~\eqref{e:MarkovExc}.

We remark that we do not actually need to condition on the boundary values for every $k\in E$---it would be enough to consider the intervals $\tilde{I}_k$ for $k\in \bar{k}$ and the complementary intervals---but doing it this way keeps notation simple and because of~\eqref{lbd}, it does not affect our bound by more than an exponentially small amount.

We now turn to the lower large deviation bound~\eqref{e:LDLB1}  and the energy bound from Lemma \ref{l:cll} (where we use that the boundary values are in $[-M,0]$). We  recall that the set $\mathcal{A}_{\delta,pre}^{\rm bc}$ from~\eqref{preset} was defined precisely so that
$$B(\mathcal{A}_{\delta,pre}^{\rm bc},\delta)=\{u\colon \text{$u$ has a wasted $\delta^+$ excursion in $[-\ell,\ell]$}\}.$$
Therefore, applying the large deviation estimate to~\eqref{newtag},  we conclude that  for any $\gamma >0$ and $\delta>0$ small enough, there exists an $\eps_0>0$ such that for any $\bar{k} \in \mathcal{I}$ and any $\eps \leq \eps_0$, we have
\begin{align}
 \mu^{-1,1}_{\eps,(-\Le,\Le)} \big( \mathcal{A}_3^{\bar{k}}  \big)  \geq \exp\left( - \frac{2nc_0+\gamma}{\eps}  \right) \, \mu^{-1,1}_{\eps,(-\Le,\Le)} \big(\mathcal{A}_4^{\bar k}\big),\label{j28}
\end{align}
where
\begin{align*}
\mathcal{A}_4^{\bar k} := &\Big\{ u \colon \,\text{$u(x_k)\in[-M,0]$ for all $k\in E_b$ and}\\
&\qquad\text{$u$ has no  layer in $I_{k}$ for any $k\in E\setminus\{k_i, 1\leq i\leq 2n\}$}\Big\}.
\end{align*}

At the same time, for any $\bar{k}\in\mathcal{I}$  we have
\begin{align*}
 \mathcal{A}_4^{\bar k} \supseteq \{u\in \complement\mathcal{A}_1\colon u(x)\leq 0 \text{ for all }x\in[-L_\eps,-2\ell]\},
\end{align*}
where $\mathcal{A}_1$ includes all the gridpoints, as defined in~\eqref{e:defA1}. Hence by the estimate \eqref{bigenuf} from Lemma~\ref{l:negneg},
the lower bound~\eqref{j28} improves to
\begin{align}
\mu^{-1,1}_{\eps,(-\Le,\Le)} \big( \mathcal{A}_3^{\bar{k}} \big)  \geq \frac{1}{3}\exp\Big( - \frac{2nc_0+\gamma}{\eps} \Big),\label{impy}
\end{align}
which establishes~\eqref{parak2} and completes the proof of the lower bound.

\end{proof}

\bigskip

\section{Proof of Theorem~\ref{t:uniform}: The uniform distribution of the layer location}
\label{s:Uniform}


As pointed out in Subsection \ref{ss:methods}, the proof of Theorem \ref{t:uniform} relies on the construction of a measure-preserving operator $\Rx$. This operator maps paths that exhibit a transition near $y$ to paths that exhibit a transition near $z$. It is constructed by performing a point reflection  between hitting points of $\pm 1$ near $y$ and $z$.

The main difficulty of the proof is to show that these hitting points exist with very high probability on the set of paths that perform a transition near $y$. The argument for this is provided in the following two lemmas.

The first lemma states, roughly speaking, that in the ``bulk,'' fluctuations around $\pm 1$ are of order $\eps^{1/2}$.  The system needs $O(| \log \eps|)$ space to relax to this scale. For simplicity, we state the lemma for paths that stay close to $1$. By symmetry, the analogous statement holds near $-1$.

\begin{lemma}\label{l:smallu}
There exists $C<\infty$ with the following property. For every $\ell_0<\infty$ sufficiently large, there exists $\eps_0'>0$ such that the following holds. For every $\eps$ and $\eps_0$ with $\eps \leq \eps_0 \leq \eps_0'$, there exists $K_\eps\in\N$ with
$$K_\eps\sim\log\left(\sqrt{\frac{\eps_0}{\eps}}\right)$$
such that for
\begin{equation*}
\ell_\eps:=(2K_\eps+1)\ell_0
\end{equation*}
and all $u_{\pm}\in[1/2,3/2]$, we have
\begin{align*}
&\mu_{\eps,(-\ell_\eps,\ell_\eps)}^{u_{-},u_{+}} \bigg(\sup_{x \in [- \ell_0, \ell_0]} |u(x)-1|\geq \sqrt{\frac{\eps}{\eps_0}} \;\bigg|\\
&\qquad\qquad\qquad |u(\pm(2k-1)\ell_0)-1|\leq \frac{1}{2},\,k=1,2,\ldots, K_\eps \bigg)\\
& \leq 4   \,\exp\left(-\frac{1}{C\eps_0}\right).
\end{align*}
\end{lemma}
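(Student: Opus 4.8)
The plan is to prove the estimate by an \emph{iterated rescaling}: a multiscale induction over dyadic amplitude scales $\rho_k:=2^{-k-1}$, $k=0,1,\dots$, in which the deterministic relaxation of the energy minimiser to the bottom of the well near $u\equiv 1$ supplies a fixed contraction factor per step, while the fluctuations around the minimiser are estimated by the uniform large deviation bound of Proposition~\ref{pr:LD1}, applied after the change of variables $v-1=2^{k}(u-1)$. Under this substitution $\mu^{a,b}_{\eps,(x_-,x_+)}$ becomes a measure of the same type on the \emph{same} interval, with effective noise $4^{k}\eps$ and potential $4^{k}V(2^{-k}(v-1)+1)$; this is exactly the uniformly $C^1$-bounded family of rescalings mentioned in the remark after Proposition~\ref{pr:LD1}, so one fixed threshold $\eps_0'$ works for all of them simultaneously. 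I would take $\ell_0$ to be a large constant (chosen below) and $K_\eps$ the largest integer with $4^{K_\eps}\eps\le\eps_0$, so that $K_\eps\sim\log\sqrt{\eps_0/\eps}$, $\rho_{K_\eps}\in[\tfrac12\sqrt{\eps/\eps_0},\sqrt{\eps/\eps_0})$, and every effective noise $4^{k}\eps$ with $k\le K_\eps$ stays $\le\eps_0\le\eps_0'$; in particular $\{\sup_{[-\ell_0,\ell_0]}|u-1|\ge\sqrt{\eps/\eps_0}\}\subseteq\{\sup_{[-\ell_0,\ell_0]}|u-1|>\rho_{K_\eps}\}$. Writing $R_j:=(2j+1)\ell_0$ (so $\pm R_0,\dots,\pm R_{K_\eps-1}$ are the conditioning points and $\pm R_{K_\eps}=\pm\ell_\eps$) and $C$ for the conditioning event, I would first dispose of the conditioning by $\mu(\cdot\mid C)\le\mu(\cdot)/\mu(C)$ together with $\mu(C)\ge\tfrac12$; the latter follows from a crude version of the estimates below --- the probability that $u$ leaves $[1/4,7/4]$ on any one of the $O(K_\eps)$ order-one shells $[R_{j-1},R_{j+1}]$ is at most $e^{-c/\eps}$, and $|\log\eps|\,e^{-c/\eps}\to0$ --- in the spirit of Lemma~\ref{le:onept}.

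\textbf{One step of the induction.} Two ingredients drive each step. The first is deterministic: the minimiser of $E_{(-2\ell_0,2\ell_0)}$ with boundary values within $\rho\le 1/2$ of $1$ stays within $C\rho e^{-c\ell_0}$ of $1$ on $[-\ell_0,\ell_0]$, which follows from the strict convexity of $V$ near $1$ (Assumption~\ref{ass:V}) just as for Lemma~\ref{l:expdecmin}. Fixing $\ell_0$ so large that $2Ce^{-c\ell_0}\le 1/4$, any path with $\sup_{[-\ell_0,\ell_0]}|u-1|>\rho/2$ then differs from this minimiser by at least $\rho/4$ somewhere on $[-\ell_0,\ell_0]$, so the energy difference $\DE$ of that event exceeds $c\,\rho^2$. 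The second ingredient is Proposition~\ref{pr:LD1} applied to the $2^k$-rescaled measure, with $\rho=\rho_{k-1}$: the target scale $\rho_k=\rho_{k-1}/2$ rescales to $\tfrac12$, the rescaled event $\{\sup|v-1|>\tfrac12\}$ has boundary data bounded by $1$ and $\DE\ge c_\ast>0$ universal, and hence probability at most $\exp(-c/(4^{k}\eps))$. Combining the two via the Markov property (Lemma~\ref{le:Markov1b}): for any $|a-1|,|b-1|\le\rho_{k-1}$,
\begin{equation*}
\mu^{a,b}_{\eps,(R_{j-1},R_{j+1})}\bigl(|u(R_j)-1|>\rho_k\bigr)\le\exp\bigl(-c/(4^{k}\eps)\bigr).
\end{equation*}

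\textbf{Assembling.} At level $k\in\{1,\dots,K_\eps-2\}$ I would show, building on level $k-1$: with conditional probability at least $1-2(K_\eps-k)\exp(-c/(4^{k}\eps))$ one has $|u(\pm R_j)-1|\le\rho_k$ for all $|j|\le K_\eps-1-k$. Indeed, for each such $j$ one has $|j\pm 1|\le K_\eps-k$, so level $k-1$ controls $a:=u(R_{j-1})$, $b:=u(R_{j+1})$ at scale $\rho_{k-1}$; conditioning on $(a,b)$ via Lemma~\ref{le:Markov1b}, applying the displayed bound on the fixed-size shell $[R_{j-1},R_{j+1}]$, and summing over the $\le 2(K_\eps-k)$ indices gives the level-$k$ claim. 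Level $0$ is just $C$ (scale $\rho_0=1/2$), and level $K_\eps-2$ controls $u(\pm 3\ell_0)$ at scale $\rho_{K_\eps-2}\lesssim\sqrt{\eps/\eps_0}$. A final application of the same two ingredients on $[-3\ell_0,3\ell_0]$ --- using that $[-\ell_0,\ell_0]$ lies a distance $2\ell_0$ inside --- bounds $\mu(\sup_{[-\ell_0,\ell_0]}|u-1|>\rho_{K_\eps})$ by $\exp(-c/(4^{K_\eps}\eps))\le\exp(-c/\eps_0)$. Summing the level-$k$ errors, the crucial point is that $4^k\eps$ grows geometrically in $k$: with $r:=K_\eps-k$, $\sum_k 2(K_\eps-k)\exp(-c/(4^{k}\eps))\le\sum_{r\ge 2}2r\exp(-c\,4^{r}/\eps_0)\le C'\exp(-c/\eps_0)$, so the factor $K_\eps-k\sim|\log\eps|$ is absorbed by the doubly-exponential decay in $r$. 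Together with the factor $2$ lost to $\mu(C)$, this yields $\le 4\exp(-1/(C\eps_0))$ after relabelling constants.

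\textbf{The hard part.} The delicate point is precisely this bookkeeping of scales. Applying Proposition~\ref{pr:LD1} at the unscaled potential would give the useless bound $\exp(-(c\rho_k^2-\gamma)/\eps)$ once $\rho_k^2\ll\gamma$; and even after rescaling, using the \emph{same} per-step error at all $K_\eps\sim|\log\eps|$ scales would leave an unabsorbed factor $|\log\eps|$ in front of $\exp(-c/\eps_0)$. Both are resolved by (i) rescaling by $2^k$, so the relevant noise $4^k\eps$ never exceeds $\eps_0\le\eps_0'$, and (ii) improving only the innermost $\approx 2(K_\eps-k)$ checkpoints at level $k$, so that the number of large deviation applications at a given scale shrinks as the per-application error grows. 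It also matters that the geometry is compatible: because each step localises (via Lemma~\ref{le:Markov1b}) to a \emph{fixed}-size shell $[R_{j-1},R_{j+1}]$ of length $4\ell_0$, one never applies a large deviation bound on an interval whose length grows with $\eps$, where Proposition~\ref{pr:LD1} would not apply. The remaining work is routine: the uniform-in-boundary-data deterministic relaxation estimate, the uniform-in-$k$ large deviation input (already provided by the cited remark), and the lower bound on $\mu(C)$; that one works near $\pm 1$ rather than near $0$ is harmless since $V$ is smooth and strictly convex there.
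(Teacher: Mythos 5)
Your multiscale scheme is, at its core, the paper's own proof of this lemma: the dyadic amplitude scales, the change of variables $v-1=2^{k}(u-1)$ producing effective noise $4^{k}\eps\le\eps_0\le\eps_0'$ and the uniformly $C^1$-bounded family of rescaled potentials (so that Proposition~\ref{pr:LD1} applies with a single threshold), the localisation via Lemma~\ref{le:Markov1b} to shells of fixed length, the relaxation-plus-Modica--Mortola energy gap of order $\rho^2$ that becomes order one after rescaling, the choice of $K_\eps$ through $4^{K_\eps}\eps\le\eps_0$, and the concluding summation $\sum_k 2(K_\eps-k)\exp(-c/(4^{k}\eps))\lesssim\exp(-c/\eps_0)$ are exactly Steps 1--3 and the final estimate of the paper's argument.

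The one place you diverge is the treatment of the conditioning, and there your justification has a genuine hole. You propose to pay a factor $\mu(C)^{-1}\le 2$ once, claiming $\mu(C)\ge\tfrac12$ because ``the probability that $u$ leaves $[1/4,7/4]$ on any one of the $O(K_\eps)$ order-one shells is at most $e^{-c/\eps}$''. Whether read as a statement for the conditional measure on a shell given its endpoint values, or as a bound obtained by conditioning on those values via the Markov property, this is false without control of the shell's boundary data: if, say, $u(R_{j-1})$ and $u(R_{j+1})$ are both close to $-1$, the conditional probability that $u(R_j)$ is far from $+1$ is close to one (the relevant energy difference vanishes); this is precisely the boundary-condition problem emphasised in Subsection~\ref{ss:methods}, and an le:onept-type compactness bound only confines the endpoints to $[-M,M]$, which does not exclude this bad case. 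A correct proof of $\mu(C)\gtrsim 1$ on the interval of length $\sim|\log\eps|\,\ell_0$ therefore needs a global argument ruling out excursions towards $-1$ given boundary data in $[1/2,3/2]$ at $\pm\ell_\eps$ --- machinery of the kind used for the upper bound of Theorem~\ref{t:layers} and in Lemmas~\ref{l:negneg} and~\ref{l:last} --- which is considerably more than ``a crude version of the estimates below''; note also that staying in $[1/4,7/4]$ does not even imply the gridpoint constraint $|u(\pm R_j)-1|\le\tfrac12$ defining $C$. The paper avoids all of this by never discharging the conditioning: on each fixed shell it bounds the conditional probability as a ratio, estimating the numerator with Proposition~\ref{pr:LD1} and the denominator from below by $\exp(-C\gamma/(4^{k}\eps))$ with Proposition~\ref{pr:LD2}, which keeps every estimate local and uniform in the shell's boundary data. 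Your induction goes through once you replace the global $\mu(C)\ge\tfrac12$ step by this local ratio argument (or supply the global no-excursion lemma as a separate piece of work).
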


We present the proof in Subsection~\ref{ss:unilem}.  Next we need a lemma that says that with positive probability, the path \emph{actually hits} $\pm 1$. Again, we state the result for hitting points of $+1$. By symmetry, the analogous statement holds for hitting points of $-1$.
\begin{lemma}
\label{l:hittingzero}
For any  $ \ell_0< \infty$  sufficiently large, there exist $\eps_0>0$ and $\lambda \in (0,1)$ such that the following holds. For  any $u_{\pm} \in [1/2, 3/2] $, any $\eps \leq \eps_0$,  and  $K_\eps$, $\ell_\eps$ as in Lemma \ref{l:smallu}, we get
\begin{align*}
\mu_{\eps,(-\ell_\eps,\ell_\eps)}^{u_{-},u_{+}}  &\Big( \exists x \in [-\ell_0, \ell_0] \text{ such that } u(x) = 1  \\
 & \,\Big|  |u(\pm(2k-1)\ell_0)-1|\leq \frac{1}{2},\,k=1,2,\ldots, K_\eps \Big)  \geq 1-\lambda.
\end{align*}
\end{lemma}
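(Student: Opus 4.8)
The plan is to reduce, via the Markov property together with Lemma~\ref{l:smallu}, to a uniform-in-$\eps$ lower bound for the probability that the path hits $+1$ on the single slab $[-\ell_0,\ell_0]$ when its boundary data there lie within $\sqrt{\eps/\eps_0}$ of $1$, and then to obtain that lower bound by rescaling the slab problem to a \emph{fixed} (small but $\eps$-independent) noise level $\eps_0$ and invoking the large deviation lower bound, Proposition~\ref{pr:LD2}. Write $G$ for the conditioning event $\{|u(\pm(2k-1)\ell_0)-1|\le 1/2,\ k=1,\dots,K_\eps\}$. All points $\pm(2k-1)\ell_0$ with $k\ge 1$ lie outside the open interval $(-\ell_0,\ell_0)$, so $G$ is measurable with respect to $\F_{[-\ell_\eps,-\ell_0]}\vee\F_{[\ell_0,\ell_\eps]}$, and the two-sided Markov property (Lemma~\ref{le:Markov1b}) gives
\begin{equation*}
\mu_{\eps,(-\ell_\eps,\ell_\eps)}^{u_{-},u_{+}}\big(\exists x\in[-\ell_0,\ell_0]\colon u(x)=1\,\big|\,G\big)\;=\;\mathbb{E}\Big[f\big(u(-\ell_0),u(\ell_0)\big)\,\Big|\,G\Big],
\end{equation*}
where $f(a_-,a_+):=\mu^{a_-,a_+}_{\eps,(-\ell_0,\ell_0)}\big(\exists x\in[-\ell_0,\ell_0]\colon u(x)=1\big)\ge 0$ and the outer expectation is under $\mu_{\eps,(-\ell_\eps,\ell_\eps)}^{u_{-},u_{+}}(\cdot\,|\,G)$. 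By Lemma~\ref{l:smallu}, conditionally on $G$ one has $\sup_{[-\ell_0,\ell_0]}|u-1|<\sqrt{\eps/\eps_0}$ — hence $|u(\pm\ell_0)-1|<\sqrt{\eps/\eps_0}$ — with probability at least $1-4\exp(-1/(C\eps_0))$. Therefore the conditional hitting probability is bounded below by $\big(1-4\exp(-1/(C\eps_0))\big)\cdot\inf\{f(a_-,a_+)\colon |a_\pm-1|\le\sqrt{\eps/\eps_0}\}$, and it suffices to bound this infimum below by a positive constant depending only on $\ell_0,\eps_0$, uniformly in $\eps\le\eps_0$.

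For the slab estimate, fix $a_\pm$ with $|a_\pm-1|\le\sqrt{\eps/\eps_0}$ and substitute $v:=1+\sqrt{\eps_0/\eps}\,(u-1)$, i.e.\ $u=1+\sqrt{\eps/\eps_0}\,(v-1)$; since $u=1\iff v=1$, the event $\{u\text{ hits }1\}$ is unchanged. An elementary computation using the covariance~\eqref{e:Cov} shows that the image of $\mu^{a_-,a_+}_{\eps,(-\ell_0,\ell_0)}$ under this affine map is the measure $\tilde\mu$ on $C([-\ell_0,\ell_0])$ with density proportional to $\exp\!\big(-\tfrac{1}{\eps_0}\int_{-\ell_0}^{\ell_0}W_\eps(v)\,dx\big)$ with respect to the Brownian bridge of variance $\propto\eps_0$ from $c_-$ to $c_+$, where
\begin{equation*}
W_\eps(v):=\tfrac{\eps_0}{\eps}\,V\big(1+\sqrt{\eps/\eps_0}\,(v-1)\big),\qquad c_\pm:=1+\sqrt{\eps_0/\eps}\,(a_\pm-1),\quad |c_\pm-1|\le 1 .
\end{equation*}
Thus $\tilde\mu$ is exactly of the type $\mu^{c_-,c_+}_{\eps_0,(-\ell_0,\ell_0)}$ but built from the potential $W_\eps$ and with noise strength $\eps_0$. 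Because $V$ is smooth with $V(1)=V'(1)=0$, Taylor's theorem yields $W_\eps\to \tfrac{V''(1)}{2}(v-1)^2$ in $C^1_{\rm loc}$ as $\eps\downarrow 0$, together with $|W_\eps|\le C(v-1)^2$ and $|W_\eps'|\le C|v-1|$ on $[0,2]$ uniformly in $\eps\le\eps_0$; in particular $\{W_\eps\}_{\eps\le\eps_0}$ has uniformly bounded local $C^1$-norm, and the energies $E_{W_\eps}$ are nonnegative.

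Now fix $\delta\in(0,1/2)$ and set
\begin{equation*}
\mathcal{A}':=\Big\{v\in C([-\ell_0,\ell_0])\colon v(\pm\ell_0)=c_\pm,\ \max_{[-\ell_0,\ell_0]}v\ge 1+2\delta,\ \min_{[-\ell_0,\ell_0]}v\le 1-2\delta\Big\}.
\end{equation*}
If $\|v-w\|_\infty\le\delta$ with $w\in\mathcal{A}'$, then $\max v\ge 1+\delta$ and $\min v\le 1-\delta$, so by the intermediate value theorem $v$ hits $1$; hence $B(\mathcal{A}',\delta)\subseteq\{v\text{ hits }1\text{ in }[-\ell_0,\ell_0]\}$. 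A piecewise-linear test profile that goes from $c_-$ to near $1$, performs one excursion up to $1+2\delta$ and one down to $1-2\delta$ near the middle of the slab, and returns to $c_+$ stays in $[0,2]$ and, by the bounds on $W_\eps$ above, has $E_{W_\eps}$-energy at most a constant $C_1=C_1(\delta,V)$, uniformly in $\eps\le\eps_0$ and in $c_\pm\in[0,2]$; since $E_{W_\eps}\ge 0$ this gives $\DE(\mathcal{A}')\le C_1$ for the energy difference associated with $W_\eps$, and $\mathcal{A}'$ admits a near-minimizer valued in $[-2,2]$. Applying Proposition~\ref{pr:LD2} — in its form uniform over potentials of bounded local $C^1$-norm (the remark following it) and using Remark~\ref{r:exminapprox} — with noise parameter $\eps_0$, the fixed $\delta$, and $\gamma=1$, we obtain $\eps_0^{\ast}>0$ such that for all $\eps_0\le\eps_0^{\ast}$ and all $\eps\le\eps_0$,
\begin{equation*}
f(a_-,a_+)\;\ge\;\tilde\mu\big(B(\mathcal{A}',\delta)\big)\;\ge\;\exp\!\Big(-\tfrac{1}{\eps_0}\big(C_1+1\big)\Big)\;=:\;c_0\;>\;0,
\end{equation*}
uniformly in $|a_\pm-1|\le\sqrt{\eps/\eps_0}$. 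Choosing $\ell_0$ large (so that Lemma~\ref{l:smallu} applies) and then $\eps_0\le\min\{\eps_0',\eps_0^{\ast}\}$ small enough that $4\exp(-1/(C\eps_0))<1$, the reduction in the first paragraph yields the lemma with $1-\lambda:=\big(1-4\exp(-1/(C\eps_0))\big)\,c_0\in(0,1)$.

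The crux is the passage to a fixed noise level: a large deviation lower bound applied directly at noise $\eps$ would only give something of order $\exp(-c/\eps)$, which degenerates as $\eps\downarrow 0$, so it is essential to rescale to the $\eps$-independent level $\eps_0$ — which is possible precisely because Lemma~\ref{l:smallu} pins down the $\sqrt{\eps}$-fluctuation scale of the path in the bulk. The technical points requiring care are that the rescaled potentials $W_\eps$ and the rescaled energy gap $\DE(\mathcal{A}')$ must be controlled \emph{uniformly in $\eps$ and in the boundary values $c_\pm$}, so that a single application of the uniform-in-$V$ version of Proposition~\ref{pr:LD2} suffices; the replacement of ``hitting $1$'' by the $\delta$-robust event $\mathcal{A}'$ (and the use of the intermediate value theorem) is what makes the large deviation estimate directly applicable.
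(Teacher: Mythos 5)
Your reduction to the slab $[-\ell_0,\ell_0]$ — the two-sided Markov property plus Lemma~\ref{l:smallu} to push the slab boundary data to within $\sqrt{\eps/\eps_0}$ of $1$, then a uniform lower bound on $\mu^{a_-,a_+}_{\eps,(-\ell_0,\ell_0)}(\text{hit }1)$ — is exactly the paper's first step, so the only real divergence is in how that slab bound is obtained. The paper rescales the fluctuation by $\eps^{-1/2}$ (i.e.\ $\hat u=\eps^{-1/2}(u-1)+1$), so the reference measure becomes a bridge of \emph{order-one} variance, and then argues entirely by hand: the normalization constant is bounded above by $1$, the Gaussian probability of a crossing set is bounded below by translation invariance (shifting by the affine interpolation of the boundary data), and the exponential weight is bounded below by $\exp(-2C\ell_0)$ because the rescaled potential $\eps^{-1}V(\eps^{1/2}(\cdot-1)+1)$ is uniformly bounded on the relevant tube; this gives the explicit constant $1-\lambda=\tfrac12 c\,e^{-2C\ell_0}$ without any large deviation input. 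You instead rescale by $\sqrt{\eps_0/\eps}$, landing at the \emph{fixed} noise level $\eps_0$ with the potential family $W_{\eps/\eps_0}$, and invoke the uniform-in-$V$ large deviation lower bound (Proposition~\ref{pr:LD2}) once, at noise $\eps_0$, with $\gamma=1$. Both routes are sound, and your quantifier bookkeeping (choose the LD threshold $\eps_0^\ast$ uniformly over the family $\{t^{-1}V(1+\sqrt t(\cdot-1))\colon t\in(0,1]\}$, then take $\eps_0\le\eps_0^\ast$) is handled correctly; the paper's argument is the more elementary and gives an explicit constant, while yours is more systematic in that it reuses the same rescaling-plus-uniform-LD device as Step~3 of the proof of Lemma~\ref{l:smallu}, at the price of verifying the hypotheses of Proposition~\ref{pr:LD2} for the rescaled family.

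One small point to tighten: Proposition~\ref{pr:LD2} (via Remark~\ref{r:exminapprox}) requires near-minimizers of the rescaled energy over $\mathcal{A}'$ that take values in a fixed compact set, and your one-line assertion that such exist is not immediate — truncating a minimizing sequence from below need not decrease the potential term, since $W_\eps$ is not monotone to the left of $1$ (it dips again near the image of the well at $-1$). The costless fix is to apply the proposition not to $\mathcal{A}'$ but to the singleton $\{w\}$, where $w$ is your explicit piecewise-linear profile: the minimizer hypothesis is then trivial, $\DE(\{w\})\le E_{W_\eps}(w)\le C_1$ uniformly in $\eps\le\eps_0$ and $c_\pm$, and $B(\{w\},\delta)$ is still contained in the hitting event by your intermediate value argument. (Also, avoid reusing the symbol $c_0$ for your final constant, since it already denotes the transition energy in the paper.)
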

The proof of this lemma, also presented in Subsection \ref{ss:unilem}, follows as a corollary to the previous result by a rescaling argument.\\

\begin{proof}[Proof of Theorem~\ref{t:uniform}]
We will show  that for some $\delta \in (0,1/2)$ and any $\alpha>0$, we have for $\eps$ sufficiently small that
\begin{align}
1-\alpha\leq \frac{\Le}{\de} \; \mu^{-1,1}_{\eps,(-\Le,\Le)}\big(&\text{at least one $\delta^-$ up layer of length $\leq 2\ell$}\notag\\
 &\text{ in }[y-d_\eps,y+d_\eps] \big)\leq 1+\alpha.\label{unif2}
\end{align}
At the end of the proof, it will not be hard to improve from a $\delta^-$ up layer of length less than or equal to $2\ell$ to a full up transition layer.

\begin{notation}
 For brevity, we will often say ``a transition layer $\leq 2\ell$'' as shorthand for ``a transition layer of length less than or equal to $2\ell$.''
\end{notation}

For $\eps$ small enough we consider intervals of type  $\Je=[y -d_\eps,y+d_\eps ]\subseteq [\Le,\Le]$. The main step of our argument consists of proving that the probabilities of transitions in these intervals $\Je$ for different values of $y$ are roughly the same. Hence fix two points $y$, $z$ such that $\Je,J_{z,\eps}\subseteq [-\Le,\Le]$. Without loss of generality, assume that $y\leq z$.

As above in the proof of Theorem~\ref{t:layers}, let $\ell$ and $M$ be a large constants to be fixed later, and let $N_\eps$ and $x_{\pm k}$ be as defined in~\eqref{ne} and~\eqref{xk}. Moreover, consider the overlapping intervals $I_k=[x_{k-1},x_{k+1}]$ as in~\eqref{ik}. Finally, define as in~\eqref{e:defA1} the ``bad set'' $\mathcal{A}_1$ of functions that have boundary values larger than M in magnitude.
In~\eqref{sq} above, we have already established that there is a universal constant $C_2<\infty$ such that
\begin{equation*}
 \mu^{-1,1}_{\eps,(-\Le,\Le)}\big(\Aa_1 \big) \leq \Le\,\exp \bigg(-  \frac{M}{C_2\,\eps} \bigg).
\end{equation*}
Hence, for the system sizes $\Le$ that we consider, the probability of $\Aa_1$  can be made arbitrarily small by choosing $M$ large.

We now define the set of functions
\begin{align}
\mathcal{J}_{y,\eps} := &\big\{ u \in \complement \Aa_1  \colon \text{ $u$ has a $\delta^-$ up layer $\leq 2\ell$ in $J_{y,\eps}$}   \big\}.\label{e:ast}
\end{align}
The set $\mathcal{J}_{z,\eps}$ is defined analogously.
In Steps 1--3 below, we will establish that the probabilities of the $\mathcal{J}_{y,\eps}$ and $\mathcal{J}_{z,\eps}$ are roughly the same. The bounds that we obtain will be uniform with respect to $y$ and $z$. Finally, in Step 4 we will show how this implies~\eqref{unif2}, and in Step 5 we will improve to the statement of Theorem~\ref{t:uniform}.

\medskip

\underline{Step 1.} The first step consists of proving that on the set $\mathcal{J}_{y,\eps}$,  with high probability, the profile $u$ is  close to $-1$ on a sufficiently large interval $J_{y,-}^\eps$ just to the left of $J_{y,\eps}$ and close to $+1$ on a sufficiently large interval $J_{z,+}^\eps$ just to the right of $J_{z,\eps}$.

The length $h_\eps$  of each of these auxiliary intervals $\mathcal{J}^\eps_{y,-}$ and $\mathcal{J}^\eps_{z,+}$ will be chosen below such that
\begin{equation*}
|\log\eps| \ll h_\eps \ll d_\eps.
\end{equation*}

We first fix the ``inner" boundary points of $J_{y,-}^\eps$ and $J_{z,+}^\eps$: In units of $\ell$, we set
\begin{align*}
k_{y,+}^\eps:=& \max \big\{ k \colon x_k \leq y - d_\eps  \big\} -2,\\
k_{z,-}^\eps:=& \min \big\{ k \colon x_k \geq z + d_\eps  \big\} +2 .
\end{align*}
Let $K_\eps$ be as in the statement of  Lemma \ref{l:smallu}. The idea is to make the probability of hitting $\pm 1$ on the auxiliary intervals large by concatenating many subintervals of length $K_
\eps \ell$ and applying Lemma~\ref{l:hittingzero} on each subinterval. With this end in mind, we fix integers  $\bar{K}_\eps$ such that

\begin{equation}\label{e:Defhe}
\bar{K}_\eps \gg 1 \qquad \text{ and } \qquad h_\eps := \ell (2 K_\eps+1)  \bar{K}_\eps \ll d_\eps.
\end{equation}
Then we set
\begin{align*}
k_{y,-}^\eps:= k_{y,+}^\eps   - (2 K_\eps +1) \bar{K_\eps} \quad \text{and} \quad
k_{z,+}^\eps:=   k_{z,-}^\eps  + (2  K_\eps+1) \bar{K_\eps},
\end{align*}
and finally
\begin{equation*}
\Hm:= [k_{y,-}^\eps\ell,k_{y,+}^\eps\ell]  \qquad \text{ and } \qquad  \Hp:= [k_{z,-}^\eps\ell,k_{z,+}^\eps\ell] .
\end{equation*}
(See Figure \ref{fig:3} for an illustration of $J_{y,\eps}$ and $J_{y,-}^\eps$.)

\begin{figure}
\centering
\begin{tikzpicture}[xscale=0.0225, yscale=1.8]

\draw[black,thin, style = densely dotted] (1,0)  -- (500,0) node[anchor=north east]{\small $x$};
\foreach \x in {20,38,56,74,...,488}
\draw[black, thin, style = densely dotted] (\x, -.05) -- (\x,0.05);

\draw[black, ultra thick] (205,0) -- (445,0);
\draw[black, ultra thick] (205,-.05) -- (205,0.05);
\draw[black, ultra thick] (445,-.05) -- (445,0.05);

\draw[color=white, fill=white]  (300,-.05) rectangle (350,.05);
\foreach \x in {314,325,336}
\filldraw[black] (\x,0) ellipse (32pt and .4pt);

\draw [decorate,decoration={brace,amplitude=5.5pt,mirror,raise=6pt},yshift=-7pt]
(205,-.05) -- (445,-.05) node [fill=white,midway,xshift=0pt, yshift =-23pt] {
$2 d_\eps $};

\draw[black, ultra thick] (72,0) -- (164,0);
\draw[black, ultra thick] (72,-.05) -- (72,0.05);
\draw[black, ultra thick] (164,-.05) -- (164,0.05);

\draw[color=white, fill=white]  (98,-.05) rectangle (138,.05);
\foreach \x in {110,118,126}
\filldraw[black] (\x,0) ellipse (24pt and .3pt);

\draw [decorate,decoration={brace,amplitude=5.5pt,mirror,raise=6pt},yshift=-7pt]
(72,-.05) -- (164,-.05) node [fill=white,midway,xshift=0pt, yshift =-23pt] {
$ h_\eps $};

\draw (325,0.1) node[above] {$J_{y,\varepsilon}$};
\draw (118,0.1) node[above] {$J_{y,-}^{\varepsilon}$};


\draw(170,-0.05) node[fill=white, below]{$x_{ k_{y,+}^\varepsilon} $};
\draw(82,-0.05) node[fill=white, below]{$x_{k_{y,-}^\varepsilon}$};

\end{tikzpicture}
\caption{The interval $J_{y,\eps}$ and the auxiliary interval $J_{y,-}^\eps$ to its left.}
\label{fig:3}
\end{figure}
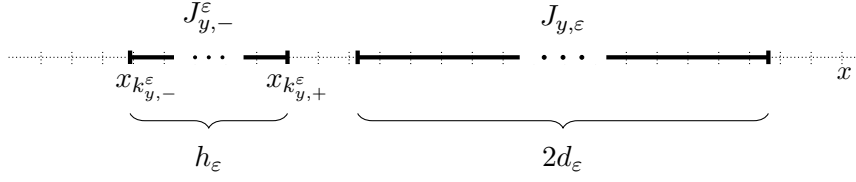

We also define the following sets of indices
\begin{equation*}
\mathcal{I}^{\eps}_{-}:= \big\{ k \colon \kxepsm\leq k\leq \kxepsp\big\},\qquad\mathcal{I}^{\eps}_+:=\big\{k\colon \kzepsm\leq k\leq \kzepsp\big\}.
\end{equation*}
For later use in~\eqref{e:n678} in the proof of Lemma \ref{l:rightshape}, we will make the additional growth assumption
\begin{equation}\label{e:Defhe2}
|\mathcal{I}_{\pm}^\eps| =   (2 K_\eps+1)  \bar{K}_\eps+1 \leq \exp(c_1/4\eps),
\end{equation}
where $c_1>0$ is defined in~\eqref{c1}, below. This is not a strong condition; we will typically think of $h_\eps$ as being much smaller.

Finally, we define another set of ``unlikely" paths, paths that have extra $\delta^-$ layers to the left of $J_{y,\eps}$ or to the right of $J_{z,\eps}$:
\begin{align}
\Aa_{y,3}^-&:=\big\{ u \in \Be\colon  \text{there exists $x \leq  (k_{y,+}^\eps+1) \,\ell$ with $u(x) \geq 1-\delta$} \big\},\notag\\
\Aa_{y,3}^+&:=\big\{ u \in \Be\colon  \text{there exists $x \geq (k_{z,-}^\eps-1) \,\ell  $ with $u(x) \leq -1+\delta$} \big\},\notag\\
 \Aa_{y,3}&:= \Aa_{y,3}^-\cup \Aa_{y,3}^+. \label{Apm}
\end{align}

We now introduce two lemmas. The proofs of both lemmas are given in Subsection \ref{ss:unilem}.
The first lemma is an extension of the upper bound in Theorem \ref{t:layers} and states roughly speaking that \emph{conditioned} on having a transition in a given interval, the probability of extra layers somewhere else is small.
 \begin{lemma}\label{l:reflection}
Let $Y$ be a subinterval of $[-\Le,\Le]$ and let $x_{-}= k_-\ell$ and $x_+= k_+\ell$ be two gridpoints (cf.~\eqref{xk}) to the left and to the right of $Y$ respectively with distance $\geq \ell$ from $Y$.
We denote by $\JY$ and $\Ayth$ the sets
\begin{align*}
\JY:=& \big\{ u \in \complement \Aa_1 \colon \, \text{$u$ has a }\delta^- \text{ up layer in $Y$}  \big\}, \\
\Aa_{Y,3}:=& \big\{ u \in \JY \text { and $u$ has another $\delta^-$ layer outside of $[x_-,x_+]$} \big\} .
\end{align*}
Fix any $\gamma>0$ and any $M <\infty$ sufficiently large. For any $\delta>0$ sufficiently small and $\ell< \infty$ sufficiently large, there exists $\eps_0 >0$ such that, for all $\eps \leq \eps_0$,  we have
\begin{align}
&\mu^{-1,1}_{\eps,(-\Le,\Le)} \big(\Aa_{Y,3}   \big)  \ls  \Le \exp \Big( - \frac{c_0-\gamma}{\eps}  \Big)  \mu^{-1,1}_{\eps,(-\Le,\Le)}\big(  \JY \big). \label{e:rightshapea}
\end{align}
\end{lemma}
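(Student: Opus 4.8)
The plan is to follow the same scheme as in the proof of the upper bound in Theorem~\ref{t:layers}, but now working \emph{conditionally} on the event $\JY$ that there is a $\delta^-$ up layer in $Y$ and controlling the \emph{additional} layer that lives outside $[x_-,x_+]$. First I would dispose of the case where the extra layer lies, say, to the right of $x_+$ (the left case is symmetric, and by a union bound it suffices to handle one side). So we want to estimate
\[
\mu^{-1,1}_{\eps,(-\Le,\Le)}\Big(\complement\Aa_1\cap\{u\text{ has a }\delta^-\text{ up layer in }Y\}\cap\{u\text{ has a }\delta^-\text{ layer in }(x_+,\Le)\}\Big).
\]
As in Step~4 of Theorem~\ref{t:layers}, cover the region $(x_+,\Le)$ by the overlapping order-one intervals $I_k$ (so that an extra $\delta^-$ layer to the right of $x_+$ is fully contained in at least one $I_k$), and sum over the index $k$ of that interval; this produces the entropic prefactor $N_\eps\lesssim\Le$. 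For each fixed $k$, we must bound
\[
\mu^{-1,1}_{\eps,(-\Le,\Le)}\Big(\complement\Aa_1\cap\{\delta^-\text{ up layer in }Y\}\cap\{\delta^-\text{ layer in }I_k\}\Big)\lesssim \exp\!\Big(-\frac{c_0-\gamma}{\eps}\Big)\,\mu^{-1,1}_{\eps,(-\Le,\Le)}\big(\JY\big).
\]

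The heart of the argument is the reflection. Since $Y$ and $I_k$ are separated and $x_+$ sits strictly between them, I would introduce, on the set where there is a $\delta^-$ layer in $I_k$, a left stopping point $\chi_-$ (the leftmost hitting of $0$ inside $I_k$ that occurs after a hitting of $1-\delta$, exactly in the spirit of~\eqref{e:defstp}) and a right stopping point $\chi_+$ (the rightmost hitting of $0$ inside $I_k$ after which $1-\delta$ is still attained, as in~\eqref{e:defstp1}); set them to $\pm\Le$ when the defining set is empty. On $\Aa_{Y,3}$ (restricted to this value of $k$) we have $u(\chi_\pm)=0$ with $\chi_-<\chi_+$, both inside $I_k$, and the composed reflection $\mathsf{R}=R^{\chi_+}_{\chi_-}$ preserves $\mu^{-1,1}_{\eps,(-\Le,\Le)}$ by the same computation as in~\eqref{e:rinv} (using the strong Markov property~\eqref{e:strMarkovmu} and the $R$-invariance of $\mu^{0,0}_{\eps,(\chi_-,\chi_+)}$, which is where the symmetry of $V$ enters). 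Because $\mathsf{R}$ only modifies $u$ on $(\chi_-,\chi_+)\subset I_k$, which is disjoint from $Y$, it does not touch the $\delta^-$ up layer in $Y$; and it turns the $\delta^-$ layer in $I_k$ into a wasted $\delta^-$ excursion in $I_k$ (Definition~\ref{def:wasted}). Therefore
\[
\mu^{-1,1}_{\eps,(-\Le,\Le)}\big(\Aa_{Y,3}\cap\{\text{layer in }I_k\}\big)
=\mu^{-1,1}_{\eps,(-\Le,\Le)}\Big(\mathsf{R}\big(\Aa_{Y,3}\cap\{\text{layer in }I_k\}\big)\Big)
\le \mu^{-1,1}_{\eps,(-\Le,\Le)}\big(\complement\Aa_1\cap\JY\cap\{\text{wasted }\delta^-\text{ excursion in }I_k\}\big).
\]

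Now I would apply the two-sided Markov property~\eqref{e:Markomub}: condition on the four-point marginal $u$ at $x_{k-2},x_{k+2}$ and at the two gridpoints $x_-\pm$ something bracketing $Y$ (more simply, condition on the boundary values of an enlarged interval $[x_{k-2},x_{k+2}]$ around $I_k$, which is disjoint from $Y$ once $\ell$ is large). This factorizes the probability as an integral over $\nu$ of
\[
\mu^{u_{-},u_{+}}_{\eps,(x_{k-2},x_{k+2})}\big(\text{wasted }\delta^-\text{ excursion in }I_k\big)\ \times\ \big(\text{conditional probability of }\JY\text{ given those boundary values}\big),
\]
and then summing the second factor back up (bounding it by the full conditional expectation of $\mathbf 1_{\JY}$) reproduces $\mu^{-1,1}_{\eps,(-\Le,\Le)}(\JY)$ up to the usual trivial estimate $\int\nu\le 1$ on the compact set $[-M,M]^{4}$ — with the contribution of boundary values outside $[-M,M]$ handled by Lemma~\ref{le:onept} exactly as in Step~2 of Theorem~\ref{t:layers}. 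The first factor is bounded by the uniform large deviation upper bound Proposition~\ref{pr:LD1} together with the energy bound Lemma~\ref{l:cl}: a $\delta$-ball around the set of functions with a wasted $\delta^-$ excursion is the set with a $(2\delta)^-$ excursion, so for $\delta$ small relative to $\gamma$ we get $\exp(-(c_0-2C\delta-\gamma)/\eps)\le\exp(-(c_0-\gamma)/\eps)$ after redefining $\gamma$. Multiplying by the number $N_\eps\lesssim\Le$ of choices of $k$ and adding the symmetric left-hand contribution yields~\eqref{e:rightshapea}.

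The main obstacle is a bookkeeping one: ensuring that the stopping points $\chi_\pm$ used to build $\mathsf{R}$ genuinely lie inside $I_k$ and are \emph{strictly between} $Y$ and the point of reflection on the relevant event, so that the reflection leaves the up layer in $Y$ untouched and the conditioning region around $I_k$ stays disjoint from $Y$ — this is exactly where the hypothesis that $x_\pm$ have distance $\ge\ell$ from $Y$ is used. One also has to treat the geometric special cases (the extra layer's interval $I_k$ overlapping $[x_\pm,\Le]$'s endpoints, or being a boundary interval where one boundary value is known) exactly as in Cases (a)--(c) of Theorem~\ref{t:layers}; these do not change the bound. Everything else is a direct transcription of the Theorem~\ref{t:layers} upper-bound machinery, now carrying along the harmless factor $\mu^{-1,1}_{\eps,(-\Le,\Le)}(\JY)$.
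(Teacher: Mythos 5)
Your overall architecture (union over the grid intervals $I_k$, Markov factorization around an enlarged block, Proposition~\ref{pr:LD1} plus Lemma~\ref{l:cl} for the wasted-excursion cost, carrying the factor $\mu^{-1,1}_{\eps,(-\Le,\Le)}(\JY)$ along) matches the paper, but the central reflection step fails. A vertical reflection between two zeros $\chi_-<\chi_+$ that both lie \emph{inside} $I_k$, within the single extra $\delta^-$ layer, does not produce a wasted $\delta^-$ excursion: the points where $|u|=1-\delta$ at the two ends of that layer lie outside $(\chi_-,\chi_+)$ and are untouched, while on $(\chi_-,\chi_+)$ one only knows $|u|<1-\delta$, so the reflected path still runs from $-1+\delta$ to $1-\delta$ across $I_k$ (it still has a $\delta^-$ layer there) and never comes within $\delta$ of the \emph{same} well on both sides of a near-zero, as Definition~\ref{def:wasted} requires. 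In the proof of Theorem~\ref{t:layers} wasted excursions are created precisely because each $R^{\chi_r}_{\chi_l}$ spans between zeros belonging to \emph{two different} layers of the up/down pair, flipping the entire stretch between them; with only one extra layer available here, the only admissible partner is the forced up layer in $Y$ itself. This is what the paper's proof does: $\chi_-$ is a zero inside the extra layer's interval, but $\chi_+$ is the rightmost zero to the left of $\sup Y$ chosen so that a full up-layer pattern ($u$ hitting $-1+\delta$ and then $1-\delta$) survives to its right inside $Y$; the reflection is therefore long-range, and the preservation of a $\delta^-$ up layer in $Y$ is exactly the delicate point your ``the reflection does not touch $Y$'' claim assumes away. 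So the map you construct essentially sends $\Aa_{Y,3}$ back into a set of paths that still have a layer in $I_k$, and the measure-preservation argument yields no gain.

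Two further points need repair even once the reflection is fixed. First, an extra $\delta^-$ layer outside $[x_-,x_+]$ need not be contained in any single $I_k$; the paper first removes the set $\Ayt$ of paths staying in $[-1+\delta,1-\delta]$ on a whole grid interval outside $[x_-,x_+]$ (a Lemma~\ref{le:lazy}-type cost $\delta^2\ell/C_1$), and only then can it assume short layers. Second, to end up with the factor $\mu^{-1,1}_{\eps,(-\Le,\Le)}(\JY)$ on the right-hand side you cannot simply ``sum the outside factor back up'': the event $\JY$ also constrains the gridpoint values inside the block $[x_{k-2},x_{k+2}]$, so one needs the ratio argument of the paper, i.e.\ a uniform lower bound of order one on $\mathbb{E}^{\mu_\eps,u_-,u_+}_{(x_{k-2},x_{k+2})}(\mathbf{1}_{\Aa^{\odot}_k})$ as in~\eqref{http}, against which the wasted-excursion numerator is compared. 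These are fixable bookkeeping issues, but the reflection gap is essential and is precisely where the hypothesis on the position of $x_\pm$ relative to $Y$ enters in the paper's argument.
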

We now apply Lemma \ref{l:reflection} for $Y = J_{\eps,y}$ and for $x_-= (k_{y,+}^\eps +1) \ell $ and $x_+ = (k_{z,-}^\eps -1) \ell $. Because of the boundary conditions, the absence of layers to the left of $x_-$ and the right of $x_+$ implies in particular that  $u\leq 1-\delta$ to the left of $x_-$ and that $u\geq -1+\delta$ to the right of $x_+$. Hence we deduce that
\begin{align}
 \mu^{-1,1}_{\eps,(-\Le,\Le)} &\big( \Aa_{y,3}  \big)  \ls  \Le \exp \Big( - \frac{c_0-\gamma}{\eps}  \Big)  \mu^{-1,1}_{\eps,(-\Le,\Le)}\big( \mathcal{J}_{y,\eps} \big). \label{e:rightshape2}
\end{align}

The second lemma establishes that, on the other hand, when there are no extra layers, there is only a small probability of making an excursion from $-1$ at some gridpoint in $J_{y,-}^\eps$ (respectively, an excursion from $1$ at some gridpoint in $J_{z,+}^\eps$). The result from the second lemma is exactly the necessary ingredient that we need in Step 2 in order to invoke Lemma~\ref{l:hittingzero}.
\begin{lemma}\label{l:rightshape}
Fix any  $M <\infty$ sufficiently large. For any $\delta>0$ sufficiently small and $\ell< \infty$ sufficiently large, there exists $\eps_0 >0$ such that for all $\eps \leq \eps_0$  we have
\begin{align}
 \mu^{-1,1}_{\eps,(-\Le,\Le)} &\big( u \in \mathcal{J}_{y,\eps} \cap \complement \Aa_{y,3}^-\colon |u(x_k) + 1| \geq \frac{1}{2} \text{ for some  $k \in \mathcal{I}^{\eps}_-$ }  \big) \notag\\
 & \leq  \exp\bigg(  -\frac{c_1}{2\eps} \bigg)  \mu^{-1,1}_{\eps,(-\Le,\Le)}\big( \mathcal{J}_{y,\eps} \big), \label{e:rightshape}
\end{align}
and, similarly,
\begin{align}
 \mu^{-1,1}_{\eps,(-\Le,\Le)} &\big( u \in \mathcal{J}_{y,\eps} \cap \complement \Aa_{y,\eps}^+\colon |u(x_k) - 1| \geq \frac{1}{2} \text{ for some  $k \in \mathcal{I}^{\eps}_+$ }  \big) \notag\\
 & \leq \exp\bigg(  -\frac{c_1}{2 \eps} \bigg)  \mu^{-1,1}_{\eps,(-\Le,\Le)}\big( \mathcal{J}_{y,\eps} \big), \label{e:leftshape}
\end{align}
where $c_1$ is defined in~\eqref{c1}, below.
\end{lemma}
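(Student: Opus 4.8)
The plan is to establish~\eqref{e:rightshape} in detail; the companion bound~\eqref{e:leftshape} then follows by applying the point reflection $u\mapsto -u$, $x\mapsto -x$, under which $\mu^{-1,1}_{\eps,(-\Le,\Le)}$ is invariant and which interchanges $+1\leftrightarrow -1$, left $\leftrightarrow$ right, and the windows $\Hm$ and $\Hp$. The structural remark driving the argument is that on $\mathcal{J}_{y,\eps}\cap\complement\Aa_{y,3}^-$ the profile stays below $1-\delta$ on all of $[-\Le,(k_{y,+}^\eps+1)\ell]$, hence — since a $\delta^-$ transition layer, up or down, requires the value $+(1-\delta)$ to be attained — carries no $\delta^-$ layer there; together with $u(-\Le)=-1<-(1-\delta)$ and the fact that $u$ returns to $-(1-\delta)$ at the left edge of a layer inside $J_{y,\eps}$, this means the profile is, roughly, near $-1$ on $\Hm$ up to excursions whose cost we wish to charge.

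First, a union bound gives that the left-hand side of~\eqref{e:rightshape} is at most $\sum_{k\in\mathcal{I}^\eps_-}\mu^{-1,1}_{\eps,(-\Le,\Le)}\big(\mathcal{J}_{y,\eps}\cap\complement\Aa_{y,3}^-\cap\{|u(x_k)+1|\geq\tfrac12\}\big)$, and I fix $k\in\mathcal{I}^\eps_-$ and split the last event into $\{u(x_k)\geq-\tfrac12\}$ and $\{u(x_k)\leq-\tfrac32\}$, treating the first (the second is handled identically, bracketing at a level just below $-1$ rather than just above). On $\{u(x_k)\geq-\tfrac12\}$ the random points $\chi_-:=\sup\{x<x_k\colon u(x)=-(1-\delta)\}$ and $\chi_+:=\inf\{x>x_k\colon u(x)=-(1-\delta)\}$ are respectively a right and a left stopping point, satisfy $-\Le<\chi_-<x_k<\chi_+<\Le$, and take the value $u(\chi_\pm)=-(1-\delta)$ — they exist because $u$ lies below $-(1-\delta)$ at $-\Le$, exceeds $-(1-\delta)$ at $x_k$, and hits $-(1-\delta)$ again at the layer in $J_{y,\eps}$, which lies to the right of $x_k$. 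Applying the strong Markov property (Lemma~\ref{p:Markov}) at $\chi_\pm$, and writing the event as the intersection of a $\big(\F_{[-\Le,\chi_-]}\vee\F_{[\chi_+,\Le]}\big)$-measurable part $E_k$ — which still carries the transition of length $\leq 2\ell$ in $J_{y,\eps}$ — with an event on $(\chi_-,\chi_+)$ contained in $\{\exists\,x\in(\chi_-,\chi_+)\colon u(x)\geq-\tfrac12\}$, one obtains
\begin{align*}
\mu^{-1,1}_{\eps,(-\Le,\Le)}\big(\mathcal{J}_{y,\eps}\cap\complement\Aa_{y,3}^-\cap\{u(x_k)\geq-\tfrac12\}\big)
\leq \mathbb{E}^{\mu_\eps,-1,1}_{(-\Le,\Le)}\Big[\mathbf{1}_{E_k}\;\mu^{-(1-\delta),-(1-\delta)}_{\eps,(\chi_-,\chi_+)}\big(\exists\,x\colon u(x)\geq-\tfrac12\big)\Big].
\end{align*}

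The inner conditional probability is now bounded $\leq\exp(-(c_1-C\delta-\gamma)/\eps)$ \emph{uniformly in $\chi_\pm$} — and, crucially, with no boundary-value uncertainty of the kind that complicated Theorem~\ref{t:layers}, because the bracketing value is the fixed number $-(1-\delta)$, close to $-1$. Restricting $\chi_+-\chi_-$ to a compact range (the complementary very-long-excursion event being negligible, as in Lemma~\ref{le:lazy}), the uniform large deviation upper bound Proposition~\ref{pr:LD1} reduces this to the energy gap $\DE_*=\inf\{E(u)\colon u(\chi_\pm)=-(1-\delta),\; u\geq-\tfrac12\text{ somewhere}\}-\inf\{E(u)\colon u(\chi_\pm)=-(1-\delta)\}$, and a Modica--Mortola computation in the spirit of Lemmas~\ref{le:lazy}--\ref{l:cl} gives $\DE_*\geq 2\int_{-1}^{-1/2}\sqrt{2V(s)}\,ds-C\delta\geq c_1-C\delta$, with $c_1$ the constant of~\eqref{c1} (the cheapest way to leave $-1$, reach $-\tfrac12$, and return; overshooting toward $1-\delta$ or toward $-\tfrac32$ costs strictly more). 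Since $\mu^{-1,1}_{\eps,(-\Le,\Le)}(E_k)\leq\mu^{-1,1}_{\eps,(-\Le,\Le)}(\text{transition }\leq 2\ell\text{ in }J_{y,\eps})\leq\mu^{-1,1}_{\eps,(-\Le,\Le)}(\mathcal{J}_{y,\eps})+\mu^{-1,1}_{\eps,(-\Le,\Le)}(\Aa_1)\lesssim\mu^{-1,1}_{\eps,(-\Le,\Le)}(\mathcal{J}_{y,\eps})$ for $M$ large, each summand above is $\lesssim\exp(-(c_1-C\delta-\gamma)/\eps)\,\mu^{-1,1}_{\eps,(-\Le,\Le)}(\mathcal{J}_{y,\eps})$; summing the $|\mathcal{I}^\eps_-|$ of them, using the growth restriction $|\mathcal{I}^\eps_-|\leq\exp(c_1/(4\eps))$ from~\eqref{e:Defhe2}, and taking $\delta,\gamma$ small enough that $C\delta+\gamma<c_1/4$ yields the factor $\exp(-c_1/(2\eps))$ of~\eqref{e:rightshape}.

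I expect the main obstacle to be not the large deviation or energy input — the energy lemma is clean precisely because the excursion is braced at the fixed level $-(1-\delta)$ near $-1$, so there is no uniformity-over-boundary-data issue — but the bookkeeping of the strong Markov step: verifying that the relevant part of the event is genuinely split by the path-dependent stopping points $\chi_\pm$ into an exterior-measurable factor and a factor supported on $(\chi_-,\chi_+)$, and controlling the long-excursion tail uniformly so that Proposition~\ref{pr:LD1} applies with a single threshold $\eps_0$.
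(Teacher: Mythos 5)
Your overall strategy (union bound over $k\in\mathcal{I}^\eps_-$, an excursion-cost estimate worth $c_1-C\delta$, and absorption of the entropy factor via \eqref{e:Defhe2}) is sound, but the central step is not: the strong Markov property of Lemma~\ref{p:Markov} does not apply to your bracketing points. Your $\chi_-=\sup\{x<x_k\colon u(x)=-(1-\delta)\}$ is a \emph{right} stopping point and $\chi_+=\inf\{x>x_k\colon u(x)=-(1-\delta)\}$ is a \emph{left} stopping point, which is exactly the opposite orientation to what Lemma~\ref{p:Markov} requires (there $\chi_-$ must be a left and $\chi_+$ a right stopping point, so that $\F_{[-\Le,\chi_-]}$ and $\F_{[\chi_+,\Le]}$ are meaningful). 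More importantly, the conclusion you draw is simply false: conditioned on the path outside $(\chi_-,\chi_+)$, the law of $u$ on $(\chi_-,\chi_+)$ is \emph{not} $\mu^{-(1-\delta),-(1-\delta)}_{\eps,(\chi_-,\chi_+)}$, because by construction the inner path is additionally conditioned not to return to the level $-(1-\delta)$ between $\chi_-$ and $x_k$ (and between $x_k$ and $\chi_+$); between a last-exit point and a first-entry point one sees an excursion law, not a bridge. This is precisely the pitfall the paper flags in Step 1 of the proof of Lemma~\ref{le:onept}, where the analogous points $x^{2M}_\pm$ are noted to have the wrong orientation and only a measure-preserving reflection (not a conditional-law statement) can be salvaged. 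A secondary gap: at the end you need $\mu^{-1,1}_{\eps,(-\Le,\Le)}(\Aa_1)\lesssim\mu^{-1,1}_{\eps,(-\Le,\Le)}(\Be)$, but no a priori lower bound on $\mu^{-1,1}_{\eps,(-\Le,\Le)}(\Be)$ is available at this stage (such bounds are only produced later, in Step 4 of the proof of Theorem~\ref{t:uniform}, which itself uses this lemma), so this comparison is circular; the same problem infects your treatment of the ``very long excursion'' case.

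The paper's proof avoids both issues by never using random bracketing points. For fixed $k$ it conditions at the \emph{deterministic} gridpoints $x_{k-2},x_{k+2}$ via Lemma~\ref{le:Markov1b}, writes the target event as $\Aa^{\ominus}_k\cap\Aa^{\odot}_{1/2,k}\cap\Aa^{\oplus}_k$ and uses the containment $\Aa^{\ominus}_k\cap\Aa^{\odot}_k\cap\Aa^{\oplus}_k\subseteq\Be$, so that everything reduces to the ratio $\sup_{u_\pm}\mathbb{E}^{\mu_\eps,u_-,u_+}_{(x_{k-2},x_{k+2})}(\mathbf{1}_{\Aa^{\odot}_{1/2,k}})/\mathbb{E}^{\mu_\eps,u_-,u_+}_{(x_{k-2},x_{k+2})}(\mathbf{1}_{\Aa^{\odot}_k})$; the numerator and denominator are handled by Propositions~\ref{pr:LD1} and~\ref{pr:LD2}, the common ground-state energy cancels in the ratio, and the key point is that the constraint $u\leq 1-\delta$ on $[x_{k-1},x_{k+1}]$ (inherited from $\complement\Aa_{y,3}^-$) is kept in \emph{both} sets, so that Lemma~\ref{l:unpv} yields the gap $c_1-C\delta$ uniformly over boundary values $u_\pm\in[-M,M]$ without ever pinning the boundary data near $-1$. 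If you want to rescue your approach, you would have to replace the bridge-law claim at $(\chi_-,\chi_+)$ by an argument of this conditional-ratio type (or by a measure-preserving reflection as in Theorem~\ref{t:layers}), since the fixed boundary value $-(1-\delta)$ you hoped to gain from the stopping points cannot be obtained in the way you propose.
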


\underline{Step 2.} The second step of the proof consists of showing that paths in the set $\Be$ have hitting points of $-1$ in $\Hm$ and hitting points of $+1$ in $\Hp$ with large probability. This is captured by the following lemma, which is also proved in Subsection~\ref{ss:unilem}.

\begin{lemma}\label{l:hittingz} There exists $C<\infty$ with the following property.
Fix any $\gamma>0$ and any $M <\infty$ sufficiently large. For any $\delta>0$ sufficiently small and $\ell< \infty$ sufficiently large, there exists $\eps_0 >0$  and $\lambda>0$ such that, for all $\eps \leq \eps_0$,  we have
\begin{align}
 \mu^{-1,1}_{\eps,(-\Le,\Le)}& \Big( u \in \Be \colon \text{no hitting of $-1$ in $\Hm$} \Big)\notag \\
 &  \leq  \frac{1}{2}\,E(\eps)  \, \mu^{-1,1}_{\eps,(-\Le,\Le)}\big( \Be \big) ,\label{e:hz11}\\
  \mu^{-1,1}_{\eps,(-\Le,\Le)}  &\Big( u \in \Be \colon \text{no hitting  of $+1$ in $\Hp$} \Big) \notag\\
  & \leq  \frac{1}{2}\,E(\eps)  \,  \mu^{-1,1}_{\eps,(-\Le,\Le)}\big( \Be  \big)  \label{e:hz29} ,
\end{align}
where the error term satisfies
\begin{align}\label{e1}
E(\eps):= C\left(\lambda^{\bar{K}_\eps}+\Le\exp\bigg(-\frac{c_0-\gamma}{\eps}\bigg)+\exp\bigg(-\frac{c_1}{2\eps}\bigg)\right),
\end{align}
and $c_1$ is defined in~\eqref{c1}, below.
\end{lemma}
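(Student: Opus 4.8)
I write $\mu := \mu^{-1,1}_{\eps,(-\Le,\Le)}$ for brevity. The plan is to prove~\eqref{e:hz11} first; then~\eqref{e:hz29} follows from it by applying the vertical reflection $u \mapsto -u$, under which $\Hm$, $\Aa_{y,3}^-$ and~\eqref{e:rightshape} are carried over to $\Hp$, $\Aa_{y,3}^+$ and~\eqref{e:leftshape}. To bound $\mu\big(u \in \Be : u \text{ does not hit } -1 \text{ in } \Hm\big)$ I would peel off three ``bad'' contributions. First, since $\Aa_{y,3}^- \subseteq \Aa_{y,3}$ by~\eqref{Apm}, the estimate~\eqref{e:rightshape2} (the consequence of Lemma~\ref{l:reflection} for $Y = J_{y,\eps}$) bounds the part coming from $\Be \cap \Aa_{y,3}^-$ by $C\,\Le\exp\!\big(-(c_0-\gamma)/\eps\big)\,\mu(\Be)$, which is the second term of~\eqref{e1}. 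Second, on $\Be \cap \complement\Aa_{y,3}^-$, Lemma~\ref{l:rightshape} and~\eqref{e:rightshape} bound the part on which $|u(x_k)+1| \geq 1/2$ for some gridpoint $x_k$ with $k \in \mathcal{I}^\eps_-$ by $\exp\!\big(-c_1/(2\eps)\big)\,\mu(\Be)$, the third term of~\eqref{e1}. It then remains to control the ``good'' event
\[
G := \Be \cap \complement\Aa_{y,3}^- \cap \big\{\, |u(x_k)+1| < 1/2 \ \text{ for all } k \in \mathcal{I}^\eps_- \,\big\} \cap \big\{\, u \text{ does not hit } -1 \text{ in } \Hm \,\big\},
\]
for which I aim at the bound $\mu(G) \leq C\,\lambda^{\bar{K}_\eps}\,\mu(\Be)$; this gives the first term of~\eqref{e1}, and summing the three bounds (and enlarging $C$ to absorb the factor $1/2$) yields~\eqref{e:hz11}.

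To estimate $\mu(G)$ I would partition $\Hm$, which has length $h_\eps = (2K_\eps+1)\bar{K}_\eps\,\ell$, into of order $\bar{K}_\eps$ consecutive blocks, each of the length $2(2K_\eps+1)\ell$ required to apply Lemma~\ref{l:hittingzero} with the choice $\ell_0 = \ell$; after renaming $\lambda$ (replacing it by a fixed power) one may take the number of blocks to be exactly $\bar{K}_\eps$. On the event $G$ every gridpoint of $\Hm$ lies in $[-3/2,-1/2]$, so for each block the two boundary gridpoints and the $2K_\eps$ odd-offset interior gridpoints entering Lemma~\ref{l:hittingzero} all lie in that interval. Lemma~\ref{l:hittingzero}, in its $-1$ version (the vertical reflection of the stated $+1$ version), then shows that on each block the path fails to hit $-1$ in that block's central $2\ell$-window with conditional probability at most $\lambda$; more precisely, averaging the conditional statement of the lemma over the event that the interior gridpoints are within $1/2$ of $-1$ gives $\mu^{u_-,u_+}_{\eps,(\cdot)}\big(\{\text{no hit of } -1\}\cap\{\text{interior gridpoints within } 1/2 \text{ of } -1\}\big) \leq \lambda$, uniformly over block-boundary data $u_\pm \in [-3/2,-1/2]$. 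Since the blocks are disjoint, the multi-interval Markov property~\eqref{e:Markomub} makes these per-block events conditionally independent given the gridpoint configuration, so their conjunction has probability at most $\lambda^{\bar{K}_\eps}$; integrating against the marginal $\nu$ of the gridpoint values (restricted to the region where they all lie within $1/2$ of $-1$), while keeping the remaining, $\nu$-measurable, part of the constraint $u \in \Be$ in the conditioning, one arrives at $\mu(G) \leq \lambda^{\bar{K}_\eps}\,\mu(\Be)$, up to an error $\mu(\Aa_1) \leq \Le\exp\!\big(-M/(C_2\eps)\big)$ accounting for the discrepancy between $\Be$ and the slightly larger ``outer'' event one actually conditions on; this error is negligible for $M$ large.

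I expect the main obstacle to be the bookkeeping in this last step. One has to cut the gridpoints of $\Hm$ and the blocks in such a way that the constraints defining $\Be$ and $\complement\Aa_{y,3}^-$ are measurable with respect to the conditioning $\sigma$-algebra generated by the gridpoint values together with the path outside $\Hm$, so that they pass through the conditional expectation intact, while the per-block ``no hit of $-1$'' events remain measurable with respect to the block $\sigma$-algebras and hence become conditionally independent via~\eqref{e:Markomub}; and one has to be careful that Lemma~\ref{l:hittingzero} is a statement about a conditional probability, so that conditioning further on the exact gridpoint values (rather than on the event that they lie within $1/2$ of the well) is legitimate only after the averaging step indicated above. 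The remaining ingredients — the union bound, the two preliminary reductions through Lemmas~\ref{l:reflection} and~\ref{l:rightshape}, and the block count — are routine.
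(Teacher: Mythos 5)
Your decomposition is exactly the paper's: peel off $\Be\cap\Aa_{y,3}^-$ via \eqref{e:rightshape2}, peel off the bad-gridpoint event via Lemma~\ref{l:rightshape}, and treat the remaining set $G$ by cutting $\Hm$ into roughly $\bar K_\eps$ blocks, conditioning with the Markov property, and applying Lemma~\ref{l:hittingzero} blockwise. The one place where you deviate is also where the argument breaks: you turn the conditional statement of Lemma~\ref{l:hittingzero} into the \emph{absolute} per-block bound $\mu^{u_-,u_+}\big(\{\text{no hit of }-1\}\cap\{\text{interior gridpoints within }1/2\}\big)\le\lambda$, so after multiplying over the blocks and integrating out the block-endpoint values, the event you compare against is only your ``outer'' event, which controls the path outside $\Hm$ and the block endpoints but \emph{not} the gridpoints interior to the blocks. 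That outer event is not contained in $\Be$, and your estimate reads $\mu(G)\le\lambda^{\bar K_\eps}\big(\mu(\Be)+\mu(\Aa_1)\big)$. The additive term cannot be dismissed as ``negligible for $M$ large'': the conclusion of the lemma is multiplicative in $\mu(\Be)$, so absorbing $\lambda^{\bar K_\eps}\mu(\Aa_1)$ requires $\mu(\Aa_1)\lesssim\mu(\Be)$, i.e.\ an a priori lower bound on $\mu(\Be)$. No such bound is available here: $\mu(\Be)$ is the probability that the transition sits in the particular window $J_{y,\eps}$, which is precisely what Theorem~\ref{t:uniform} is in the process of estimating, and a priori it could be smaller than $\Le\exp(-M/(C_2\eps))$ for any fixed $M$. (One could manufacture a lower bound of order $\exp(-(c_0+\gamma)/\eps)$ by a separate construction plus the large deviation lower bound and then choose $M$ accordingly, but your proposal contains no such step.)

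The fix is to keep the \emph{relative} form of the per-block estimate, which is what the paper does: uniformly over block boundary data in $[-3/2,-1/2]$, the conditional expectation of $\mathbf{1}_{\{\text{no hit}\}\cap\{\text{interior gridpoints close}\}}$ is at most $\lambda$ times that of $\mathbf{1}_{\{\text{interior gridpoints close}\}}$, and this second factor must be carried along. Then, after undoing the blockwise Markov decomposition, the comparison event retains the interior-gridpoint constraints; since being within $1/2$ of $-1$ implies $|u(x_k)|\le M$ once $M\ge 3/2$, that event is contained in $\Be$, and one obtains $\mu(G)\le\lambda^{\bar K_\eps}\mu(\Be)$ with no additive error, cf.\ \eqref{done}. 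A further, minor, inaccuracy: \eqref{e:hz29} does not follow from \eqref{e:hz11} by the vertical reflection $u\mapsto-u$ alone, since this maps $\mu^{-1,1}_{\eps,(-\Le,\Le)}$ to $\mu^{1,-1}_{\eps,(-\Le,\Le)}$, turns up layers into down layers, and does not move $\Hm$ to $\Hp$; one should either invoke the point reflection $u(x)\mapsto-u(-x)$ with the roles of $y$ and $z$ mirrored, or simply rerun the argument, as the paper does.
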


\underline{Step 3.} Now we are ready to define the reflection operator $\Rx$. First, we define the following left and right stopping points
\begin{align*}
\chi_-  \,:= \, & \inf\big\{x  \in \Hm \colon u(x) = -1    \big\} , \\
\chi_+\, : = \, & \sup \big\{x \in \Hp \colon u(x) = 1 \big\}.
\end{align*}
Here we use the convention that $\chi_-= \Le$ if there is no hitting point of $-1$ in $\Hm$ and similarly $\chi_+=-\Le$ if there is no hitting point of $1$ in $\Hp$. We use these hitting points to define the reflection operator
\begin{align}
\Rx u(x):=
\begin{cases}
u(x)  & \text{for } x \leq \chi_-,\\
-u(\chi_- + \chi_+ - x) & \text{for } \chi_- < x < \chi_+,\\
u(x)  & \text{for } x \geq \chi_+,\\
\end{cases}
\end{align}
if $\chi_- \leq \chi_+$. We set  $\Rx$ to be the identity otherwise. In other words the operator $\Rx$ performs a point reflection of the graphs of $u$ between the left and right stopping points $\chi_{\pm}$. As in Step 4 of the proof of the upper bound in Theorem~\ref{t:layers}, one argues that the strong Markov property~\eqref{e:strMarkovmu} implies that $\Rx$ leaves the measure  $  \mu^{-1,1}_{\eps,(-\Le,\Le)}$ invariant. The action of the reflection operator is illustrated in Figure \ref{fig:5}.

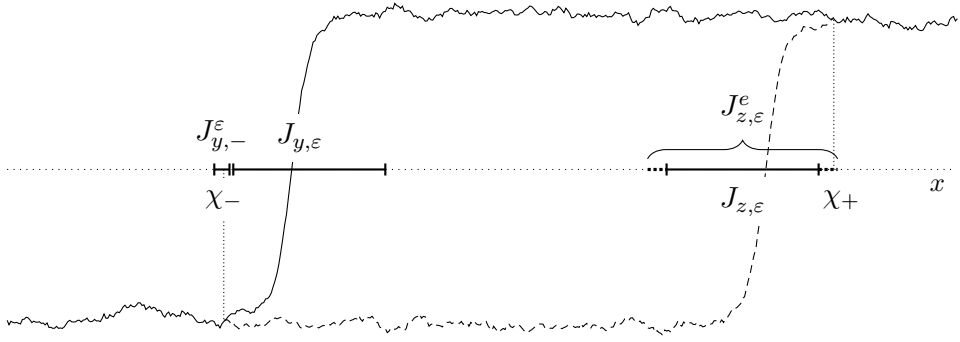
\begin{figure}
\centering

\begin{tikzpicture}[xscale=0.025, yscale=2, >=stealth]

\draw[black, style=dotted] (1,0) -- (500,0) node[anchor=north east]{\small $x$};

\draw[black] plot file{./Graphs/P5G1.txt};
\draw[black, style = densely dashed] plot file {./Graphs/P5G2.txt};

\draw[black, thick] (120,0) -- (200,0);
\draw[black,  thick] (120,-.04) -- (120,0.04);
\draw[black,  thick] (200,-.04) -- (200,0.04);
\draw(155,0.05) node[above, fill=white] {$J_{y,\varepsilon}$};
\draw[black, thick] (110,0) -- (118,0);
\draw[black,  thick] (110,-.04) -- (110,0.04);
\draw[black,  thick] (118,-.04) -- (118,0.04);
\draw (114,0.05) node[above] {$J_{y,-}^{\varepsilon}$};

\draw[black, thick] (348,0) -- (428,0);
\draw[black,  thick] (348,-.04) -- (348,0.04);
\draw[black,  thick] (428,-.04) -- (428,0.04);
\draw (388,-0.05) node[below,fill=white] {$J_{z,\varepsilon}$};
\draw[densely dotted, very thick] (428,0) -- (438,0);
\draw[densely dotted, very thick] (338,0) -- (348,0);


\draw [decorate,decoration={brace,amplitude=7pt},xshift=-0pt,yshift= 2pt]
(338,0.0) -- (438,0) node [midway,yshift= 18 pt]
{ $J_{z,\varepsilon}^e$};


%
\draw[style= densely dotted] (115,0) -- (115,-1);
\draw[style=densely dotted] (436,0) --  (436,1);
\draw (440,-0.2) node[ fill=white] {$\chi_+$} ;
\draw (115,-0.2) node[ fill=white] {$\chi_-$} ;

\end{tikzpicture}
\caption{The reflection operator $\mathsf{R}_{y,z}$ performs a point reflection of the path between the left and right hitting points $\chi_{\pm}$. In this way the $\delta^-$ transition in $J_{y,\eps}$ is mapped into $J_{z,\eps}^e$.}
\label{fig:5}
\end{figure}

Assume that  $u\in \Be$ is a path that admits a hitting point of $-1$ in $\Hm$ and a hitting point of $+1$ in $\Hp$. Recall that if $u \in \Be$, then  $u$ has a $\delta^-$ up transition layer of length $\leq 2\ell$ in $J_{y,\eps}$. Under $\Rx$ the $\delta^-$ up transition layer is mapped from $J_{y,\eps}$ to near $J_{z,\eps}$ and we would like to conclude that the reflected path is contained within $\mathcal{J}_{z,\eps}$.

Unfortunately, the layer does not necessarily fall within $J_{z,\eps}$. What is true is that
there is a $\delta^-$ up layer of length less than  $2\ell$ in the extended interval
\begin{equation}\label{e:jee}
J_{z,\eps}^e:= [z - d_\eps - 3\ell - h_\eps, z + d_\eps + 3\ell + h_\eps] .
\end{equation}
(Recall that $h_\eps$, the length of the auxiliary intervals, was defined above in~\eqref{e:Defhe}.)


Let us denote by $\mathcal{J}_{z,\eps}^e$ the set of functions with a $\delta^-$ up transition layer of length less than $2\ell$ in $J_{z,\eps}^e$:
\begin{equation*}
\mathcal{J}_{y,\eps}^e := \big\{  u \colon \text{ $u$ has a $\delta^-$ up layer $\leq 2\ell$ in $J_{y,\eps}^e$}   \big\}.
\end{equation*}

In Step 2, we had established that
\begin{align}
\mu^{-1,1}_{\eps,(-\Le,\Le)}& \Big( u \in \Be \colon \text{no hitting of $-1$ in $\Hm$ or no hitting of $+1$ in $\Hp$} \Big) \notag \\
 &  \leq  E(\eps)\, \mu^{-1,1}_{\eps,(-\Le,\Le)}\big( \Be \big). \notag
\end{align}
Hence, as $\Rx$ leaves $\mu^{-1,1}_{\eps,(-\Le,\Le)}$ invariant, we can conclude that
\begin{equation} \label{e:ResStt}
\mu^{-1,1}_{\eps,(-\Le,\Le)} \big( \mathcal{J}_{z,\eps}^e \big) \geq   \big(1 -   E(\eps) \big)  \, \mu^{-1,1}_{\eps,(-\Le,\Le)}\big( \Be \big).
\end{equation}
An analogous construction to turn transitions in $J_{z,\eps}$ into transitions near $\Je$ can be performed to obtain the same bound with $J_{y,\eps}$ and $J_{z,\eps}$ interchanged.

\medskip
\underline{Step 4.} In this step, we establish the bound \eqref{unif2}. For notational convenience we will establish the bound in the case of the center interval $[-d_\eps,d_\eps]$, but our argument does not depend on this. More precisely, what we show is that for some $\delta>0$ and any $\alpha>0$, there exists an $\eps_0>0$ such that, for $\eps\leq\eps_0$, we have
\begin{align}
\bigg| \frac{\Le}{\de} \; \mu^{-1,1}_{\eps,(-\Le,\Le)}\big(&u\in\complement{A}_1\colon \text{at least one $\delta^-$ up layer }\notag\\
& \text{ of length $\leq 2\ell$}\text{ in }[-d_\eps,d_\eps] \big)\;-1\;\bigg|\leq\alpha.\label{step5}
\end{align}

The main ingredient will be the estimate  \eqref{e:ResStt}. We will also use make use of Lemma \ref{l:reflection} but except for that, the argument is completely elementary and only consists of choosing the right intervals and sets of paths.

We first split  up the system into smaller blocks. Actually, it will useful to define two different partitions $\{ J_{k,\eps}, k = -M_\eps, \ldots, M_\eps-1\}$ and $\{ J_{m,\eps}^e, m = -\tilde{M}_\eps, \ldots, \tilde{M}_\eps-1\}$ of $[-\Le,\Le]$. The lengths  of the intervals $J_{k,\eps}$ will be chosen small relative to $d_\eps$ but still large relative to $|\log \eps|$. These intervals  will be overlapping and play the role of $J_{y,\eps}$ when we  apply \eqref{e:ResStt}. The  intervals $J_{m,\eps}^e$ will be slightly larger than than the intervals $J_{k,\eps}$ and will be of distance $2 \ell$ away from each other. They will be used as $J_{z,\eps}^e$ when applying \eqref{e:ResStt}.

We fix integers  $M_\eps$ and $k_\eps$ such that
\begin{align}
&|\log \eps| \ll M_\eps^{-1} \Le \ll \de, \notag\\
\text{and }\; & k_\eps M_\eps^{-1} \Le \leq \de <(k_\eps+1) M_\eps^{-1 }\Le.\label{sqar}
\end{align}
Then we set $\tilde{d}_\eps:=\Le/M_\eps$ and define the overlapping intervals
\begin{equation*}
J_{k,\eps}:= [k  \tilde{d}_\eps - 2 \ell , (k+1) \tilde{d}_\eps +2\ell  ],\quad k = - ( M_\eps- 1) , \ldots, M_\eps-2.
\end{equation*}
The boundary intervals are defined as
\begin{align*}
J_{-M_\eps ,\eps}:=& [-\Le ,-(M_\eps-1) \tilde{d}_\eps +2\ell  ] \quad \text{and}\\
 J_{M_\eps-1 ,\eps}:=& [(M_\eps-1) \tilde{d}_\eps -2\ell, \Le  ].
\end{align*}
As above in \eqref{e:ast}, we then define the associated sets of paths as
\begin{align}
\mathcal{J}_{k,\eps} := &\big\{ u \in \complement \Aa_1  \colon \text{ $u$ has $\delta^-$ up  layer of length $\leq 2\ell$ in $J_{k,\eps}$}   \big\}.
\end{align}

In order to define the slightly longer intervals,  in analogy to the parameters $h_\eps$ and $\bar K$ from Steps 1--3, we choose parameters $\tilde{h}_\eps$ and $\tilde{\bar K}_\eps$ such that
\begin{equation}\label{e:hba}
\tilde{\bar K}_\eps \gg 1 \quad \text{ and } \quad \tilde h_\eps := \ell (2 K_\eps+1)  \tilde{\bar K}_\eps \ll  \min\left\{\tilde{d},\exp\bigg(\frac{c_1}{4\eps}\bigg)\right\}.
\end{equation}
These  parameters then define the error term $E(\eps)$ (see~\eqref{e1}, above).
Then we  define the integers $\tilde{M}_\eps$ and $m_\eps$ such that
\begin{align}
 M_\eps \bigg( \frac{1}{1 + (\tilde{h}_\eps M_\eps)L^{-1}}   \bigg) -1 \;\leq\; & \tilde{M}_\eps <  M_\eps \bigg( \frac{1}{1 + (\tilde{h}_\eps M_\eps)L^{-1}}   \bigg) \notag \\
\text{and }\quad  m_\eps \tilde{M}_\eps^{-1} \Le \;\leq\; &\de <(m_\eps+1) \tilde{M}_\eps^{-1 }\Le. \label{e:mee}
\end{align}
As above,  we define the intervals
\begin{equation*}
J_{m,\eps}^e:= [m   \tilde{M}_\eps^{-1} \Le + 2 \ell , (m+1) \tilde{M}_\eps^{-1} \Le - 2\ell  ],\quad m = - \tilde{M}_\eps  , \ldots, \tilde{M}_\eps-1.
\end{equation*}
Each of these intervals $J_{m,\eps}^e$ is of length
\begin{equation}\label{e:mtil}
\frac{\Le}{\tilde{M}_\eps} - 4 \ell = \frac{\Le}{M_\eps} + \tilde{h}_\eps - 4\ell,
\end{equation}
and in particular these intervals are long enough to use them as $J_{z,\eps}^e$ in \eqref{e:ResStt}. Actually, when comparing \eqref{e:mtil} to \eqref{e:jee}, one notices a discrepancy in the length of $10\ell$ but this can easily be treated by making $\tilde{h}_\eps$ a bit larger.

 We define the associated sets of paths
\begin{align*}
\mathcal{J}_{m,\eps}^e := &\big\{ u  \colon \text{ $u$ has a $\delta^-$ up layer $\leq 2\ell$ in $J_{m,\eps}^e$}   \big\},\\
\mathcal{J}_{m,\eps}^{e,\ast} := &\big\{ u  \in \mathcal{J}_{m,\eps}^e \colon \text{ $u$ has no $\delta^-$ up layer in any $J_{n,\eps}^e$ for any $n \neq m$}   \big\}.
 \end{align*}

 After these preliminary definitions, we are now ready to proceed to the proof of  \eqref{step5}.

 As mentioned above, the intervals $J_{k,\eps}$ are overlapping. In particular, every $\delta^{-}$ layer $\leq 2\ell$ in $[-d_\eps,d_\eps]$ must be contained in at least one of the $J_{k,\eps}$. This implies that
\begin{align}\label{e:nfs}
 \mu^{-1,1}_{\eps,(-\Le,\Le)}& \big( u \in \complement\Aa_1\colon \text{$\delta^-$ up layer $\leq 2\ell$ in $[-d_\eps,d_\eps]$}  \big)\notag\\
 &\leq \sum_{k = -(k_\eps +1)} ^{k_\eps}  \mu^{-1,1}_{\eps,(-\Le,\Le)} \big(  \mathcal{J}_{k,\eps} \big).
\end{align}
In the same way we see that \emph{every} possible path on $[-\Le,\Le]$ must be either
\begin{itemize}
\item  in one of the unlikely sets $\Aa_1$ or $\Aa_2$ defined above in~\eqref{e:defA1} and~\eqref{e:defA2}
\item  or in at least one of the sets $\mathcal{J}_{k,\eps}$.
\end{itemize}
 This implies that
\begin{equation}
\sum_{k=-M_\eps}^{M_\eps-1}  \mu^{-1,1}_{\eps,(-\Le,\Le)} \big( \mathcal{J}_{k,\eps}  \big)  \geq 1 -   \mu^{-1,1}_{\eps,(-\Le,\Le)}\big( \mathcal{A}_1 \cup \Aa_2 \big) ,
\end{equation}
and hence we have
\begin{equation}\label{e:mindeins}
\max_{k} \mu^{-1,1}_{\eps,(-\Le,\Le)} \big( \mathcal{J}_{k,\eps}  \big) \geq \frac{1 -   \mu^{-1,1}_{\eps,(-\Le,\Le)}\big( \mathcal{A}_1 \cup \Aa_2 \big) }{2 M_\eps}.
\end{equation}
On the other hand, applying \eqref{e:ResStt} gives that for any $k$ and $m$
\begin{equation}\label{e:nfs2}
\mu^{-1,1}_{\eps,(-\Le,\Le)} \big(  \mathcal{J}_{k,\eps} \big) \leq \big(1 -  E(\eps) \big)^{-1} \mu^{-1,1}_{\eps,(-\Le,\Le)} \big(  \mathcal{J}_{m,\eps}^e \big).
\end{equation}
Then, applying Lemma \ref{l:reflection} with $Y=J_{m,\eps}^e$, we have for every $m$ that
\begin{equation}\label{e:JdotJ}
 \big(1 - E(\eps) \big) \mu^{-1,1}_{\eps,(-\Le,\Le)} \big(\mathcal{J}^e_{m,\eps} \big)  \leq\mu^{-1,1}_{\eps,(-\Le,\Le)} \big(\mathcal{J}^{e,*}_{m,\eps} \big).
\end{equation}
 Finally, the sets $\mathcal{J}_{m,\eps}^{e,\ast}$ are all disjoint and in particular, we have
 \begin{equation*}
\sum_{m=-\tilde{M}_\eps}^{\tilde{M}_\eps-1}  \mu^{-1,1}_{\eps,(-\Le,\Le)} \big( \mathcal{J}_{m,\eps}^{e,\ast} \big) \leq 1,
 \end{equation*}
which implies that
\begin{equation}\label{e:ee}
\min_{m}  \mu^{-1,1}_{\eps,(-\Le,\Le)} \big( \mathcal{J}_{m,\eps}^{e,\ast} \big) \leq  \frac{1}{2\tilde{M}_\eps}  .
 \end{equation}
We now collect ingredients to deduce the upper bound
\begin{eqnarray}
&&\hspace{-30pt}\mu^{-1,1}_{\eps,(-\Le,\Le)} \big(u\in\complement{\mathcal{A}_1}\colon \text{$u$ has a $\delta^-$ up layer $\leq 2\ell$ in $[-\de,\de]$ }\big)\notag\\
&\overset{\eqref{e:nfs}}\leq& \sum_{k = -(k_\eps+1)}^{k_\eps} \mu^{-1,1}_{\eps,(-\Le,\Le)}  \big(  \mathcal{J}_{k,\eps} \big)  \notag\\
& \leq& (2 k_\eps +2) \,  \max_k\,  \mu^{-1,1}_{\eps,(-\Le,\Le)}  \big(  \mathcal{J}_{k,\eps} \big)\notag\\
& \overset{\eqref{e:nfs2}}{\leq}&  (1 - E(\eps) )^{-1} (2 k_\eps +2)  \min_m \,  \mu^{-1,1}_{\eps,(-\Le,\Le)}  \big(  \mathcal{J}_{m,\eps}^e \big)\notag \\
& \overset{\eqref{e:JdotJ}}{\leq}&  (1 - E(\eps) )^{-2}(2 k_\eps +2)  \min_m \,  \mu^{-1,1}_{\eps,(-\Le,\Le)}  \big(  \mathcal{J}_{m,\eps}^{e,*} \big)\notag \\
& \overset{\eqref{e:ee}}{\leq}&  (1 - E(\eps) )^{-2} \;\frac{ ( k_\eps +1)}{ \tilde{M}_\eps}.\label{upb}
\end{eqnarray}

The proof of the lower bound now follows along similar lines:
\begin{eqnarray}
&&\mu^{-1,1}_{\eps,(-\Le,\Le)} \big(u\in\complement\mathcal{A}_1\colon \text{$u$ has a $\delta^{-}$ up layer $\leq 2\ell$ in $[-\de,\de]$ }\big) \notag\\
&& \qquad \qquad+ \mu^{-1,1}_{\eps,(-\Le,\Le)}\big( \Aa_1\big)  \notag\\
&\overset{\eqref{e:mee}}\geq &\sum_{m = -m_\eps}^{m_\eps-1} \mu^{-1,1}_{\eps,(-\Le,\Le)}  \big(  \mathcal{J}^{e,*}_{m,\eps} \big) \notag\\
& \geq& 2m_\eps \min_{m}  \mu^{-1,1}_{\eps,(-\Le,\Le)}  \big(  \mathcal{J}^{e,\ast}_{m,\eps} \big)\notag\\
&\overset{\eqref{e:JdotJ}}{ \geq}&  \big(1 - E(\eps) \big) \, 2m_\eps \min_{m}  \mu^{-1,1}_{\eps,(-\Le,\Le)}  \big(  \mathcal{J}^{e}_{m,\eps} \big)\notag\\
& \overset{\eqref{e:nfs2}}{\geq}& \big(1 - E(\eps) \big)^2  \, 2 m_\eps   \max_{k}  \mu^{-1,1}_{\eps,(-\Le,\Le)}  \big(  \mathcal{J}_{k,\eps} \big)  \notag \\
&\overset{\eqref{e:mindeins}}{ \geq}& \big(1 - E(\eps) \big)^2 \big( 1- \mu^{-1,1}_{\eps,(-\Le,\Le)} ( \Aa_1 \cup \Aa_2  ) \big)  \frac{ m_\eps}{ M_\eps} .\label{lowb}
\end{eqnarray}
Now, from the assumptions~\eqref{sqar} on $k_\eps$ and $M_\eps$  as well as the assumptions~ \eqref{e:mee} on $m_\eps$ and $\tilde{M}_\eps$,  we have that
\begin{equation}
1\lessapprox\frac{  m_\eps}{ M_\eps}  \frac{\Le}{\de}  \leq 1 \qquad \text{and} \qquad   1\leq\frac{ k_\eps+1}{ \tilde{M}_\eps}  \frac{\Le}{\de} \lessapprox 1.\label{smallfrc}
\end{equation}
Moreover, if we choose for instance $M\geq 4\,C_2\,c_0$ in the bound~\eqref{sq} on $\Aa_1$, we recover
\begin{equation}
\Le\,\mu^{-1,1}_{\eps,(-\Le,\Le)} \big( \Aa_1 \big)  \ll 1\ll d_\eps.\label{nom}
\end{equation}
Combining~\eqref{upb},~\eqref{lowb},~\eqref{e:A2bou},~\eqref{smallfrc}, and~\eqref{nom} establishes~\eqref{step5}, as desired.

\underline{Step 5.}
It remains to remove the restriction on the length of the layer and improve from a $\delta^-$ up layer to a full up layer.

The upper bound is immediate, since
\begin{eqnarray*}
\lefteqn{\mu^{-1,1}_{\eps,(-\Le,\Le)} \big(u\in\complement\mathcal{A}_1\text{ and there exists an up layer in }[y-d_\eps,y+d_\eps]\big)}\\
&\leq&\mu^{-1,1}_{\eps,(-\Le,\Le)} \big(u\in\complement\mathcal{A}_1\text{ and }\delta^- \text{ up layer in }[y-d_\eps,y+d_\eps]\big)\\
&\leq& \mu^{-1,1}_{\eps,(-\Le,\Le)} \big(u\in\complement\mathcal{A}_1\text{ and }\delta^- \text{ up layer }\leq 2\ell\text{ in}\\
&&\qquad\qquad\qquad[y-d_\eps,y+d_\eps]\big)\\
&&+
\mu^{-1,1}_{\eps,(-\Le,\Le)} \big(u\in\complement\mathcal{A}_1\text{ and }u\in [-1+\delta, 1-\delta]\text{ on}\\
 &&\qquad\qquad\qquad\text{ all of }[x_k,x_{k+1}]\text{ for some }k\big)\\
&\overset{\eqref{unif2},\eqref{e:A2bou}}{\leq}& (1+\alpha)\frac{d_\eps}{\Le}+ \Le\exp \Big( -  \frac{ \delta^2\ell }{C_1\eps}  \Big)\\
&\leq &(1+2\alpha)\frac{d_\eps}{\Le},
\end{eqnarray*}
for $\ell$ large enough with respect to $1/\delta^2$.

For the lower bound, on the other hand, we use Step 2 once more. To this end, we will consider layers falling strictly interior to $J_{y,\eps}$ on the subset $J_{y,\eps}^\odot:=[y-d_\eps+h_\eps+3\ell,y+d_\eps-h_\eps-3\ell]$. Then, according to Step 2, there is a high probability of hitting $\pm 1$ on $J_{y,\eps}\setminus J_{y,\eps}^\odot$. More precisely, notice that we can estimate
\begin{align}
\lefteqn{\mu^{-1,1}_{\eps,(-\Le,\Le)} \big(u\in\complement\mathcal{A}_1\text{ and there exists an up layer in }[y-d_\eps,y+d_\eps]\big)}\notag\\
&\geq \mu^{-1,1}_{\eps,(-\Le,\Le)} \big(u\in\complement\mathcal{A}_1\text{ and there exists a $\delta^-$ up layer $\leq 2\ell$ in }J_{y,\eps}^\odot\notag\\
&\qquad\qquad\qquad\text{ and $u$ hits $-1$ in $(y-d_\eps,y-d_\eps+h_\eps+3\ell)$}\notag\\
&\qquad\qquad\qquad\text{ and $u$ hits $+1$ in $(y+d_\eps-h_\eps-3\ell,y+d_\eps)$}\big)\notag\\
&\geq\big(1 - E(\eps) \big)\,\mu^{-1,1}_{\eps,(-\Le,\Le)}\big( u\in\complement\mathcal{A}_1\text{ and }\notag\\
&\qquad\qquad\qquad\text{ there exists a $\delta^-$ up layer $\leq 2\ell$ in }J_{y,\eps}^\odot \big),\label{piks}
\end{align}
where in the last line, we have applied Lemma~\ref{l:hittingz}. On the other hand the probability on the last line can be estimated
\begin{align}
\lefteqn{\mu^{-1,1}_{\eps,(-\Le,\Le)}\big( u\in\complement\mathcal{A}_1\text{ and  $\delta^-$ up layer $\leq 2\ell$ in }J_{y,\eps}^\odot \big)}\notag\\
&\geq \mu^{-1,1}_{\eps,(-\Le,\Le)}\big( u\in\complement\mathcal{A}_1\text{ and $\delta^-$ up layer $\leq 2\ell$ in }J_{y,\eps} \big)\notag\\
&\qquad- \mu^{-1,1}_{\eps,(-\Le,\Le)}\big( u\in\complement\mathcal{A}_1\text{ and $\delta^-$ up layer $\leq 2\ell$}\notag\\
  &\qquad\qquad\qquad\qquad\qquad\text{in }(y-d_\eps,y-d_\eps+h_\eps+5\ell) \big)\notag\\
&\qquad- \mu^{-1,1}_{\eps,(-\Le,\Le)}\big( u\in\complement\mathcal{A}_1\text{ and $\delta^-$ up layer $\leq 2\ell$}\notag\\
 &\qquad\qquad\qquad\qquad\qquad\text{ in }(y+d_\eps-h_\eps-5\ell,y+d_\eps) \big).\label{piks2}
\end{align}
Applying the bound~\eqref{unif2} to each term in~\eqref{piks2} and substituting into~\eqref{piks} completes the lower bound.

Finally, recalling the bound~\eqref{sq} on the probability of $\mathcal{A}_1$ completes the proof of Theorem~\ref{t:uniform}.

\end{proof}

\section{Proofs of the Lemmas}
\label{s:lemmas}
\subsection{Proofs of preliminary energy lemmas}\label{ss:enlem}

The energy lemmas rely on \emph{upper bounds} and \emph{lower bounds} for the energy over various sets. The upper bounds are derived based on constructions. (The minimum value of the energy is necessarily less than or equal to the value that we can achieve with any given construction.) The lower bound, on the other hand, describes the best possible value for \emph{any function} and is based on the so-called Modica-Mortola trick discussed in Section~\ref{s:detac}. Before we begin, we make a remark about our constructions.

\begin{remark}\label{r:constr}
In addition to giving us an ODE for the energy minimizer on $\R$, equation~\eqref{absode} serves as the backbone for the \emph{constructions} that are used to establish upper bounds for energy minimization problems on finite systems. For instance, suppose we want to minimize the energy on $(-\ell,\ell)$ subject to $u(\pm\ell)=\pm 1$. For $\ell$ large, we can build a construction that almost achieves the cost $c_0$. Specifically, consider the centered solution of~\eqref{ode} on $(-\ell+a,\ell-a)$ for $a=1/\ell$. Linearly interpolate from its value at $-\ell+a$ to $-1$ at $-\ell$, and symmetrically at the other end. Because of the exponential convergence of the minimizer to $\pm 1$ (cf., Lemma~\ref{l:expdecmin}), the energy on $(-\ell,-\ell+a)$ and $(\ell-a,\ell)$ is $o(1)$ as $\ell\uparrow\infty$. Similarly, if we minimize the energy over functions satisfying $u(\pm\ell)=\pm M$ for $M$ large, we can build a piecewise-defined construction that goes from $-M$ at $-\ell$ to a neighborhood of $-1$ at $-\ell/2$,
goes from a neighborhood of $-1$ at $-\ell/2+a$ to a neighborhood of $1$ at $\ell/2-a$, and goes from a neighborhood of $1$ at $\ell/2$ to $M$ at $\ell$, with linear interpolation near $\pm\ell/2$ to make the function continuous. The cost of such a construction is
\begin{align*}
\int_{-M}^{-1}\sqrt{2V(u)}\,du +\int_{-1}^1 \sqrt{2V(u)}\,du +\int_1^M \sqrt{2V(u)}\,du +o(1)_{\ell\uparrow\infty},
\end{align*}
where we write the integrals separately to emphasize the additivity of the energy over the three subintervals described above. Because according to~\eqref{absode} we can get a good bound using increasing or decreasing functions, the analogous bounds hold for $u(\pm \ell)=\mp M$, $u(\pm \ell)=M$, et cetera.

If $M$ is very large, the constant $\ell_*$ in the energy lemmas may also need to be very large in order to make the $o(1)$ term small. The idea in all of the following proofs is to make this term small enough so that it can be absorbed into a $\delta$-dependent term, so the ordering of the constants is important: We fix $M$ (large) and $\delta$ (small) and then choose $\ell_*$ large enough so that the term(s) that are $o(1)$ with respect to $\ell$ can be absorbed.
\end{remark}

In what follows, it will be convenient to introduce the notation:
\begin{align*}
\varphi_{-1}(u)=\bigg| \int_{-1}^u \sqrt{2V(s)}\,ds\bigg|,\qquad\qquad \varphi_{+1}(u)=\bigg |\int_u^1\sqrt{2V(s)}\,dx\bigg|.
\end{align*}

\begin{proof}[Proof of Lemma~\ref{le:lazy}]
We will establish~\eqref{e:lazy} via an upper bound on the energy over $\mathcal{A}^{\rm bc}$ and a lower bound on the energy over $\mathcal{A}^{\rm bc}_0$. Because of the extra condition in $\A_0^{\rm bc}$, the energy on $(-\ell,\ell)$ is large (of order $\delta^2\ell$), and we do not have to be as careful about the boundary conditions as usual. A rough bound will suffice.

\underline{Step 1}. As explained in Remark~\ref{r:constr}, the upper bound relies on a construction. Given any $u_-\in[-M,M]$, we can use the solution of~\eqref{absode} to connect to a neighborhood of $1$ or $-1$, and similarly for $u_+$. If the optimal connection for $u_-$ is to $-1$ and the optimal connection for $u_+$ is to $+1$, then in order to build a continuous construction, we incur the additional cost
$\phim(1)=c_0$, where we have used the notation introduced above and recalled the value of $c_0$ from~\eqref{c0}. (If the optimal connection for $u_-$ and $u_+$ is to the same value, then the construction does not incur this extra cost, but the upper bound is still valid.) Putting together these three pieces of the construction and the small correction terms for continuity (see Remark~\ref{r:constr}), we can express the upper bound derived in this way as:
\begin{align}
\inf_{u\in\A^{\rm bc}} E_{(-2\ell,2\ell)}(u)
&\leq \min\{\phim(u_-),\phip(u_-)\}\notag\\
&+\min\{\phim(u_+),\phip(u_+)\}+c_0+o(1)_{\ell\uparrow\infty}.\label{23up}
\end{align}
Note that Assumption~\ref{ass:V} allows that $o(1)_{\ell\uparrow\infty}$ may depend on $M$: If $u_-$ is very large, the (near) optimal connection from $u_-$ to $1$ requires a lot of space. This explains why $\ell_*$ in the statement of the lemma depends on $M$.

\underline{Step 2}. Now we turn to the lower bound over $\A_0^{\rm bc}$. On the one hand, on $(-\ell,\ell)$, the condition in $\A_0^{\rm bc}$ implies that the integral of $V$ over $(-\ell,\ell)$ cannot be too small. Using the quadratic behavior of $V$ near $\pm 1$ (see Assumption~\ref{ass:V}), we have for $\delta$ small enough
\begin{align}
E_{(-\ell,\ell)}(u)\geq \int_{-\ell}^{\ell} V(u)\,du\geq \frac{V''(1)\,\ell\,\delta^2}{2}.\label{23ll}
\end{align}

To integrate over the rest of the interval, we recall the trick of Modica and Mortola that was explained in Section~\ref{s:detac}. Consider first $(-2\ell,-\ell)$. We divide into two cases: $|u_-|> 1$ and the complement.

If $u\in\A_0^{\rm bc}$ and $|u_-|> 1$, then there is a point $x_-\in(-2\ell,-\ell)$ such that $|u(x_-)|=1$. In this case, the Modica-Mortola trick on $(-2\ell,x_-)$ gives
\begin{align}
E_{(-2\ell,-\ell)}(u)\geq E_{(-2\ell,x_-)}(u)&\geq \min\{\phim(u_-),\phip(u_-)\}.\label{23um}
\end{align}
On the other hand if $|u_-|\leq 1$, then for $\ell$ large enough, we have
\begin{align}
\min\{\phim(u_-),\phip(u_-)\}\leq \frac{V''(1)\,\ell\,\delta^2}{8}.\label{23umb}
\end{align}

If $|u_-|>1$, then adding the contributions from~\eqref{23ll} and~\eqref{23um} and subtracting the contribution from~\eqref{23up} gives
\begin{align*}
\inf_{u\in\A_0^{\rm bc}}&E_{(-2\ell,2\ell)}(u)-\inf_{u\in\A^{\rm bc}}E_{(-2\ell,2\ell)}(u)\\
&\geq \frac{V''(1)\,\ell\,\delta^2}{2}-\min\{\phim(u_+),\phip(u_+)\}-c_0+o(1)_{\ell\uparrow\infty}.
\end{align*}
On the other hand if $|u_-|\leq 1$, then the contributions from~\eqref{23ll} and~\eqref{23up} together with the bound from~\eqref{23umb} imply
\begin{align*}
\inf_{u\in\A_0^{\rm bc}}&E_{(-2\ell,2\ell)}(u)-\inf_{u\in\A^{\rm bc}}E_{(-2\ell,2\ell)}(u)\\
&\geq \frac{3V''(1)\,\ell\,\delta^2}{8}-\min\{\phim(u_+),\phip(u_+)\}-c_0+o(1)_{\ell\uparrow\infty}.
\end{align*}
Since this is a weaker bound, it holds in either case.

Repeating the identical argument on $(\ell,2\ell)$ and in addition absorbing $c_0$ by $V''(1)\ell\delta^2/8$ gives
\begin{align*}
\inf_{u\in\A_0^{\rm bc}}&E_{(-2\ell,2\ell)}(u)-\inf_{u\in\A^{\rm bc}}E_{(-2\ell,2\ell)}(u)\\
&\geq \frac{V''(1)\,\ell\,\delta^2}{8}+o(1)_{\ell\uparrow\infty},
\end{align*}
which completes the proof of Lemma~\ref{le:lazy}.
\end{proof}

\begin{proof}[Proof of Lemma~\ref{l:cl}]
We rewrite the set  $\Aa_0^{\rm bc}$ as
\begin{equation*}
\Aa_0^{\rm bc} = \Aa_- \cup \Aa_+,
\end{equation*}
where the $\Aa_{\pm}$ are the sets of paths that perform a wasted excursion starting from a neighborhood of $\pm 1$. We will prove the bound on the energy difference for $\Aa_+$. The corresponding bound for $\Aa_-$ follows in the same way.

As usual, our task is to produce appropriate upper and lower bounds.

\underline{Step 1}. The upper bound on $\inf_{\mathcal{A}^{\rm bc}}E_{(-2\ell,2\ell)}(u)$ is by construction.  Consider the function $\bar u$ that minimizes $E_{(-2\ell,2\ell)}$ subject to
$$u(\pm 2\ell)=u_{\pm },\quad u(0)=1,$$
and notice that
\begin{align}
E_{(-2\ell,2\ell)}(\bar u)&=\inf\left\{\int_{-2\ell}^0\frac{1}{2}(\partial_x u)^2+V(u)\,dx\colon
u(0)=1, u(-2\ell)=u_-\right\} \notag\\
&\quad+\inf\left\{\int_0^{2\ell}\frac{1}{2}(\partial_x u)^2+V(u)\,dx\colon
u(0)=1, u(2\ell)=u_+\right\}\notag\\
&= \phip(u_-)+\phip(u_+)+o(1)_{\ell\uparrow\infty},\notag
\end{align}
uniformly for $u_\pm\in[-M,M]$. (This can be established by building a construction by hand, as we have explained in Remark~\ref{r:constr} and the proof of Lemma~\ref{le:lazy}.)
Hence, since $\bar u \in \mathcal{A}^{\rm bc}$, we have the (not necessarily tight) upper bound
\begin{eqnarray}
\inf_{\mathcal{A}^{\rm bc}}E_{(-2\ell,2\ell)}(u)\leq E_{(-2\ell,2\ell)}(\bar u){=}\phip(u_-)+\phip(u_+)+o(1)_{\ell\uparrow\infty}
.\label{M.e2}
\end{eqnarray}

\underline{Step 2}. We now turn to the lower bound on $\inf_{\Aa_+}E_{(-2\ell,2\ell)}(u)$. Recall the points $x_\pm$ that follow from the definition of $\A_+$ and Definition~\ref{def:wasted}. Because of the properties of the potential, we may without loss of generality assume that $u(x_\pm)=1-\delta$ and $u(x_0)=\delta$.

We now use the Modica-Mortola trick on $(-2\ell,x_-)\cup(x_+,2\ell)$ to recover
\begin{eqnarray}
&&\hspace{-40pt}E_{(-2\ell,x_-)}(u)+E_{(x_+,2\ell)}(u)\notag\\
&\geq& \phip(u_-)+\phip(u_+)-C\delta^2\notag\\
&\overset{\eqref{M.e2}}\geq & \inf_{u\in\A^{\rm bc}}E_{(-2\ell,2\ell)}(u)-C\delta^2-o(1)_{\ell\uparrow\infty}.\label{M.e11}
\end{eqnarray}
On the other hand, applying the Modica-Mortola trick on $(x_-,x_0)\cup(x_0,x_+)$ gives
\begin{align}
E_{(x_-,x_0)}(u)+E_{(x_0,x_+)}(u)\geq 2\int_\delta^{1-\delta}\sqrt{2V(u)}\,du\overset{\eqref{c0}}{=}c_0-C\delta.
\label{M.e12}
\end{align}
Combining~\eqref{M.e11} and~\eqref{M.e12} completes the proof of Lemma~\ref{l:cl}.
\end{proof}

\begin{proof}[Proof of Lemma~\ref{l:cll}]
\underline{Step 1}. For the upper bound over $\A_{\delta,pre}^{\rm bc}$, we use the function $\bar{u}$ that minimizes the energy subject to
\begin{align*}
u(\pm 2\ell)=u_{\pm},\quad u(\pm \ell)=-1-2\delta, \quad u(0)=\delta.
\end{align*}
As in the proof of Lemma~\ref{l:cl}, we observe that $\bar u\in \A_{\delta,pre}^{\rm bc}$ and hence the construction gives an upper bound
\begin{eqnarray}
\inf_{\A_{\delta,pre}^{\rm bc}}E_{(-2\ell,2\ell)}(u)&\leq&\phim(u_-)+\phim(u_+)+2\phip(0)+C\delta+o(1)_{\ell\uparrow\infty}\notag\\
&\overset{\eqref{c0}}=&\phim(u_-)+\phim(u_+)+c_0+C\delta+o(1)_{\ell\uparrow\infty}.\label{28up}
\end{eqnarray}

\underline{Step 2}. For the lower bound over $\A^{\rm bc}$, we observe that for any $u\in\A^{\rm bc}$, either there is a point $x_-\in(-2\ell,0)$ and a point $x_+\in(0,2\ell)$ such that $u(x_\pm)$ is in a $\delta$ neighborhood of $1$ or $-1$, or else the energy (by the same argument as in the proof of Lemma~\ref{le:lazy}) is bounded below by $\delta^2\ell V''(1)/2$ for $\delta$ small enough. We can choose $\ell$ so large that this is greater than $\phim(u_-)+\phim(u_+)$ and hence dominates the boundary terms in~\eqref{28up}. On the other hand, if the points $x_\pm$ exist, then by the usual trick of Modica and Mortola, we recover
\begin{align}
&\hspace{-20pt}\inf_{\A^{\rm bc}}E_{(-2\ell,2\ell)}(u)\notag\\
&\geq \min\{\phim(u_-),\phip(u_-)\}+\min\{\phim(u_+),\phip(u_+)\} - C\delta\notag\\
&= \phim(u_-)+\phim(u_+) - C\delta,\label{28low}
\end{align}
where the second line follows by virtue of the boundary conditions $u_\pm\in[-M,0]$ and the symmetry of the potential.

The combination of~\eqref{28up} and~\eqref{28low} completes the proof of Lemma~\ref{l:cll}.
\end{proof}

\subsection{Proof of the strong Markov property}\label{ss:62}
\begin{proof}[Proof of Lemma \ref{le:Markov1}]
By subtracting $h^{u_-,u_+}_{(x_-,x_+)}$, we  can reduce the problem to the case of zero boundary conditions. Under $\,  \mathcal{W}^{0,0}_{\eps, (x_{-},x_+)}$, $u-u_{\hat{x}_-}^{\hat{x}_+} $ and $ u_{\hat{x}_-}^{\hat{x}_+}$ are jointly Gaussian and centered, because they are both linear images of $u$. So it is sufficient to calculate their covariances. Using~\eqref{e:Cov}, it is easy to see that, for all $x_1,x_2 \in [x_-, x_+]$, one has
\begin{equation*}
 \mathbb{E}_{(x_{-},x_{+})}^{\mathcal{W}_\eps,0,0} \Big((u-u_{\hat{x}_-}^{\hat{x}_+})  \big(x_1\big) \, u_{\hat{x}_-}^{\hat{x}_+} \big( x_2 \big) \Big) \, = \, 0,
\end{equation*}
and for $x_1,x_2 \in [\hat{x}_-,\hat{x}_+]$, one has
\begin{align*}
 \mathbb{E}_{(x_-,x_{+})}^{\mathcal{W}_\eps,0,0} &\Big( (u-u_{\hat{x}_-}^{\hat{x}_+}) \big(x_1\big) \, (u-u_{\hat{x}_-}^{\hat{x}_+})  \big( x_2 \big)  \Big) \, \\
 &= \,   \frac{\eps}{\hat{x}_+ - \hat{x}_-}  \Big(  (x_1 - \hat{x}_-)(\hat{x}_+ - x_2)  \wedge (x_2  - \hat{x}_-)(\hat{x}_+ -x_1) \Big).
\end{align*}
This shows the claim.
\end{proof}

\begin{proof}[Proof of Lemma \ref{le:Markov1b}]
We start by observing that the statement of  Lemma \ref{le:Markov1} implies that
\begin{equation}\label{e:pM1}
 \mathbb{E}_{(x_{-},x_{+})}^{\mathcal{W}_\eps,u_{-},u_{+}} \big( \Phi  \big|  \F_{[x_-,\hat{x}_-]}  \vee  \F_{[\hat{x}_+,x_+]  }  \big) \, = \,    \,  \mathbb{E}_{(\hat{x}_{-},\hat{x}_{+})}^{\mathcal{W}_\eps,{\bf u}} \big( \Phi  \big) .
\end{equation}

In order to prove the desired  statement~\eqref{e:Markovmu}, observe that the density of $  \mu^{u_-,u_+}_{\eps,(x_-,x_+)}$ with respect to $  \mathcal{W}^{u_{-},u_+}_{\eps, (x_{-},x_+)}$ can be written as
\begin{align*}
\exp\Big( - \frac{1}{\eps}\int_{x_-}^{x_+} V( u) \, dx \Big) \, = \, \Psi_- \, \Psi_\odot \, \Psi_+,
\end{align*}
where %
\begin{align*}
\Psi_- :=  &\exp\Big(     - \frac{1}{\eps}\int_{x_-}^{\hat{x}_-} V( u) \, dx \Big), \qquad \Psi_+ :=  \exp\Big( - \frac{1}{\eps} \int_{\hat{x}_+}^{x_+} V(u) \, dx \Big), \\
\text{and}\;\; \Psi_\odot := & \exp\Big( - \frac{1}{\eps} \int_{\hat{x}_-}^{\hat{x}_+} V(u) \, dx \Big)
\end{align*}
are measurable with respect to $\F_{[x_-, \hat{x}_-]}$,  $\F_{[\hat{x}_-, \hat{x}_+]}$, and  $\F_{[\hat{x}_+, x_+]}$. Suppose that test functions  $\Xi_-$ and $\Xi_+$ are measurable with respect to $\F_{[x_-, \hat{x}_-]}$ and  $\F_{[\hat{x}_+, x_+]}$. Then we get
\begin{eqnarray}
 &\mathbb{E}_{(x_{-},x_{+})}^{\mu_\eps,u_{-},u_{+}} \Big( & \Phi  \, \Xi_- \,  \Xi_+ \Big) \label{e:6.5}\\
  &\, =  &  \frac{1}{  \mathcal{Z}^{u_-,u_+}_{\eps, (x_-,x_+)}}   \mathbb{E}_{(x_{-},x_{+})}^{\mathcal{W}_\eps,u_{-},u_{+}} \Big( \Xi_- \,  \Psi_- \, \Phi \,   \Psi_\odot \Xi_+  \Psi_+ \Big) \notag\\
 &\overset{\eqref{e:pM1}}=  & \frac{1}{  \mathcal{Z}^{u_-,u_+}_{\eps, (x_-,x_+)}}   \mathbb{E}_{(x_{-},x_{+})}^{\mathcal{W}_\eps,u_-,u_+} \Big( \Xi_- \,  \Psi_-   \mathbb{E}_{(\hat{x}_{-},\hat{x}_{+})}^{\mathcal{W}_\eps,{\bf u}} \big( \, \Phi \,   \Psi_\odot  \big) \Xi_+  \Psi_+ \Big) \notag\\
 & =  & \frac{1}{  \mathcal{Z}^{u_-,u_+}_{\eps, (x_-,x_+)}}   \mathbb{E}_{(x_{-},x_{+})}^{\mathcal{W}_\eps,u_-,u_+} \Big( \Xi_- \,  \Psi_-   \mathbb{E}_{(\hat{x}_{-},\hat{x}_{+})}^{\mathcal{W}_\eps,{\bf u}} \big( \Psi_{\odot}  \big)   \mathbb{E}_{(\hat{x}_{-},\hat{x}_{+})}^{\mu_\eps,{\bf u}} \big( \, \Phi   \big) \Xi_+  \Psi_+ \Big) \notag \\
 &\overset{\eqref{e:pM1}}=  &    \mathbb{E}_{(x_{-},x_{+})}^{\mu_\eps,u_-,u_+} \Big( \Xi_- \,      \mathbb{E}_{(\hat{x}_{-},\hat{x}_{+})}^{\mu_\eps,{\bf u}} \big( \, \Phi   \big)\, \Xi_+ \Big).\notag
\end{eqnarray}
This finishes the proof of Lemma \ref{le:Markov1b}.
\end{proof}

We are now ready to give a proof of the strong Markov property.

\begin{proof}[Proof of Lemma~\ref{p:Markov}:]
We treat only the Gaussian case~\eqref{e:strMarkovW}. Equation~\eqref{e:strMarkovmu} then follows as in the proof of Lemma \ref{le:Markov1b}.

We start by proving~\eqref{e:strMarkovW} in the case in which  $\chi_-$ and $\chi_+$ are left and right stopping points that attain values in a finite set  $\big\{ \chi^1, \ldots, \chi^N \big\}$. Then we can write

\begin{eqnarray*}
&&\mathbb{E}_{(x_{-},x_{+})}^{\mathcal{W}_\eps,u_{-},u_{+}} \big( \Phi \big|   \F_{[x_-,\chi_-]} \vee \F_{[\chi_+, x_+]}   \Big) \\
&\, = \,&  \sum_{n=1}^N \sum_{m=1}^N \mathbb{E}_{(x_{-},x_{+})}^{\mathcal{W}_\eps,u_{-},u_{+}} \Big( \Phi\,  \mathbf{1}_{\{\chi_- = \chi^n \}} \mathbf{1}_{\{\chi_+= \chi^m \}}  \big|   \F_{[x_-,\chi_-]} \vee \F_{[\chi_+, x_+]} \Big)\\
&\, = \,&  \sum_{n=1}^N \sum_{m=1}^N    \mathbf{1}_{\{\chi_- = \chi^n \}} \mathbf{1}_{\{\chi_+= \chi^m \}}  \mathbb{E}_{(x_{-},x_{+})}^{\mathcal{W}_\eps,u_{-},u_{+}} \Big( \Phi\, \big|   \F_{[x_-,\chi^n]} \vee \F_{[\chi^m, x_
+]} \Big)\\
&\,\overset{\eqref{e:pM1}}{ =}&   \sum_{n=1}^N \sum_{m=1}^N    \mathbf{1}_{\{\chi_- = \chi^n \}} \mathbf{1}_{\{\chi_+= \chi^m \}}  \,  \mathbb{E}_{(\chi^n,\chi^m)}^{\mathcal{W}_\eps,{\bf u}} \big( \Phi\, \big)\\
&\,= \,&    \mathbb{E}_{(\chi_-,\chi_+)}^{\mathcal{W}_\eps,{\bf u}} \big( \Phi\, \big).
\end{eqnarray*}
In the second equality, we have used the fact that the $\chi_{\pm}$ are left and right stopping points.

In order to see the general case, we approximate the stopping points by
\begin{align*}
\chi^N_- \, := \,& \inf \big\{  x \,= \, i \, 2^{-N} \colon \, i  \in \Z , \, x \geq \chi_- \big\},\\
\chi^N_+ \, := \,& \sup \big\{   x \, = \,  i  \, 2^{-N}\colon i  \in \Z , \, x \leq \chi_+ \big\}.
\end{align*}
Then $\chi^N_-$ and $\chi^N_+$ are stopping points taking values in a finite set and, in particular,~\eqref{e:strMarkovW} holds for them. We have
\begin{equation*}
\chi^N_-  \downarrow \chi_- \qquad \text{and} \qquad  \chi^N_+  \uparrow \chi_+ \qquad \text{as }N \uparrow \infty.
\end{equation*}
Now, in order to conclude that~\eqref{e:strMarkovW} also holds for $\chi_{\pm}$, we first observe that  for any continuous, bounded $\Phi \colon C([x_-,x_+]) \to \R$,  we have for every path $u$ that
\begin{eqnarray*}
&& \mathbb{E}_{(\chi_{-},\chi_{+})}^{\mathcal{W}_\eps,{\bf u}} \big( \Phi  \big) \notag\\
& \,=\, & \lim_{N \to \infty } \mathbb{E}_{(\chi^N_{-},\chi^N_{+})}^{\mathcal{W}_\eps,{\bf u}} \big( \Phi  \big) \\
&\, =\, & \lim_{N \to \infty } \mathbb{E}_{(x_{-},x_{+})}^{\mathcal{W}_\eps,u_{-},u_{+}} \big( \Phi \big|   \F_{[x_-,\chi^N_-]} \vee \F_{[\chi^N_+, x_+]}   \big)\\
&\, =\, &  \mathbb{E}_{(x_{-},x_{+})}^{\mathcal{W}_\eps,u_{-},u_{+}} \big( \Phi \big|   \F_{[x_-,\chi_-]} \vee \F_{[\chi_+, x_+]}   \big).
\end{eqnarray*}
In the first step, we have used  that, due to the continuity of $u$,  the measures $\mathcal{W}^{u}_{\eps, (\chi^N_{-},\chi^N_+)}$ converge weakly to $\mathcal{W}^{u}_{\eps, (\chi_{-},\chi_+)}$, as can easily be confirmed. In order to see the last line, it suffices to check that the limit in the third line does indeed satisfy the characteristic properties of a conditional expectation.

This equality can then be extended to arbitrary test functions $\Phi$ with a standard monotone class argument (see e.g. \cite[Ch. 0, Thm 2.2]{RY}).

\end{proof}

\subsection{Proof of large deviation bounds}\label{ss:63}
The large deviation bounds~\eqref{e:LDUB1} and~\eqref{e:LDLB1}  are statements about the quotient of expectations of the form
\begin{equation*}
 \mathbb{E}_{(x_{-},x_{+})}^{\mathcal{W}_\eps,u_{-},u_{+}} \Big(\mathbf{1}_{\Aa} (u)\,  \exp \Big(- \frac{1}{\eps} \int_{x_-}^{x_+} V(u) \, dx \Big)\Big),
\end{equation*}
see~\eqref{m30.1}. Consequently, the results will follow as soon as we establish upper and lower bounds on these expectations. Throughout this subsection, $\Aa$ will always denote a set of continuous paths $u$ on $[x_-,x_+]$ that satisfy the boundary conditions $u(x_{\pm})= u_{\pm}$, and topological notions like open or closed will always refer to the topology of uniform convergence.  We will frequently use $I_{x_-,x_+}(u)$, the Gaussian energy of a path (defined in~\eqref{e:I}),  and $\Ixu$, the minimal Gaussian energy given the boundary conditions (defined in~\eqref{e:Iu}).

The upper bound for the Gaussian expectation can then be stated as follows.

\begin{lemma}[Upper bound]\label{le:LDUpBou}
Fix constants $M< \infty$, $0 < \ell_- < \ell_{+} < \infty$ and $R < \infty$. Suppose that $\ell=(x_+-x_-) \in [\ell_- , \ell_+]$ and  $u_{\pm} \in [-M,M]$. Then for any $\delta, \gamma >0$, there exists an $\eps_0>0$  such that for any measurable  set $\Aa$  satisfying
\begin{equation}\label{e:condA}
\inf_{u \in B(\Aa,\delta)}E(u) - \Ixu  \leq R
\end{equation}
and for any $\eps \leq \eps_0$, we have
\begin{align}
 \mathbb{E}_{(x_{-},x_{+})}^{\mathcal{W}_\eps,u_{-},u_{+}} \Big(\mathbf{1}_{\Aa}(u) \, &  \exp \Big(- \frac{1}{\eps}\int_{x_-}^{x_+} V(u) \, dx \Big)\Big) \notag\\
 & \leq \,    \exp  \Big(-\frac{1}{\eps} \big(  \inf_{u \in B(\Aa,\delta)} E(u) -\Ixu - \gamma   \big)  \Big) \label{e:LDUB} .
\end{align}
Here  $\eps_0$ depends on $M, \ell_{\pm},  \sup_{|v| \leq M +\sqrt{2^{-1} (\ell_+ R +1)}+ 1}  |V'(v)| , \delta,$ and $\gamma$ but not on the particular choice of $x_\pm, u_\pm$, and it depends on $\mathcal{A}$ only through condition~\eqref{e:condA}.
\end{lemma}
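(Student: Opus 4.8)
The statement to prove is the Gaussian upper bound of Lemma~\ref{le:LDUpBou}: an explicit, locally uniform version of the classical Laplace/Varadhan upper bound for the tilted measure $\mathbf{1}_\Aa \exp(-\eps^{-1}\int V(u))$ under the Brownian bridge $\mathcal{W}^{u_-,u_+}_{\eps,(x_-,x_+)}$. The plan is to run the standard proof of the Gaussian large deviation upper bound and the exponential tilting, but to keep careful track of which quantities the threshold $\eps_0$ depends on. First I would reduce to zero boundary conditions by the shift map $u\mapsto u - h^{u_-,u_+}_{(x_-,x_+)}$ (recall~\eqref{e:Defh}), under which $\mathcal{W}^{u_-,u_+}_{\eps,(x_-,x_+)}$ becomes $\mathcal{W}^{0,0}_{\eps,(x_-,x_+)}$ and the energy $E$ becomes $I_{x_-,x_+}(\cdot + h) + \int V(\cdot+h)$; the linear term $\int \partial_x u\,\partial_x h$ in the expansion of $I_{x_-,x_+}(u+h)$ is annihilated because $h$ is affine and $u$ vanishes at the endpoints, so $I_{x_-,x_+}(u+h)=I_{x_-,x_+}(u)+\Ixu$. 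Thus the claim~\eqref{e:LDUB} becomes a bound on $\mathbb{E}^{\mathcal{W}_\eps,0,0}(\mathbf{1}_{\Aa-h}\exp(-\eps^{-1}\int V(u+h)))$ in terms of $\inf_{B(\Aa,\delta)}(I_{x_-,x_+}(u)+\int V(u))$.

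The core estimate is the following: for any $v\in\mathcal{W}^{0,0}$'s Cameron--Martin space $H^1_0$ with $I_{x_-,x_+}(v)\le$ (the relevant energy level), the probability that $u$ lies in a small $\|\cdot\|_\infty$-ball of radius $\rho$ around $v$ satisfies, via the Cameron--Martin theorem and a Gaussian tail bound for $\mathcal{W}^{0,0}_{\eps,(x_-,x_+)}$, $\mathcal{W}^{0,0}_{\eps,(x_-,x_+)}(\|u-v\|_\infty\le\rho)\le \exp(-\eps^{-1}(I_{x_-,x_+}(v)-\kappa))$ for $\eps$ small, where $\kappa\to0$ as $\rho\to0$ and the rate of decay is controlled \emph{only} through $\rho$, $\ell_\pm$, and the sup-norm of $v$ (which is controlled by $\ell_\pm$ and the energy level via $\|v\|_\infty\le |v(x_-)| + (2I_{x_-,x_+}(v)\,\ell_+)^{1/2}$). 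This is where the uniformity is forced to come from a $\delta$-neighborhood rather than from $\Aa$ itself: one covers the relevant compact sublevel set of $I_{x_-,x_+}$ (restricted to functions within $\|\cdot\|_\infty$-distance of $\Aa$, which by condition~\eqref{e:condA} has bounded energy) by finitely many such balls of radius $\delta/2$, but the \emph{number} of balls, and hence $\eps_0$, is then controlled uniformly because everything is happening inside a fixed compact subset of $C([x_-,x_+])$ whose size depends only on $M$, $R$, $\ell_\pm$. On each ball one tilts: $\exp(-\eps^{-1}\int V(u+h))\le \exp(-\eps^{-1}(\int V(v+h) - \omega(\delta)))$, where $\omega(\delta)\to0$ uses the local Lipschitz modulus of $V$ on the ball $\{|w|\le M+\sqrt{2^{-1}(\ell_+R+1)}+1\}$ --- this is exactly the quantity appearing in the statement, and it is what makes the $V$-dependence explicit. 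Summing the finitely many balls and using that for $\eps$ small the largest summand dominates gives~\eqref{e:LDUB} with $\gamma$ absorbing $\kappa+\omega(\delta)$ plus the $\log$(number of balls)$\cdot\eps$ term.

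The main obstacle, and the only place requiring real care, is the bookkeeping: making sure that (i) the covering number, (ii) the Gaussian tail constant, and (iii) the Lipschitz modulus of $V$ all depend only on the advertised parameters $M,R,\ell_\pm,\delta,\gamma$ and not on $x_\pm$, $u_\pm$, or $\Aa$ beyond~\eqref{e:condA}. Translation invariance of $\mathcal{W}^{0,0}$ (and its scaling in $\eps$) reduces the $x_\pm$-dependence to the single parameter $\ell = x_+-x_-\in[\ell_-,\ell_+]$; the $u_\pm$-dependence is handled once and for all by the shift reduction and the bound $|u_\pm|\le M$; and the set $\Aa$ enters only through the fixed compact sublevel set it is contained in, by~\eqref{e:condA}. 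Once these three points are checked, the proof is a routine but lengthy assembly of the pieces; I would present the covering argument explicitly and relegate the elementary Gaussian tail estimate (e.g.\ Borell's inequality, or a direct computation using~\eqref{e:Cov}) and the deterministic energy comparison to short verifications. The a priori bound $\|v\|_\infty \le M + \sqrt{2^{-1}(\ell_+ R+1)} + 1$ used in the definition of the relevant Lipschitz norm comes from combining $|v(x_-)|=|u_-|\le M$, the energy constraint $I_{x_-,x_+}(v+h)-\Ixu \le \int V = E - I \le R + 1$ (say, after enlarging by the $\delta$-neighborhood), and Cauchy--Schwarz on $[x_-,x]$; I would verify this inequality at the start so the Lipschitz constant is fixed before the covering is chosen.
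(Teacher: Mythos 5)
Your proposal follows essentially the same route as the paper's proof of Lemma~\ref{le:LDUpBou}: reduce to zero boundary conditions (the paper additionally rescales to $[0,1]$, so that the $x_\pm$-dependence collapses to the single parameter $\ell\in[\ell_-,\ell_+]$ and the reference measure becomes $\mathcal{W}^{0,0}_{\ell\eps,(0,1)}$); cover a compact sublevel set of the Gaussian energy, chosen \emph{independently of} $\Aa$, by finitely many sup-norm balls; on each ball freeze $\int V$ using the local Lipschitz bound of $V$ on $\{|v|\le M+\sqrt{2^{-1}(\ell_+R+1)}+1\}$, with the same Cauchy--Schwarz argument you give for the sup bound on the (near-)minimizing centers; estimate each Gaussian ball probability by $\exp(-\eps^{-1}(I-\kappa))$; and let the $\delta$-enlargement of $\Aa$ pay for the $\Aa$-independence of the covering. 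Proving the ball estimate via Cameron--Martin plus a Gaussian tail bound, rather than quoting the Gaussian large deviation upper bound for closed sets as the paper does (Proposition~\ref{pr:GLD}), is a harmless variant.

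There is, however, one genuine gap: you never estimate the contribution of $\Aa$ \emph{outside} the union of the covering balls. Condition~\eqref{e:condA} bounds only the \emph{infimum} of $E$ over $B(\Aa,\delta)$; it does not place $\Aa$ (or its neighborhood) inside any compact sublevel set of the Gaussian energy, so your parenthetical ``which by condition~\eqref{e:condA} has bounded energy'' is a misreading, and the balls covering the sublevel set do not cover $\Aa$. The expectation over $\Aa\setminus\bigcup_k B(u_k,\cdot)$ must be bounded separately; this is Step~3 of the paper's proof: one fixes the sublevel set at Gaussian level $\ell_+R$ (in the rescaled picture), applies the Gaussian upper bound to the closed complement of the union of balls to get a bound of order $\exp\big(-\eps^{-1}(R-\gamma/\ell_-)\big)$, and \emph{only then} invokes~\eqref{e:condA} to see that this is dominated by the target $\exp\big(-\eps^{-1}(\inf_{B(\Aa,\delta)}E-\Ixu-\gamma)\big)$. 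Without this step the covering argument alone does not control the full expectation, and it is precisely here that the hypothesis~\eqref{e:condA} does its real work. A secondary, easily repaired point: the covering radius cannot simply be $\delta/2$, because $\delta$ and $\gamma$ are independent data; to absorb the tilting error into $\gamma$ the radius must be taken $\le\min\{\delta/2,\,c\,\gamma/(\ell_+\sup|V'|)\}$, which is exactly the paper's choice of $\tilde\delta$ in~\eqref{e:td}.
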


As usual in large deviation theory, the derivation of lower bounds for integrals is reduced to the case of a ball
\begin{equation*}
B(u_{\ast},\delta) := \big\{u \colon \|u - u_{\ast} \|_\infty \leq \delta    \big\}
\end{equation*}
around a suitably chosen profile $u_{\ast}$.
\begin{lemma}[Lower bound]\label{le:LDLoBou}
Fix constants $M$ and $\ell_{+} < \infty$. Suppose that $\ell= x_+-x_- \leq \ell_+$, $u_{\pm} \in [-M,M]$. Then for any profile  $u_{\ast}$ with
\begin{equation}\label{e:condu}
\sup_{x \in (x_-, x_+)}  |u_{\ast}(x)|  \leq M
\end{equation}
and any $\delta,\gamma>0$, there exists an $\eps_0>0$ such that for $\eps \leq \eps_0$
\begin{align}
 \mathbb{E}_{(x_{-},x_{+})}^{\mathcal{W}_\eps,u_{-},u_{+}} \Big(\mathbf{1}_{B(u_{\ast},\delta)}(u) \, &  \exp \Big(- \frac{1}{\eps} \int_{x_-}^{x_+} V(u) \, dx \Big) \notag\\
 & \geq \,    \exp  \Big(- \frac{1}{\eps} \big(  E(u_{\ast}) -I_{x_{\pm}}^{u_{\pm}} + \gamma   \big)  \Big) \label{e:LDLB} .
\end{align}
Here  $\eps_0$ depends on $\sup_{|v| \leq M+1}|V'(v)| , \ell_+, \delta,$ and $\gamma$ but not on the particular choice of $x_\pm, u_\pm$ and it depends on $u_{\ast}$ only through the condition~\eqref{e:condu}.
\end{lemma}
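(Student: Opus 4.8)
The plan is to run the classical Cameron--Martin lower bound argument, keeping careful track of the dependence of $\eps_0$ on the data. We may assume that $u_\ast$ satisfies the boundary conditions $u_\ast(x_\pm)=u_\pm$ (the relevant case) and that $E(u_\ast)<\infty$ (otherwise the right-hand side of~\eqref{e:LDLB} vanishes and there is nothing to prove). Then $g:=u_\ast-\hx$ lies in $H^1_0([x_-,x_+])$, which is precisely the Cameron--Martin space of $\mathcal{W}^{0,0}_{\eps,(x_-,x_+)}$, with squared Cameron--Martin norm $\tfrac1\eps\int_{x_-}^{x_+}(\partial_x g)^2\,dx=\tfrac2\eps\,I_{x_-,x_+}(g)$.

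The first step is to shrink the radius and control the potential term. Set $C_V:=\sup_{|v|\le M+1}|V'(v)|$ (the case $C_V=0$ being trivial) and $\delta':=\min\{\delta,1,\gamma/(2\ell_+C_V)\}$, and use $B(u_\ast,\delta')\subseteq B(u_\ast,\delta)$. On $B(u_\ast,\delta')$ every path $u$ satisfies $\|u\|_\infty\le M+1$, so by the mean value theorem $\int_{x_-}^{x_+}V(u)\,dx\le\int_{x_-}^{x_+}V(u_\ast)\,dx+\ell_+\delta'C_V$, whence
\[
\mathbb{E}_{(x_{-},x_{+})}^{\mathcal{W}_\eps,u_{-},u_{+}}\Big(\mathbf{1}_{B(u_\ast,\delta)}(u)\,e^{-\frac1\eps\int_{x_-}^{x_+}V(u)\,dx}\Big)\ \ge\ e^{-\frac1\eps(\int_{x_-}^{x_+}V(u_\ast)\,dx+\ell_+\delta'C_V)}\ \mathcal{W}^{u_-,u_+}_{\eps,(x_-,x_+)}\big(B(u_\ast,\delta')\big).
\]

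The main step is the Gaussian shift estimate for $\mathcal{W}^{u_-,u_+}_{\eps,(x_-,x_+)}(B(u_\ast,\delta'))$. Subtracting $\hx$ reduces it to $\mathcal{W}^{0,0}_{\eps,(x_-,x_+)}(\|w-g\|_\infty\le\delta')$, and the Cameron--Martin formula expresses this as $e^{-\frac1\eps I_{x_-,x_+}(g)}\int_{\{\|w\|_\infty\le\delta'\}}e^{-L_g(w)}\,d\mathcal{W}^{0,0}_{\eps,(x_-,x_+)}(w)$, where $L_g$ is the Paley--Wiener functional of $g$. Since the ball $\{\|w\|_\infty\le\delta'\}$ is symmetric, $w\mapsto-w$ preserves $\mathcal{W}^{0,0}_{\eps,(x_-,x_+)}$ and reverses the sign of $L_g$, so Jensen's inequality conditioned on that ball gives $\int_{\{\|w\|_\infty\le\delta'\}}e^{-L_g}\,d\mathcal{W}^{0,0}_{\eps,(x_-,x_+)}\ge\mathcal{W}^{0,0}_{\eps,(x_-,x_+)}(\|w\|_\infty\le\delta')$. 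Finally, a standard estimate for the Brownian bridge (e.g.\ via the reflection principle) gives $\mathcal{W}^{0,0}_{\eps,(x_-,x_+)}(\|w\|_\infty>\delta')\le C\exp(-c\,\delta'^2/(\eps\ell_+))$ with absolute constants $C,c>0$, uniformly over all intervals of length $\le\ell_+$, so $\mathcal{W}^{0,0}_{\eps,(x_-,x_+)}(\|w\|_\infty\le\delta')\ge\tfrac12$ once $\eps$ is small in terms of $\ell_+$ and $\delta'$.

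It remains to assemble the pieces. Using $E(u_\ast)=I_{x_-,x_+}(u_\ast)+\int_{x_-}^{x_+}V(u_\ast)\,dx$ together with $I_{x_-,x_+}(u_\ast)=I_{x_-,x_+}(g)+\Ixu$ (the cross term $\int_{x_-}^{x_+}\partial_x g\,\partial_x\hx\,dx$ vanishes because $\partial_x\hx$ is constant and $g\in H^1_0$), one finds $\int_{x_-}^{x_+}V(u_\ast)\,dx+I_{x_-,x_+}(g)=E(u_\ast)-\Ixu$, so combining the displays above yields, for $\eps$ small,
\[
\mathbb{E}_{(x_{-},x_{+})}^{\mathcal{W}_\eps,u_{-},u_{+}}\Big(\mathbf{1}_{B(u_\ast,\delta)}(u)\,e^{-\frac1\eps\int_{x_-}^{x_+}V(u)\,dx}\Big)\ \ge\ \tfrac12\,e^{-\frac1\eps(E(u_\ast)-\Ixu+\ell_+\delta'C_V)}.
\]
Since $\ell_+\delta'C_V\le\gamma/2$ by the choice of $\delta'$, requiring in addition $\eps_0\le\gamma/(2\ln2)$ absorbs the factor $\tfrac12$ and the remaining error into $\gamma$, which is~\eqref{e:LDLB}; tracing the constants, $\eps_0$ depends only on $\ell_+$, $\sup_{|v|\le M+1}|V'(v)|$, $\delta$ and $\gamma$, and on $u_\ast$ only through~\eqref{e:condu}. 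I expect the only genuinely delicate point to be the shift estimate together with the uniformity over admissible intervals of the small-ball bound $\mathcal{W}^{0,0}_{\eps,(x_-,x_+)}(\|w\|_\infty\le\delta')\to1$ --- which is precisely where the hypothesis $x_+-x_-\le\ell_+$ enters: since $g$ is only $H^1_0$ and $L_g$ is unbounded, no pointwise estimate on $L_g$ is available and one must use the Cameron--Martin/Jensen/symmetry route rather than a naive bound; the remaining estimates are routine.
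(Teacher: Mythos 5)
Your proof is correct and follows essentially the same route as the paper's: shrink the radius so that the Lipschitz bound on $V$ (on $\{|v|\le M+1\}$) turns the potential term into $\int V(u_\ast)\,dx$ plus an error absorbed into $\gamma$, apply the Cameron--Martin shift by $g=u_\ast-\hx$, kill the stochastic-integral term by the symmetry $w\mapsto -w$ of the centered measure and the ball, and finish with a uniform small-ball estimate together with the identity $I_{x_-,x_+}(u_\ast)=I_{x_-,x_+}(g)+\Ixu$. The only (inessential) differences are cosmetic: you use Jensen on the symmetric ball where the paper sneaks in a $\cosh\ge 1$, and you quote an explicit reflection-principle bound for the Brownian bridge instead of rescaling to $[0,1]$ and invoking the Gaussian large deviation upper bound on the complement of the small ball.
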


Now we give the proofs of Lemmas \ref{le:LDUpBou} and \ref{le:LDLoBou}. The proofs of Propositions \ref{pr:LD1} and \ref{pr:LD2} are given afterwards.

In order to prove the upper bound, we will invoke the known upper bound for Gaussian large deviations. In the current context, this can be stated as follows.

\def \Bogachev {\cite[Cor. 4.9.3 ]{Bog}}

\begin{proposition}[Gaussian large deviation, see e.g. \Bogachev]\label{pr:GLD} For every \emph{closed} set $\Aa$ and for any $\gamma > 0$, there exists an $\eps_0 >0$ such that for every $\eps \leq \eps_0$ we have
\begin{equation}\label{e:GLD}
 \mathcal{W}^{0,0}_{\eps, (0,1)} \big( \Aa \big) \leq \exp \Big(- \frac{1}{\eps} \big( \inf_{u \in \Aa} I_{0,1}(u) - \gamma \big) \Big) .
\end{equation}
\end{proposition}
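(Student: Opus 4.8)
The plan is to recognize Proposition~\ref{pr:GLD} as the upper-bound half of the Schilder-type large deviation principle for the rescaled Brownian bridge and to recall the classical proof. Let $X:=\{u\in C([0,1])\colon u(0)=u(1)=0\}$, a separable Banach space under $\|\cdot\|_\infty$; then $\mathcal{W}^{0,0}_{\eps,(0,1)}$ is the law of $\sqrt{\eps}$ times a standard Brownian bridge, a centered Gaussian measure on $X$ whose Cameron--Martin space is $H^1_0([0,1])$ and whose associated energy is exactly $I_{0,1}(u)=\tfrac12\int_0^1(\partial_x u)^2\,dx$ (which equals $+\infty$ unless $u\in H^1_0$). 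Put $a:=\inf_{u\in\Aa}I_{0,1}(u)$ and fix $\gamma>0$. If $a=+\infty$, the argument below, run with an arbitrary large constant in place of $a$, shows that $\mathcal{W}^{0,0}_{\eps,(0,1)}(\Aa)$ decays faster than any exponential, which is how the statement is to be read in that case; so I assume $a<\infty$. The proof rests on two ingredients, exponential tightness and an exponential Chebyshev bound, glued together by a finite covering.

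First I would establish \emph{exponential tightness}: for every $L<\infty$ there is a compact set $\Gamma_L\subset X$, \emph{independent of $\eps$}, with $\mathcal{W}^{0,0}_{\eps,(0,1)}(X\setminus\Gamma_L)\le\exp(-L/\eps)$ for all sufficiently small $\eps$. This follows from the Arzel\`a--Ascoli characterization of compact subsets of $X$ together with the Gaussian tail bound $\mathbb{P}\big(\sup_{|s-t|\le\rho}|B^{\mathrm{br}}_s-B^{\mathrm{br}}_t|\ge\lambda\big)\le C\rho^{-1}\exp(-\lambda^2/(C\rho))$ for the oscillation of the bridge: substituting $\lambda\mapsto\lambda/\sqrt{\eps}$ and summing over a geometrically refining mesh (with $\eps$-independent moduli) produces the claimed super-exponential bound with a compact that does not depend on $\eps$.

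Next comes the algebraic core. The elementary exponential Chebyshev inequality gives, for every $\phi$ in the dual $X^*$ and every measurable $\Bb\subseteq X$,
\begin{equation*}
\mathcal{W}^{0,0}_{\eps,(0,1)}(\Bb)\le\exp\!\left(-\frac1\eps\Big(\inf_{w\in\Bb}\langle\phi,w\rangle-\tfrac12\,\mathrm{Var}\langle\phi,\cdot\rangle\Big)\right),
\end{equation*}
the variance being that under $\mathcal{W}^{0,0}_{1,(0,1)}$ and the step using that $\langle\phi,\cdot\rangle$ is a centered Gaussian. I would combine this with the classical fact that, for a centered Gaussian measure, the rate function is the Legendre transform of the quadratic log-Laplace functional, i.e.\ $\sup_{\phi\in X^*}\big(\langle\phi,u\rangle-\tfrac12\mathrm{Var}\langle\phi,\cdot\rangle\big)=I_{0,1}(u)$ for every $u\in X$. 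Then for any point $u_\ast\in\Aa$ one picks $\phi_\ast\in X^*$ with $\langle\phi_\ast,u_\ast\rangle-\tfrac12\mathrm{Var}\langle\phi_\ast,\cdot\rangle\ge a-\gamma/2$ (possible since $I_{0,1}(u_\ast)\ge a$) and a radius $\delta_\ast>0$ with $\delta_\ast\|\phi_\ast\|_{X^*}\le\gamma/2$; applying the displayed bound to $\Bb=B(u_\ast,\delta_\ast)$, for which $\inf_{w\in\Bb}\langle\phi_\ast,w\rangle=\langle\phi_\ast,u_\ast\rangle-\delta_\ast\|\phi_\ast\|_{X^*}$, yields $\mathcal{W}^{0,0}_{\eps,(0,1)}(B(u_\ast,\delta_\ast))\le\exp(-(a-\gamma)/\eps)$ for \emph{all} $\eps$.

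Finally I would glue the pieces. With $L:=a+1$ and $\Gamma_L$ as above, the set $\Gamma_L\cap\Aa$ is compact, hence covered by finitely many balls $B(u_1,\delta_1),\dots,B(u_N,\delta_N)$ of the kind just built, and
\begin{equation*}
\mathcal{W}^{0,0}_{\eps,(0,1)}(\Aa)\le\mathcal{W}^{0,0}_{\eps,(0,1)}(X\setminus\Gamma_L)+\sum_{i=1}^N\mathcal{W}^{0,0}_{\eps,(0,1)}(B(u_i,\delta_i))\le\exp\!\Big(-\tfrac{a+1}{\eps}\Big)+N\exp\!\Big(-\tfrac{a-\gamma}{\eps}\Big),
\end{equation*}
which is $\le\exp(-(a-2\gamma)/\eps)$ once $\eps$ is small enough that $N+\exp(-(1+\gamma)/\eps)\le\exp(\gamma/\eps)$; running the argument with $\gamma/2$ in place of $\gamma$ gives the statement, with $\eps_0$ depending on $\Aa$ and $\gamma$. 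The steps that require real care---and that I would regard as the main obstacles---are the uniform exponential tightness bound and the Legendre-duality identification of $I_{0,1}$, in particular checking that the supremum over $\phi$ is infinite exactly when $u\notin H^1_0$; both are, however, standard facts about Gaussian measures on separable Banach spaces (cf.\ \cite{Bog}).
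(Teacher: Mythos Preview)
The paper does not actually prove Proposition~\ref{pr:GLD}: it is stated with a citation to \cite[Cor.~4.9.3]{Bog} and then used as a black box in the proofs of Lemmas~\ref{le:LDUpBou} and~\ref{le:LDLoBou}. There is therefore no ``paper's own proof'' to compare against.

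Your sketch is a correct outline of the standard Schilder/Gaussian large deviation argument: exponential tightness via Arzel\`a--Ascoli and Gaussian tail bounds for the modulus of continuity, the exponential Chebyshev bound combined with the Legendre duality $I_{0,1}(u)=\sup_{\phi\in X^*}\big(\langle\phi,u\rangle-\tfrac12\mathrm{Var}\langle\phi,\cdot\rangle\big)$, and then a finite cover of the compact $\Gamma_L\cap\Aa$ by balls on which the Chebyshev bound is good. This is essentially the proof in \cite{Bog}, so you have supplied what the paper omitted. One small cosmetic point: in the final display you should shrink $\gamma$ before the argument rather than after, but you already note this.
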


The argument for  Lemma \ref{le:LDUpBou} is an adaptation of the proof of \cite[p. 34]{dH00}.

\begin{proof}[Proof of Lemma \ref{le:LDUpBou}]

\underline{Step 1.} We start by reducing the general problem to the case of homogeneous boundary conditions on $[0,1]$.  To this end, we introduce the following affine transformation. 
We define the transformation $T: u \mapsto \hat{u}$, where for a given path $u\colon[x_-,x_+] \to \R$ we denote by $\hat{u} \in C([0,1])$ the function
\begin{equation}\label{e:T}
\hat{u}(x) := u\big( x_-+\ell\,x\big) - h_{0,1}^{u_-,u_+}(x).
\end{equation}
Recall from~\eqref{e:Defh} that  $h_{0,1}^{u_-,u_+}(x) = xu_+ + (1-x)u_- $. It is clear that $T$ is a bijection between the set of continuous paths $u$ on $[x_-,x_+]$ with boundary conditions $u(x_{\pm}) = u_{\pm}$ and $C([0,1])$, the space of continuous paths on $[0,1]$ with homogeneous boundary conditions. Furthermore, if $u$ is distributed according to $  \mathcal{W}^{u_{-},u_+}_{\eps, (x_{-},x_+)}$, then $\hat{u}$ is distributed according to $  \mathcal{W}^{0,0}_{\ell \eps, (0,1)}$.  Note that the variance changes due to the rescaling by $\ell$.

The expectation that we want to bound can be expressed in terms of $\hat{u}$ as
\begin{align}
 \mathbb{E}_{(x_{-},x_{+})}^{\mathcal{W}_\eps,u_{-},u_{+}} & \Big(\mathbf{1}_{\Aa} (u)\,   \exp \Big(- \frac{1}{\eps} \int_{x_-}^{x_+} V( u) \, dx \Big)\Big) \notag \\
  = \,  \mathbb{E}_{(0,1)}^{\mathcal{W}_{\ell \eps},0,0}& \Big( \mathbf{1}_{\hat{\Aa}}(\hat{u}) \,   \exp \Big(- \frac{\ell}{\eps}  \int_{0}^{1} V\big( \hat{u} +h_{0,1}^{u_-,u_+} \big) \, dx \Big)\Big) \label{e:modint},
\end{align}
where $\hat{\Aa} := \big\{ Tu \colon u \in \Aa \big\}$. On the other hand, the condition~\eqref{e:condA} and the right-hand side of the desired bound~\eqref{e:LDUB} can also be expressed in terms of $\hat{u}$, as we will now do. We have for every $u$ that
\begin{equation*}
E(u) = E_\ell^{u_{\pm}}(\hat{u}) + I_{x_{\pm}}^{u_{\pm}},
\end{equation*}
where, for convenience, we have introduced the notation
\begin{equation*}
E_\ell^{u_{\pm}}(\hat{u}) := \int_0^1 \frac{1}{2\ell} \big(  \partial_x  \hat{u} \big)^2 + \ell  V\big( \hat{u} +h_{0,1}^{u_-,u_+}\big) \, dx.
\end{equation*}
(Note that we have not included $I_{x_{\pm}}^{u_{\pm}}$ in the definition of the rescaled energy $E_\ell^{u_{\pm}}$, because this way $E_\ell^{u_{\pm}}$ will appear as the natural rate functional.)
Condition~\eqref{e:condA} can now be expressed as
\begin{equation}\label{e:condA1}
\inf_{u \in B(\hat{\Aa},\delta)}E^{u_{\pm}}_\ell(\hat{u}) \, \leq \,   R,
\end{equation}
and for the right-hand side of~\eqref{e:LDUB}, we get
\begin{align*}
   \exp  \Big(-\frac{1}{\eps} &\big(  \inf_{u \in B(\Aa,\delta)} E(u) -\Ixu - \gamma   \big)  \Big) \\
   & =    \exp  \Big(-\frac{1}{\eps}\big(  \inf_{\hat{u} \in B(\hat{ \Aa},\delta)} E_{\ell}^{u_{\pm}}(\hat{u}) - \gamma   \big)  \Big).
\end{align*}

Relabelling $\hat{\Aa}$ as $\Aa$ and $\hat{u}$ as $u$, we  conclude that it suffices to show that for every set $\Aa \subseteq C([0,1])$ satisfying
\begin{equation}\label{e:condAA1}
\inf_{u \in B(\Aa,\delta)}E^{u_{\pm}}_\ell(u) \, \leq \,   R,
\end{equation}
we have for $\eps \leq \eps_0$ that
\begin{align}
\mathbb{E}_{(0,1)}^{\mathcal{W}_{\ell \eps},0,0}&  \Big( \mathbf{1}_{\Aa}(u) \,   \exp \Big(- \frac{\ell}{\eps}  \int_{0}^{1} V\big( u + h_{0,1}^{u_{\pm}}  \big) \, dx \Big)\Big) \notag\\
& \leq \exp \Big( - \frac{1}{\eps} \big(  \inf_{ u \in B(\Aa,\delta)} E_\ell^{u_{\pm}}(u) -\gamma \big) \Big) \label{e:wish}.
\end{align}
This bound will be established in  Steps 2-4.

\underline{Step 2.}The strategy to prove~\eqref{e:wish} consists of  decomposing $C([0,1])$ into a set of paths with high Gaussian energy and a finite number of small balls with lower Gaussian energy. One can use the Gaussian large deviation bound~\eqref{e:GLD} to bound the probability of the set of high Gaussian energy, which we will make to be a term of higher exponential order by choosing the Gaussian energy high enough. Then for the balls with lower Gaussian energy, the expectation over a given ball can be estimated by bounding an exponential factor by its supremum on that ball, and then bounding the Gaussian probability of the set using~\eqref{e:GLD} again. Finally, one has to sum over all the balls. As the total number of balls is finite and the bounds decay exponentially, the largest of the summands determines the behavior.

The main difference with respect to the classical argument in \cite{dH00} is that we choose a partition of $C([0,1])$ into sets that do not depend on $\Aa$. This is necessary to ensure that the number of balls is independent of $\Aa$. The price we have to pay is that on the right-hand side of~\eqref{e:wish} we take the infimum over the small neighborhood $B(\Aa,\delta)$ of $\Aa$ instead of taking it over $\Aa$ only, as in the classical argument.

Let us now give the details: First, fix a $\gamma<1$ and let
\begin{equation}\label{e:td}
\tilde{\delta} := \gamma\bigg( \ell_+ \sup_{|v| \leq \sqrt{2^{-1} (\ell_+ R +1)}+M+ 1} |V'(v) |  \bigg)^{-1} \wedge\delta \wedge \frac{1}{2}.
\end{equation}
The sublevel set
\begin{equation*}
\mathcal{K}_{\ell_+ R} \, := \,  \Big\{ u \colon I_{0,1} \leq \ell_+ R  \Big\}
\end{equation*}
is compact in $C([0,1])$, and  we can cover it by a finite number $N_{\tilde{\delta}, \ell_+ R}$ of open balls $B(u_k, \tilde{\delta})$ of radius $\tilde{\delta}$, where $u_k\in \mathcal{K}_{\ell_+ R}$ for each $k$. Note that $\Aa$ does not enter here, so both the profiles $u_k$ and the number $N_{\tilde{\delta}, \ell_+ R}$ depend only on $\gamma,\delta, \ell_+ R$, and $\sup_{|v| \leq \sqrt{2^{-1} (\ell_+ R +1)}+M+ 1} |V'(v) |$, not on the set $\Aa$ or the specific choice of $x_{\pm}, u_{\pm}$. Actually, it can be checked using the H\"older continuity of functions with bounded $H^1$-norm that this number grows like $\exp\big( C \big( R\ell_+ \tilde{\delta}^{-1} \big)^2  \big)$.

Using this covering and the positivity of $V$, we have for any set $\Aa$ that
\begin{align}
& \mathbb{E}_{(0,1)}^{\mathcal{W}_{\ell \eps},0,0} \Big( \mathbf{1}_{\Aa}(u) \,   \exp \Big(-\frac{\ell}{\eps}  \int_{0}^{1} V\big( u + h_{0,1}^{u_{\pm}} \big) \, dx \Big)\Big) \notag \\
  &  \leq \! \sum_{k=1}^{N_{\tilde{\delta}, \ell_+ R}}  \mathbb{E}_{(0,1)}^{\mathcal{W}_{\ell \eps},0,0} \Big( \mathbf{1}_{ B(u_k,   \tilde{\delta}) \cap \Aa}(u) \,   \exp \Big(- \frac{\ell}{\eps}  \int_{0}^{1} V\big( u + h_{0,1}^{u_{\pm}}  \big) \, dx \Big)\Big) \notag\\
  & \qquad \qquad +   \mathcal{W}^{0,0}_{\ell \eps, (0,1)}\Big( \Aa \setminus \cup_k B(u_k,  \tilde{\delta}) \Big) \label{e:decobou} .
\end{align}

\underline{Step 3.} The last term in~\eqref{e:decobou} can now easily be bounded:
\begin{align}
\mathcal{W}^{0,0}_{\ell \eps, (0,1)} \Big(  \Aa \setminus \cup_k B(u_k,  \tilde{\delta}) \Big) \leq
\mathcal{W}^{0,0}_{\ell \eps, (0,1)} \Big(  \complement \cup_k B(u_k,  \tilde{\delta}) \Big).
\label{e:618}
\end{align}
The set $\mathcal{B}:=\complement \cup_k B(u_k, \tilde{\delta})$ is closed and by definition $\inf_{u \in \Bb}I_{0,1}(u) \geq \ell_+ R$. Hence, the Gaussian large deviation bound~\eqref{e:GLD} implies that there exists an $\tilde{\eps}_0>0$ such that, for $ \eps \leq  \tilde{\eps}_0$, we have
\begin{align*}
\mathcal{W}^{0,0}_{ \eps, (0,1)} \big(  \Bb \big)
&\leq \exp \Big(- \frac{1}{\eps} \big( \ell_+ R - \gamma \big) \Big).
\end{align*}
Now we choose $\eps_0 = \tilde{\eps}_0 \ell_+^{-1}$. Then, for $\eps\leq \eps_0$, we can conclude that
\begin{eqnarray}
&&\mathcal{W}^{0,0}_{\ell \eps, (0,1)} \Big(  \Aa \setminus \cup_k B(u_k,  \tilde{\delta}) \Big)\notag\\
 &\leq& \exp \Big(- \frac{1}{\ell \eps}\big( \ell_+ R - \gamma \big) \Big) \notag\\
&\leq& \exp \Big(- \frac{1}{\eps} \big(  R - \frac{ \gamma}{\ell_-} \big) \Big)\notag\\
&\overset{\eqref{e:condA1}}{\leq}&  \exp  \Big(-\frac{1}{\eps} \big(  \inf_{u \in B(\Aa,\delta)} E_\ell^{u_{\pm}}(u) -   \frac{\gamma}{\ell_-}   \big)  \Big) \label{later} .
\end{eqnarray}

\underline{Step 4.} It remains to bound the sum on the right-hand side of~\eqref{e:decobou}. Since the number of summands $N_{\tilde{\delta}, \ell_+ R}$   remains constant as $\eps \downarrow 0$, the sum is dominated by the largest summand. Specifically, after fixing $\gamma$, $\delta$, $\ell_+R$ and $M$, we can choose $\eps_0>0$ sufficiently small so that $\eps\leq \eps_0$ implies
\begin{align}\label{ritt1}
 N_{\tilde{\delta}, \ell_+ R}=\exp\left(\frac{1}{\eps}\Big(\eps\log\big(N_{\tilde{\delta}, \ell_+ R}\big)\Big)\right)\leq\exp\left(\frac{\gamma}{\eps}\right).
\end{align}
Hence, up to an extra factor of $\gamma$, it is sufficient to obtain a good exponential bound on the largest summand on the right-hand side of~\eqref{e:decobou}.

If $B(u_k,\tilde{\delta}) \cap \Aa$ is empty, the largest summand is zero. Otherwise, we have
\begin{align}
&\mathbb{E}_{(0,1)}^{\mathcal{W}_{\ell \eps},0,0}  \Big( \mathbf{1}_{B(u_k,  \tilde{\delta}) \cap \Aa}(u) \,   \exp \Big(- \frac{\ell}{\eps}  \int_{0}^{1} V\big( u+ h_{0,1}^{u_{\pm}}  \big) \, dx \Big)\Big) \notag\\
 & \quad  \leq\, \!\!\!    \sup_{u \in B(u_k,   \tilde{\delta})} \exp \Big( -\frac{\ell}{\eps}  \int_{0}^{1} V\big( u + h_{0,1}^{u_{\pm}}  \big) \, dx \Big) \,   \mathcal{W}^{0,0}_{\ell \eps,  (0 ,1)} \big( \overline{ B(u_k,  \tilde{\delta})} \big)
 \label{e:620}.
\end{align}
Due to the lower semi-continuity of $I_{0,1}$, we can choose $\tilde{u}_k \in B(u_k, \tilde{\delta})$ so that \begin{align}
I_{0,1}(\tilde{u}_k) \leq  \inf_{ u \in \overline{ B(u_k,\tilde{\delta})} } I_{0,1}(u) + \gamma.\label{bso7}
 \end{align}
Then the first factor in~\eqref{e:620} can be bounded above by
\begin{equation}
\exp \Big( - \frac{1}{\eps} \Big( \ell \int_{0}^{1} V\big( \tilde{u}_k + h_{0,1}^{u_{\pm}} \,\big) dx  -2\tilde{\delta}  \ell_+\sup_{|v| \leq\| \tilde{u}_k\|_\infty +M +1} |V'(v)|   \Big)\Big), \label{e:granted1}
\end{equation}
and we need a bound on $||\tilde{u}_k||_\infty$. First we recall that $u_k\in\mathcal{K}_{\ell_+ R}$, which by definition gives $I_{0,1}(u_k)\leq\ell_+R$. Together with the definition of $\tilde{u}_k$, this gives
\begin{align*}
I_{0,1}(\tilde{u}_k)\overset{\eqref{bso7}}\leq \ell_+ R+ \gamma.
\end{align*}
Recalling the homogeneous boundary conditions, this implies that
\begin{align*}
||\tilde{u}_k||_\infty\leq \frac12 \int_0^1|\partial_x \tilde{u}_k|\,dx\leq \frac12\left(\int_0^1 |\partial_x \tilde{u}_k|^2\,dx\right)^{1/2}\leq \sqrt{2^{-1}(\ell_+ R+1)}.
\end{align*}
Hence, the definition~\eqref{e:td} of $\tilde{\delta}$ implies that the bound in~\eqref{e:granted1} improves to
\begin{align}
\exp \Big( - \frac{1}{\eps} \Big( \ell \int_{0}^{1} V\big( \tilde{u}_k + h_{0,1}^{u_{\pm}} \,\big) dx  -2 \gamma\Big)\Big).\label{mrmr}
\end{align}

On the other hand, the Gaussian large deviation bound~\eqref{e:GLD} and the definition~\eqref{bso7} of $\tilde{u}_k$ imply that for every $k$ there exists an $\eps_0>0$ such that for $\ell_+ \eps \leq \eps_0$ we have
\begin{equation}\label{Problem}
\mathcal{W}^{0,0}_{\ell \eps,  (0 ,1)} \big( \overline{B(u_k, \tilde{\delta} )} \big) \leq \exp \Big(- \frac{1}{\ell \eps} \big( I_{0,1}\big(\tilde{u}_k\big) - 2\gamma \big)\Big) .
\end{equation}
As there are only finitely many $u_k$ (the selection of which does not depend on $\Aa$), we can find an $\eps_0$ such that this bound holds for all $\tilde{u}_k$ simultaneously and such that~\eqref{later} holds as well.

Substituting~\eqref{mrmr} and~\eqref{Problem} into~\eqref{e:620} gives for each $k$ that
\begin{eqnarray}
&&\mathbb{E}_{(0,1)}^{\mathcal{W}_{\ell \eps},0,0}  \Big( \mathbf{1}_{B(u_k, \tilde{\delta}) \cap \Aa} \,   \exp \Big(- \frac{1}{\eps} \ell \int_{0}^{1} V\big( u + h_{0,1}^{u_{\pm}}  \big) \, dx \Big)\Big) \notag\\
 &   \leq& \exp\Big(- \frac{1}{\eps} \Big(  E^{u_{\pm}}_\ell( \tilde{u}_k) - \Big(2 +  \frac{2}{\ell_-}  \Big)\gamma  \Big) \Big) \notag\\
 &   \leq& \exp\Big(- \frac{1}{\eps}\Big( \inf_{u \in B(u_k,  \tilde{\delta})} E^{u_{\pm}}_\ell(u) -\Big(2 + \frac{2}{\ell_-} \Big)\gamma  \Big) \Big)\notag\\
 &\overset{\eqref{e:td}}\leq& \exp\Big(-\frac{1}{\eps}\Big( \inf_{u \in B(u_k, {\delta})} E^{u_{\pm}}_\ell(u) -\Big(2 + \frac{2}{\ell_-} \Big)\gamma  \Big) \Big)\notag.
\end{eqnarray}
After relabelling $\gamma$ (for instance by a factor of $6$), the above bound together with~\eqref{e:decobou},~\eqref{later}, and~\eqref{ritt1} finishes the proof of~\eqref{e:wish}.

\end{proof}

The proof of the lower bound~\eqref{e:LDLB} relies on the classical Cameron-Martin Theorem. In the current context it can be stated as follows.

\def\Hairer{\cite[Thm~3.41]{Ha09}}

\begin{theorem}[Cameron-Martin Thm. e.g.\Hairer ]\label{thm:CamMart}
For a fixed $f \in$ \\$C([x_-, x_+])$, define the shift map $T_f \colon C([x_-, x_+]) \to C([x_-, x_+])$ by $T_f(u) \, = u + f$. Then the image measure $T_f^* \mathcal{W}^{0,0}_{\eps, (x_{-},x_+)}$  is absolutely continuous with respect to $\mathcal{W}^{0,0}_{\eps, (x_{-},x_+)}$ if and only if $f \in H^{1}_0 (x_{-},x_+)$. In that case the Radon-Nykodym derivative is given by
\begin{align}
 \frac{d \,  T_f^* \mathcal{W}^{0,0}_{\eps, (x_{-},x_+)} }{d \,   \mathcal{W}^{0,0}_{\eps, (x_{-},x_+)}} (u) \, = \, & \exp \Big(  -  \frac{1}{ \eps}  I_{x_-,x_+}(f)  +\frac{1}{ \eps}    \int_{x_-}^{x_+} \, \partial_x f(x) \, d u(x) \Big)\Big)  .\label{radnyk}
\end{align}
 \end{theorem}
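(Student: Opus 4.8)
The plan is to deduce Theorem~\ref{thm:CamMart} from the abstract Cameron--Martin theorem for centered Gaussian measures on a separable Banach space, once the Cameron--Martin space of $\mathcal{W}^{0,0}_{\eps,(x_-,x_+)}$ has been identified. Recall that for a centered Gaussian measure $\mu$ on a separable Banach space $X$ with Cameron--Martin space $\mathcal{H}_\mu\subseteq X$, a translate $T_f^*\mu$ is absolutely continuous with respect to $\mu$ if and only if $f\in\mathcal{H}_\mu$ (the ``only if'' being the Feldman--H\'ajek dichotomy), and in that case
\begin{equation*}
\frac{d\,T_f^*\mu}{d\mu}(u)=\exp\Big(\langle f,u\rangle^{\sim}_{\mathcal{H}_\mu}-\tfrac12\|f\|^2_{\mathcal{H}_\mu}\Big),
\end{equation*}
where $\langle f,\cdot\rangle^{\sim}_{\mathcal{H}_\mu}$ is the Paley--Wiener functional associated with $f$, obtained as the $L^2(\mu)$-limit of elements of $X^*$ approximating $f$ in $\mathcal{H}_\mu$. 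It therefore suffices to (i) identify $\mathcal{H}_\mu$ for $\mu=\mathcal{W}^{0,0}_{\eps,(x_-,x_+)}$ on $X=\{u\in C([x_-,x_+])\colon u(x_\pm)=0\}$, and (ii) evaluate the two terms in the exponent. (Alternatively one may first reduce to the standard Brownian bridge on $[0,1]$ by the affine rescaling used in~\eqref{e:T} and quote the classical statement verbatim; I sketch the self-contained route below.)

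\emph{Identification of $\mathcal{H}_\mu$.} The covariance~\eqref{e:Cov} is exactly $\eps$ times the Green's function $G(x_1,x_2)$ of $-\partial_x^2$ on $(x_-,x_+)$ with Dirichlet boundary conditions. By the reproducing-kernel characterization, $\mathcal{H}_\mu$ is the completion of $\mathrm{span}\{\,x\mapsto \eps\,G(x,y)\colon y\in(x_-,x_+)\,\}$ under the inner product determined by $\langle \eps G(\cdot,y),\eps G(\cdot,z)\rangle_{\mathcal{H}_\mu}=\eps G(y,z)$; using $-\partial_x^2 G(\cdot,y)=\delta_y$ and integrating by parts, this is precisely $H^1_0(x_-,x_+)$ equipped with $\langle u,v\rangle_{\mathcal{H}_\mu}=\frac1\eps\int_{x_-}^{x_+}\partial_x u\,\partial_x v\,dx$, as already recorded after~\eqref{e:Cov}. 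This yields the stated dichotomy in terms of $f\in H^1_0(x_-,x_+)$, and also $\tfrac12\|f\|^2_{\mathcal{H}_\mu}=\frac1\eps I_{x_-,x_+}(f)$ by the definition~\eqref{e:I} of $I_{x_-,x_+}$.

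\emph{The linear term.} It remains to check that the Paley--Wiener functional of $f\in H^1_0$ is $u\mapsto\frac1\eps\int_{x_-}^{x_+}\partial_x f\,du$, understood as the $L^2(\mathcal{W}^{0,0}_{\eps,(x_-,x_+)})$-limit of the corresponding expressions for smooth approximants. On the dense set $f=\eps G(\cdot,y)$ the functional $\langle f,\cdot\rangle^{\sim}$ is simply $u\mapsto u(y)$, while $\frac1\eps\int\partial_x f\,du=\int\partial_x G(\cdot,y)\,du$, which---integrating by parts against the bounded-variation function $\partial_x G(\cdot,y)$, using $u(x_\pm)=0$ and the jump relation $d(\partial_x G(\cdot,y))=-\delta_y$ on the interior---also equals $u(y)$. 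Both sides are linear and $L^2$-isometric (by the inner-product formula above), hence they agree for all $f\in H^1_0$ after passing to the $H^1_0$-limit; this is in fact the definition of $\int_{x_-}^{x_+}\partial_x f\,du$ for general $f$, and it produces formula~\eqref{radnyk}. The main obstacle is precisely this point: $\int\partial_x f\,du$ is \emph{not} a pathwise integral when $f$ is merely in $H^1_0$ (the path $u$ is a.s.\ nowhere differentiable), so one must construct it as a Wiener-type integral via the $L^2$-isometry and verify its coincidence with the reproducing-kernel pairing; for $C^2$ boundary-vanishing $f$ the identity $\int\partial_x f\,du=-\int\partial_x^2 f\cdot u\,dx$ makes everything elementary, and the only genuine work is the density extension.
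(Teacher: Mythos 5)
Your derivation is correct, but note that the paper does not prove this statement at all: it is quoted as a classical result with a reference (Hairer, Thm.~3.41), and the paper only records afterwards that the stochastic integral is defined as an $L^2$-limit. Your argument is precisely the standard proof behind that citation --- identify the Cameron--Martin space of $\mathcal{W}^{0,0}_{\eps,(x_-,x_+)}$ from the covariance~\eqref{e:Cov} as $H^1_0(x_-,x_+)$ with the $\frac1\eps$-Dirichlet inner product (which the paper itself already states), apply the abstract Cameron--Martin/Feldman--H\'ajek theorem, and match the Paley--Wiener functional with $\frac1\eps\int\partial_x f\,du$ on the reproducing kernels before extending by the $L^2$-isometry --- so there is nothing to add.
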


Here, as in the case of Brownian motion, the stochastic integral term \newline $  \frac{1}{\eps}\int_{x_-}^{x_+} \partial_x f(x) \, d u(x)$ can be defined as the limit of Riemann sums in $L^2\big(   \mathcal{W}^{0,0}_{\eps, (x_{-},x_+)} \big)$. In particular, it is a linear mapping in $u$ defined for all $u$ in a measurable subspace of $C([x_-,x_+])$ of full measure (See e.g. \cite[Sec. 3]{Ha09}).

Note that  ~\eqref{radnyk} can formally be derived by expanding the square in the non-rigorous expression~\eqref{e:Fey}.

\begin{proof}[Proof of Lemma \ref{le:LDLoBou} ]
We can assume that $u_{\ast} \in H^1$, because otherwise the bound is trivial. As in the proof of the upper bound, ~\eqref{e:LDLB} only gets stronger when we take a smaller $\delta$. Therefore, it is sufficient to show~\eqref{e:LDLB} with $\delta$ replaced by
\begin{equation}\label{e:td2}
\tilde{\delta}:=  \gamma\bigg( \sup_{|v| \leq M + 1} |V'(v) |\ell_+  \bigg)^{-1} \wedge \delta \wedge 1 .
\end{equation}

We begin by stating the simplistic bound
\begin{align}
& \mathbb{E}_{(x_{-},x_{+})}^{\mathcal{W}_\eps,u_{-},u_{+}}  \Big(\mathbf{1}_{B(u_{\ast},\tilde{\delta})}(u) \,  \exp \Big(- \frac{1}{\eps} \int_{x_-}^{x_+} V( u ) \, dx \Big)\Big) \notag \\
   & \qquad \geq \,    \exp  \Big(-\frac{1}{\eps} \sup_{u \in B(u_{\ast},\tilde{\delta}) } \int_{x_-}^{x_+} V( u ) \, dx \Big) \,   \mathcal{W}^{u_{-},u_+}_{\eps, (x_{-},x_+)} \big( B(u_{\ast},\tilde{\delta}) \big)\label{e:LoBo1}.
\end{align}
Due to the assumption~\eqref{e:condu}  on $u_{\ast}$ and the definition~\eqref{e:td2} of $\tilde{\delta}$, we get that
\begin{align*}
\sup_{u \in B(u_{\ast},\tilde \delta) } \int_{x_-}^{x_+} V(u) \, dx& \leq  \int_{x_-}^{x_+} V( u_{\ast} ) \, dx +\sup_{|v| \leq  M+1 } |V'(v)| \,  \tilde{\delta} \,  \ell_+\\
&\leq \, \int_{x_-}^{x_+} V( u_{\ast} ) \, dx+ \gamma.
\end{align*}
It only remains to derive a lower bound on $  \mathcal{W}^{u_{-},u_+}_{\eps, (x_{-},x_+)}\big(B(u_{\ast},\tilde{\delta}) \big)$ in terms of the Gaussian energy. To this end, we again transform $u_{\ast}$ to an interval of length one and shift it in a way that it satisfies homogenous boundary conditions, as in the proof of Lemma~\ref{le:LDUpBou}. To be more precise, we assume that $u$ is distributed according to $  \mathcal{W}^{u_{-},u_+}_{\eps, (x_{-},x_+)}$ and apply the affine transformation $T$ defined in~\eqref{e:T}. Then $Tu= \hat{u}$ is distributed according to $ \mathcal{W}^{0,0}_{ \ell \eps, (0,1)}$. Therefore, we have to bound the probability $  \mathcal{W}^{0,0}_{\ell \eps, (0,1)}\big(B(\hat{ u}_{\ast}, \tilde{\delta})   \big)$, where $\hat{u}_{\ast}:= Tu_\ast$. This can be obtained using the Cameron-Martin Theorem \ref{thm:CamMart} with $f:=\hat{u}_{\ast}$. According to~\eqref{radnyk}, we have
\begin{align*}
 \mathcal{W}^{0,0}_{\ell \eps, (0,1)}\big( B( \hat{u}_{\ast}, \tilde{\delta})   \big) =  &\exp \Big(  -\frac{1}{\ell \eps}  I_{0,1}\big(   \hat{u}_{\ast} \big)   \Big) \,  \\
 &\mathbb{E}_{(0,1)}^{\mathcal{W}_{\ell\eps},0,0} \Big( \mathbf{1}_{B(0,\tilde{\delta})} (\hat{u}) \exp \Big( \frac{1}{\ell \eps}   \int_{0}^{1} \, \partial_x \hat{u}_{\ast}(x) \, d \hat{u}(x) \Big)\Big).
\end{align*}
Now we will use the trick of sneaking in a cosh function. To this end, we remark that the map $\hat{u} \mapsto  \int_{0}^{1} \, \partial_x \hat{u}_{\ast}(x) \, d \hat{u}(x) $ is linear in $\hat{u}$. Also, the measure $\mathcal{W}^{0,0}_{\ell\eps,(0,1)}$ is invariant under
the mapping $\hat{u} \mapsto -\hat{u}$ and this mapping leaves the ball $B(0,\tilde{\delta})$ invariant. Hence, the last expectation is equal to
\begin{equation*}
\mathbb{E}_{(0,1)}^{\mathcal{W}_{\ell\eps},0,0} \Big( \mathbf{1}_{B(0,\tilde{\delta})}(\hat{u})  \exp \Big(-\frac{1}{\ell \eps}    \int_{0}^{1} \, \partial_x \hat{u}_{\ast}(x) \, d \hat{u}(x) \Big)\Big) .
\end{equation*}
Therefore, we can write
\begin{align*}
\mathbb{E}_{(0,1)}^{\mathcal{W}_{\ell\eps},0,0} & \Big( \mathbf{1}_{B(0,\tilde{\delta})}(\hat{u})  \exp \Big(   \frac{1}{\ell \eps}  \int_{0}^{1} \, \partial_x \hat{u}_{\ast}(x) \, d \hat{u}(x) \Big)\Big) \\
& = \,\frac{1}{2} \,   \mathbb{E}_{(0,1)}^{\mathcal{W}_{\ell\eps},0,0} \Big( \mathbf{1}_{B(0,\tilde{\delta})} (\hat{u}) \, \Big[ \exp \Big(  \frac{1}{\ell \eps}  \int_{0}^{1} \, \partial_x \hat{u}_{\ast}(x) \, d \hat{u}(x) \Big)\\
& \qquad \qquad \qquad  \qquad + \exp \Big(-\frac{1}{\ell \eps}    \int_{0}^{1} \, \partial_x \hat{u}_{\ast}(x) \, d \hat{u}(x) \Big) \Big]\Big) \\
&= \,\mathbb{E}_{(0,1)}^{\mathcal{W}_{\ell\eps},0,0}  \Big( \mathbf{1}_{B(0,\tilde{\delta})} (\hat{u}) \cosh \Big(  \frac{1}{\ell \eps}    \int_{0}^{1} \, \partial_x \hat{u}_{\ast}(x) \, d \hat{u}(x) \Big)\Big)\\
& \geq \,  \mathcal{W}^{0,0}_{\ell \eps, (0,1)}\big(B(0,\tilde{\delta}) \big).
\end{align*}

We claim that there exists an $\eps_0>0$ such that for all $\ell \leq \ell_+$ and all $\eps \leq \eps_0$ this probability is larger than $\exp\big(-\eps^{-1} \gamma\big)$. Actually, ~\eqref{e:GLD} even implies that for any $\tilde{\gamma}>0$ there exists $\tilde{\eps}_0>0$ such that, for $\ell \eps \leq \tilde{\eps}_0$, we have the stronger bound
\begin{equation*}
 \mathcal{W}^{0,0}_{\ell \eps, (0,1)}\big(\complement B(0,\tilde{\delta}) \big) \leq \exp \Big(- \frac{1}{\ell \eps}  \Big( \inf_{\hat{u} \in \complement B(0,\tilde{\delta}) }  I_{0,1}(\hat{u}) - \tilde{\gamma} \Big) \Big).
\end{equation*}

Note that this $\eps_0$ also depends on $  \sup_{|v| \leq  M+1 } |V'(v)| $ as we have potentially decreased $\delta$ in the first step. Then in order to conclude, it is sufficient to observe that
\begin{align*}
\frac{1}{\ell \eps}  I_{0,1}\big(   \hat{u}_{\ast} \big)  \, = \, \frac{1}{\eps} \big(  I_{x_-,x_+}\big(   u_{\ast} \big) - I_{x_{\pm}}^{u_{\pm}} \big).
\end{align*}

\end{proof}

Now the proofs of Propositions \ref{pr:LD1} and \ref{pr:LD2} are straightforward. We begin with the upper bound, Proposition  \ref{pr:LD1}.

\begin{proof}[Proof of Proposition \ref{pr:LD1}]
We want to derive a bound on
\begin{equation} \label{e:fraction}
 \mu^{u_-,u_+}_{\eps,(x_-,x_+)}\big( \Aa \big) \, = \,  \frac{  \mathbb{E}_{(x_{-},x_{+})}^{\mathcal{W}_\eps,u_{-},u_{+}} \Big( \mathbf{1}_{\Aa} (u)\,   \exp \Big(- \frac{1}{\eps} \int_{x_-}^{x_+} V(u) \, dx \Big)    \Big)}{\mathbb{E}_{(x_{-},x_{+})}^{\mathcal{W}_\eps,u_{-},u_{+}} \Big(   \exp \Big(- \frac{1}{\eps} \int_{x_-}^{x_+} V(u) \, dx \Big)    \Big)}.
\end{equation}
The assumptions on $\Aa$ in Proposition \ref{pr:LD1} are identical to those in Lemma \ref{le:LDUpBou}, so we can conclude from~\eqref{e:LDUB} that
\begin{align*}
  \mathbb{E}_{(x_{-},x_{+})}^{\mathcal{W}_\eps,u_{-},u_{+}}& \Big( \mathbf{1}_{\Aa} (u)\,   \exp \Big(- \frac{1}{\eps} \int_{x_-}^{x_+} V( u ) \, dx \Big)    \Big)\\
  & \leq \,    \exp  \Big(-\frac{1}{\eps} \big(  \inf_{u \in B(\Aa,\delta)} E(u) -\Ixu - \gamma   \big)  \Big)
\end{align*}
for $\eps \leq \eps_0$. Also this $\eps_0$ depends on $M,R,, \ell_+,\delta,$ and $\gamma$ but not on the particular choice of $x_{\pm}, u_{\pm}$. It only depends on $\Aa$ through the condition~\eqref{e:condA} and on $V$ through the local Lipschitz constant.

To get a lower bound on the denominator in~\eqref{e:fraction}, we observe that for every set of boundary conditions $u_{\pm}$, there exists at least one minimizer $u_\ast$ of $E$ given these boundary conditions. Furthermore,  this minimizer attains only values in $[-M,M]$. This is clear because replacing $u_\ast$ by $u_\ast\wedge M \vee (-M)$ only decreases the energy. Therefore, for any $\delta >0$, we get from~\eqref{e:LDLB}  that
\begin{align*}
\mathbb{E}_{(x_{-},x_{+})}^{\mathcal{W}_\eps,u_{-},u_{+}}&  \Big(   \exp \Big(- \frac{1}{\eps} \int_{x_-}^{x_+} V( u ) \, dx \Big)    \Big) \\
& \geq \, \mathbb{E}_{(x_{-},x_{+})}^{\mathcal{W}_\eps,u_{-},u_{+}}  \Big( \mathbf{1}_{B(u_\ast, \delta)}(u) \,   \exp \Big(- \frac{1}{\eps} \int_{x_-}^{x_+} V( u ) \, dx \Big)    \Big) \\
& \geq \,     \exp  \Big(-\frac{1}{\eps} \big(  E(u_\ast) -I_{x_{\pm}}^{u_{\pm}} + \gamma   \big)  \Big)
\end{align*}
for $\eps \leq \eps_0$, where $\eps_0$ satisfies the same uniformity assumptions as above. This finishes the argument.
\end{proof}

The proof of the lower bound is similar.

\begin{proof}[Proof of Proposition \ref{pr:LD2}]
To derive a lower bound on $  \mu^{u_-,u_+}_{\eps,(x_-,x_+)}\big(\Aa\big)$ for a given $\gamma$ we choose $u_\gamma$ as in~\eqref{e:coneta}. Then we can write using~\eqref{e:LDLB}
\begin{align*}
\mathbb{E}_{(x_{-},x_{+})}^{\mathcal{W}_\eps,u_{-},u_{+}}& \Big( \mathbf{1}_{B(\Aa,\delta)} (u)\,   \exp \Big(- \frac{1}{\eps} \int_{x_-}^{x_+} V(u) \, dx \Big)    \Big)\\
&\geq \mathbb{E}_{(x_{-},x_{+})}^{\mathcal{W}_\eps,u_{-},u_{+}} \Big( \mathbf{1}_{B(u_\gamma,\delta)} (u)\,   \exp \Big(- \frac{1}{\eps} \int_{x_-}^{x_+} V(u) \, dx \Big)    \Big)\\
& \geq \exp \Big( - \frac{1}{\eps} \big(  \inf_{u \in \Aa} E(u) - I_{x_{\pm}}^{u_{\pm}}+2 \gamma \big) \Big),
\end{align*}
for $\eps \leq \eps_0$ where $\eps_0$ can again be chosen uniformly.

To derive a uniform upper bound on the normalization constant we only need to observe that for any $M<\infty$ there exists an $R < \infty$ such that for all $u_\pm\in[-M,M]$, we have
\begin{equation*}
\inf_{u \in\mathcal{A}^{\rm bc}} E(u) \leq R.
\end{equation*}
Then \eqref{e:LDUB} implies that there exists $\eps_0>0$ such that uniformly for $\eps \leq \eps_0$
\begin{align*}
\mathbb{E}_{(x_{-},x_{+})}^{\mathcal{W}_\eps,u_{-},u_{+}}&  \Big(   \exp \Big(- \frac{1}{\eps} \int_{x_-}^{x_+} V(u) \, dx \Big)    \Big) \\
& \leq \,     \exp  \Big(-\frac{1}{\eps} \big(  \inf_{u\in\mathcal{A}^{\rm bc}} E(u) -I_{x_{\pm}}^{u_{\pm}} + \gamma   \big)  \Big).
\end{align*}
This establishes \eqref{e:LDLB1}.

\end{proof}

\subsection{Proof of the one-point distribution lemma}\label{ss:onept}
\begin{proof}[Proof of Lemma~\ref{le:onept}]

First we remark that, heuristically, the ``most difficult'' point to consider is $x_0=0$.  We present the following proof for precisely this case.  The same proof carries over for any point $x_0$ (with only trivial modifications), but we present it for $x_0=0$ since it simplifies the notation slightly and makes the main ideas stand out.

Also notice that by the symmetry of the potential (cf. Assumption~\ref{ass:V}) and the representation~\eqref{m30.1}, it suffices to prove
\begin{align*}
\mu^{-1,1}_{\eps,(-\Le,\Le)}\Big(u(x_0)\geq M\Big)\,\leq \,\exp\left(-\frac{M}{\eps\,C_2}\right).
\end{align*}
In fact, it will be convenient to establish the estimate in the form
\begin{align}
\mu^{-1,1}_{\eps,(-\Le,\Le)}\Big(u(0)\geq 4M\Big)\,\leq \,\exp\left(-\frac{M}{\eps\,\tilde{C}_2}\right),\label{onept2}
\end{align}
which is of course equivalent for $C_2:=4\tilde{C}_2$.
Thus consider the set of functions
\begin{align}
\mathcal{A}:=\{u\in C([-\Le,\Le])\colon u(-\Le)=-1, u(\Le)=1, \,\text{and}\, u(0)\geq 4M\}.
\end{align}
Define $x^{3M}_\pm$ as follows:
\begin{align*}
x^{3M}_-:=\sup\{x\leq 0 \colon u(x)\leq 3M\}\quad \text{and}\quad x^{3M}_+:=\inf\{x\geq 0 \colon u(x)\leq 3M\}.
\end{align*}
Notice that we may assume without loss of generality that $M\geq 1$, and hence, because of the boundary conditions $u(-\Le)=-1$ and $u(\Le)=1$,  the points $x^{3M}_- < 0 < x^{3M}_+ $ are well-defined for every $u \in \Aa$.  The set $\Aa$ can then be divided into the following two sets:
\begin{align*}
&\mathcal{A}_1:=\{u\in\mathcal{A}\colon\max\{|x^{3M}_-|,x^{3M}_+\}>1\},\\
&\mathcal{A}_2:=\{u\in\mathcal{A}\colon\max\{|x^{3M}_-|,x^{3M}_+\}\leq 1\}.
\end{align*}
To bound the probability of $\Aa_1$, we will use bounds on the potential and a reflection argument.  For $\Aa_2$ we will use a rescaling argument and the large deviation bound \eqref{e:LDUB1}. The two cases are illustrated in Figure~\ref{fig:6}.
 \begin{figure}
\begin{subfigure}[b]{0.46\textwidth}
                \centering
\begin{tikzpicture}[xscale=0.011, yscale=1.38, >=stealth]
%
%
\draw[black] plot file{./Graphs/P6G1.txt};
\draw[black,style=dashed] plot file{./Graphs/P6G2.txt};
%
%
%
\draw[black,  style=dotted] (0,0) -- (500,0) node[anchor=north east]{\small $x$};
\draw[black] (200,0.1) -- (200,-0.1) node[fill=white,anchor= north]{$-1$} ;
\draw[black] (300,0.1) -- (300,-0.1) node[fill=white,anchor= north]{$1$} ;%
%
%
%
%
%
\draw[black,  style=dotted]  (1,4)  -- (500,4)  node[anchor=east,fill=white]{$4M$} ;
\draw[black,  style=dotted] (1,3)   -- (500,3) node[anchor = east,fill=white] {$3M$}  ;
\draw[black,  style=dotted] (127,0)  -- (127,3)  node[fill=white,anchor= south east]{$x^{3M}_-$}   ;
\draw[black,  style=dotted] (333,0) -- (333,3) node[fill=white,anchor= south west]{$x^{3M}_+$}   ;
\draw[black,  style=dotted]  (1,2)  -- (500,2)  node[anchor=east,fill=white]{$2M$} ;
\draw[black,  style=dotted] (113,0)  -- (113,2)  node[fill=white,anchor= south east]{$x^{2M}_-$}   ;
\draw[black,  style=dotted] (355,0) -- (355,2) node[anchor= south west]{$x^{2M}_+$}   ;
\end{tikzpicture}
\caption{A path in $\Aa_1$.}
\end{subfigure}
\begin{subfigure}[b]{0.46 \textwidth}
\centering
\begin{tikzpicture}[xscale=0.011, yscale=1.38, >=stealth]
%
%
%
%
%
\draw[black,  style=dotted] (0,0) -- (500,0) node[anchor=north east]{\small $x$};
\draw[black] (200,0.1) -- (200,-0.1) node[fill=white,anchor= north]{$-1$} ;
\draw[black] (300,0.1) -- (300,-0.1) node[fill=white,anchor= north]{$1$} ;
%
%
%
%
%
\draw[black] plot file{./Graphs/P6G3.txt};
%
%
%
\draw[black,  style=dotted] (1,4) -- (500,4) node[anchor=east,fill=white]{$4M$}  ;
\draw[black,  style=dotted] (1,3) -- (500,3) node[anchor=east,fill=white] {$3M$} ;
\draw[black,  style=dotted] (228,0)  -- (228,3)  node[fill=white,anchor= south east]{$x^{3M}_-$}   ;
\draw[black,  style=dotted] (265,0) -- (265,3) node[fill=white,anchor= south west]{$x^{3M}_+$}   ;
\end{tikzpicture}
\caption{A path in $\Aa_2$.}
\end{subfigure}
\caption{The two different cases. To show that $\Aa_1$ has small probability, we reflect between the $x_{\pm}^{2M}$. This decreases the \emph{potential energy}. The probability of $\Aa_2$ can be bounded using a large deviation argument.  }
\label{fig:6}
\end{figure}
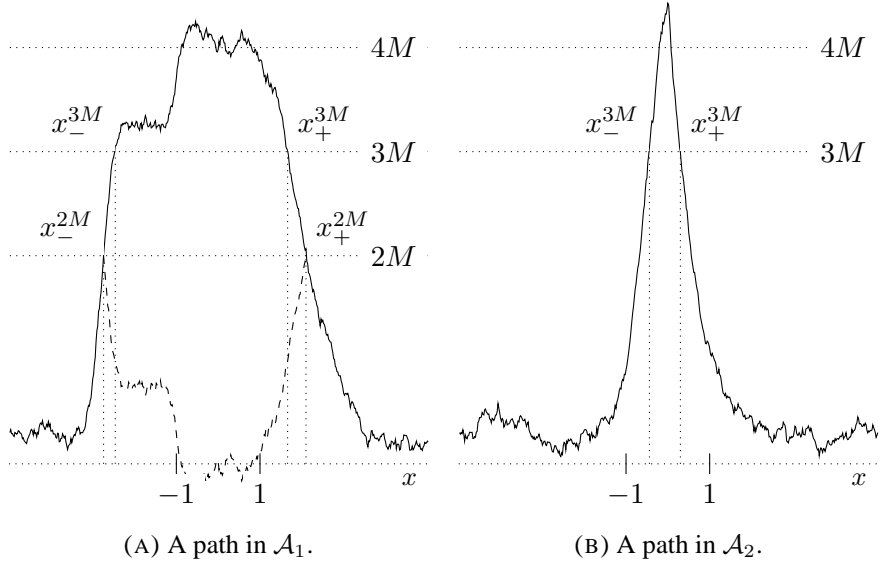

\underline{Step 1}.  We treat $\Aa_1$ first. For $u \in \Aa_1$ we have
\begin{align}
x^{3M}_+-x^{3M}_-\geq 1.\label{m.set}
\end{align}
The idea is to introduce a reflection over the line $u=2M$ that preserves the Gaussian measure, and use the decrease of the energy~\eqref{energy} under this reflection.

We begin by collecting some facts about the potential $V$.  To begin with, according to the growth estimate in~\eqref{e:assumptions}, $V$ grows superlinearly at infinity.  Hence, we may choose $C_3$ sufficiently large so that the following two properties are satisfied.  On the one hand, $V$ grows at least linearly on $[C_3,\infty)$, i.e., there exists $C_4<\infty$ such that for $u_1\geq u_2\geq C_3$, there holds
\begin{align}
V(u_1)-V(u_2)\,\geq\, 1/C_4\,(u_1-u_2).\label{m.1}
\end{align}
On the other hand, $V(C_3)\geq V(0)$, so that in particular
\begin{align}
V(C_3)=\sup_{u\in[0,C_3]}V(u).\label{m.2}
\end{align}
We will use the fact that~\eqref{m.1} and~\eqref{m.2} together imply that as long as $u_1\geq C_3$, then
\begin{align}
u_1\geq |u_2|\Rightarrow V(u_1)\geq V(|u_2|).\label{m.2p}
\end{align}

Now we are ready to reflect.  Define $x^{2M}_\pm$ analogously to $x^{3M}_\pm$ (noting as above that they are well-defined for paths in the set of interest). Consider the reflection operator $R^{x^{2M}_+}_{x^{2M}_-}$ defined as
\begin{equation}\label{e:Refl1}
R^{x^{2M}_+}_{x^{2M}_-} u (x) :=
\begin{cases}
u(x) \qquad & \text{if $ x \notin (x^{2M}_-,x^{2M}_+ )  $}\\
4M - u(x) \qquad & \text{if $ x \in (x^{2M}_-,x^{2M}_+ )  $}
\end{cases},
\end{equation}
which for the purposes of this lemma we will abbreviate with $\mathsf{R}$. In order to have $\mathsf{R}$ well-defined for all continuous paths $u$, we define it to be the identity for those paths $u$  that never exceed the level $2M$.

Notice that $x_{-}^{2M}$ is a \emph{right} but not a \emph{left}  stopping point, and similarly $x_{+}^{2M}$ is a \emph{left} but not a \emph{right}  stopping point. In particular, the strong Markov property \eqref{e:strMarkovW} does not directly imply that $\mathsf{R}$ leaves $\mathcal{W}^{-1,1}_{\eps, (-\Le, \Le)}$ invariant. Indeed, it is not true that under $  \mathcal{W}^{-1,1}_{\eps, (-\Le,\Le)}$ the conditional distribution of $u(x)$ for $x \in [x^{2M}_-,x^{2M}_+]$, given the path outside of this interval, is a Brownian bridge.

Still, it is true that the reflection operator $\mathsf{R}$ preserves $  \mathcal{W}^{-1,1}_{\eps, (-\Le, \Le)}$. To see this, introduce auxiliary stopping points
\begin{align*}
\chi^{2M}_-:=\inf\{x\geq -\Le \colon u(x) = 2M\}\quad \text{and}\quad \chi^{2M}_+:=\sup\{x \leq \Le  \colon u(x)= 2M\}.
\end{align*}
As above in \eqref{e:defstp}, we use the convention that $\chi_{\pm}^{2M} = \mp \Le$ if these sets are empty.

On $\Aa$, these points are well-defined and we automatically have $[x^{2M}_-,x^{2M}_+ ]  \subseteq [\chi^{2M}_-,\chi^{2M}_+ ] $. The points $\chi^{2M}_{\pm}$ are left and right stopping points. Therefore, \eqref{e:strMarkovW} implies that the reflection operators $ R^{\chi^{2M}_+}_{\chi^{2M}_-} $,  $ R^{\chi^{2M}_+}_{x^{2M}_+} $, and  $ R^{x^{2M}_-}_{\chi^{2M}_-} $ (defined in the same  way as $\mathsf{R}$ ) preserve $  \mathcal{W}^{-1,1}_{\eps, (-\Le, \Le)}$. Observing that
\begin{equation*}
\mathsf{R} =  R^{\chi^{2M}_+}_{\chi^{2M}_-} \circ R^{\chi^{2M}_+}_{x^{2M}_+} \circ  R^{x^{2M}_-}_{\chi^{2M}_-} ,
\end{equation*}
we conclude that $ \mathsf{R}$ also preserves  $  \mathcal{W}^{-1,1}_{\eps, (-\Le, \Le)}$.

We now develop a quantitative, pointwise estimate of the effect of $\mathsf{R}$ on the ``bulk energy'' $V(u)$.  By the definition of $x^{2M}_\pm$, we have that $u(x)\geq 2M$ for all $x\in[x^{2M}_-,x^{2M}_+]$, the set where $\mathsf{R}$ acts.  Hence, it suffices to consider the effect of $\mathsf{R}$ when $u(x)\geq 3M$ and when $u(x)\in[2M,3M)$.  We will first establish that on the set
$$\{x\in[x^{2M}_-,x^{2M}_+]\colon u(x)\geq 3M\},$$
$\mathsf{R}$ decreases the bulk energy significantly.  Indeed, on this set, $|\mathsf{R}u|\leq u-2M$ and $u-2M\geq M\geq C_3$, so that
\begin{equation}
V(|\mathsf{R}u|)\overset{\eqref{m.2p}}\leq V(u-2M),\label{m.3}
\end{equation}
which together with~\eqref{m.1} implies that for $u(x)\geq 3M$,
\begin{eqnarray}
V(u(x))-V(\mathsf{R}u(x))&\overset{\eqref{e:assumptions}}=&V(u(x))-V(|\mathsf{R}u(x)|)\nonumber\\
&\overset{\eqref{m.3}}\geq&V(u(x))-V(u(x)-2M)\nonumber\\
&\overset{\eqref{m.1}}{\geq}&2\,M/C_4,\label{m.4}
\end{eqnarray}
which holds in particular on all of $[x^{3M}_-,x^{3M}_+]$. On the other hand, if instead $u(x)\in[2M,3M)$, then the bulk energy still decreases under $\mathsf{R}$.  Indeed, we have for $u(x)\in[2M,3M)$ that $u(x)\geq \mathsf{R}u(x)\geq M$, so that by~\eqref{m.2p} we know
\begin{align}
V(u(x))-V(\mathsf{R}u(x))\,\geq\, 0.\label{m.4p}
\end{align}
Combining~\eqref{m.4} and~\eqref{m.4p} implies that for all $u\in\mathcal{A}_1$, we have
\begin{eqnarray}
\lefteqn{\int_{(-\Le,\Le)}\big(V(u)-V(\mathsf{R}u)\big)\,dx}\nonumber\\
&=&\int_{(x^{2M}_-,x^{2M}_+)}\Big( V(u)-V(\mathsf{R}u)\Big)\,dx\nonumber\\
&\geq& 2M(x^{3M}_+-x^{3M}_-)/C_4\overset{\eqref{m.set}}\geq 2M/C_4.\label{m.5}
\end{eqnarray}
We are now ready to estimate the probability of $\mathcal{A}_1$.
Indeed, we have that

\begin{eqnarray*}
1&\geq& \mu^{-1,1}_{\eps,(-\Le,\Le)}\big(\mathsf{R}\mathcal{A}_1\big)\\
&\overset{\eqref{m30.1}}=&  \frac{1}{   \mathcal{Z}}  \, \mathbb{E}^{\mathcal{W}_\eps,-1,1}_{( -\Le,\Le)}\left[\mathbf{1}_{ \mathsf{R}\mathcal{A}_1}(u) \,  \exp\Big(-\frac{1}{\eps}\int V(u)\,dx\Big)\right] \\
&\overset{\text{inv. of $\mathsf{R}$} }=&\frac{1}{  \mathcal{Z}} \,  \mathbb{E}^{\mathcal{W}_\eps,-1,1}_{( -\Le,\Le)}\left[\mathbf{1}_{ \mathcal{A}_1}(u)\,   \exp\Big(-\frac{1}{\eps}\int V(\mathsf{R}u)\,dx\Big)\right]\\
&=& \frac{1}{  \mathcal{Z}} \,  \mathbb{E}^{\mathcal{W}_\eps,-1,1}_{( -\Le,\Le)}\bigg[\mathbf{1}_{\mathcal{A}_1}(u) \exp\Big(-\frac{1}{\eps}\int \big(V(\mathsf{R}u)-V(u)\big)\,dx\\
&& \qquad \qquad \qquad \qquad  \qquad -\frac{1}{\eps}\int V(u)\,dx\Big)\bigg] \\
&\overset{\eqref{m.5}}\geq&\exp\left(\frac{2M}{ C_4 \, \eps }\right)   \frac{1}{  \mathcal{Z}} \,  \mathbb{E}^{\mathcal{W}_\eps,-1,1}_{( -\Le,\Le)}\left[\mathbf{1}_{ \mathcal{A}_1}(u) \, \exp\Big(-\frac{1}{\eps}\int V(u)\,dx\Big)\right]\\
&=&\exp\left(\frac{2M}{  C_4 \, \eps}\right)\mu^{-1,1}_{\eps,(-\Le,\Le)}\big(\mathcal{A}_1\big),
\end{eqnarray*}
where $\mathcal{Z}= \mathcal{Z}^{-1,1}_{\eps, (-\Le,\Le)}$ is the normalization constant for $\mu^{-1,1}_{\eps,(-\Le,\Le)}$ and all of the integrals are over $[-\Le,\Le]$.  Moving the exponential to the other side of the inequality, we get that
\begin{align}
\mu^{-1,1}_{\eps,(-\Le,\Le)}\big(\mathcal{A}_1\big)\leq\exp\left(-\frac{2M}{C_4 \, \eps}\right), \label{case1}
\end{align}
which gives~\eqref{onept2} for $\Aa_1$ with $\tilde{C}_2=C_4/2$ .

\medskip

\underline{Step 2}. Now consider the set $\Aa_2$.  Here we will use a rescaling argument and the large deviation bound \eqref{e:LDUB1}. For $u \in \Aa_2$,  we can define
\begin{align*}
\chi_-:=\inf\{x\in[-1,0]\colon u(x)=3M\},\quad \chi_+:=\sup\{x\in[0,1]\colon u(x)=3M\},
\end{align*}
with the understanding that $\chi_{\pm} = 0$ if these sets are empty.  These random variables are left and right stopping points. Hence, the strong Markov property \eqref{e:strMarkovmu} implies that
\begin{align}
\lefteqn{\mu^{-1,1}_{\eps,(-\Le,\Le)}\big(\Aa_2\big)}\nonumber\\
&=\mu^{-1,1}_{\eps,(-\Le,\Le)}\Big(u(0)\geq 4M \;\text{and}\; \chi_\pm \neq 0\Big)\nonumber\\
&=  \mathbb{E}_{(-\Le,\Le)}^{\mu_\eps,-1,1} \Big( \mu_{\eps,(\chi_{-},\chi_{+})}^{3M,3M} \big( u(0) \geq 4M \big)  \,\mathbf{1}_{ \{ \chi_{\pm} \neq 0 \}}  \Big) .      \label{mm.sq}
\end{align}
Therefore, if we can show that
\begin{align}
\mu^{3M,3M}_{\eps,(x_-,x_+)}\Big(u(0)\geq 4M\Big)\leq\exp\left(-\frac{M}{\tilde{C}_2\eps}\right)\label{mm.tr}
\end{align}
for all $\eps$ sufficiently small (uniformly for $-x_-,x_+\in (0,1] $), then the combination of~\eqref{mm.sq} and~\eqref{mm.tr} concludes the proof of~\eqref{onept2}.  We can see~\eqref{mm.tr} by rescaling.  Indeed, if we transform $(x_-,x_+)$ into $[-1,1]$ by applying the affine change of variables $x\rightarrow  \frac{\Delta x}{2}  \,x+\frac{x_-+x_+}{2}$ where $\Delta x:=  x_+-x_-$,  we see that
\begin{align}
&\mu^{3M,3M}_{\eps,(x_-,x_+)}  \Big(u(0)\geq 4M\Big)\notag\\
  =\, &  \frac{1}{ \mathcal{Z} }  \mathbb{E}_{(x_{-},x_{+})}^{\mathcal{W}_\eps,3M,3M} \Big( \mathbf{1}_{\{u(0)>4M \}} \exp\Big(-\frac{1}{\eps}\int_{x_-}^{x_+} V\big( u(x) \big) \, dx \Big)  \Big)\notag\\
 =\, &  \frac{1}{  \mathcal{Z}} \,   \mathbb{E}_{(-1,1)}^{\mathcal{W}_{\tilde{\eps}},3M,3M} \Big( \mathbf{1}_{\{\hat{u}(\frac{x_-+x_+}{2})>4M \}} \exp\Big(- \frac{(\Delta x)^2}{ 4\tilde{\eps} }   \int_{-1}^1 V\big( \hat{u}(x) \big) \, dx \Big)  \Big) ,\label{vor}
\end{align}
where $\mathcal{Z}=\mathcal{Z}^{3M,3M}_{\eps, (x_-,x_+)}$ is the normalization constant for $\mu_{\eps,(x_-,x_+)}^{3M,3M}$ and  $\tilde{\eps} :=   \frac{1}{2} \eps \Delta x$. Now, we observe that the  family of potentials
\begin{equation*}
  \Big\{ \frac{(\Delta x)^2}{4} V \colon  0 < \Delta x \leq 2 \Big\}
\end{equation*}
is locally uniformly Lipschitz. In particular, applying Proposition \ref{pr:LD1} for $\gamma$ and $\delta$ fixed to say $\gamma=\delta=1$, there exists $\eps_0>0$ such  that, for $\tilde{\eps} \leq \eps_0$ and uniformly in $x_{\pm}$, we have
\begin{equation*}
\mu^{3M,3M}_{\tilde \eps,(-1,1)}(\mathcal{B}_+^{\rm bc})  \,  \leq \,  \exp\Big( -\frac{1}{\tilde{\eps}} \big( \inf_{\hat{u}\in B(\mathcal{B}_+^{\rm bc},1)}E_{\Delta x}(\hat{u})-\inf_{\hat{u}\in \mathcal{B}^{\rm bc}}E_{\Delta x}(\hat{u}) - 1 \big) \Big).
\end{equation*}
Note that the choice of $\eps_0$ depends on $M$.

Here we use the notation
\begin{align*}
E_{\Delta x }(\hat{u}) \, := \int_{-1}^{1} \left(\frac{1}{2}(\partial_x \hat{u})^2+\frac{(\Delta x)^2}{4}V(\hat{u})\right)\,dx
\end{align*}
and
\begin{align*}
\mathcal{B}^{\rm bc}&:=\{\hat{u}\in C([-1,1])\colon \hat{u}(\pm 1)=3M\},\\
\mathcal{B}_+^{\rm bc}&:=\big\{\hat{u}\in C([-1,1])\colon \hat{u}(\pm 1)=3M,\;\hat{u}\big((x_- + x_+)/2 \big)\geq 4M\big\}.
\end{align*}

Hence, as $\tilde{\eps} \leq  \eps $, to establish~\eqref{mm.tr} it will be sufficient for us to show
\begin{align*}
\inf_{B(\mathcal{B}_+^{\rm bc},1)}E_{\Delta x}-\inf_{\mathcal{B}^{\rm bc}}E_{\Delta x}\,\geq\, \frac{M}{\tilde{C}_2},
\end{align*}
and we will in fact establish the stronger bound
\begin{equation*}
\inf_{B(\mathcal{B}_+^{\rm bc},1)}E_{\Delta x}-\inf_{\mathcal{B}^{\rm bc}}E_{\Delta x} \,\geq (M- 1)^2\overset{M\geq 4}{\geq}\frac{M^2}{2}.
\end{equation*}
We will establish the first inequality  by way of a variational argument.  Notice that we may assume that the infima are achieved (if not, a simple approximation argument suffices), and so let
$$\hat{u}_1:=\underset{B(\mathcal{B}_+^{\rm bc},1)}{\rm argmin}\,E_{\Delta x},\qquad \hat{u}_2:=\underset{\mathcal{B}^{\rm bc}}{\rm argmin}\,E_{\Delta x} .$$
Observe that automatically $\hat{u}_1\big( (x_-+x_+)/2\big) \geq 4M- 1$.

We define the auxiliary function $\hat{u}_3 := \min\{ \hat{u}_1, 3M\} $. Notice that according to the growth assumption~\eqref{e:assumptions} (or see~\eqref{m.1}):
\begin{align}
V(\hat{u}_1(x))\geq V(3M)\qquad \text{on } \{ \hat{u}_1 \geq 3M \} .\label{mm.gtr}
\end{align}
On the other hand, since $\hat{u}_3\in\mathcal{B}^{\rm bc}$ and as $\hat{u}_2$ is the minimizer over $\mathcal{B}^{\rm bc}$, we have
\begin{eqnarray*}
E_{\Delta x}(\hat{u}_1)\!\!\!\!&-&\!\!\!\!E_{\Delta x}(\hat{u}_2)\\
&\geq& E_{\Delta x}(\hat{u}_1)-E_{\Delta x}(\hat{u}_3)\\
&=&  \int_{ \{ u \geq 3M\}}\Big( (\partial_x \hat{u}_1)^2+\frac{(\Delta x)^2}{4} \big(V(\hat{u}_1)-V(3M)\big)\Big)\,dx\\
&\overset{\eqref{mm.gtr}}\geq&   \int_{-1}^1 (\partial_x\max\{ \hat{u}_1 -3M, 0\})^2\,dx\\
&\geq&  \big( \sup_{x \in [-1,1]} \max\{ \hat{u}_1 -3M , 0 \} \big)^2\\
&\geq& (M- 1)^2.
\end{eqnarray*}
This concludes the proof of~\eqref{onept2} for $\mathcal{A}_2$ and establishes the lemma.
\end{proof}

\subsection{Proofs of lemmas from the lower bound of Theorem~\ref{t:layers}.}\label{ss:negneg}
\begin{proof}[Proof of Lemma~\ref{l:negneg}]
Since the proof is similar to (and simpler than) the proof of the upper bound in Theorem~\ref{t:layers}, we will be somewhat brief. Our goal is to bound above by $2/3$ the complementary event, namely that $u(x)>0$ for some $x\in[-\Le,-2\ell]$ or that $|u(x_k)|>M$ for some $k$ in the index set. As in the proof of the upper bound, the probability that $|u(x_k)|>M$ can be shown to be exponentially small in $M/\eps$, cf.~\eqref{sq}. It remains to bound above the probability that $u(x)>0$ for some $x\in[-\Le,-2\ell]$ and $|u(x_k)|\leq M$ for all $k$.

Now fix $\delta>0$ sufficiently small so that the estimates from the upper bound of Theorem~\ref{t:layers} apply. The set $\{u\in\complement\mathcal{A}_1\colon u(x) >0\text{ for some }x\in[-\Le,-2\ell] \}$ is contained within the union of:
\begin{enumerate}
 \item functions with more than one $\delta^-$ layer (exponentially unlikely by the upper bound of Theorem~\ref{t:layers}) ,

  \item  functions with a $\delta^-$ layer longer than $2\ell$ (exponentially unlikely for $\delta^2\ell$ large, according to the calculation in Step 3 of the proof of the upper bound, cf.~\eqref{e:A2bou}),

 \item functions with one and only one $\delta^-$ layer, which is at most length $2\ell$ and is contained in $[-\Le,0]$,

  \item functions with one and only one $\delta^-$ layer,  which is at most length $2\ell$ and is contained in  $[-2\ell,\Le]$, and such that $u(x)>0$ for some $x\in[-\Le,-2\ell]$.
\end{enumerate}
By symmetry properties of the measure, i.e. the symmetry with respect to point reflection of the graph at $x=0$ and $u=0$, the probability of a $\delta^-$ layer contained in $[-\Le,0]$ is equal to the probability of a $\delta^-$ layer contained in $[0,\Le]$, hence neither can be more than $1/2$. Therefore, the probability of the event described in point (3) is less than or equal to $1/2$.

By the calculations referred to above, the sum of the probabilities of the sets described in (1)-(3) is bounded by $1/2$ plus exponentially small terms, so we are finished if we can show that the probability of the set described in (4) is also exponentially small, namely, the probability that: $u(x)>0$ for some $x\in[-\Le,-2\ell]$, $|u(x_k)|\leq M$ for all $k$, and there is one and only one $\delta^-$ layer, which is at most $2\ell$ and is contained in $[-2\ell,\Le]$. Note that the latter implies that $u \leq 1-\delta$ on $[-\Le, -2\ell]$.

This bound is easy to obtain by breaking into subintervals (using conditioning) and using the  large deviation estimate~\eqref{e:LDUB1}. Indeed, we reduce to probabilities of the form
\begin{align*}
 \mu_{\eps,(x_{k-2},x_{k+2})}^{u_{k-2},u_{k+2}}\Big(u\leq 1-\delta\text{ and }u(\hat{x})=0\text{ for some }\hat{x}\in(x_{k-1},x_{k+1})\Big),
\end{align*}
where $u_{k-2}$ and $u_{k+2}$ are arbitrary boundary values in $[-M,1-\delta]$ and $k\in\{-(N_\eps-2),-(N_\eps-3),\ldots,-3\}$. (We also need to consider the boundary interval, where $\hat{x}\in (x_{-N_\eps},x_{-(N_\eps-2)})$. As usual, this is no more difficult than the bound for the interior intervals.) After applying Proposition~\ref{pr:LD1} (with $\tilde{\delta}=\delta/2$), it remains only to introduce an energetic bound. The  bound from Lemma~\ref{l:last} below suffices.

Before stating the energy lemma, we explain the idea in words: If we take a $\delta/2$ ball around the set of interest, then on $[x_{k-1},x_{k+1}]$, there is a point $x_0$ such that $u(x_0)\geq -\delta/2$. For $\ell$ large, the energy minimizer needs to come very close to $\pm 1$ someplace in $[x_{k-2},x_{k-1}]$ and $[x_{k+1},x_{k+2}]$, (say within $\delta/4$), and since it cannot come this close to $+1$, it is forced into a small neighborhood of $-1$. Consequently, the large excursion from $-1$ at $x_0$ costs almost $c_0$ energy. We give the precise statement below and prove the lemma at the end of the subsection.
\begin{lemma}\label{l:last}
There exists $C<\infty$ with the following property. For any $M$ large enough and $\delta>0$ small enough, consider the boundary conditions $u_\pm\in[-M,1-\delta]$ and define the sets
\begin{align*}
\mathcal{A}^{\rm bc}&:=\{u\in C([-2\ell, 2\ell]) \colon u(-2\ell)=u_{-}  \text{ and  }u(2 \ell) = u_+\},\\
\mathcal{A}_0^{\rm bc}&:=\{u\in \mathcal{A}^{\rm bc}  \colon   u(x) \leq 1 - \delta/2 \text{ for all  $ x \in  [-2\ell, 2 \ell]$ and    } \\
& \qquad \qquad \text{ there is an $x_0 \in [-\ell, \ell]$  such that $u(x_0)\geq -\delta/2 $} \}.
\end{align*}
Then there exists $\ell_0=\ell_0(M,\delta)$ such that for $\ell\geq \ell_0$ there holds
\begin{equation*}
\inf_{u \in \mathcal{A}_0^{\rm bc} } E_{(-2\ell, 2 \ell)}(u) - \inf_{u \in \mathcal{A}^{\rm bc} }  E_{(-2\ell, 2 \ell)}(u) \geq  c_0-C\delta .
\end{equation*}
\end{lemma}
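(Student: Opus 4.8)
The plan is to prove Lemma~\ref{l:last} by the same upper-bound/lower-bound strategy used for Lemmas~\ref{le:lazy}, \ref{l:cl}, and \ref{l:cll}. For the \emph{upper bound} on $\inf_{\mathcal{A}^{\rm bc}} E_{(-2\ell,2\ell)}(u)$, I would use a construction in the spirit of Remark~\ref{r:constr}: from $u_-$ at $-2\ell$, follow the $|\partial_x u|=\sqrt{2V(u)}$ ODE to reach a small neighborhood of the nearest well ($\pm 1$, whichever is optimal for $u_-$), stay flat there, and symmetrically at the right end connect from $u_+$ to its optimal well. Since $u_\pm\in[-M,1-\delta]$, the cost of this construction is $\min\{\phim(u_-),\phip(u_-)\}+\min\{\phim(u_+),\phip(u_+)\}+o(1)_{\ell\uparrow\infty}$, where (if both connect to $-1$) there is no extra $c_0$ cost. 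This gives
\begin{align}
\inf_{\mathcal{A}^{\rm bc}}E_{(-2\ell,2\ell)}(u)&\leq \min\{\phim(u_-),\phip(u_-)\}\notag\\
&\quad+\min\{\phim(u_+),\phip(u_+)\}+o(1)_{\ell\uparrow\infty}.\label{llup}
\end{align}

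For the \emph{lower bound} over $\mathcal{A}_0^{\rm bc}$, fix $u\in\mathcal{A}_0^{\rm bc}$, so $u\leq 1-\delta/2$ everywhere and $u(x_0)\geq -\delta/2$ for some $x_0\in[-\ell,\ell]$. The key point, as in the proofs of Lemmas~\ref{le:lazy} and~\ref{l:cll}, is to split into two cases on each side. Consider $(-2\ell,x_0)$: if $u$ comes within, say, $\delta/4$ of one of the wells at some point $x_-\in(-2\ell,x_0)$, then because $u\leq 1-\delta/2<1-\delta/4$, it must be within $\delta/4$ of $-1$ at $x_-$, and the Modica--Mortola trick on $(-2\ell,x_-)$ together with the trick on $(x_-,x_0)$ (where $u$ goes from near $-1$ up to $\geq -\delta/2$) gives
\[
E_{(-2\ell,x_0)}(u)\geq \min\{\phim(u_-),\phip(u_-)\}+\phim(-\delta/2)-C\delta.
\]
If instead $u$ never comes within $\delta/4$ of a well on $(-2\ell,x_0)$ (hence $u\leq -1-\delta/4$ or $u\in[-1+\delta/4,1-\delta/2]$ there), then $\int_{-2\ell}^{x_0}V(u)\,dx\gtrsim \ell\delta^2$ by the quadratic growth of $V$ near the wells (Assumption~\ref{ass:V}) exactly as in~\eqref{23ll}, which for $\ell\geq\ell_0(M,\delta)$ dominates $\min\{\phim(u_-),\phip(u_-)\}+c_0$ and makes the estimate trivial. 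The symmetric dichotomy applies on $(x_0,2\ell)$, yielding the matching term $\min\{\phim(u_+),\phip(u_+)\}+\phim(-\delta/2)-C\delta$ (or the trivial case).

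Combining the nontrivial branches on both sides with~\eqref{llup} and using $\phim(-\delta/2)=\phim(0)-C\delta=c_0/2-C\delta$ (from~\eqref{c0} and the Lipschitz bound on $\phim$ near $0$) — so that $2\phim(-\delta/2)=c_0-C\delta$ — gives
\[
\inf_{\mathcal{A}_0^{\rm bc}}E_{(-2\ell,2\ell)}(u)-\inf_{\mathcal{A}^{\rm bc}}E_{(-2\ell,2\ell)}(u)\geq c_0-C\delta-o(1)_{\ell\uparrow\infty},
\]
and absorbing the $o(1)_{\ell\uparrow\infty}$ term into $C\delta$ by choosing $\ell_0$ large (the ordering of constants: fix $M$, fix $\delta$, then choose $\ell_0$, as in Remark~\ref{r:constr}) completes the proof. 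I also need to handle the boundary interval touching $\pm\Le$ by a one-sided version, but this is routine. The main obstacle is purely bookkeeping: making sure the $\delta/4$ threshold in the case split is chosen small enough relative to the $\delta/2$ appearing in $\mathcal{A}_0^{\rm bc}$ and the $\delta/2$ radius from the large-deviation ball, so that "cannot approach $+1$" genuinely forces the path into a $\phim$-type neighborhood of $-1$, and confirming that all the $C\delta$ error terms from the Modica--Mortola estimates and from $\phim(-\delta/2)\approx c_0/2$ accumulate only to a universal multiple of $\delta$ independent of $M$ and $\ell$.
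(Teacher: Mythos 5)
There is a genuine gap, and it sits in your upper bound on $\inf_{\mathcal{A}^{\rm bc}}E_{(-2\ell,2\ell)}$. The bound $\inf_{\mathcal{A}^{\rm bc}}E\leq \min\{\phim(u_-),\phip(u_-)\}+\min\{\phim(u_+),\phip(u_+)\}+o(1)_{\ell\uparrow\infty}$ is false whenever the two boundary values connect optimally to \emph{different} wells, which is allowed here: take $u_-=1-\delta$ (optimal well $+1$, cost $O(\delta^2)$) and $u_+=-1$ (optimal well $-1$, cost $0$). Your claimed bound is then $O(\delta^2)$, but any path joining $1-\delta$ to $-1$ costs at least $\int_{-1}^{1-\delta}\sqrt{2V(s)}\,ds=c_0-O(\delta^2)$ by the Modica--Mortola inequality, so the true infimum is of order $c_0$; your construction omits the extra transition needed to join a neighborhood of $+1$ to a neighborhood of $-1$. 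Moreover, the defect cannot be repaired merely by correcting the upper bound, because you also weakened the lower bound over $\mathcal{A}_0^{\rm bc}$ to $\min\{\phim(u_\pm),\phip(u_\pm)\}$ terms: in the mixed example above, the corrected upper bound is $\approx \phim(u_-)+\phim(u_+)\approx c_0$, while your lower bound gives only $\min\{\phim(u_-),\phip(u_-)\}+\min\{\phim(u_+),\phip(u_+)\}+c_0-C\delta\approx c_0-C\delta$, so the difference you could conclude is $\approx -C\delta$, not $c_0-C\delta$.

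The paper's proof avoids both problems by exploiting the constraint $u\leq 1-\delta/2$ on \emph{both} sides of the comparison. For the upper bound it uses the always-admissible construction that connects \emph{both} boundary values to $-1$ (forcing $u(0)=-1$), giving $\inf_{\mathcal{A}^{\rm bc}}E\leq \phim(u_-)+\phim(u_+)+o(1)_{\ell\uparrow\infty}$ with no case distinction and no $\min$. For the lower bound it notes that on $\mathcal{A}_0^{\rm bc}$ either $u$ stays in $[-1+\delta/2,1-\delta/2]$ (or below $-1-\delta/2$) on an interval of length $\ell$, which costs $\gtrsim \ell\delta^2$ and is negligible for $\ell\geq\ell_0(M,\delta)$, or there are points $x_-\in[-2\ell,-\ell]$ and $x_+\in[\ell,2\ell]$ with $|u(x_\pm)+1|\leq\delta/2$ (near $+1$ is excluded by $u\leq 1-\delta/2$); applying Modica--Mortola on $(-2\ell,x_-)$, $(x_-,x_0)$, $(x_0,x_+)$, $(x_+,2\ell)$ then yields the \emph{full} boundary terms $\phim(u_-)+\phim(u_+)$ plus $c_0-C\delta$. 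The boundary terms cancel exactly against the upper bound, giving $c_0-C\delta$. Your case-splitting with the $\delta/4$ threshold and the $\ell\delta^2$ estimate on the expensive branch is fine; the fix is simply to keep $\phim(u_\pm)$ (not the minimum) in the nontrivial branch of the lower bound and to replace your upper bound by the $\phim(u_-)+\phim(u_+)$ construction.
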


Proposition~\ref{pr:LD1} and Lemma~\ref{l:last} together give
\begin{align*}
 \mu_{\eps,(x_{k-2},x_{k+2})}^{u_{k-2},u_{k+2}}\Big(&u\leq 1-\delta \text{ and }u(x_0)>0\text{ for some }x_0\in(x_{k-1},x_{k+1})\Big)\\
 &\leq \exp\left(-\frac{c_0 - C\delta-\gamma}{\eps}\right).
\end{align*}
Finally, we now choose $\gamma$ and  $\delta$ sufficiently small and sum over the order $N_\eps\sim \Le$ intervals. Bearing in mind the bound~\eqref{lbd} on $\Le$, we observe that there is also an exponentially small probability of the final set that we have studied.
\end{proof}

\begin{proof}[Proof of Lemma~\ref{l:last}]
We will be brief, since the proof is similar to the proof of Lemma~\ref{l:cl}. 

First of all, fix $M$ large and $\delta$ small. The infimum of the energy over $\mathcal{A}^{\rm bc}$ is less than or equal to the minimum of the energy over functions with $u(\pm 2\ell)=u_{\pm}$ and $u(0)=-1$. By a standard construction, we have
\begin{align*}
\inf_{\Aa^{\rm bc}} E_{(-2\ell,2\ell)}(u)\leq \varphi_{-1}(u_-)+\varphi_{-1}(u_+)+o(1)_{\ell\uparrow\infty}.
\end{align*}
In particular, for $\ell_0$ large enough and $\ell\geq\ell_0$, one has
\begin{align}
\inf_{\Aa^{\rm bc}} E_{(-2\ell,2\ell)}(u)\leq \varphi_{-1}(u_-)+\varphi_{-1}(u_+)+\delta.\label{upinf}
\end{align}
On the other hand, on $\Aa_0^{\rm bc}$, either there exist $x_-\in[-2\ell,-\ell]$ and $x_+\in[\ell,2\ell]$ such that
\begin{align*}
|u(x_\pm)+1|\leq \delta/2
\end{align*}
or we have $u\in[-1+\delta/2,1-\delta/2]$ on an interval of length $\ell$. In the latter case, we get easily
\begin{align*}
E_{(-2\ell,2\ell)}(u)\gtrsim \ell\delta^2.
\end{align*}
Since this is higher order for $\ell$ large, we may assume that we are in the former case.

In the former case, we may assume without loss of generality that $u(x_{\pm})=-1+\delta$ and $u(x_0)=-\delta/2$. We then use the Modica-Mortola trick to connect the values (a) $u_-$ and $u(x_-)$, (b) $u(x_-)$ and $u(x_0)$, (c) $u(x_0)$ and $u(x_+)$, and (d) $u(x_+)$ and $u_+$. We conclude in the usual way that
\begin{align*}
\inf_{\Aa_0^{\rm bc}} E_{(-2\ell,2\ell)}(u)\geq \varphi_{-1}(u_-)+\varphi_{-1}(u_+)+c_0-C\delta.
\end{align*}
Together with~\eqref{upinf}, this completes the proof of Lemma~\ref{l:last}.
\end{proof}

\subsection{Proof of lemmas related to the uniform distribution}\label{ss:unilem}

\begin{proof}[Proof of Lemma~\ref{l:smallu}]
Our argument relies on an iterated rescaling,  illustrated in Figure \ref{fig:7}.

 \begin{figure}
\tikzset{decorate sep/.style 2 args=
{decorate,decoration={shape backgrounds,shape=circle,shape size=#1,shape sep=#2}}}

\begin{tikzpicture}[xscale=0.023, yscale=2, >=stealth]

\draw[black] plot file{./Graphs/P7G1.txt};
\draw[black] plot file{./Graphs/P7G2.txt};
\draw[black] plot file{./Graphs/P7G3.txt};

\draw (1,.3) node[anchor=east]{$u_-$};
\draw (500,.45) node[anchor=west]{$u_+$};

\draw [decorate,decoration={brace,amplitude=2.5pt,mirror,raise=6pt},yshift=0pt]
(283,1.75) -- (283,2) node [fill=white,midway,xshift=28.5pt, yshift =-2pt] {
$\sqrt{\varepsilon/\varepsilon_0}$};

%
\draw[black] (1,2) node[fill=white, anchor = east]{$1$} -- (135,2);
\draw[black] (365,2) -- (500,2);
\draw[black] (185,2) -- (315,2);

\foreach \x in {150,160,170,330,340,350}
\filldraw[black] (\x,2) ellipse (12pt and .15pt);

\draw[black,style=dotted] (1,0) node[fill=white]{$1/2$} -- (135,0);
\draw[black,style=dotted] (365,0) -- (500,0);

\draw[black,style=dotted] (40,1) -- (135,1);
\draw[black,style=dotted] (365,1) -- (460,1);
\draw[black,style=dotted] (460,1) -- (460,2);
\draw[black,style=dotted] (40,1) -- (40,2);

\draw[black,style=dotted] (80,1.5) -- (135,1.5);
\draw[black,style=dotted] (365,1.5) -- (420,1.5);
\draw[black,style=dotted] (420,1.5) -- (420,2);
\draw[black,style=dotted] (80,1.5) -- (80,2);

\draw[black] (420,1.95) -- (420,2.05)node[fill=white, xshift = 4pt, anchor = south]{\scriptsize$x_{K-1}$};
\draw[black] (460,1.95) -- (460,2.05)node[fill=white, xshift = 4pt, anchor = south]{\scriptsize$x_{K}$};
\draw[black] (500,1.95) -- (500,2.05) node[fill=white, anchor = south]{$\ell_\eps$};

\draw[black] (40,1.95) -- (40,2.05) node[fill=white, xshift = -2pt, anchor = south]{\scriptsize$x_{-K}$};
\draw[black] (80,1.95) -- (80,2.05) node[fill=white, xshift = 4pt, anchor = south]{\scriptsize$x_{-(K-1)}$};
\draw[black] (0,1.95) -- (0,2.05) node[ anchor = south]{$-\ell_\eps$};

\draw[black,style=dotted] (185,1.5) -- (315,1.5);

\draw[black,style=dotted] (230,1.75) -- (270,1.75);
\draw[black,style=dotted] (230,1.75) -- (230,2);
\draw[black,style=dotted] (270,1.75) -- (270,2);
\draw[black] (230, 1.95) -- (230,2.05) node[fill=white, anchor = south]{$-\ell_0$};
\draw[black] (270,1.95) -- (270,2.05) node[fill=white, anchor = south]{$\ell_0$};

\end{tikzpicture}
\caption{By  iterated rescaling and application of the large deviation bounds we show that the paths relax to a $O( \eps^{1/2} )$ - neighbourhood of $1$ within a distance of $O |\log(\eps)|$.}
\label{fig:7}
\end{figure}

We will define $K=K_\eps\geq 1$ below.  We begin by enumerating the partition $\{x_{k}\}_{k=-(K+1)}^{K+1}$ of $(-\ell_\eps,\ell_\eps)$ with width $2\ell_0$, so that
$$x_{\pm 1}=\pm \ell_0,\;x_{\pm 2}=\pm 3\ell_0,\;\ldots,\;x_{\pm K}=\pm(2K-1)\ell_0,\; x_{\pm(K+1)}=\pm\ell_\eps.$$ For brevity of notation, let  $$\mathcal{A}:=\Big\{u\colon |u(x_k)-1|\leq \frac{1}{2}\,\text{for all}\,k\in \{-(K+1),-K,\ldots,K+1\} \Big\}.$$
We will use the  elementary facts from probability that for any  sets $A_1$, $A_2$, and $A_3$, we have
  \begin{align}
   P(A_1\cap A_2)&\leq P(A_1\,\cap\,A_3)+P(\complement A_3\cap A_2),\label{prob1}\\
        P(A_1\cap A_2\cap A_3)&\leq P(A_1\cap A_2|A_3).\label{prob2}
   \end{align}

We also use the Markov property from Lemma~\ref{le:Markov1b} to deduce the following property for conditional measures. If $\mathcal{A}_{in}$ and $\tilde{\mathcal{A}}_{in}$ are in $\F_{[-x_2,x_2]}$ and $\mathcal{A}_{out}$ is in $\F_{[-\ell_\eps,-x_2]}\vee\F_{[x_2,\ell_\eps]}$, then
\begin{align}
 &\mu_{\eps,(-\ell_\eps,\ell_\eps)}^{u_-,u_+}\left(\Aa_{in} \bigg| u\in\tilde{\Aa}_{in}\cap\Aa_{out}\;\text{and}\;u(\pm x_2)\in (a,b) \right)\notag\\
&=\frac{\E_{(-\ell_\eps,\ell_\eps)}^{\mu_\eps,u_-,u_+}\left(\mathbf{1}_{\Aa_{out}}\mathbf{1}_{u(\pm x_2)\in (a,b)}\E_{(-x_2,x_2)}^{\mu_\eps,u^2_-,u^2_+}\big(\mathbf{1}_{\Aa_{in}}\mathbf{1}_{\tilde\Aa_{in}}\big)\right)}{\E_{(-\ell_\eps,\ell_\eps)}^{\mu_\eps,u_-,u_+}
\left(\mathbf{1}_{\Aa_{out}}\mathbf{1}_{u(\pm x_2)\in (a,b)}\E_{(-x_2,x_2)}^{\mu_\eps,u^2_-,u^2_+}\big(\mathbf{1}_{\tilde\Aa_{in}}\big)\right)}\notag\\
&\leq\sup_{u^2_\pm\in (a,b)}\mu_{\eps,(-x_2,x_2)}^{u^2_-,u^2_+}\big(u\in\Aa_{in}\big| u\in\tilde\Aa_{in}\big).\label{condmark}
\end{align}

Keeping these preliminaries in mind, we now observe that we can make the following decomposition:
\begin{eqnarray*}
&&\mu_{\eps,(-\ell_\eps,\ell_\eps)}^{u_-,u_+}\left(\sup_{x\in[-x_1,x_1]}|u(x)-1|\geq\frac{1}{2^{K+1}}\bigg|\,u\in\mathcal{A}\right)\\
&\overset{\eqref{prob1}}\leq&\mu_{\eps,(-\ell_\eps,\ell_\eps)}^{u_-,u_+}\Bigg(\sup_{x\in[-x_1,x_1]}|u(x)-1|\geq\frac{1}{2^{K+1}}\;\;\text{and}\,\\
&& \qquad \qquad \qquad \sup_{x\in[-x_2,x_2]}|u(x)-1|\leq \frac{1}{2^K}\bigg|u\in\mathcal{A}\Bigg)\\
&&\qquad +\mu_{\eps,(-\ell_\eps,\ell_\eps)}^{u_-,u_+}\left(\sup_{x\in[-x_2,x_2]}|u(x)-1|\geq \frac{1}{2^K}\bigg|\;u\in\mathcal{A}\right).
\end{eqnarray*}

For the first term, we can now send the smallness condition into the boundary conditions in the following way:
\begin{eqnarray*} &&\mu_{\eps,(-\ell_\eps,\ell_\eps)}^{u_-,u_+}
\bigg(\sup_{x\in[-x_1,x_1]}|u(x)-1|\geq\frac{1}{2^{K+1}}\;\; \text{and}\,\\
&& \qquad \qquad \qquad \sup_{x\in[-x_2,x_2]}|u(x)-1|\leq \frac{1}{2^K} \bigg| u\in\mathcal{A}\bigg)\\
&\overset{\eqref{prob2}}\leq& \mu_{\eps,(-\ell_\eps,\ell_\eps)}^{u_-,u_+}
\bigg(\sup_{x\in[-x_1,x_1]}|u(x)-1|\geq\frac{1}{2^{K+1}}\;\; \text{and}\,\\
&& \qquad \qquad  \sup_{x\in[-x_2,x_2]}|u(x)-1|\leq \frac{1}{2^K} \bigg|\\
&& \qquad \qquad
 |u(x)-1|\leq \frac{1}{2^K}\;\text{for}\,x\in\{\pm x_1,\pm x_2\}\;\text{and}\;u\in\Aa\bigg)\\
&\overset{\eqref{condmark}}\leq& \sup_{u^2_{\pm}\in[1-2^{-K},1+2^{-K}]}\mu_{\eps,(-x_2,x_2)}^{u_-^2,u_+^2}\bigg(\sup_{x\in[-x_1,x_1]}|u(x)-1|\geq\frac{1}{2^{K+1}} \\
&& \qquad \qquad  \;\;\text{and}\,
 \sup_{x\in[-x_2,x_2]}|u(x)-1|\leq \frac{1}{2^K}\bigg|
 |u(\pm x_1)-1|\leq \frac{1}{2^K}\bigg)\\
&\leq&\sup_{u^2_{\pm}\in[1-2^{-K},1+2^{-K}]}\  \mu_{\eps,(-x_2,x_2)}^{u_-^2,u_+^2}\bigg(\sup_{x\in[-x_1,x_1]}|u(x)-1|\geq\frac{1}{2^{K+1}} \bigg| \\
&&\qquad \qquad |u(\pm x_1)-1|\leq \frac{1}{2^K}\bigg).
\end{eqnarray*}
We can iterate this argument to reduce the probability to the form:
\begin{align}
&\mu_{\eps,(-\ell_\eps,\ell_\eps)}^{u_-,u_+}\left(\sup_{x\in[-x_1,x_1]}|u(x)-1|\geq\frac{1}{2^{K+1}}\bigg|
\,u\in\mathcal{A}\right)\notag\\
&\leq\sum_{k=1}^K \sup_{u_{\pm}^k \in [1- 2^{-k}, 1 + 2^{-k} ]} \notag\\
&\qquad  \mu_{\eps,(x_{-(K-k+2)},x_{K-k+2})}^{u_-^{k},u_+^{k}}\bigg(\sup_{x\in[x_{-(K-k+1)},x_{K-k+1}]}\!\!\!\!\!\!\!\! |u(x)-1|\geq
\frac{1}{2^{k+1}}\bigg|\notag\\
& \hspace{60pt}|u(x_{\pm (K-k+2)})-1|\leq \frac{1}{2^{k}}\bigg).\label{deco}
\end{align}
Hence it remains to estimate the individual terms in the sum.
The argument involves three steps: a large deviation estimate, concatenation, and an iterated rescaling of the deviation of $u$ from $1$.

\medskip

\underline{Step 1: Large deviation estimate}. The first step is to derive a uniform large deviation bound for the measures $\mu_{\eps,(-3\ell_0,3\ell_0)}^{u_-,u_+}$. We show that there exists $C<\infty$ such that for every $\ell_0<\infty$ sufficiently large, there exists $\eps_0'>0$ such that for any $u_{\pm}\in[1/2,3/2]$ and $\eps\leq\eps_0'$, we get
\begin{align}
 \mu_{\eps,(-3\ell_0,3\ell_0)}^{u_{-},u_{+}}& \bigg( \sup_{x \in [-\ell_0,\ell_0]}   |u(x)-1|\geq\frac{1}{4}\bigg|
|u(\pm \ell_0)-1|\leq\frac{1}{2}\bigg)\notag\\
 &\qquad\qquad\leq \exp\left(-\frac{1}{C\eps}\right). \label{f2}
\end{align}
In the next steps, we will always assume $\eps_0\leq\eps_0'$ to be sufficiently small in this sense, and this is the only restriction on $\eps_0$ in the proof of the lemma.

To bound the conditional probability in~\eqref{f2} it suffices to establish an upper bound on
\begin{align}
 \mu_{\eps,(-3\ell_0,3\ell_0)}^{u_{-},u_{+}}& \bigg( \sup_{x \in [-\ell_0,\ell_0]}   |u(x)-1|\geq\frac{1}{4}\;\;\text{and}\;\; |u(\pm\ell_0)-1|\leq\frac{1}{2}\bigg)\label{ff2}
\end{align}
and a lower bound on
\begin{align}
 \mu_{\eps,(-3\ell_0,3\ell_0)}^{u_{-},u_{+}}& \bigg( |u(\pm \ell_0)-1|\leq\frac{1}{2}\bigg), \label{ff3}
\end{align}
uniformly with respect to $u_\pm\in[1/2,3/2])$.
To this end, we turn to the uniform large deviation estimates from Propositions~\ref{pr:LD1} and~\ref{pr:LD2}. In fact, we do not even need the second condition in~\eqref{ff2}, and it suffices to  bound the probability of the larger set
\begin{align*}
\Aa_0:=\Big\{u\in C([-3\ell_0,3\ell_0])\colon u(\pm 3\ell_0)=u_\pm,\;\;\sup_{x \in [-\ell_0,\ell_0]}   |u(x)-1|\geq\frac{1}{4}\Big\},
\end{align*}
The estimate~\eqref{e:LDUB1} gives that for any $\gamma, \,\delta>0$, we have for sufficiently small $\eps$ that
\begin{equation}
 \mu^{u_-,u_+}_{\eps,(-3 \ell_0,3 \ell_0)}(\Aa_0) \leq \exp  \Big(-\frac{1}{\eps} \big(  \DE\big(B( \Aa_0,\delta)  \big) - \gamma  \big)  \Big),\label{fld}
\end{equation}
where $\DE$ is defined in~\eqref{endiff} and
$$\A^{\rm bc}=\{u\in C([-3\ell_0,3\ell_0])\colon u(\pm 3\ell_0)=u_\pm\}.$$
Consider now a small $\delta>0$ to be fixed below and a function $u\in B(\Aa_0,\delta)$. Because  the boundary conditions are in $[1/2,3/2]$ and $\ell_0$ is large, the infimum of the energy must take place over functions such that
\begin{align}
\max\bigg\{\min_{x\in[-3\ell_0,-\ell_0]}|u(x)-1|,\;\min_{x\in[\ell_0,3\ell_0]}|u(x)-1|\bigg\}\lesssim \frac{1}{\sqrt{\ell_0}}.\label{felixstar}
\end{align}
(Indeed, $u$ must be close to either $1$ or $-1$ at some point in each of the intervals, and if $u$ were instead close to $-1$ on either  interval, satisfying the boundary conditions would lead to an even greater energetic cost than the one we will arrive at below.) Let us label the minimizing points $x_-$ and $x_+$. Moreover, let us define $x_*$ to be a point in $(-\ell_0,\ell_0)$ such that
\begin{align*}
|u(x_*)-1|\geq \frac{1}{4}-\delta.
\end{align*}
As above in Subsection \ref{ss:enlem}, we now define $\varphi(u):=|\int_u^1\sqrt{2V(s)}\,ds|$ and apply the ``Modica-Mortola trick'' on $(-3\ell_0,x_-)$, $(x_-,x_*)$, $(x_*,x_+)$, and $(x_+,3\ell_0)$ to recover
\begin{eqnarray}
\inf_{B(\Aa_0,\delta)}E(u)&\geq& \varphi(u_-)-\varphi(u(x_-))+\varphi(u(x_*))-\varphi(u(x_-))\notag\\
&&\qquad+\varphi(u(x_*))-\varphi(u(x_+))+\varphi(u_+)-\varphi(u(x_+))\notag\\
&\overset{\eqref{felixstar}}\geq& 2\varphi(u(x_*))+\varphi(u_-)+\varphi(u_+)-o(1)_{\ell_0\uparrow\infty}\notag\\
&\geq& 2\varphi_{1/4}+\varphi(u_-)+\varphi(u_+)-o(1)_{\ell_0\uparrow\infty}-o(1)_{\delta\downarrow 0},\label{flow}
\end{eqnarray}
where $$\varphi_{1/4}:=\min\{\varphi(3/4),\varphi(5/4)\}.$$

On the other hand, a standard construction gives
\begin{equation}
\inf_{\mathcal{A}^{\rm bc}} E(u)\leq \varphi(u_-)+\varphi(u_+)+o(1)_{\ell_0\uparrow\infty}.\label{fup}
\end{equation}
Now fixing $\delta>0$ and $\gamma>0$ sufficiently small, the combination of~\eqref{fld},~\eqref{flow}, and~\eqref{fup} gives for sufficiently small $\eps$ that
\begin{equation}
 \mu^{u_-,u_+}_{\eps,(x_-,x_+)}(\Aa_0) \leq \exp\left( -\frac{3/2\;\varphi_{1/4}}{\eps}\right). \label{fiq}
 \end{equation}

We now remark that the lower bound on~\eqref{ff3} follows easily from Proposition~\ref{pr:LD2}. Indeed, for a fixed $0<\delta<\frac12$, the set of interest can be written as the $\delta$ ball around the set $\mathcal{A}_1$ defined as
\begin{align*}
\Aa_1:=\Big\{u\colon u(\pm 3\ell_0)=u_\pm,\;\; |u(\pm \ell_0)-1|\leq\frac{1}{2}-\delta\Big\}.
\end{align*}
We recover for any $\gamma>0$ and for $\eps>0$ sufficiently small that
\begin{equation}\label{feak}
 \mu^{u_-,u_+}_{\eps,(x_-,x_+)} \big(B(\Aa_1,\delta)\big) \geq \exp  \Big(-\frac{1}{\eps} \big(  \DE \big( \Aa_1  \big) + \gamma  \big)  \Big),
\end{equation}
where $\DE$ is defined in~\eqref{endiff}. The constraint in $\Aa_1$ is inactive/slack in the optimization for $\ell_0$ sufficiently large, and the usual construction together with the usual Modica-Mortola estimate thus gives
\begin{align*}
\DE \big(\Aa_1\big)\leq o(1)_{\ell_0\uparrow\infty}.
\end{align*}
Plugging back into~\eqref{feak} gives
\begin{align*}
\mu^{u_-,u_+}_{\eps,(x_-,x_+)} \big(B(\Aa_1,\delta)\big) \geq \exp  \Big(-\frac{ 2 \gamma }{\eps}  \Big),
\end{align*}
which together with~\eqref{fiq} gives~\eqref{f2} with $C=1/\varphi_{1/4}$ as long as $\gamma$ is chosen sufficiently small.

\medskip

\underline{Step 2: Concatenation}. The next step is to prove for any $K \in \N$ that
\begin{align}
\mu_{\eps,(x_{-(K+1)},x_{K+1})}^{u_-,u_+}&\bigg(\sup_{x\in[x_{-K},x_K]}|u(x)-1|\geq \frac{1}{4}\;\bigg|u\in\mathcal{A}\bigg)\notag\\
&\qquad\leq 2\,K\exp\left(-\frac{1}{C\eps}\right),\label{bigcat}
\end{align}
uniformly for $u_{\pm}\in(1/2,3/2)$.
As usual, the idea is to break up the larger interval by conditioning on the boundary values. The restriction of the boundary values on each subinterval to $(1/2,3/2)$ because of $u\in\mathcal{A}$ will allow us to apply the uniform estimate from Step 1.

We will consider the non-overlapping subintervals $[x_k,x_{k+1}]$ for $k=-K,\ldots,K-1$. Decomposing the interval in this way gives
\begin{align}
&\mu_{\eps,(x_{-(K+1)},x_{K+1})}^{u_-,u_+}\bigg(\sup_{x\in[x_{-K},x_K]}|u(x)-1|\geq \frac{1}{4}\;\;\bigg|u\in\mathcal{A}\bigg) \notag\\
&\leq\sum_{k=1}^{2K} \mu_{\eps,(x_{-(K+1)},x_{K+1})}^{u_-,u_+}\bigg(\sup_{x\in[x_k,x_{k+1}]}|u(x)-1|\geq\frac{1}{4}\bigg|u\in\mathcal{A}\bigg) \label{e:p60firsteq}.
\end{align}
Now the  Markov property implies that for $k \in \{-K, \ldots ,K-1  \}$ we have
\begin{eqnarray*}
& &\mu_{\eps,(x_{-(K+1)},x_{K+1})}^{u_-,u_+}\bigg( \sup_{x\in[x_k,x_{k+1}]}|u(x)-1|\geq\frac{1}{4} \, \text{and} \,u\in\mathcal{A}\bigg) \\
& =& \E_{(x_{-(K+1)},x_{K+1})}^{\mu_\eps,u_-,u_+}\left( \prod_{j=-K}^{K}   \mathbf{1}_{  \{ |u(x_j) -1 | \leq \frac12 \}}   \mathbf{1}_{ \{ \sup_{[x_k,x_{k+1}]}|u(x)-1|\geq\frac{1}{4} \}} \right) \\
& \overset{\eqref{condmark}}\leq&  \sup_{u^k_{\pm} \in [1/2, 3/2]}   \mu^{u_-^k,u_+^k}_{\eps,(x_{k-1},x_{k+2})}  \Big( \sup_{x \in [x_k,x_{k+1}] }  \big| u(x) -1 \big|  \geq  \frac{1}{4}  \\
&&  \Big|  \, \max\{| u(x_k) -1| , | u(x_{k+1}) -1|\} \leq  \frac{1}{2}   \Big)  \mu_{\eps,(x_{-(K+1)},x_{K+1})}^{u_-,u_+}\big( \Aa \big) .
\end{eqnarray*}
Hence, using the translational invariance of the measures $  \mu^{u_-,u_+}_{\eps,(x_-,x_+)}$, we bound the right-hand side of equation \eqref{e:p60firsteq} by
\begin{eqnarray*}
&2\,K\!\!&  \sup_{u_{\pm} \in [1/2, 3/2]}  \mu_{\eps,(-3\ell_0,3\ell_0)}^{u_-,u_+}\bigg(\sup_{x\in[-\ell_0,\ell_0]}|u(x)-1|\geq\frac{1}{4} \bigg| |u(\pm\ell_0)-1|\leq \frac{1}{2}\bigg)\\
&\overset{\eqref{f2}}\leq&2\,K\,\exp\left(-\frac{1}{C\eps}\right),
\end{eqnarray*}
which is what we wanted to show.

\underline{Step 3: Rescaling and iteration}. In this step, we rescale the deviation of $u$ from $1$. We fix $k$ and consider the random variables
$$\hat{u}:=2^{k-1}(u-1)+1.$$
For $u$ distributed according to $\mu_{\eps,(-\ell,\ell)}^{u_-,u_+}$,the profile $\hat{u}$ is distributed according to a rescaled version of the measure. Indeed, the Radon-Nikodym density with respect to the Brownian bridge measure with \emph{modified noise strength}  $\hat \eps:=4^{k-1}\eps$ and rescaled boundary conditions is proportional to
\begin{align}
\exp\left(-\frac{1}{\hat \eps}\int_{-\ell}^{\ell}\,4^{k-1} V\Big(2^{-(k-1)}(\hat{u}-1)+1\Big)\,dx\right).
\end{align}
Let us give a name to the modified potential
\begin{align*}
\hat V(\hat{u}):=4^{k-1} V(2^{-(k-1)} (\hat{u}-1)+1)
\end{align*}
and the associated energy
\begin{align*}
\hat E(\hat{u}):=\int_{-\ell}^\ell\left(\frac{1}{2}(\partial_x \hat{u})^2+\hat{V}(\hat{u})\right)\,dx.
\end{align*}
We now make a series of observations that will allow us to apply the same large deviation bounds from Steps 1 and 2 to the rescaled random variables $\hat{u}$. 

First consider how the sets involved in~\eqref{deco} behave under the rescaling. Notice that $u$ satisfies
\begin{align*}
 \sup_{x\in[x_{-(K-k+2)},x_{K-k+2}]}|u(x)-1|\geq
\frac{1}{2^{k+1}}
\end{align*}
precisely when
\begin{align*}
 \sup_{x\in[x_{-(K-k+2)},x_{K-k+2}]}|\hat{u}(x)-1|\geq\frac{1}{4}.
\end{align*}
Similarly, for the set on which we condition, we have that $u$ satisfies
\begin{align*}
 |u(x_j)-1|\leq \frac{1}{2^{k}}\;\text{for}\;j\in\{-(K-k+3),\ldots,{K-k+3}\}
\end{align*}
precisely when $\hat u$ satisfies
\begin{align*}
 |\hat{u}(x_j)-1|\leq \frac{1}{2}\;\text{for}\;j\in\{-(K-k+3),\ldots,{K-k+3}\}.
\end{align*}
Hence each term in~\eqref{deco} can be bounded if we can establish that the bound from Step 2 also holds for the measure governing $\hat u$.

In order to show that the estimates from Step 1 and 2  hold uniformly for the measure of the rescaled random variables $\hat u$, we need to be able to invoke Propositions~\ref{pr:LD1} and~\ref{pr:LD2} (with uniform constants).
This in turn requires uniform control on the boundary values, the minimum  energy $\hat E$ over the sets of interest, and
the Lipschitz constant of $\hat V$. The boundary values are easy: On the sets of interest, the boundary values $u_\pm\in(1/2,3/2)$. On the other hand, the  minimum of the  energy $\hat{E}$ is bounded uniformly with respect to $k$ on the sets of interest. Indeed, 
consider 
\begin{align*}
\mathcal{C}:=\Big\{u\colon |u(x_j)-1|&\leq \frac{1}{2^{k}}\;\text{for}\;j\in\{-(K-k+3),\ldots,{K-k+3}\}\\ 
&\text{ and } \sup_{x\in[x_{-(K-k+2)},x_{K-k+2}]}|u(x)-1|\geq
\frac{1}{2^{k+1}}\Big\}
\end{align*}
and let $\hat{\mathcal{C}}$ denote the image of the set under the transformation $u\rightarrow\hat{u}$.
By the usual method (``Modical Mortola trick'' for the lower bound and construction for the upper bound), one can check that there exists $R<\infty$ such that, for every $k\in\N$, one has
\begin{align*}
\inf_{\hat{u}\in\mathcal{\hat{C}}}\,\hat{E}(\hat u)=4^{k-1}\inf_{u\in\mathcal{C}}\,E(u)\leq R.
\end{align*}
Finally, because of Assumption~\ref{ass:V}, we have a uniform bound on the Lipschitz constant of $V$. Indeed, let $C:=3/2+2\ell_0 R+1$. Then uniformly with respect to $k\in\N$, the potential $\hat V$ satisfies
\begin{align*}
\sup_{|\hat u|\leq C} |\hat V'(\hat u)|&\leq\sup_{|u-1|\leq 2^{-k+1}(C+1)}\,2^{k-1}\,|V'(u)|\\
&\leq \,\sup|V''(\tau)|(C+1),
\end{align*}
where the supremum is taken over $\tau\in[1-2(C+1),1+2(C+1)].$

Hence, the potential satisfies the requirements of Propositions~\ref{pr:LD1} and~\ref{pr:LD2}. The remaining requirement in order to invoke large deviation theory is that
$$4^{k-1}\eps\leq\eps_0\qquad\text{for all}\;k\leq K,$$
which is true if
$$4^{K-1}\eps\leq\eps_0.$$
Therefore we choose $K$ to be an integer satisfying
\begin{align}
\frac{1}{2^{K+1}}\leq\sqrt{\frac{\eps}{\eps_0}}\leq\frac{1}{2^{K-1}}.\label{kook2}
\end{align}
With the restriction~\eqref{kook2} on $K$, the arguments used in Step 1 and Step 2 carry over to the rescaled measures governing the $\hat{u}$.

We are now ready to complete the argument. Indeed, recalling the decomposition from~\eqref{deco}, we have
\begin{eqnarray}
&&\mu_{\eps,(-\ell_\eps,\ell_\eps)}^{u_-,u_+}\left(\sup_{x\in[-\ell_0,\ell_0]}|u(x)-1|\geq\sqrt{\frac{\eps}{\eps_0}}\;\bigg|\,u\in\mathcal{A}\right)\notag\\
&\overset{\eqref{kook2}}\leq&\mu_{\eps,(-\ell_\eps,\ell_\eps)}^{u_-,u_+}\left(\sup_{x\in[-\ell_0,\ell_0]}|u(x)-1|\geq\frac{1}{2^{K+1}}\;\bigg|
\,u\in\mathcal{A}\right)\notag\\
&\overset{\eqref{deco}}\leq&\sum_{k=1}^K \sup_{u_{\pm}^k \in [1- 2^{-k}, 1 + 2^{-k} ]} \notag\\
&& \hspace{12pt}\mu_{\eps,(x_{-(K-k+2)},x_{K-k+2})}^{u_-^{k},u_+^{k}}\bigg(\sup_{x\in[x_{-(K-k+1)},x_{K-k+1}]}|u(x)-1|\geq
\frac{1}{2^{k+1}}\bigg|\notag\\
&& \hspace{12pt}|u(x_{\pm(K-k+2)})-1|\leq \frac{1}{2^{k}}\bigg).\label{cow}
\end{eqnarray}
From the preceding argument, we can now apply the estimate~\eqref{bigcat} for the rescaled measures to bound the $k^{th}$ summand above by
\begin{align*}
2(K-k+1)\exp\left(-\frac{1}{C\,4^{k-1}\eps}\right).
\end{align*}
Substituting into the right-hand side of~\eqref{cow}, we deduce
\begin{eqnarray*}
&&\mu_{\eps,(-\ell_\eps,\ell_\eps)}^{u_-,u_+}\left(\sup_{x\in[-\ell_0,\ell_0]}|u(x)-1|\geq\sqrt{\frac{\eps}{\eps_0}}\;\bigg|\,u\in\mathcal{A}\right)\\
&\leq&\sum_{k=1}^{K} 2(K-k+1)\exp\left(-\frac{1}{C\,4^{k-1}\eps}\right)\\
&=&2\sum_{k=0}^{K-1}(K-k)\left(\exp\left(-\frac{1}{C\,4^K\,\eps}\right)\right)^{4^{K-k}}\\
&\overset{\eqref{kook2}}\leq&2\sum_{k=0}^{K-1}(K-k)r^{4^{K-k}},\qquad \text{for}\;r:=\exp\left(-\frac{1}{C\,\eps_0}\right)\\
&\leq&2\sum_{k'=1}^{\infty}k'r^{k'}=\frac{2r}{(r-1)^2}\leq 4r\qquad\text{for}\;r\in(0,1/4].
\end{eqnarray*}
\end{proof}

\begin{proof}[Proof of Lemma \ref{l:hittingzero}]  We start by defining some sets. We denote the set of paths that  we condition on by
\begin{align*}
\Aa &:= \{ u \in C([-\ell_\eps, \ell_\eps]) \colon    |u(\pm(2k-1)\ell_0)-1|\leq \frac{1}{2},\,k=1,2,\ldots, K_\eps  \}.
\end{align*}
For $\eps, \eps_0>0$ let us also fix the following subset of $\Aa$
 \begin{align*}
\Aa_\eps &:= \bigg\{ u  \in \Aa \colon    |u(\pm \ell_0)-1|\leq \left(\frac{\eps}{\eps_0}\right)^{1/2} \bigg\}.
\end{align*}
Then  Lemma \ref{l:smallu} implies in particular that, for a small but fixed $\eps_0>0$ and for  $\eps \leq \eps_0$, we have
\begin{equation}\label{e:hz10}
 \mu_{\eps,(-\ell_\eps,\ell_\eps)}^{u_{-},u_{+}}  \big( \Aa_\eps \big) \geq \frac12 \mu_{\eps,(-\ell_\eps,\ell_\eps)}^{u_{-},u_{+}}  \big( \Aa \big).
\end{equation}
From now on, we fix an $\eps_0$ such that this identity holds. This will be the only restriction on $\eps_0$.

Let us also introduce a notation for the set of paths that have a hitting point of $1$ in $[-\ell_0,\ell_0]$
\begin{align*}
\Bb &:= \{ u \in C([-\ell_\eps, \ell_\eps]) \colon  \exists x \in [-\ell_0, \ell_0] \text{ such that } u(x) = 1  \}.
\end{align*}
As a slight abuse of notation we will use the same letter $\Bb$ to denote the set of paths $u\in \Bb$ restricted to $[-\ell_0,\ell_0]$.

Using the Markov property \eqref{e:Markovmu}, we get for any $u^{\pm} \in [1/2, 3/2]$ that
\begin{align}
\mu_{\eps,(-\ell_\eps,\ell_\eps)}^{u_{-},u_{+}}  \big( \Aa \cap \Bb \big)  &\geq  \mu_{\eps,(-\ell_\eps,\ell_\eps)}^{u_{-},u_{+}}  \big( \Aa_\eps \cap \Bb \big)  \label{e:hz1}\\
 &  =  \mathbb{E}_{(- \ell_\eps, \ell_\eps)}^{\mu_\eps,u_{-},u_{+}} \left(   \mathbf{1}_{\Aa_\eps }(u)   \,  \mu_{\eps, (-\ell_0,\ell_0)}^{u(-\ell_0) ,u(+\ell_0)} \big( \Bb \big)  \,  \right)   \notag.
\end{align}
Our  main task is thus to derive a lower bound for the probabilities
\begin{equation}\label{e:ProbBb}
 \mu_{\eps, (-\ell_0,\ell_0)}^{u_- ,u_+} \big( \Bb \big)
\end{equation}
that holds uniformly in the boundary conditions.  In view of the definition of $\A_\eps$, it is sufficient to consider boundary conditions $u_{\pm}$ that are $O(\eps^{1/2})$ close to $1$:
\begin{equation}\label{e:conbcc}
1- \left(\frac{ \eps}{\eps_0}\right)^{1/2} \leq u_{\pm} \leq 1 + \left(\frac{\eps}{\eps_0}\right)^{1/2}.
\end{equation}
 As in the proof of Lemma \ref{l:smallu}, we rescale the random profile $u$ around $1$, this time by a factor $\eps^{-\frac12}$. More precisely,  we consider the  transformation
\begin{equation*}
 \hat{u}(x) :=   \eps^{-1/2}  (u(x)-1) +1.
\end{equation*}
According to its definition, a path $u$ is in the set $\Bb$ if and only if $\hat{u}$ is in $\Bb$. Hence, we can express the probability \eqref{e:ProbBb} in terms of $ \hat{u}$.

The random variable $\hat{u}$ is distributed according to a rescaled version of $\mu_{\eps, (-\ell_0,\ell_0)}^{u_- ,u_+} $. The variance of the Gaussian reference measure becomes one and the rescaled boundary values are
\begin{equation*}
 \hat{u}_{\pm}:= \eps^{-1/2}  (u_{\pm}-1) +1.
\end{equation*}
Note that the condition \eqref{e:conbcc} implies that these rescaled boundary conditions take values in an order-one interval around $1$.
More precisely, the distribution of $\hat{u}$ is absolutely continuous with respect to $ \mathcal{W}^{\hat{u}_{-}, \hat{u}_{+}}_{ 1, (-\ell_0,\ell_0)}$ and the  Radon Nikodym density of the rescaled measure is proportional to $\exp \Big( - \int^{\ell_0}_{-\ell_0} \hat{V} \big(\hat{u} \big) \, dx \Big)$,  where $\hat{V}(\hat{u}) := \frac{1}{\eps}  V\big(\eps^{1/2} (\hat{u} -1) +1 \big)$.
Hence we can rewrite
\begin{align}
& \mu_{\eps, (-\ell_0,\ell_0)}^{u_{-} ,u_+} \big( \Bb \big)  \notag\\
&=     \frac{ \mathbb{E}_{(-\ell_0,\ell_0)}^{\mathcal{W}_1,\hat{u}_{-},  \hat{u}_{+}} \Big( \mathbf{1}_{\Bb}(\hat{u})  \, \exp \Big( -  \int^{\ell_0}_{-\ell_0} \hat{V} ( \hat{u} ) \, dx \Big)\Big) }{  \mathbb{E}_{(-\ell_0,\ell_0)}^{\mathcal{W}_1,\hat{u}_{-},\hat{u}_{+}} \Big( \exp \Big( -  \int^{\ell_0}_{-\ell_0} \hat{V} ( \hat{u} )\, dx \Big)\Big) }. \label{e:hz5}
\end{align}
The denominator of this expression can be trivially bounded above by $1$. To get a lower bound for the  numerator, we can write for example
\begin{align}
& \mathbb{E}_{(-\ell_0,\ell_0)}^{\mathcal{W}_1,\hat{u}_{-},\hat{u}_{+}} \Big( \mathbf{1}_{\Bb}(\hat{u})  \, \exp \Big( -  \int^{\ell_0}_{-\ell_0} \hat{V} (\hat{u}) \, dx \Big) \Big) \notag \\
& \geq  \mathbb{E}_{(-\ell_0,\ell_0)}^{\mathcal{W}_1,\hat{u}_{-},\hat{u}_{+}} \Big( \mathbf{1}_{\Bb^*}(\hat{u})  \, \exp \Big( -  \int^{\ell_0}_{-\ell_0} \hat{V} ( \hat{u} ) \, dx \Big) \Big)\notag\\
 & \geq  \mathcal{W}_{1,(-\ell_0,\ell_0)}^{\hat{u}_{-},\hat{u}_{+}} \big( \Bb^* \big)\,   \inf_{\hat{u} \in \Bb^*}  \exp \Big( -  \int^{\ell_0}_{-\ell_0} \hat{V} ( \hat{u})\, dx \Big).\label{e:hz2}
\end{align}
Here we have made the probability smaller by restricting the integration to the set
\begin{equation*}
\Bb^*:= \bigg\{ \hat{u} \in \Bb \colon \sup_{x \in [- \ell_0,\ell_0] } |\hat{u}(x) -1| \leq 3 \eps_0^{-1/2} \bigg\}.
\end{equation*}
Using the translation invariance of the Gaussian measures, we can  get a lower bound on the Gaussian probabilities that holds uniformly in the boundary conditions. For example,  set
\begin{align*}
\Bb^{**}:= \big\{ \hat{u} &\in C([-\ell_0, \ell_0]) \colon \sup_{x \in [-\ell_0,\ell_0]} \hat{u}(x) \in (\eps_0^{-1/2}, 2\eps_0^{-1/2}), \\
& \quad \text{and}  \quad \inf_{x \in [-\ell_0,\ell_0]} \hat{u}(x) \in (-2\eps_0^{-1/2} ,- \eps_0^{-1/2})  \big\}.
\end{align*}
Then, on the one hand, for every path $\hat{u} \in \Bb^{**}$ and for all $u_{\eps,\pm} \in [1 - \eps^{-1/2}_0, 1+\eps^{-1/2}_0]$, the shifted paths $\hat{u} + h^{\hat{u}_{-}, \hat{u}_{+}}_{(-\ell_0,\ell_0)}$ lies in $\Bb^*$. (Recall the definition \eqref{e:Defh} of the affine profile $h^{\hat{u}_{-}, \hat{u}_{+}}_{(-\ell_0,\ell_0)}$). On the other hand, by definition, shifting by $h^{\hat{u}_{-}, \hat{u}_{+}}_{(-\ell_0,\ell_0)}$ transforms the measure $\mathcal{W}_{1,(-\ell_0,\ell_0)}^{0,0}$ into $ \mathcal{W}_{1,(-\ell_0,\ell_0)}^{\hat{u}_{-},\hat{u}_{+}}$. This implies that
\begin{equation}\label{e:hz7}
 \mathcal{W}_{1,(-\ell_0,\ell_0)}^{\hat{u}_{-},\hat{u}_{+}} \big( \Bb^* \big) \geq  \mathcal{W}_{1,(-\ell_0,\ell_0)}^{0,0} \big( \Bb^{**} \big)=: c>0.
\end{equation}

Hence it remains to get a lower bound on the second term in \eqref{e:hz2}. As above in the proof of Lemma \ref{l:smallu}, Assumption \ref{ass:V} on $V$ and Taylor's formula imply that $\hat{ V}$ satisfies 
\begin{equation*}
\sup_{|u -1| \leq 3 \, \eps_0^{-1/2} } \;\sup_{\eps\in(0,1)} \eps^{-1} V\big( \eps^{1/2} (u-1)+1 \big) =: C< \infty.
\end{equation*}
Plugging this into \eqref{e:hz2}, we get
\begin{equation*}
\inf_{\hat{u} \in \Bb^*}  \exp \Big( -  \int^{\ell_0}_{-\ell_0} \hat{V} (  \hat{u} )\, dx \Big) \geq \exp(- 2C \ell_0). \label{e:hz3}
\end{equation*}
Hence, summarizing this calculation, we get uniformly for all $u_{\pm}$ satisfying \eqref{e:conbcc} that
\begin{equation*}
\mu_{\eps, (-\ell_0,\ell_0)}^{u_{-} ,u_+} \big( \Bb \big) \geq c \exp(-2C\ell_0).
\end{equation*}
Finally, plugging this back into \eqref{e:hz1}, we get
\begin{eqnarray*}
\mu_{\eps,(-\ell_\eps,\ell_\eps)}^{u_{-},u_{+}}  \big( \Aa \cap \Bb \big) & \geq& c \exp(-2C\ell_0) \,  \mu_{\eps,(-\ell_\eps,\ell_\eps)}^{u_{-},u_{+}}  \big( \Aa_\eps \big)\\
&\overset{\eqref{e:hz10} }{\geq}& \frac{1}{2}c \exp(-2C\ell_0)\, \mu_{\eps,(-\ell_\eps,\ell_\eps)}^{u_{-},u_{+}}  \big( \Aa \big) .
\end{eqnarray*}
Thus  we get the desired conclusion for $1 - \lambda := \frac{1}{2}c \exp(-2C\ell_0) $.
\end{proof}

\begin{proof}[Proof of Lemma \ref{l:reflection}]
\underline{Step 1.} We begin by ruling out long layers to the left and to the right of $Y$. Once we know that layers are bounded in length, we can use a reflection argument as in the proof of Theorem~\ref{t:layers} to turn them into wasted excursions and estimate their probability. To this end, we define the set $\Ayt$ of functions that are bounded away from $\pm 1$ on a whole subinterval outside of $Y$:
\begin{align*}
\Ayt&:=\left\{u\in J_Y\colon \text{ there exists a $k$ with}\right.\\
&\qquad \left. k\leq k_- \text{ or }k \geq k_+ -1 \text{ such that}\right.\\
&\qquad \left. u\in[-1+\delta,1-\delta]\text{ on all of }[x_k,x_{k+1}]\right\}.
\end{align*}
As usual, we note that $\Ayth$ is contained within $\Ayt\cup (\Ayth\cap \complement\Ayt)$. Our first step is to show that~\eqref{e:rightshapea} holds for $\Ayt$. In fact, $\Ayt$ is of higher order for $M$  and $\delta^2\ell$ sufficiently large.

The set $\Ayt$ can be written in the obvious way as the union of sets $\Ayt^k$ that have bad behavior on a given subinterval $[x_k,x_{k+1}]$. Without loss of generality, suppose that $k\leq k_-$.

Then we introduce the following sets for a Markovian decomposition:
\begin{align*}
\Aa^{\ominus}_k:= &\left\{ u \colon |u(x_j)|\leq M\text{ for all }j\leq k-1\right\},\\
\Aa^{\oplus}_k:= &\left\{ u\colon |u(x_j)|\leq M \text{ for all }j\geq k+2,\right. \\
& \qquad \left.\text{ and at least one }\delta^- \text{ up layer $\leq 2\ell$ in }Y\right\},\\
\Aa^{\odot}_k:= &\left\{ u \colon  |u(x_j)|\leq M\text{ for }j=k-1, \ldots, k+2\right\},\\
\Aa^{\odot}_{\delta,k}:= &\left\{ u \in\Aa^{\odot}_k\colon   u\text{  $\in [-1+\delta,1-\delta]$ on all of $[x_{k},x_{k+1}]$}\right\}.
\end{align*}
We remark that
\begin{align*}
&\Aa^{\ominus} \in\F_{[-\Le,x_{k-1}]},\quad\text{and}\quad \Aa^\oplus\in\F_{[x_{k+2},\Le]},\\
\text{while}\quad&\Aa^\odot\in\F_{[x_{k-1},x_{k+2}]}\quad\text{ and }\quad\Aa^\odot_{\delta,k}\in\F_{[x_{k-1},x_{k+2}]}.
\end{align*}
 Consequently, the decompositions $\Ayt^{k}=\Aa^\ominus_k\cap\Aa^\odot_{\delta,k}\cap\Aa^\oplus_k$ and $\JY=\Aa^\ominus_k\cap\Aa^\odot_k\cap\Aa^\oplus_k$  lend themselves to an application of the Markov property from Lemma~\ref{le:Markov1b}.
We will often use such decompositions in the proofs below.

In the proof at hand, the Markov property from Lemma~\ref{le:Markov1b} gives
\begin{align*}
\mu_{\eps,(-\Le,\Le)}^{-1,1}\left(\Ayt^{k}\right)\leq \sup_{u_\pm \in [-M,M]}\frac{\mathbb{E}^{\mu_\eps,u_-,u_+}_{(x_{k-1},x_{k+2})}(\textbf{1}_{\Aa_{\delta,k}^\odot})}{\mathbb{E}^{\mu_\eps,u_-,u_+}_{(x_{k-1},x_{k+2})}
(\textbf{1}_{\Aa_k^\odot})}\;\mu_{\eps,(-\Le,\Le)}^{-1,1}(\JY).
\end{align*}
It suffices to bound the ratio of expectations on the right-hand side. For the denominator,
we observe that
\begin{align}
\inf_{u_{\pm}\in [-M,M]} \mathbb{E}^{\mu_\eps,u_-,u_+}_{(x_{k-1},x_{k+2})}(\textbf{1}_{\Aa_{k}^\odot})\gtrapprox 1\label{http}
\end{align}
for $M$  sufficiently large. In fact, this bound follows immediately from the large deviation bound \eqref{e:LDUB1} and a simple energy estimate applied to the complement.

Hence, it suffices to bound the numerator.
Recalling the bound~\eqref{jn20.2}, the expectation in the numerator can be estimated by
\begin{align*}
\exp\left(- \frac{1}{\eps}\bigg(\frac{\delta^2\ell}{ C_1}-2\gamma\bigg)\right)\leq \exp\left(-\frac{\delta^2\ell -1}{\eps C_1} \right).
\end{align*}
For $\delta^2\ell$ sufficiently large, this drops below the threshold expressed in the exponential in~\eqref{e:rightshape2}.
Hence, summing the probabilities of $\Ayt^k$ over $k$, the probability of $\Ayt$ is negligible in the sense that, in order to establish~\eqref{e:rightshape2}, it suffices to show that it holds for $\tilde{A}_{Y,3}:=\Ayth \setminus \Ayt$.  For ease of notation, we drop the tildes for the remainder of the proof of the lemma.

\underline{Step 2.}
We will now show the desired bound for $\Ayth$. That is, we  will show that for any $\gamma>0$ there exists an $\eps_0 >0$ such that for all $\eps \leq \eps_0$  we have
\begin{align*}
 \mu^{-1,1}_{\eps,(-\Le,\Le)} &\big( \Ayth  \big)  \ls  \Le \exp \Big( - \frac{c_0-\gamma}{\eps}  \Big)  \mu^{-1,1}_{\eps,(-\Le,\Le)}\big( \JY \big).
\end{align*}
The proof uses a reflection argument very similar to the argument in the proof of the upper bound in Theorem~\ref{t:layers}.

As above in~\eqref{Apm} the set $\Ayth$ can be expressed as $\Ayth= \Ayth^- \cup \Ayth^+$ where
\begin{align*}
\Ayth^-&= \left\{ u \in \complement \Aa_1 \cap \complement \Ayt \colon u\text{ has a }\delta^-\text{ up layer} \right.\\
&\left.\text{ contained in }[-\Le, k_-\ell]
\text{ and a $\delta^-$ up layer $\leq 2\ell$ in $J_{Y}$ }\right\},\\
\Ayth^+&= \left\{ u \in \complement \Aa_1 \cap \complement \Ayt \colon u\text{ has a }\delta^-\text{ down layer} \right.\\
&\left.\text{ contained in }[ k_+\,\ell,\Le]
\text{ and a $\delta^-$ up layer $\leq 2\ell$ in $J_{Y}$ }\right\}.
\end{align*}
We will only give the bound for the set $\Ayth^-$. The proof of the corresponding bound for $\Ayth^+$ follows in the same way. The set $\Ayth^-$ is contained in the union of $k$ from $-(N_\eps-1)$ to $k_-$ of the sets
\begin{align*}
\Ayth^{-,k}&:=
\left\{u\in\complement\Aa_1 \colon u\text{ has a }\delta^-\text{ up layer }\right.\\
&\left.\text{contained in }[x_{k-1},x_{k+1}]\text{ and a $\delta^-$ up layer $\leq 2\ell$ in $Y$ }\right\}.
\end{align*}

As in the proof of Theorem~\ref{t:layers}, we will transform the additional $\delta^-$ transition layer into a wasted $\delta^-$ excursion to control the probability.  We need to reflect in such a way as to (a) create a wasted excursion in $[x_{k-1},x_{k+1}]$ and (b) leave at least one $\delta^-$ up layer in $Y$. To this end, we define the left stopping point capturing the additional $\delta^-$ up layer
\begin{align*}
\chi_{-} = \inf \{ &x>x_{k-1}\colon u(x)=0\\
&\text{ and }u(y_1)=-1+\delta\text{ for some }y_1\in(x_{k-1},x) \}
\end{align*}
and the right stopping point
\begin{align*}
\chi_+:=\sup\big\{ & x\leq y_+ \colon u(x)=0\text{ and there exist } y_1< y_2 \\
&\text{ both in } (x,y_+)\text{ with }u(y_1)=-1+\delta,\,u(y_2)=1-\delta \big\},
\end{align*}
where $y_+:= \sup Y$ is the right boundary of $Y$. As before we will use the convention that $\chi_{\pm} = \mp \Le$ if these sets are empty.
As in the proof of Theorem~\ref{t:layers}, the reflection  operator
\begin{equation*}
\mathsf{R} = R^{\chi_+}_{\chi_-},
\end{equation*}
reflects the paths $u$ between the stopping points $\chi_{\pm}$ while preserving $\mu^{-1,1}_{\eps,(-\Le,\Le)} $. On the other hand, it maps the set $\Ayth^{-,k}$ into the set
\begin{align*}
\hat{\Aa}_{Y,3}^{-,k}:=&\left\{ u \in \complement \Aa_1  \colon
\text{at least one wasted  $\delta^-$ excursion in  $[x_{k-1}, x_{k+1}]$}\right. \\
&\qquad\left.\text{ and at least one }\delta^-\text{ up layer $\leq 2\ell$ in $\JY$}\right\}.
\end{align*}
Hence, the estimate~\eqref{e:rightshape2} will follow if we can establish, uniformly in $k$, that
\begin{align}
\mu_{\eps,(-\Le,\Le)}^{-1,1}\left(\hat{\Aa}_{Y,3}^{-,k}\right)\leq \exp\left(-\frac{c_0-\gamma}{\eps}\right)\mu_{\eps,(-\Le,\Le)}^{-1,1}(\JY),\label{intermd}
\end{align}
which will follow from the Markov property and a large deviation estimate. Indeed, let us define the following sets:
\begin{align*}
\Aa^{\ominus}_k:= &\left\{ u \colon |u(x_j)|\leq M \text{ for all $j \leq k-2$ }\right\},\\
\Aa^{\oplus}_k:= &\left\{ u\colon |u(x_j)|\leq M  \text{ for all $j \geq k+2$ }\text{ and at least one }\delta^- \text{ up layer $\leq 2\ell$ in }Y\right\},\\
\Aa^{\odot}_k:= &\left\{ u \colon  |u(x_j)|\leq M\text{ for }j=k-2, \ldots, k+2\right\},\\
\Aa^{\odot}_{w,k}:= &\left\{ u \in\Aa^{\odot}_k\colon   u\text{ has a wasted $\delta^-$ excursion in $[x_{k-1},x_{k+1}]$}\right\}.
\end{align*}
Then we can decompose $\hat{\Aa}_{Y,3}^{-,k}=\Aa^\ominus_k\cap\Aa^\odot_{w,k}\cap\Aa^\oplus_k$ and $\JY=\Aa^\ominus_k\cap\Aa^\odot_k\cap\Aa^\oplus_k$, so that applying the Markov property as in Lemma~\ref{le:Markov1b} gives
\begin{align}
\mu_{\eps,(-\Le,\Le)}^{-1,1}\left(\hat{\Aa}_{Y,3}^{-,k}\right)\leq \sup_{u_\pm \in [-M,M]}\frac{\mathbb{E}^{\mu_\eps,u_-,u_+}_{(x_{k-2},x_{k+2})}(\textbf{1}_{\Aa^\odot_{w,k}})}
{\mathbb{E}^{\mu_\eps,u_-,u_+}_{(x_{k-2},x_{k+2})}(\textbf{1}_{\Aa_{k}^\odot})}\mu_{\eps,(-\Le,\Le)}^{-1,1}(\JY).\label{crsc}
\end{align}

It now remains to estimate the ratio of expectations. Recalling~\eqref{http}, it suffices to bound the numerator.
For this purpose, we remark that~\eqref{5pm} yields that for any $\gamma>0$ and for $\delta>0$ sufficiently small, we have
\begin{align*}
\mathbb{E}^{\mu_\eps,u_-,u_+}_{(x_{k-2},x_{k+2})}(\textbf{1}_{\Aa^\odot_{w,k}})\leq\exp\big(-\frac{1}{\eps}(c_0-\gamma)\big)
\end{align*}
(where, as usual, we have redefined $\gamma$ by a factor of two). Substituting these upper and lower bounds,~\eqref{crsc} improves to~\eqref{intermd}, and the proof of Lemma~\ref{l:reflection} is complete.
\end{proof}
\begin{proof}[Proof of Lemma~\ref{l:rightshape}]
We will show~\eqref{e:rightshape}. The proof of~\eqref{e:leftshape} is similar. We can assume that the interval $J_{y,-}^\eps$ is contained in $[-\Le, \Le]$; if it is not, the proof becomes even simpler.

Given the bound~\eqref{e:Defhe2} on $\big|  \mathcal{I}_{-}^{\eps}\big|$, it is clearly sufficient to prove that for any fixed $k \in \mathcal{I}_{-}^\eps$, we have
\begin{align}
 \mu^{-1,1}_{\eps,(-\Le,\Le)}& \bigg( u \in \mathcal{J}_{y,\eps} \cap \complement \Aa_{3,y}^- \colon |u(x_{k}) + 1| \geq \frac{1}{2}   \bigg) \notag\\
&  \leq  \, \exp\bigg(-  \frac{ 3c_1}{4\eps} \bigg)\,  \mu^{-1,1}_{\eps,(-\Le,\Le)}\big( \Be \big) \label{e:n678} .
\end{align}
This in turn will follow trivially from
\begin{align}
 \mu^{-1,1}_{\eps,(-\Le,\Le)}& \bigg( u \in \mathcal{J}_{y,\eps} \colon |u(x_{k}) + 1| \geq \frac{1}{2},\, u\leq 1-\delta\text{ on } [x_{k-1},x_{k+1}]   \bigg) \notag\\
&  \leq  \, \exp\bigg(-  \frac{ 3c_1}{4\eps} \bigg)\,  \mu^{-1,1}_{\eps,(-\Le,\Le)}\big( \Be \big) \label{e:rightshape1}.
\end{align}

In order to establish~\eqref{e:rightshape1}, we again introduce a decomposition. This time we define the sets
\begin{align*}
\Aa^{\ominus}_k:= &\left\{ u \colon |u(x_j)|\leq M\text{ for all }j\leq k-2\right\},\\
\Aa^{\oplus}_k:= &\left\{ u\colon |u(x_j)|\leq M \text{ for all }j\geq k+2,\right.\\
&\left.\text{ at least one }\delta ^- \text{ up layer $\leq 2\ell$ in }J_{y,\eps}\right\},\\
\Aa^{\odot}_k:= &\left\{ u \colon  |u(x_j)|\leq M\text{ for }j=k-2,k-1,\ldots, k+2,\right. \\
&\left. u \leq 1-\delta \text{ on }[x_{k-1},x_{k+1}] \right\},\\
\Aa_{1/2,k}^{\odot}:= &\bigg\{ u \in \Aa_k^{\odot} \colon  |u(x_{k}) + 1 | \geq \frac{1}{2}  \bigg\}.
\end{align*}
The set on the left-hand side of~\eqref{e:rightshape1} can be written as $\Aa_k^{\ominus} \cap  \Aa^{\odot}_{1/2,k} \cap \Aa_k^{\oplus}$, and we have the containment
\begin{align*}
\Aa_k^{\ominus} \cap  \Aa_k^{\odot} \cap \Aa_k^{\oplus}\subseteq\Be,\\
\end{align*}
so that applying the Markov property from Lemma~\ref{le:Markov1b} leads to
\begin{align*}
& \mu^{-1,1}_{\eps,(-\Le,\Le)} \big(  \Aa_k^{\ominus} \cap  \Aa^{\odot}_{1/2,k} \cap \Aa_k^{\oplus} \big)\\
& \leq  \sup_{u_{\pm} \in [-M, 1-\delta] }    \frac{\mathbb{E}_{(x_{k-2},x_{k +2})}^{\mu_\eps,u_-,u_+} \big( \mathbf{1}_{ \Aa^{\odot}_{1/2,k} } (u)\big)}{\mathbb{E}_{(x_{k-2},x_{k +2})}^{\mu_\eps,u_-,u_+} \big( \mathbf{1}_{ \Aa_k^{\odot} } (u)\big)}    \mu^{-1,1}_{\eps,(-\Le,\Le)}(\Be).
\end{align*}
Therefore, to show the desired estimate~\eqref{e:rightshape1}, it is sufficient to establish
\begin{equation}
 \sup_{u_{\pm} \in [-M, M] }    \frac{\mathbb{E}_{(x_{k-2},x_{k +2})}^{\mu_\eps,u_-,u_+} \big( \mathbf{1}_{ \Aa^{\odot}_{1/2,k} } (u)\big)}{\mathbb{E}_{(x_{k-2},x_{k +2})}^{\mu_\eps,u_-,u_+} \big( \mathbf{1}_{ \Aa_k^{\odot} } (u)\big)}  \leq \exp\bigg(  -\frac{3c_1}{ 4\eps} \bigg).\label{38}
\end{equation}

To get a lower bound for the denominator, we will as usual use the large deviation lower bound from  Proposition~\ref{pr:LD2}. For this, we note that
\begin{align*}
&\mathcal{A}_{k}^\odot=B(\mathcal{A},\delta)\\
&\text{where } \mathcal{A}=\{u\colon |u(x_j)|\leq M-\delta\text{ for }j=k-2,\ldots, k+2, \notag\\
&\quad\quad\qquad u\leq 1-2\delta\text{ on }[x_{k-1},x_{k+1}]\}.
\end{align*}
Therefore, the large deviation bound gives that for any $\gamma>0$ and for $\eps$ small enough
\begin{align}
\mu_{\eps,(x_{k-2},x_{k+2})}^{u_-,u_+}\bigg(\mathcal{A}_{k}^\odot\bigg)\geq \exp\left(-\frac{1}{\eps}\big(\Delta E(\mathcal{A})+\gamma\big)\right).\label{eny1}
\end{align}

To get an upper bound for the numerator of~\eqref{38}, on the other hand, we will use the large deviation upper bound from Proposition~\ref{pr:LD1}. For this, we observe that the closed $\delta/2$ ball around $\mathcal{A}_{1/2,k}^\odot$  is the set
\begin{align*}
\tilde{\mathcal{A}}:=&\bigg\{u\colon |u(x_k)|\leq M+\delta, \text{ for }j=k-2,\ldots, k+2, \notag\\
&\qquad  u\leq 1-\delta/2\text{ on }[x_{k-1},x_{k+1}],\,|u(x_k)+1|\geq \frac{1-\delta}{2}  \bigg\},
\end{align*}
so that the large deviation bound gives
\begin{align}
\mu_{\eps,(x_{k-2},x_{k+2})}^{u_-,u_+}\bigg(\mathcal{A}_{1/2,k}^\odot\bigg)\leq \exp\left(-\frac{1}{\eps}\big(\Delta E(\tilde{\mathcal{A}})+\gamma\big)\right).\label{eny2}
\end{align}
We substitute~\eqref{eny1} and~\eqref{eny2} into the ratio on the left-hand side of~\eqref{38} and observe that there is a cancellation of the second factor in the energy difference (see equation~\eqref{endiff}):
\begin{align*}
\frac{\mu_{\eps,(x_{k-2},x_{k+2})}^{u_-,u_+}\bigg(\mathcal{A}_{1/2,k}^\odot\bigg)}{\mu_{\eps,(x_{k-2},x_{k+2})}^{u_-,u_+}\bigg(\mathcal{A}_{k}^\odot\bigg)}
\leq \exp\left(-\frac{1}{\eps}\big(\inf_{u\in\tilde{\mathcal{A}}} E(u)-\inf_{u\in\mathcal{A}} E(u)-\gamma\big)\right).
\end{align*}

Hence, the final ingredient that we need is the following energetic fact.
\begin{lemma}\label{l:unpv}
There exists $C<\infty$ such that for any $M$ large enough and $\delta>0$ small enough, there exists $\ell_*<\infty$ with the following property. Let $\ell\geq\ell_*$ and consider the boundary conditions $u_\pm\in[-M,M]$. Define the sets $\mathcal{A}$ and $\tilde{\mathcal{A}}$ as
\begin{align*}
\mathcal{A}:=&\{u\colon |u(x)|\leq M-\delta\text{ for }x=-2\ell,\,-\ell,\ldots, 2\ell, \\
&\quad\qquad  u\leq 1-2\delta\text{ on }[-\ell,\ell]\},\\
\tilde{\mathcal{A}}:=&\bigg\{u\colon |u(x)|\leq M+\delta\text{ for }x=-2\ell,\,-\ell,\ldots,2\ell,\\
&\qquad\quad \, u \leq 1-\delta/2\text{ on }[-\ell,\ell],
 \, |u(0)+1|\geq \frac{1-\delta}{2}  \bigg\}.
\end{align*}
Then there holds
\begin{equation*}
\inf_{u \in\tilde{\mathcal{A}} } E_{(x_{k-2},x_{k+2})}(u) - \inf_{u \in \mathcal{A}}   E_{(x_{k-2},x_{k+2})}(u) \geq  c_1-C\delta,
\end{equation*}
where
\begin{align}
c_1:=2\min\left\{ \int_{-1}^{-1/2} \sqrt{2V(s)}\,ds,\,\int_{-3/2}^{-1}\sqrt{2V(s)}\,ds\right\}. \label{c1}
\end{align}
\end{lemma}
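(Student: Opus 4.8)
The plan is to establish the energy gap in Lemma~\ref{l:unpv} by the same ``upper bound via construction, lower bound via Modica--Mortola'' scheme used throughout Subsection~\ref{ss:enlem}, the key point being that forcing $u(0)$ to stay at distance at least $(1-\delta)/2$ from $-1$ while keeping $u\le 1-\delta/2$ on $[-\ell,\ell]$ costs, to leading order, the excursion energy $c_1$ on top of the unconstrained minimum. First I would note that the relevant interval here is $(x_{k-2},x_{k+2})$, which has length $4\ell$; after a harmless relabelling this is the same as working on $(-2\ell,2\ell)$ with the gridpoints at $-2\ell,-\ell,0,\ell,2\ell$, so I will write the argument on $(-2\ell,2\ell)$. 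Since the constraint ``$u(0)$ bounded away from $-1$'' is nontrivial only when the optimal profile would otherwise want to be near $-1$ at $0$, the natural comparison is between $\inf_{\mathcal A}E$ and $\inf_{\tilde{\mathcal A}}E$, and the gap should be the cost of a round trip from a neighbourhood of $-1$ out to level $-1/2$ (or equivalently to $-3/2$, whichever is cheaper), which is exactly $c_1$ as defined in~\eqref{c1}.

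For the upper bound over $\tilde{\mathcal A}$: I would take $\bar u$ to be the energy minimizer on $(-2\ell,2\ell)$ subject to $u(\pm 2\ell)=u_\pm$, $u(\pm\ell)=-1+\delta$ (say; if the cheaper round trip is to $-3/2$ one uses $u(0)=-1-1/2$, otherwise $u(0)=-1/2$), and $u(0)=-1/2$. As in Remark~\ref{r:constr} and the proof of Lemma~\ref{l:cl}, this splits additively over $(-2\ell,-\ell)$, $(-\ell,0)$, $(0,\ell)$, $(\ell,2\ell)$, and by the standard construction plus exponential decay of the minimizer (Lemma~\ref{l:expdecmin}) one gets
\[
\inf_{\tilde{\mathcal A}}E_{(-2\ell,2\ell)}(u)\;\le\;\varphi_{-1}(u_-)+\varphi_{-1}(u_+)+c_1+C\delta+o(1)_{\ell\uparrow\infty},
\]
where the $c_1$ term comes from the two Modica--Mortola segments $(-\ell,0)$ and $(0,\ell)$ connecting a neighbourhood of $-1$ to level $-1/2$, each contributing $\int_{-1}^{-1/2}\sqrt{2V}$ up to $O(\delta)$, and the $\varphi_{-1}$ boundary terms come from connecting $u_\pm$ to $-1$ over $(\pm\ell,\pm 2\ell)$. (One uses here the symmetry of $V$ together with $u_\pm\in[-M,M]$ to see that connecting to $-1$ rather than $+1$ is at least as cheap when the sign is unfavourable, and in any case gives a valid upper bound.) Note $\bar u\in\tilde{\mathcal A}$ because the constraints $|u(x_j)|\le M+\delta$, $u\le 1-\delta/2$ on $[-\ell,\ell]$, and $|u(0)+1|\ge(1-\delta)/2$ are all satisfied by the construction for $\ell$ large.

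For the lower bound over $\mathcal A$: given any $u\in\mathcal A$, I distinguish two cases exactly as in Lemma~\ref{le:lazy} and Lemma~\ref{l:cll}. Either $u$ stays in $[-1+2\delta,1-\delta/2]$ throughout an entire subinterval of length $\ell$, in which case $\int V(u)\gtrsim \delta^2\ell$ by the quadratic behaviour of $V$ near its minima, and for $\ell$ large this dominates $\varphi_{-1}(u_-)+\varphi_{-1}(u_+)+c_1$ so the claimed bound holds trivially; or else there exist points $x_-\in(-2\ell,0)$ and $x_+\in(0,2\ell)$ with $u(x_\pm)$ in a $\delta$-neighbourhood of $-1$ (it cannot be in a neighbourhood of $+1$ since $u\le 1-2\delta$ on $[-\ell,\ell]$, and outside $[-\ell,\ell]$ a crossing toward $+1$ would by the usual Modica--Mortola estimate cost more than what we are trying to prove). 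Since $|u(0)+1|\ge(1-2\delta)/2$ (it is $\ge (1-\delta)/2$ on $\tilde{\mathcal A}$, but here on $\mathcal A$ we only need that the infimum over $\mathcal A$ is what appears with a plus sign, so I should be slightly careful: the quantity $|u(0)+1|\ge(1-\delta)/2$ lives on $\tilde{\mathcal A}$; the set $\mathcal A$ has \emph{no} lower constraint on $|u(0)+1|$, so on $\mathcal A$ I only get the easy upper bound). Let me restate: the lower bound I actually need is on $\inf_{\tilde{\mathcal A}}E - \inf_{\mathcal A}E$, so it is $\inf_{\tilde{\mathcal A}}E$ that must be bounded \emph{below} and $\inf_{\mathcal A}E$ that must be bounded \emph{above}; the construction above already handles the upper bound on $\inf_{\mathcal A}E$ (drop the $u(0)=-1/2$ constraint and take $u(0)=-1$, giving $\inf_{\mathcal A}E\le\varphi_{-1}(u_-)+\varphi_{-1}(u_+)+o(1)$). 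For the lower bound on $\inf_{\tilde{\mathcal A}}E$, I use that any $u\in\tilde{\mathcal A}$ either has the $\delta^2\ell$-type energy (done) or has points $x_-<0<x_+$ with $u(x_\pm)$ within $\delta$ of $-1$; then applying the Modica--Mortola trick on $(-2\ell,x_-)$, $(x_-,0)$, $(0,x_+)$, $(x_+,2\ell)$ and using $|u(0)+1|\ge(1-\delta)/2$ gives
\[
\inf_{\tilde{\mathcal A}}E_{(-2\ell,2\ell)}(u)\;\ge\;\varphi_{-1}(u_-)+\varphi_{-1}(u_+)+2\int_{-1+\delta}^{-1/2}\sqrt{2V(s)}\,ds-C\delta\;\ge\;\varphi_{-1}(u_-)+\varphi_{-1}(u_+)+c_1-C\delta,
\]
where the factor $2$ counts the two segments $(x_-,0)$ and $(0,x_+)$, and the $\min$ in the definition~\eqref{c1} of $c_1$ accounts for the possibility that $u(0)$ is displaced toward $-3/2$ rather than $-1/2$. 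Subtracting the upper bound on $\inf_{\mathcal A}E$ and absorbing the $o(1)_{\ell\uparrow\infty}$ into $C\delta$ by choosing $\ell_*$ large (depending on $M$ and $\delta$) completes the proof.

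The main obstacle, and the place to be careful, is the bookkeeping of \emph{which} set needs an upper bound and which needs a lower bound, together with ensuring that the ``degenerate'' alternative (where $u$ hugs an intermediate level on a whole subinterval, or tries to head toward $+1$) really is more expensive than $c_1$ plus the boundary terms — this is where largeness of $\ell$ relative to $1/\delta^2$ and largeness of $M$ enter, exactly as in Lemmas~\ref{le:lazy},~\ref{l:cl}, and~\ref{l:cll}. Everything else is a routine repetition of the Modica--Mortola lower bound and the hand-built construction from Remark~\ref{r:constr}.
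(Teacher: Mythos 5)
Your proposal is correct and is essentially the paper's intended argument: the paper omits this proof, remarking only that the lemma is ``virtually identical to Lemma~\ref{l:last},'' and your scheme---an upper bound on $\inf_{\mathcal{A}}E$ by the standard construction sitting near $-1$ (cost $\varphi_{-1}(u_-)+\varphi_{-1}(u_+)+o(1)_{\ell\uparrow\infty}$), plus a lower bound on $\inf_{\tilde{\mathcal{A}}}E$ via the ``either hit a $\delta$-neighborhood of $-1$ on both sides of $0$ or pay $\gtrsim\delta^2\ell$'' dichotomy and Modica--Mortola on the four subintervals, with the half-excursion from $-1$ of size $1/2$ (upward or downward, whence the $\min$ in~\eqref{c1}) giving the $c_1$ term---is exactly that adaptation of the Lemma~\ref{l:last} proof. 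The initial upper bound over $\tilde{\mathcal{A}}$ and the momentary confusion about which infimum needs which bound are superfluous but harmless, since you self-correct and the final bookkeeping is right.
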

This lemma is virtually identical to Lemma~\ref{l:last}. The principal difference is that  here the excursion from $-1$ is only of magnitude $1/2$. This changes only the leading order cost (from $c_0$ to $c_1$). We omit the proof of the lemma.
\end{proof}

\begin{proof}[Proof of Lemma \ref{l:hittingz}]
We will prove only~\eqref{e:hz11}, the proof of~\eqref{e:hz29} being essentially the same. We will always assume that the left endpoint of the interval $\Hm$ is greater than or equal to $-\Le$ (since otherwise the boundary condition at $-\Le$ trivially implies the result).

Notice that the set of paths $u \in \Be$ that do not hit $-1$ in $\Hm$  is contained in the following two sets
\begin{itemize}
\item The set  of paths  (a)  in $\Aa_{y,3}^-$ (extra $\delta^-$ layers: recall~\eqref{Apm}) or (b) without extra layers but  more than $1/2$ away from $-1$ at a gridpoint for some $k$ in $\mathcal{I}_-^\eps$.

\item The set $\Aa_{y,4}^-$ of paths in $\Be$ that are within $1/2$ of $-1$ at all  gridpoints with $k\in\mathcal{I}_-^\eps$ but do not hit $-1$ in $\Hm$.

\end{itemize}
Hence, because of the bounds already established in  Lemmas~\ref{l:reflection} and~\ref{l:rightshape}, we will be done as soon as we show
\begin{align}
 \mu^{-1,1}_{\eps,(-\Le,\Le)}\Big( \Aa_{y,4}^- \Big) \leq  \lambda^{\bar{K}_\eps}   \, \mu^{-1,1}_{\eps,(-\Le,\Le)}\big( \Be \big). \label{done}
\end{align}
We remark for reference below that we may assume $M\geq 3/2$ so that $|u(x_k)-1|\leq 1/2$ implies $|u(x_k)|\leq M$.

The interval $\Hm$ can naturally be divided up into $\bar{K}_\eps$ subintervals of length $\ell (2K_\eps+1)$. We set
\begin{equation*}
\bar{k}_j := \klm + j (2 K_\eps+1) \;\text{ for }\;  j = 0, \ldots, \bar{K}_\eps,\;\;\text{and}\;\;
\bar{I}_j :=  [x_{\bar{k}_j} , x_{\bar{k}_{j+1}}]\text{ for }j \leq\bar{K}_\eps-1.
\end{equation*}
We want to use the Markov property and then apply Lemma \ref{l:hittingzero} on these subintervals. Therefore, as usual, we introduce some sets for a decomposition.
\begin{align*}
\Aa^{\ominus}:= &\big\{ u \colon |u(x_k)|\leq M\text{ for }k\leq \kxepsm \big\},\\
\Aa^{\oplus}:= &\big\{ u \colon   |u(x_k)|\leq M  \text{ for } k \geq \kxepsp ,\text{ $\delta ^-$ up layer $\leq 2\ell$ in $J_{y,\eps}$}\big\},\\
\Aa_{bc}^{\odot}:= & \big\{  u \colon \, |u(x_{\bar{k}_j}) -1|\leq  \frac12  \text{ for $ j = 0, \ldots, \bar{K}_\eps $}     \big\},\\
\Aa_j^{\odot}:=& \big\{ u \colon |u(x_k) -1| \leq \frac12 \text{ for } x_k \in \bar{I}_j \big\},\\
\Aa_{-1,j}^{\odot}:= &\big\{ u \in\Aa_j^{\odot} \colon  \text{no hitting of $-1$ in $ \bar{I}_j$  } \big\}.
\end{align*}

We now write $\Aa_{y,4}^-$ as the intersection
\begin{align}
\Aa_{y,4}^-=\Aa^{\ominus}\cap\Aa^{\oplus}\cap\Aa_{bc}^\odot \cap \bigg( \bigcap_{j=0}^{\bar{K}_\eps -1}  \Aa^{\odot}_{-1,j} \bigg) ,\label{x4m}
\end{align}
and apply the Markov property (Lemma \ref{le:Markov1b}) $\bar{K}_\eps$ times to deduce
\begin{align}
 &\mu^{-1,1}_{\eps,(-\Le,\Le)} \Big(  \Aa^{\ominus} \cap \Aa^{\oplus} \cap \Aa_{bc}^{\odot} \cap \Big( \bigcap_{j=0}^{\bar{K}_\eps -1}  \Aa^{\odot}_{-1,j}  \Big)  \Big) \notag\\
&=  \mathbb{E}_{(-\Le,\Le)}^{\mu_\eps,-1,1} \Big( \mathbf{1}_{ \Aa^{\ominus}}(u) \,    \mathbf{1}_{ \Aa^{\oplus}}(u) \, \mathbf{1}_{ \Aa_{bc}^{\odot}}(u)   \, \prod_{j=0}^{\bar{K}_\eps-1}      \mathbb{E}_{(x_{\bar{k}_j},x_{\bar{k}_{j+1}})}^{\mu_\eps,\mathbf{u}} \big( \mathbf{1}_{ \Aa^{\odot}_{-1,j} } (u)\big)  \,  \Big).\label{mk1}
\end{align}
According to Lemma \ref{l:hittingzero}, we have
\begin{equation*}
\mathbb{E}_{(x_{\bar{k}_j},x_{\bar{k}_{j+1}})}^{\mu_\eps,\mathbf{u}} \big( \mathbf{1}_{ \Aa^{\odot}_{-1,j} } (u)\big)   \leq  \,\lambda \,  \mathbb{E}_{(x_{\bar{k}_j},x_{\bar{k}_{j+1}})}^{\mu_\eps,\mathbf{u}} \big( \mathbf{1}_{ \Aa^{\odot}_j } (u)\big),
\end{equation*}
uniformly over all paths $\mathbf{u}$ that satisfy $ \mathbf{u}(x_{\bar{k}_j}), \mathbf{u}(x_{\bar{k}_{j+1}}) \in [-3/2, -1/2]$.
We insert this bound into~\eqref{mk1} and then use the Markov property once more to recover
\begin{align}
 &\mu^{-1,1}_{\eps,(-\Le,\Le)} \Big(  \Aa^{\ominus} \cap \Aa^{\oplus} \cap \Aa_{bc}^{\odot} \cap \Big( \bigcap_{j=0}^{\bar{K}_\eps -1}  \Aa^{\odot}_{-1,j}  \Big)  \Big) \notag\\
 & \leq \lambda^{\bar{K}_\eps} \,  \mathbb{E}_{(-\Le,\Le)}^{\mu_\eps,-1,1} \Big( \mathbf{1}_{ \Aa^{\ominus}}(u) \,     \mathbf{1}_{ \Aa^{\oplus}}(u) \,\mathbf{1}_{ \Aa_{bc}^{\odot}}(u)    \, \prod_{j=0}^{\bar{K}_\eps-1}      \mathbb{E}_{(x_{\bar{k}_j},x_{\bar{k}_{j+1}})}^{\mu_\eps,\mathbf{u}} \big( \mathbf{1}_{ \Aa^{\odot}_j } (u)\big)  \,  \Big)\notag\\
 & =\lambda^{\bar{K}_\eps}   \mu^{-1,1}_{\eps,(-\Le,\Le)} \Big(  \Aa^{\ominus} \cap \Aa^{\oplus} \cap \Aa_{bc}^{\odot} \cap \Big( \bigcap_{j=0}^{\bar{K}_\eps -1}  \Aa^{\odot}_j  \Big)  \Big).\label{mk2}
\end{align}
Since
\begin{equation*}
 \Aa^{\ominus} \cap \Aa^{\oplus} \cap \Aa_{bc}^{\odot} \cap \Big( \bigcap_{j=0}^{\bar{K}_\eps -1}  \Aa^{\odot}_j  \Big)  \subseteq \Be,
\end{equation*}
the combination of~\eqref{x4m} and~\eqref{mk2} completes the proof of~\eqref{done}.
\end{proof}

\section*{Acknowledgements}
We  thank the Max-Planck Institute for Mathematics in the Sciences in Leipzig, where we had the pleasure of working jointly on this project.

The second author would like to thank Volker Betz for explaining to him some background about Schr\"odinger operators. He would also like to thank Martin Hairer and Andrew Stuart for many discussions about this project and  related topics.

The third author would like to thank Eric Vanden--Eijnden for insightful discussions related to ideas developed in this project.

\medskip

Hendrik Weber was partially supported by ERC grant \emph{AMSTAT -- Problems at the Applied Mathematics and Statistics Interface} and by a Philip Leverhulme prize.  Maria G. Westdickenberg was partially supported as an Alfred P. Sloan Research Fellow and by the National Science Foundation under Grant No. DMS--0955051.


 \end{document}